\definecolor{darkgreen}{rgb}{0.1,0.5,0.1}
\newtheorem{theorem}{Theorem}
\newtheorem{lemma}[theorem]{Lemma}
\newtheorem{proposition}[theorem]{Proposition}
\newtheorem{corollary}[theorem]{Corollary}
\newtheorem{remark}[theorem]{Remark}
\DeclareMathAlphabet{\bi}{OML}{cmm}{b}{it}
\DeclareMathAlphabet\bfcal{OMS}{cmsy}{b}{n} 
\newcommand{\Imag}{\operatorname{Im}}
\newcommand{\bE}{\boldsymbol{E}}
\newcommand{\bH}{\boldsymbol{H}}
\newcommand{\bG}{\boldsymbol{G}}
\newcommand{\bJ}{\boldsymbol{J}}
\newcommand{\bK}{\boldsymbol{K}}
\newcommand{\bx}{\boldsymbol{x}}
\newcommand{\bU}{\boldsymbol{U}}
\newcommand{\bV}{\boldsymbol{V}}
\newcommand{\bu}{\boldsymbol{u}}
\newcommand{\calH}{\mathcal{H}}
\newcommand{\calO}{\mathcal{O}}
\newcommand{\calW}{\mathcal{W}}
\newcommand{\calZ}{\mathcal{Z}}
\newcommand{\bbA}{\mathbb{A}}
\newcommand{\bbC}{\mathbb{C}}
\newcommand{\bbD}{\mathbb{D}}
\newcommand{\bbE}{\mathbb{E}}
\newcommand{\bbF}{\mathbb{F}}
\newcommand{\bbI}{\mathbb{I}}
\newcommand{\bbM}{\mathbb{M}}
\newcommand{\bbP}{\mathbb{P}}
\newcommand{\bbR}{\mathbb{R}}
\newcommand{\bbV}{\mathbb{V}}
\newcommand{\bbW}{\mathbb{W}}
\newcommand{\Om}{\Omega_{\rm m}}
\newcommand{\Oe}{\Omega_{\rm e}}
\newcommand{\Op}{\Omega_{\rm p}}
\newcommand{\Oc}{\Omega_{\rm c}}
\newcommand{\kc}{k_{\rm c}}
\newcommand{\bhatU}{\boldsymbol{\hat{U}}}
\newcommand{\curl}{\operatorname{curl}}
\newcommand{\bcurl}{\operatorname{\bf curl}}
\def\div{\operatorname{div}}
\newcommand{\sgn}{\operatorname{sgn}}
\def\ker{\mathrm{Ker}}
\newcommand{\hatH}{\boldsymbol{\hat{\calH}}}
\newcommand{\Thetakl}{\Theta_{k,\lambda}}
\newcommand{\thetakl}{\theta_{k,\lambda}}
\newcommand{\thetaklp}{\theta_{k,\lambda'}}
\def\Rop{\mathrm{R}}
\def\bR{\boldsymbol{\rm{R}}}
\newcommand{\bPi}{\boldsymbol{\Pi}}
\newcommand{\kE}{k_{\scE}}
\newcommand{\kO}{k_{0}}
\newcommand{\kD}{k_{\scD}}
\newcommand{\kI}{k_{\scI}}
\newcommand{\kp}{k^{+}}
\newcommand{\eps}{\varepsilon}
\newcommand{\rmd}{{\mathrm{d}}}
\newcommand{\rmD}{{\mathrm{D}}}
\newcommand{\rme}{{\rm e}}
\newcommand{\rmi}{{\rm i}}
\newcommand{\Hxy}{\boldsymbol{\mathcal{H}}}
\newcommand{\Hxydiv}{\boldsymbol{\mathcal{H}}_{\rm div0}}
\newcommand{\Hps}{\boldsymbol{\mathcal{H}}_{s}}
\newcommand{\Hms}{\boldsymbol{\mathcal{H}}_{-s}}
\newcommand{\Xps}{\boldsymbol{\mathcal{X}_{s}}}
\newcommand{\Xpsstar}{\boldsymbol{\mathcal{X}_s^*}}
\def\scD{\textsc{d}}
\def\scE{\textsc{e}}
\def\scI{\textsc{i}}
\def\scZ{\textsc{z}}
\def\DD{\textsc{dd}}
\def\DE{\textsc{de}}
\def\EI{\textsc{ei}}
\def\DI{\textsc{di}}
\def\EE{\textsc{ee}}
\def\zDD{\Lambda_\DD}
\def\zDE{\Lambda_\DE}
\def\zEI{\Lambda_\EI}
\def\zDI{\Lambda_\DI}
\def\zEE{\Lambda_\EE}
\def\zZ{\Lambda_\scZ}
\def\JacE{\mathcal{J}_{\scE}}
\def\Xint#1{\mathchoice
	{\XXint\displaystyle\textstyle{#1}}%
	{\XXint\textstyle\scriptstyle{#1}}%
	{\XXint\scriptstyle\scriptscriptstyle{#1}}%
	{\XXint\scriptscriptstyle\scriptscriptstyle{#1}}%
	\!\int}
\def\XXint#1#2#3{{\setbox0=\hbox{$#1{#2#3}{\int}$}
		\vcenter{\hbox{$#2#3$}}\kern-.5\wd0}}
\def\dashint{\Xint-}
\DeclareMathOperator*{\slim}{s-lim}
\newcommand{\thetaklt}{\theta_{k,\tilde{\lambda}}}
\newcommand{\bbml}{\mathbb{M}_{\lambda}}
\newcommand{\bbmlp}{\mathbb{M}_{\lambda'}}
\begin{document}
\title{Spectral theory for Maxwell's equations at the interface of a metamaterial. Part II: Limiting absorption, limiting amplitude principles and interface resonance}
\author{Maxence Cassier$^{a}$, Christophe Hazard$^{b}$ and Patrick Joly$^{b}$ \\ \ \\
{
\footnotesize $^a$ Aix Marseille Univ, CNRS, Centrale Marseille, Institut Fresnel, Marseille, France }\\ 
{\footnotesize $^b$ POEMS$^1$, CNRS, INRIA, ENSTA Paris, Institut Polytechnique de Paris, 91120 Palaiseau, France}\\ 
{\footnotesize (maxence.cassier@fresnel.fr, christophe.hazard@ensta-paris.fr, patrick.joly@inria.fr)}}
\footnotetext[1]{POEMS (Propagation d'Ondes: Etude Math\'ematique et Simulation) is a mixed research team (UMR 7231) between CNRS (Centre National de la Recherche Scientifique), ENSTA Paris (Ecole Nationale Sup\'erieure de Techniques Avanc\'ees) and INRIA (Institut National de Recherche en Informatique et en Automatique).}
\footnotetext[2]{The work of Maxence Cassier was supported in part by Simons Foundation grant $\sharp 376319$ (Michael I. Weinstein).}

\maketitle

\begin{abstract}
This paper is concerned with the time-dependent Maxwell's equations for a plane interface between a negative material described by the Drude model and the vacuum, which fill, respectively, two complementary half-spaces. In a first paper, we have constructed a generalized Fourier transform which diagonalizes the Hamiltonian that represents the propagation of transverse electric waves. In this second paper, we use this transform to prove the limiting absorption and limiting amplitude principles, which concern, respectively, the behavior of the resolvent near the continuous spectrum and the long time response of the medium to a time-harmonic source of prescribed frequency. This paper also underlines the existence of an interface resonance which occurs when  there exists a  particular frequency characterized by a ratio of permittivities and permeabilities equal to $-1$ across the interface.  At this frequency, the response of the system  to a harmonic forcing term blows up linearly in time. Such a  resonance is unusual for wave problem in unbounded domains and corresponds to  a  non-zero embedded eigenvalue of infinite multiplicity of the underlying operator. 
This is the time counterpart of the ill-posdness of the corresponding harmonic problem. 
\end{abstract}

{\bf Keywords:} Negative Index Materials, Drude model, Dispersive Maxwell's 
equations, Spectral theory, Limiting Amplitude principle, Limiting absorption principle, Interface resonance. \\

{ \bf 2020 AMS subject classification:} 35P10, 35Q60, 47A70, 78A25.

\section{Introduction}
This paper is second of a series of two articles devoted to the mathematical analysis of the transmission of electromagnetic waves through a plane interface separating a standard material (here the vacuum) and a metamaterial. These two papers are an improved version of the preliminary study presented in the PhD thesis \cite{Cas-14}. 
Metamaterials are manufactured materials whose effective behaviour is dispersive (in other words frequency dependent). In particular, the effective permittivity and permeability can be both negative in a certain  range of frequencies  \cite{cas-mil-16,cas-kach-jol-17,Gra-10,Tip-98}. As a consequence such materials support so-called back propagating waves whose phase and group velocities have opposite directions \cite{Zio-01}. This is the reason of apparition of new phenomena at the interface with a dielectric medium such as negative refraction or plasmonic surface waves \cite{Mai-07}. Therefore, these materials have raised a lot interest in the physical literature during the two last decades due to applications in electromagnetism \cite{Nir-94,Smit-04}, in acoustic \cite{Cum-16, Li-04} and also for seismic waves \cite{Bru-14}. Accordingly the study of the corresponding models raised new mathematical questions for transmission problems, to begin with the long time behaviour of the response of such medium to a time-harmonic source of prescribed frequency. More precisely, after a transient regime, does the solution of the time-dependent equation ``converge'' for large times to a stationary regime? In scattering theory, this property is referred to as the \emph{limiting amplitude principle}. It is closely related to another property called the \emph{limiting absorption principle} which defines the stationary regime via the limit of the resolvent of the propagative operator at the frequency of excitation. The question of the validity of both limiting amplitude and limiting absorption principles is precisely the objective of this paper for the particular case where the metamaterial is a Drude material, which can be seen as the simplest metamaterial.\\ 

Limiting absorption and limiting amplitude principles have a long history  in scattering theory and more generally in mathematical physics. In the context of wave phenomena, these principles were first proved to our knowledge by  C. Morawetz \cite{Mora-62} for sound soft obstacles in a homogeneous medium via energy techniques. Then D. Eidus  \cite{Eid-65,Eid-69} constructed an abstract proof which involved the spectral decomposition of the propagative operator and applied it to a class of acoustic media that are locally inhomogeneous. Eidus' approach was then developed by  C. Wilcox \cite{Wil-84}, Y. Dermanjian, and J-C. Guillot  \cite{Der-83,Der-86} and R. Weder \cite{Wed-91} for acoustic and electromagnetic stratified media. Finally, it was extended  to other structures  such as waveguides \cite{Mor-89}, periodic media \cite{Rad-15}  \ldots  and  to other  waves equations: elastic waves \cite{Der-88,San-89}, water waves \cite{Vul-87,Haz-07}, \ldots. The method we use is inspired from Eidus' spectral approach and its extension to stratified media. It is applied for the first time in the context of dispersive Maxwell's equations and  metamaterials. Compared to previous studies, the difficulty and novelty of the analysis relies in the fact that for the Drude material, the permittivity and permittivity  depend on the frequency and become negative for low frequencies. This complicates significantly the establishment of both principles. Finally, we want to mention  that other techniques such as Mourre's commutators \cite{Koj-91,Wed-84} can be used to prove the limiting  absorption and limiting amplitude  principles. These techniques have the advantage to work on non separable geometries but they are based on a  more abstract limit process. Thus, unlike the spectral decomposition approach, they don't provide an explicit modal decomposition of the solution and its limiting stationary regime. Therefore, they are not as precise for applications. \\

In the first paper \cite{Cas-Haz-Jol-17}, we begin by writing the governing equations as a conservative Sch\"odinger equation. We point out that such a reformulation of the time-dependent Maxwell's equation which takes a very explicit expression in \cite{Cas-Haz-Jol-17} for the Drude material, can be applied in the more general setting of linear passive electromagnetic media (including dissipative ones), see \cite{cas-kach-jol-17,Fig-05,Gra-10,Tip-98}. Then, we perform the complete spectral analysis of the corresponding Hamiltonian. In particular, we provide the diagonalization of this operator through the construction of an appropriate generalized Fourier transform. This furnishes the material needed for addressing the question of the limiting amplitude principle which relies on the  existence of a limiting absorption principle. As we shall see, our analysis emphasizes the role of a so-called resonant frequency  corresponding to the case where the ratios between the permittivities and permeabilities across the interface are simultaneously equal to $-1$. At this particular frequency, the limiting principle fails and the solution grows linearly in time. This result in the time-domain is the counterpart of the results concerning the  ill-posedness of the transmission problem in the frequency domain \cite{Bon-14,Bon-14(2),Cos-85,Ngu-16}. 
This  interface resonance phenomenon  has been enlighten in the physical literature in \cite{Gra-12}. We prove here that it is based
on the existence of a non-zero embedded eigenvalue of infinite multiplicity that does not exist in a stratified media composed of non-dispersive dielectrics \cite{Wed-91}  media. It is is due to the presence of a negative dispersive material: the Drude material.
Let us mention than other resonance phenomena which are not linked to eigenvalues are observed in unbounded domains such as waveguides  (see e. g. \cite{Wed-98,Wer-87,Wer-96}) excited at a cut-off frequencies. In this case, the growth  rate is non-linear with the time and depends on the geometry shape: $C\; \log{t}$ for planar waveguides and $C \, t^{{1}/{2}}$ for cylindrical ones. \\

The outline of the paper is as follows. Section \ref{s.math-model-main-results} is devoted to a recap of \cite{Cas-Haz-Jol-17} and the statement of the main results of the present paper.  In \S\ref{ss.math-model}, we recall the formulation of the evolution problem as a generalized Schr\"odinger equation. We then present in \S\ref{ss.main-results} the main theorems of this paper: the limiting absorption principle (Theorem \ref{thm.limabs}) and the limiting amplitude principle (Theorem \ref{th.ampllim}). Their proofs are based on the diagonalization of the Hamiltonian involved in the Schr\"odinger equation, using appropriate generalized eigenfunctions, which is recalled in \S\ref{ss.diag-Hamilt}. 

In \S\ref{sec.spectral-density}, we introduce the fundamental notion of spectral density of the Hamiltonian, as a function of the real (spectral) variable with values in the set of bounded linear operators between two appropriate weighted function spaces on $\mathbb{R}^2$. We give an explicit expression of this spectral density with the help of the generalized eigenfunctions and establish the technical results which are the basic ingredients for the proofs of our main theorems:  bounds of the spectral density  and corresponding (local) H\"older continuity estimates, that themselves rely on similar properties about generalized eigenfunctions. The proofs of the two main theorems are the subject of section \ref{sec.prolimabs}. Finally, section \ref{sec-limampl-limabs-threshold} is devoted to a very specific situation excluded in Theorems \ref{thm.limabs} and \ref{th.ampllim} and which necessitates technical adjustments: this corresponds to the case where the frequency of the source coincides with the so-called \emph{plasmonic frequency}.

\section{Mathematical model and main results}
\label{s.math-model-main-results}
\subsection{Mathematical model}
\label{ss.math-model}
We recall here the mathematical formulation of the problem studied in \cite{Cas-Haz-Jol-17}. In this previous paper, we have  considered the Transverse Electric (TE) transmission problem between a Drude material and the vacuum separated by a planar interface, which reduces to a two-dimensional model where the vacuum and the Drude material fill respectively the half-planes
\begin{equation*} 
\bbR^2_- := \{\bx=(x,y)\in \bbR^2\mid x<0\} 
\quad\text{and}\quad
\bbR^2_+ := \{\bx=(x,y)\in \bbR^2\mid x>0\}.
\end{equation*}
The  physical unknowns are the transverse component of the electric field $E(\bx,t)$, the magnetic field $\bH(\bx,t)=(H_x(\bx,t), H_y(\bx,t) \big)^{\top}$, the induced transverse electric current in the Drude material $J(\bx,t)$ and finally the induced magnetic current in the Drude material $\bK(\bx,t)=(K_x(\bx,t), K_y(\bx,t) \big)^{\top}$. Our problem, which couples these unknowns, can be formulated in a concise form as
\begin{equation}\label{TE}
\left\{ \begin{array}{ll}
\eps_0 \:\partial_t E -\curl \bH + \Pi \, J = - J_{\rm s} & \mbox{in }  \bbR^2, \\[5pt]
\mu_0\: \partial_t \bH + \bcurl E + \bPi\, \bK= 0 & \mbox{in } \bbR^2,\\[5pt]
\partial_t J = \eps_0 \Oe^2 \, \Rop \, E & \mbox{in } \bbR_+^2, \\[5pt]
\partial_t \bK= \mu_0 \Om^2 \, \bR \, \bH & \mbox{in } \bbR_+^2.
\end{array} \right.
\end{equation}
The first two equations derive from Maxwell's equations, whereas the last two are the constitutive laws of the Drude material. In these equations, $\eps_0$ and $\mu_0$ stand for the permittivity and the permeability of the vacuum, whereas $\Oe$ and $\Om$ are positive constants which characterize the Drude material. The operators $\bcurl$ and $\curl$ are respectively defined by
\begin{equation}\label{eq-defcurls}
\bcurl u := (\partial_y u, -\partial_x u)^{\top}
\quad\mbox{and}\quad 
\curl \bu := \partial_x u_y -\partial_y u_x \mbox{ for } \bu=(u_x,u_y)^{\top}.
\end{equation}
The operator $\Pi$ (respectively, $\bPi$) denotes the extension by $0$ of a scalar function (respectively, a 2D vector field) defined on $\bbR^2_+$ to the whole plane $\bbR^2$, whereas $\Rop$ (respectively, $\bR$) stands for the restriction to  $\bbR^2_+$ of a scalar function (respectively, a 2D vector field) defined on $\bbR^2$. Finally, in the right-hand side of the first equation, $J_{\rm s}(\bx,t)$ represents the excitation (current density) which generates an electromagnetic wave. 
We emphasize that all quantities involved in \eqref{TE} will be assumed square-integrable (the equations being understood in the sense of distributions). Thus \eqref{TE} contains implicitly the continuity of the tangential fields across the interface $x=0$, that is,
\begin{equation*}\label{eq.transTE}
[E]_{x=0}=0 \quad\mbox{and}\quad [H_{y}]_{x=0}=0,
\end{equation*}
where $[f]_{x=0}$ denotes the gap of $f$ across the line $x=0.$

When looking for time-harmonic solutions to \eqref{TE} at a given (circular) frequency $\lambda \in \bbR,$ \textit{i.e.}, 
$$(E(\bx,t),\bH(\bx,t), J(\bx,t),K(\bx,t)) =(E_{\lambda}(\bx),  \bH_{\lambda}(\bx) ,  J_{\lambda}(\bx),  \bK_{\lambda}(\bx))   \  \rme^{-\rmi\lambda t}$$ 
for $J_{\rm s}(\bx,t) = J_{{\rm s},\lambda}(\bx)\, \rme^{-\rmi\lambda t},$ one can eliminate 
$J_{\lambda}$ and $K_{\lambda}$ and obtain the following time-harmonic Maxwell equations:
\begin{equation*}
\rmi \, \lambda\, \eps_\lambda(\bx) \, \bE_{\lambda} + \bcurl \bH_{\lambda}= J_{{\rm s},\lambda}
\quad\mbox{and}\quad 
-\rmi \, \lambda\, \mu_\lambda(\bx) \,\bH_{\lambda} + \bcurl\bE_{\lambda} = 0
\quad\mbox{in }\bbR^2,
\end{equation*}
where 
\begin{equation}\label{eq.defepsmu}
\eps_{\lambda}(\bx)  :=  \left\lbrace\begin{array}{ll}
         \eps_{\lambda}^{-}:=\eps_{0} & \mbox{ if } x<0,\\[2pt]
       \displaystyle   \eps_{\lambda}^{+}:=\eps_0 \left( 1-\frac{ \Oe^2}{\lambda^2}\right) & \mbox{ if } x>0,\\
       \end{array}\right.  \ \mbox{ and }
        \ \mu_{\lambda}(\bx)  :=  \left\lbrace\begin{array}{ll}
         \mu_{\lambda}^{-}:=\mu_{0} & \mbox{ if } x<0,\\[2pt]
       \displaystyle   \mu_{\lambda}^{+}:=\mu_0 \left( 1-\frac{ \Om^2}{\lambda^2}\right) & \mbox{ if } x>0.\\
       \end{array}\right.
\end{equation}
The rational functions $\lambda \mapsto \eps_{\lambda}^{+},\mu_{\lambda}^{+}$ characterize the frequency dispersion of the Drude material. Both take negative values for low frequencies (respectively, when $|\lambda| < \Oe$ and $|\lambda| < \Om$). Note that
\begin{equation*}
\frac{\eps_\lambda^+}{\eps_0} = -1  \mbox{ if } |\lambda| = \frac{\Oe}{\sqrt{2}}
\quad\mbox{and}\quad
\frac{\mu_\lambda^+}{\mu_0} = -1  \mbox{ if } |\lambda| = \frac{\Om}{\sqrt{2}}.
\end{equation*}
We see in particular that both ratios can be simultaneously equal to $-1$ at the same frequency if and only if $\Oe = \Om,$ which will be referred to as the \emph{critical case} in the following. 

Our study of the Maxwell's equations \eqref{TE} is based on their reformulation as a \emph{conservative Schr\"{o}dinger equation} (see \cite{Cas-Haz-Jol-17}) 
\begin{equation}\label{eq.schro}
\frac{\rmd \, \bU}{\rmd\, t} + \rmi\, \bbA \, \bU=\bG,
\end{equation}
in the Hilbert space
\begin{equation}\label{eq.defHxy}
\Hxy := L^2(\bbR^2) \times L^2(\bbR^2)^2 \times L^2(\bbR^2_+) \times L^2(\bbR^2_+)^2
\end{equation}
whose inner product is defined for all $\bU:=(E, \bH, J, \bK)^{\top}$ and $\bU':=(E^{\prime}, \bH^{\prime}, J^{\prime}, \bK^{\prime})^{\top}\in \Hxy$ by
\begin{equation*}
(\bU,\bU')_{\Hxy} := 
\int_{\bbR^2}\left(\eps_0\,E\,\overline{E^{\prime}} + \mu_0\,\bH\cdot\overline{\bH^{\prime}}\right)\rmd \bx 
+ \int_{\bbR^2_+}\left(\eps_0^{-1} \Oe^{-2}\,J\,\overline{J^{\prime}} + \mu_0^{-1} \Om^{-2}\, \bK\cdot\overline{\bK^{\prime}}\right)\rmd \bx.
\end{equation*}
The Hamiltonian $\bbA$ is the unbounded \emph{selfadjoint} operator on $\Hxy$ defined by
\begin{align*}
\rmD(\bbA) & := H^{1}(\bbR^2) \times \bH_{\!\curl}(\bbR^2) \times  L^2(\bbR^2_+) \times L^2(\bbR^2_+)^2 \subset \Hxy,\\[5pt]
\bbA & := \ \rmi\, \begin{pmatrix}
0 &\eps_0^{-1}\,\curl & -\eps_0^{-1} \, \Pi & 0\\
- \mu_0^{-1}\,\bcurl& 0 &0 & - \mu_0^{-1} \,\bPi \\
\eps_0 \Oe^2 \, \Rop & 0 & 0 &0 \\
0 & \mu_0 \Om^2\, \bR & 0 & 0
\end{pmatrix},
\end{align*}
where $\bH_{\!\curl}(\bbR^2) := \{ \bu\in  L^2(\bbR^2)^2 \mid \curl \bu \in L^2(\bbR^2)\}$. 
Finally the source term $\bG$ in \eqref{eq.schro} is given by $\bG(t) := (- \eps_0^{-1} \, J_{\rm s}(\cdot,t) \,,0\,, \,0 ,\, 0)^{\top} \in \Hxy$.

Considering for simplicity zero initial conditions (\textit{i.e.}, $\bU(0) = 0$), we know from the Hille--Yosida theorem \cite{Bre-10} that the Schr\"{o}dinger equation \eqref{eq.schro} has a unique solution $\bU \in C^1\big([0,+\infty), \Hxy \big) \cap C^0\big([0,+\infty), \rmD(\bbA) \big) $ which is given by Duhamel's formula
\begin{equation}\label{eq.Duhamel}
\bU(t) = \int_{0}^{t} \rme^{-\rmi \bbA\, (t-s)}\, \bG(t)\,\rmd s,  \quad  \forall  t\geq0. 
\end{equation}

Let us finally notice that Maxwell's equation \eqref{TE} contain implicitly some conditions about the divergence of the magnetic field $\bH$ and of the induced magnetic current $\bK$. Indeed taking the divergence of the second equation of \eqref{TE} restricted to $\bbR^2_-$ shows that $\mu_0\,\partial_t \div \bH = 0$ in $\bbR^2_-$. Hence, as our system starts from rest, we have $\div \bH = 0$ at $t=0$ and the latter equation shows that $\div \bH = 0$ in $\bbR^2_-$ for all $t>0.$ Similarly, taking the divergence of the second and fourth equations of \eqref{TE} restricted to $\bbR^2_+$ yields
\begin{equation}\label{eq.divTE}
\left\{ \begin{array}{ll}
\mu_0\: \partial_t \div \bH + \div \bK= 0 & \mbox{in } \bbR_+^2,\\[5pt]
\partial_t \div \bK = \mu_0 \Om^2 \, \div \bH & \mbox{in } \bbR_+^2.
\end{array} \right.
\end{equation}
Then by differentiating the second equation with respect to $t$, we can eliminate $\div \bH$ and obtain
\begin{equation*}
\partial_t^2 \div \bK + \Om^2 \, \div \bK = 0 \mbox{ in } \bbR_+^2.
\end{equation*}
Hence, as $\div \bK = 0$ and $\partial_t \div \bK = 0$ at $t=0,$ this equation shows that $\div \bK = 0$ for all $t>0$ and we deduce from \eqref{eq.divTE} that $\div \bH = 0$ in $\bbR^2_+$ for all $t>0.$ This explains why in the following, the solution to \eqref{TE} will be searched for in the subspace of $\Hxy$ defined by
\begin{equation}
\Hxydiv :=\{ (\bE, \bH, \bJ, \bK)^\perp \in \Hxy \mid \div \bH=0 \mbox{ in } \bbR^{2}_{\pm} \mbox{ and } \div \bK=0 \mbox{ in } \bbR^2_+\}.
\label{eq.Hxydiv}
\end{equation}
Note that the conditions $\div \bH=0$ in $\bbR^{2}_{\pm}$ does not mean that the divergence of $\bH$ vanishes in the whole plane $\bbR^{2}$: there may be a gap of the normal component of $\bH$ across the line $x=0.$

\subsection{Statement of the main results}
\label{ss.main-results}
In this paper, we are interested in  the long-time behavior of $\bU(t)$ given in \eqref{eq.Duhamel} when the excitation starts at $t=0$ and becomes time-harmonic at a given (circular) frequency $\omega > 0$, that is,
\begin{equation*}
	\bG(t) = \bG_\omega \ H(t) \, \rme^{-\rmi\omega\, t} \quad \text{for some given } \bG_\omega \in \Hxy,
\end{equation*}
where $H$ denotes the Heaviside function (\textit{i.e.}, $H(t) = 0$ if $t<0$ and $H(t) = 1$ if $t\geq 0$). In this case, formula \eqref{eq.Duhamel} can be rewritten equivalently as 
\begin{equation*}
\bU(t) = \phi_{\omega,t}(\bbA)\,\bG_\omega
\end{equation*}
where $\lambda \mapsto \phi_{\omega,t}(\lambda)$ is, for all $t\geq 0$, the bounded continuous function defined by
\begin{equation}\label{eq.phiduhamel2}
\phi_{\omega,t}(\lambda) := 
\rme^{-\rmi \lambda\, t} \int_{0}^{t} \rme^{\rmi(\lambda - \omega)\,s}\,\rmd s
= \left\lbrace\begin{array}{ll}
\displaystyle \rmi \,\frac{\rme^{-\rmi \lambda\, t} -\rme^{-\rmi \omega \, t}}{\lambda-\omega} & \mbox{ if } \lambda \neq \omega , \\[10pt]
t\, \rme^{-\rmi \omega \,t} & \mbox{ if } \lambda =\omega.
\end{array}
\right.
\end{equation}
We intuitively expect that after some transient regime due to the fact that $\bU(t)$ starts from rest at $t=0$, the solution  $\bU(t)$ behaves like a time-harmonic wave 
$\bU_{\omega}^{+}=(E_{\omega}^{+},\bH_{\omega}^{+}, J_{\omega}^{+},\bK_{\omega}^{+})^\perp$, that is,
\begin{equation}
\bU(t)\sim - \rmi  \, \bU_{\omega}^{+}(\cdot)\, \rme^{-\rmi\omega\, t} \quad\text{as } t \to +\infty. 
\label{eq.LAmP-imprecis}
\end{equation} 
Such a property is usually called the \emph{limiting amplitude principle} in mathematical physics. It is closely related to another property, called the \emph{limiting absorption principle}, which provides the time-harmonic behavior $\bU_{\omega}^{+}$ by the formula 
\begin{equation*}
\bU_{\omega}^{+} = \lim_{\eta \searrow 0} \big( \bbA - (\omega + \rmi\eta)\,\bbI \big)^{-1} \bG_\omega.
\end{equation*} 
Our aim is to define a mathematical framework for a rigorous statement of these principles and to make precise the various situations where these principles hold true or not. Our main results are summarized below.


\subsubsection{The limiting absorption principle}\label{sec-thabsl}

We first have to recall some results about the spectrum of $\bbA$ (which is necessarily real since $\bbA$ is a selfadjoint) and introduce some notations, in particular the following particular frequencies $\Op$ (``p'' for ``plasmonic'') and $\Oc$ (``c'' for ``cross point'', see \S\ref{ss.diag-Hamilt}) defined by
\begin{equation}
\Op := \frac{\Om}{\sqrt{2}} \quad\text{and}\quad
\Oc := \frac{\Oe \, \Om}{\sqrt{\Oe^2+\Om^2}}.
\label{eq.def-Op-Oc}
\end{equation}
Note that in the critical case, that is, when $\Oe = \Om$, we have $\Op = \Oc.$ It is also useful to introduce the following set of ``exceptional frequencies'' (whose role will be made clear later):
\begin{equation}
\sigma_{\rm exc} := \left\{
\begin{array}{ll}
\{0,\pm\Op,\pm\Om\} & \text{if } \Oe \neq \Om, \\[5pt] \{0,\pm\Om\} & \text{if } \Oe = \Om.
\end{array}\right.
\label{eq.def-sigma-exc}
\end{equation}

The proposition below gathers various results given in \cite[\S 4]{Cas-Haz-Jol-17}.
\begin{proposition}\label{prop.spectrumA}
	The spectrum of $\bbA$ is the whole real line: $\sigma(\bbA)=\bbR.$ The point spectrum $\sigma_{\rm pt}(\bbA)$ is composed of eigenvalues of infinite multiplicity: 
	\begin{equation}\label{eq.specpt}
	\sigma_{\rm pt}(\bbA) = \left\{
	\begin{array}{ll}
	\{0,\pm\Om\} & \text{if } \Oe \neq \Om, \\[5pt] \{0,\pm\Op,\pm\Om\} & \text{if } \Oe = \Om.
	\end{array}\right.
	\end{equation} 
	The eigenspaces $\ker(\bbA)$ and $\ker(\bbA\pm \Om)$ are respectively given by:
	\begin{align*}
		\ker(\bbA) & =  \{ (0, \widetilde{\bPi}\, \nabla \phi, 0 , 0)^{\top} \ \mid \ \phi\in  W_0^1(\bbR^2_-)\},  \\
		\ker(\bbA \mp \Om ) & = \left\{ \left(0, \,\bPi \,\nabla \phi, \,0 , \pm \rmi\mu_0\Om \,\nabla \phi \right)^{\top} \ \mid \ \phi\in W_0^1(\bbR^2_+)\right\},
	\end{align*}
	where $\widetilde{\bPi}$ is the extension by $0$ of a 2D vector field defined on $\bbR^2_-$ to the whole plane $\bbR^2$ and $W_0^1(\bbR^2_{\pm})$ stands for  the Beppo-Levi space 
	$W_0^1(\bbR^2_{\pm}) := \{ \phi \in L^2_{\rm loc}(\bbR^2_{\pm})\ \mid  \nabla \phi  \in L^2(\bbR_{\pm}^2)^2 \mbox{ and } \phi|_{x=0} = 0 \}.$ Furthermore, the orthogonal complement of the direct sum of the eigenspaces associated to 0 and $\pm\Om$ is the space $\Hxydiv$ defined in \eqref{eq.Hxydiv}:
	\begin{equation*}
	\Hxydiv = \Big( \ker \bbA \oplus \ker(\bbA+\Om)\oplus \ker(\bbA-\Om) \Big)^\perp.
	\end{equation*}
\end{proposition}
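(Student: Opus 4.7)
The plan is to prove the four assertions of the proposition in sequence: identify the point spectrum by direct solution of $\bbA\bU=\lambda\bU$, get $\sigma(\bbA)=\bbR$ from the diagonalization constructed in \cite{Cas-Haz-Jol-17}, and extract the orthogonal decomposition by integration by parts against the explicit eigenvectors.

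First I would write $\bbA\bU=\lambda\bU$ componentwise. For $\lambda\neq 0$, the last two equations give $J=\rmi\eps_0\Oe^2 E/\lambda$ and $\bK=\rmi\mu_0\Om^2\bH/\lambda$ in $\bbR^2_+$; substituting into the first two yields the time-harmonic Maxwell system on each half-plane with the coefficients $\eps_\lambda^\pm,\mu_\lambda^\pm$ of \eqref{eq.defepsmu}, coupled through the tangential continuity of $E$ and $H_y$ encoded in $\rmD(\bbA)$. Eliminating $\bH$ via $\curl\bcurl=-\Delta$ reduces the problem to a Helmholtz equation $\Delta E+\lambda^2\eps_\lambda^\pm\mu_\lambda^\pm E=0$ on each half-plane with transmission conditions at $x=0$. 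A partial Fourier transform in the $y$-variable then reduces the search for $L^2$ eigenfunctions to a mode-by-mode analysis; for $\lambda\notin\sigma_{\rm exc}$, each $y$-mode is either oscillatory (hence not $L^2$ in $x$) or an exponential combination that the transmission conditions force to vanish, so no eigenfunction exists.

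The values in $\sigma_{\rm exc}$ need separate treatment and produce the point spectrum. At $\lambda=0$, the third component of the eigenvalue equation forces $E=0$ in $\bbR^2_+$; the fourth then gives $\bH=0$ in $\bbR^2_+$, whence $J=0$ and $\bK=0$, and $\bcurl E=0$ in $\bbR^2_-$ yields $E=0$ there by $L^2$-integrability. What remains is a curl-free $L^2$ field $\bH$ in $\bbR^2_-$ whose tangential continuity $H_y(0^-,\cdot)=0$ identifies it with $\widetilde{\bPi}\nabla\phi$ for $\phi\in W_0^1(\bbR^2_-)$. At $\lambda=\pm\Om$, the vanishing of $\mu_\lambda^+$ forces $\bcurl E=0$ and hence $E=0$ in $\bbR^2_+$; a half-plane Dirichlet Helmholtz analysis at frequency $\Om$ then kills $E$ in $\bbR^2_-$, leaving $\bH$ a curl-free $L^2$ field supported in $\bbR^2_+$ with $H_y(0^+,\cdot)=0$, i.e., $\bH=\bPi\nabla\phi$ with $\phi\in W_0^1(\bbR^2_+)$, while the fourth equation gives $\bK=\pm\rmi\mu_0\Om\nabla\phi$. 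The candidates $\lambda=\pm\Oe$ in the non-critical case $\Oe\neq\Om$ are ruled out similarly: $\eps_\lambda^+=0$ forces $\Delta E=0$ in $\bbR^2_+$, hence $E=0$ there by $L^2$-harmonicity, and a Dirichlet Helmholtz argument on $\bbR^2_-$ closes the case. In the critical case $\Oe=\Om$, the ratios $\eps_\lambda^+/\eps_0$ and $\mu_\lambda^+/\mu_0$ simultaneously equal $-1$ at $\lambda=\pm\Op$; the eigenvalue equation reduces to a sign-changing Dirichlet transmission problem whose kernel is infinite-dimensional (the interface resonance), so $\pm\Op$ are embedded eigenvalues of infinite multiplicity.

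For $\sigma(\bbA)=\bbR$, selfadjointness gives $\sigma(\bbA)\subset\bbR$, while the generalized Fourier transform of \cite{Cas-Haz-Jol-17} realizes $\bbA$ as multiplication by the spectral variable on an $L^2$-type space with full support on $\bbR$, yielding the reverse inclusion. The orthogonal decomposition follows from integration by parts: for $\phi\in W_0^1(\bbR^2_-)$, $(\bU,(0,\widetilde{\bPi}\nabla\phi,0,0))_{\Hxy}=\mu_0\int_{\bbR^2_-}\bH\cdot\overline{\nabla\phi}\,\rmd\bx=-\mu_0\int_{\bbR^2_-}(\div\bH)\,\overline{\phi}\,\rmd\bx$ (the boundary term at $x=0$ vanishes since $\phi|_{x=0}=0$), so orthogonality to $\ker\bbA$ is equivalent to $\div\bH=0$ in $\bbR^2_-$; testing against $\ker(\bbA\mp\Om)$ yields two conditions on $\bbR^2_+$ whose sum and difference give $\div\bH=0$ and $\div\bK=0$ there. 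These three conditions together are exactly the definition of $\Hxydiv$. The hardest step is the critical-case analysis at $\pm\Op$: constructing the infinite-dimensional eigenspace demands describing the kernel of a sign-changing elliptic transmission problem with coefficient ratio $-1$, which encodes the well-known ill-posedness of such harmonic problems and lies at the heart of the interface-resonance phenomenon.
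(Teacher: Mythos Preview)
The paper does not prove this proposition; it states explicitly that it ``gathers various results given in \cite[\S 4]{Cas-Haz-Jol-17}''. Your outline is the natural approach and almost certainly matches what is done there, but there is one genuine imprecision worth flagging.

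Your argument that no eigenvalue lies outside $\sigma_{\rm exc}$ is not quite right as stated. You write that after Fourier transform in $y$, ``each $y$-mode is either oscillatory (hence not $L^2$ in $x$) or an exponential combination that the transmission conditions force to vanish''. The second alternative is false: for $(k,\lambda)\in\zEE$, the plasmonic mode $\psi_{k,\lambda,0}(x)=\exp(-\thetakl(x)|x|)$ is exponentially decaying on both sides \emph{and} satisfies the transmission conditions --- this is exactly what the dispersion relation $\calW_{k,\lambda}=0$ of \eqref{eq.disp} encodes. The correct reason no $L^2(\bbR^2)$ eigenfunction arises is a measure argument: in the non-critical case, for fixed $\lambda$ the set $\{k:\lambda_\scE(k)=\lambda\}$ is at most $\{\pm\kE(\lambda)\}$, a set of Lebesgue measure zero in $k$, so any putative eigenfunction has Fourier transform (in $y$) supported on a null set and therefore vanishes. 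This is precisely the mechanism that breaks down in the critical case $\Oe=\Om$, where $\lambda_\scE(k)\equiv\Op$ for all $|k|>\kc$ (see \eqref{eq.expressionlambdae}), so the support has positive measure and $L^2$ superpositions of plasmonic modes yield the infinite-dimensional eigenspace at $\pm\Op$. Framing it this way also resolves your ``hardest step'': you do not need to analyse the sign-changing transmission problem directly, only to observe that the dispersion curve degenerates to a horizontal line.

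Your treatment of $\lambda=\pm\Oe$ (non-critical) is also too quick: $\eps_\lambda^+=0$ gives $\Delta E=0$ in $\bbR^2_+$, but harmonic plus $L^2$ on a half-plane does not by itself force $E=0$ (after Fourier in $y$ one gets $\hat E(k,x)=C(k)e^{-|k|x}$, which can be $L^2$). You need to couple back to the $\bbR^2_-$ problem through the transmission conditions and run the mode-by-mode argument; again the conclusion follows from the measure-zero reasoning above rather than from a direct vanishing.
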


In the following, we denote by $\bbP_{\rm div0}$ the orthogonal projection on the subspace $\Hxydiv$ of $\Hxy$,  by $\bbP_{\rm pt}$ the orthogonal projection on the point subspace of $\bbA,$ that is, the direct sum of the eigenspaces associated to the eigenvalues of $\bbA$, and by $\bbP_{\pm\Op}$ is the orthogonal projection on the eigenspace associated to $\pm \Op$. Finally we introduce
\begin{equation}\label{eq.def-Pac}
\bbP_{\rm ac} := \bbI - \bbP_{\rm pt}
= \left\{\begin{array}{ll}
\bbP_{\rm div0} & \text{if } \Oe \neq \Om, \\[5pt] 
\bbP_{\rm div0}  - \bbP_{-\Op} - \bbP_{+\Op} & \text{if } \Oe = \Om,
\end{array}\right. 
\end{equation}
where the last equality follows from Proposition \ref{prop.spectrumA}. We will see in \S\ref{sec.spectral-density} that $\bbP_{\rm ac}$ is actually the orthogonal projection on the \emph{absolutely continuous} subspace associated to $\bbA$, which explains the index ``ac''.

The limiting absorption principle explores the behavior of the resolvent of $\bbA$, \textit{i.e.},
\begin{equation*}
R(\zeta):=(\bbA-\zeta\,\bbI)^{-1} \quad \text{for } \zeta \in \bbC\setminus\bbR
\end{equation*}
near the spectrum of $\bbA.$ More precisely, we investigate the existence of the one-sided limits of the \emph{absolutely continuous} part of the resolvent near some $\omega \in \bbR,$ that is,
\begin{equation}\label{eq.def-Rac}
R^{\pm}_{\rm ac}(\omega) := \lim_{\eta \searrow 0} R_{\rm ac}(\omega \pm \rmi \eta) 
\quad\text{where}\quad
R_{\rm ac}(\zeta) := R(\zeta)\,\bbP_{\rm ac} = \bbP_{\rm ac}\,R(\zeta)
\text{ for }\zeta \in \bbC\setminus\bbR.
\end{equation}
The resolvent $R(\zeta)$ is an analytic function of $\zeta$ in $\bbC\setminus \bbR$ with values in $B(\Hxy)$, the Banach algebra of bounded linear operators in $\Hxy$. Of course, the above one-sided limits, if they exist, are not defined in $B(\Hxy)$ (otherwise, $\omega$ would belong to the resolvent set of $\bbA$), but for a weaker topology. To this aim, we introduce a weighted version of our Hilbert space $\Hxy$ (see \eqref{eq.defHxy}) defined for any $s \in \bbR$ by
\begin{equation*}
\Hps := L^2_{s}(\bbR^2) \times L^2_{s}(\bbR^2)^2 \times L^2_{s}(\bbR^2_+) \times L^2_{s}(\bbR^2_+)^2,
\end{equation*}
where $L^2_{s}(\calO) := \{ u \in L^2_{\rm loc}(\calO) \mid  \eta_{s}\,u \in L^2(\calO)\}$ for $\calO=\bbR^2$ or $\calO=\bbR^2_+$, and the weight $\eta_s$ is given by
\begin{equation*}
\eta_s(x,y) := (1 + x^2)^{s/2}\,(1 + y^2)^{s/2}.
\end{equation*}
The space $L^2_{s}(\calO)$ is naturally endowed with the norm
\begin{equation*}
\|u\|_{L^2_{s}(\calO)}^2 := \|\eta_s\,u\|_{L^2(\calO)}^2 = \int_\calO |\eta_s\,u|^2\,\rmd\bx.
\end{equation*}
Similarly, the Hilbert space $\Hps$ is equipped with the norm
$$
\|\bU\|_{\Hps}  := \|\eta_s \,\bU\|_{\Hxy}.
$$
It is readily seen that for positive ${s},$ the spaces $\Hps$ and $\Hms$ are dual to each other if $\Hxy$ is identified with its own dual space, which yields the continuous embeddings $\Hps \subset \Hxy \subset \Hms.$ The notation $\langle \cdot\,,\cdot\rangle_{s}$ represents the duality product between them. This duality product extends the inner product of $\Hxy$ in the sense that
\begin{equation}\label{eq.innerproduct}
\langle \bU \,, \bU' \rangle_{s} = (\bU \,, \bU')_{\Hxy} \quad\mbox{if }\bU \in \Hps \mbox{ and }\bU' \in \Hxy.
\end{equation}
As a topology for the limits \eqref{eq.def-Rac}, we choose the operator norm  $\| \cdot\|_{\Hps,\Hms}$ of $B(\Hps,\Hms)$, the Banach algebra  of bounded linear operators from $\Hps$ to $\Hms$. The limiting amplitude principle stated in the next subsection requires the H\"{o}lder regularity of the limits. The following theorem, which is proved in \S\ref{sec-thabslproof}, aims at providing an optimal result in this direction.

\begin{theorem}[Limiting absorption principle]\label{thm.limabs}
Let $s>1/2$. For all $\omega \in \bbR \setminus \sigma_{\rm exc},$ the absolutely continuous part of the resolvent 
$R_{\rm ac}(\zeta)$ has one-sided limits 
$R^{\pm}_{\rm ac}(\omega) := \lim_{\eta \searrow 0} R_{\rm ac}(\omega \pm \rmi \eta)$ for the operator norm of $B(\Hps,\Hms)$. Moreover, by denoting
\begin{equation}\label{eq.def-Rac-pm}
R^{\pm}_{\rm ac}(\zeta) := 
R_{\rm ac}(\zeta) \quad \text{if } \zeta \in \bbC^\pm := \{ \zeta\in\bbC \mid \pm \Imag\zeta > 0 \},
\end{equation}
the function  $\zeta \mapsto R^{\pm}_{\rm ac}(\zeta) \in B(\Hps,\Hms)$ is locally H\"{o}lder continuous in $\overline{\bbC^\pm} \setminus \sigma_{\rm exc}.$ More precisely, for any compact set $K \subset \overline{\bbC^\pm} \setminus \sigma_{\rm exc}$, there exists a set $\Gamma_K \subset (0,1)$ of H\"{o}lder exponents such that for any $\gamma \in \Gamma_K,$ there exists $C_{K,\gamma}>0$ such that
\begin{equation*}
\forall (\zeta,\zeta') \in K \times K, \quad 
\Big\| R^{\pm}_{\rm ac}(\zeta') - R^{\pm}_{\rm ac}(\zeta) \Big\|_{\Hps,\Hms} 
\leq C_{K,\gamma} \ |\zeta'-\zeta|^{\gamma}.
\end{equation*}
The set $\Gamma_K$ is defined as follows: 
\begin{equation}
\Gamma_K := \left\{
\begin{array}{ll}
\big(0,\min(s-1/2,1)\big) & \text{if } K \cap \{ \pm\Oe,\pm\Oc \} = \varnothing, \\[5pt] 
\big(0,\min(s-1/2,1/2)\big) & \text{if } K \cap \{ \pm\Oe,\pm\Oc \} \neq \varnothing.
\end{array}\right.
\label{eq.def-Gamma-K}
\end{equation}
\end{theorem}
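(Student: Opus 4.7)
The strategy is an Eidus-type spectral decomposition argument. Part I \cite{Cas-Haz-Jol-17} furnishes a generalized Fourier transform $\mathbb{F}$ diagonalizing $\bbA$ on the absolutely continuous subspace $\bbP_{\rm ac}\Hxy$, and \S\ref{sec.spectral-density} will recast this diagonalization as a spectral density $\rho(\lambda) \in B(\Hps,\Hms)$ so that, for every $\zeta \in \bbC \setminus \bbR$,
\begin{equation*}
R_{\rm ac}(\zeta) = \int_\bbR \frac{\rho(\lambda)}{\lambda-\zeta}\,\rmd\lambda
\end{equation*}
as an integral in $B(\Hps,\Hms)$ (for the strong operator topology). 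I will then reduce the boundary behavior of this Cauchy-type integral to regularity properties of $\rho$ itself.

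The two technical inputs I plan to take from \S\ref{sec.spectral-density} are: (i) local boundedness of $\lambda \mapsto \|\rho(\lambda)\|_{\Hps,\Hms}$ on $\bbR \setminus \sigma_{\rm exc}$, and (ii) local H\"{o}lder continuity of $\rho$ in the norm $\|\cdot\|_{\Hps,\Hms}$ with exponents $\gamma \in \Gamma_K$ as in \eqref{eq.def-Gamma-K}. Both estimates will descend from the explicit formulas for $\rho$ in terms of the generalized eigenfunctions $\wlkj$ of Part I: pointwise control of these eigenfunctions and of their $\lambda$-derivatives yields (i)--(ii) provided $s>1/2$, this threshold being exactly what is needed to make the duality pairing between oscillatory (non-$L^2$) generalized eigenfunctions and test vectors in $\Hps$ convergent, essentially by a one-dimensional van~der~Corput type argument in the transverse variable.

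Given (i)--(ii), the theorem follows from a classical Plemelj--Privalov argument. Fix $\omega \in \bbR \setminus \sigma_{\rm exc}$ and a small $\delta>0$, and write, for $\zeta \in \overline{\bbC^\pm} \setminus \{\omega\}$,
\begin{equation*}
R^\pm_{\rm ac}(\zeta) = \int_{|\lambda-\omega|>\delta} \frac{\rho(\lambda)}{\lambda-\zeta}\,\rmd\lambda + \int_{|\lambda-\omega|\leq\delta} \frac{\rho(\lambda)-\rho(\omega)}{\lambda-\zeta}\,\rmd\lambda + \rho(\omega)\int_{|\lambda-\omega|\leq\delta} \frac{\rmd\lambda}{\lambda-\zeta}.
\end{equation*}
The first piece is analytic in $\zeta$ in a complex neighborhood of $\omega$. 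The second is H\"{o}lder of exponent $\gamma$ in $\zeta$ by (ii) and the standard estimate for singular integrals with H\"{o}lder numerator. The third is an explicit scalar Cauchy integral whose one-sided boundary values consist of a logarithm plus a $\pm\rmi\pi$ jump and are themselves H\"{o}lder of any exponent $<1$ on $\overline{\bbC^\pm}$ near $\omega$. Summing the three contributions yields both the existence of $R^\pm_{\rm ac}(\omega) = \lim_{\eta\searrow 0} R_{\rm ac}(\omega \pm \rmi\eta)$ in $B(\Hps,\Hms)$ and the announced local H\"{o}lder estimate with exponent $\gamma \in \Gamma_K$.

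The main obstacle is establishing (ii) uniformly on compacta that touch $\{\pm\Oe,\pm\Oc\}$: at the endpoints $\pm\Oe$ the generalized dispersion curves terminate, and at the cross points $\pm\Oc$ two branches meet with square-root Puiseux behavior in their parametrization by $\lambda$. Both phenomena force the regularity of the eigenfunctions $\wlkj$ in $\lambda$ to drop from $C^1$ to $C^{1/2}$, which is precisely the origin of the cap $\min(s-1/2,1/2)$ in \eqref{eq.def-Gamma-K}; away from these points the eigenfunctions are smooth in $\lambda$, so only the weight constrains the exponent, yielding $\min(s-1/2,1)$. The exclusion of $\sigma_{\rm exc}$ is unavoidable: $0$ and $\pm\Om$ (and $\pm\Op$ in the critical case) carry embedded eigenvalues of $\bbA$ by Proposition \ref{prop.spectrumA}, whereas at the plasmonic frequency $\pm\Op$ in the non-critical case the spectral density becomes unbounded in $B(\Hps,\Hms)$ for the weight $s>1/2$; this last case is the object of the separate analysis of \S\ref{sec-limampl-limabs-threshold}.
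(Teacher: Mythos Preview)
Your proposal is correct and follows essentially the same route as the paper: represent $R_{\rm ac}(\zeta)$ as a Cauchy integral of the spectral density $\bbM_\lambda$ (what you call $\rho(\lambda)$), then deduce the boundary limits and H\"older regularity from the Sokhotski--Plemelj/Privalov theorem, the key input being the local H\"older continuity of $\lambda\mapsto\bbM_\lambda$ in $B(\Hps,\Hms)$ established in Theorem~\ref{th.Holder-dens-spec}. The only cosmetic difference is that the paper splits into a ``singular'' part on a small interval $J\ni\omega$ and a ``regular'' part on $\bbR\setminus J$ (handled via the diagonal formula $\bbF^*f_\zeta^{\rm reg}(\lambda)\bbF$ rather than as a Bochner integral), whereas you split into three pieces with an explicit subtraction of $\rho(\omega)$; and your heuristic attribution of the weighted estimates to a ``van der Corput type argument'' is not quite how \S\ref{sec_EF-estimates} actually proceeds (it uses direct pointwise bounds and the embedding $L^\infty\subset L^2_{-s}$), but this does not affect the architecture of the proof.
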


This abstract theorem provides us the existence of the one-sided limits $R^{\pm}_{\rm ac}(\omega)$, but not an explicit expression of these operators. Section  \ref{sec-thabslproof} will show their respective spectral representations (see Proposition \ref{prop.lim-res}). To understand the physical significance of these limits, first note that $\bU_\zeta = R_{\rm ac}(\zeta)\,\bG$ means equivalently that $(\bbA - \zeta\,\bbI) \,\bU_\zeta = \bbP_{\rm ac}\bG$. Hence one can expect that the limits $\bU_\omega^\pm = R^{\pm}_{\rm ac}(\omega)\,\bG$ satisfy the time-harmonic equation $(\bbA - \omega\,\bbI) \,\bU_\omega^\pm = \bbP_{\rm ac}\bG$. This can be verified rigorously provided $\bbA$ is interpreted in a suitable  distributional sense (since $\bU_\omega^\pm$ does not belong to $\Hxy$ in general). This means that $\bU_\omega^\pm$ both represent time-harmonic solutions of our Maxwell equations \eqref{TE}. Their difference can be understood precisely by the \emph{limiting amplitude principle} will actually tells us that $\bU_\omega^+$ is \emph{outgoing}, in the sense of \eqref{eq.LAmP-imprecis}. It could be seen similarly that $\bU_\omega^-$ is \emph{incoming}, by considering the behavior as $t \to -\infty$ of anti-causal solutions.

Note that Theorem \ref{thm.limabs} excludes the values of $\sigma_{\rm exc}$ defined in \eqref{eq.def-sigma-exc}. In the case where $\Oe \neq \Om,$ it may seem surprising to exclude the values $\omega = \pm\Op$ which are not in the point spectrum of $\bbA.$ As a matter of fact, these values require a special study, which is the subject of \S\ref{sec-limampl-limabs-threshold}.

\begin{remark}
The above formulation of Theorem \ref{thm.limabs} is not entirely optimal in the sense that our definition of $\Gamma_K$ excludes a particular case which could be included. Indeed, when $K$ contains $+\Oe$ or $-\Oe$ but not $\pm\Oc$, the value $\gamma = 1/2$ is allowed, provided that $s > 1$ (note that this situation cannot occur in the critical case $\Oe = \Om$ since in this case, the values  $\pm\Oe = \pm\Om \in \sigma_{\rm exc}$ cannot belong to $K$). The reason why we excluded this particular case is that its proof is slighly more involved than for all the other cases. As the proof of the general situation is already very technical (see \S\ref{sec.spectral-density}), we decided to spare the reader and postpone the proof of this particular case to Appendix \ref{app.cas-un-demi}.
\label{rem.cas-un-demi}
\end{remark}

\subsubsection{The limiting amplitude principle}
We are now able to state our main result concerning the asymptotic behavior of the solution $\bU(t)$ to our Schr\"{o}dinger equation \eqref{eq.schro} with a time-harmonic excitation $\bG(t) = \bG_\omega \ H(t) \, \rme^{-\rmi\omega\, t}$ starting at $t=0$ (recall that $H(t)$ denotes the Heaviside function). Instead of our particular excitation introduced in \S\ref{ss.math-model}, we consider the more general case where $\bG_{\omega} \in \Hxydiv \cap \Hps.$ For simplicity, we assume that $\bU(0)=0$ which corresponds to zero initial conditions on the fields $(E,\bH,J,\bK)$.

\begin{theorem}\label{th.ampllim}
Let $s > 1/2$ and $\omega \in \bbR \setminus \sigma_{\rm exc}$ (see \eqref{eq.def-sigma-exc}).
On the one hand, for any $\bG_\omega \in \Hps$ which belongs to the range of $\bbP_{\rm ac},$ the limiting amplitude principle holds true in the sense that the solution $\bU(t)$ to \eqref{eq.schro} with zero initial conditions has the following asymptotic behavior for large time:
\begin{equation}
\lim_{t\to+\infty}\Big\| \bU(t) + \rmi \, \bU_{\omega}^{+} \, \rme^{-\rmi \omega t} \Big\|_{\Hms} = 0,
\label{eq.lim-ampl}
\end{equation}
where $\bU_{\omega}^{+} := R^{+}_{\rm ac}(\omega) \, \bG_\omega\in \Hms$ is given by the limiting absorption principle. 

On the other hand, in the critical case $\Oe = \Om,$ for any $\bG_\omega \in \Hps \cap \Hxydiv$, the asymptotic behavior for large time of $\bU(t)$ is given by
\begin{equation}
\lim_{t\to+\infty}\Big\| \bU(t) 
- \Big( -\rmi \,\bU_{\omega}^{+} \, \rme^{-\rmi \omega t} + \sum_{\pm} \bbP_{\pm\Op} \bG_\omega\ \phi_{\omega,t}(\pm\Op)\Big) \Big\|_{\Hms} = 0,
\label{eq.lim-ampl-reson}
\end{equation}
where $\bbP_{\pm\Op}$ is the orthogonal projection on the infinite dimensional eigenspace associated to $\pm\Op$, $\phi_{\omega,t}$ is defined in \eqref{eq.phiduhamel2} and $\bU_{\omega}^{+} := R^{+}_{\rm ac}(\omega) \, \bG_\omega$ as above.
\end{theorem}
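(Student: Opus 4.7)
I will combine Duhamel's formula \eqref{eq.Duhamel} and the functional calculus for the self-adjoint operator $\bbA$ with the H\"older bounds of Theorem~\ref{thm.limabs}, thereby reducing the asymptotics to a vector-valued oscillatory integral.

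\emph{Step 1 (point-spectrum part).} By Duhamel and functional calculus, $\bU(t)=\phi_{\omega,t}(\bbA)\,\bG_\omega$ with $\phi_{\omega,t}$ as in \eqref{eq.phiduhamel2}. Decompose $\bG_\omega=\bbP_{\rm ac}\bG_\omega+\bbP_{\rm pt}\bG_\omega$. Proposition~\ref{prop.spectrumA} yields $\bbP_{\rm pt}\bG_\omega=0$ whenever $\bG_\omega=\bbP_{\rm ac}\bG_\omega$ (first assertion) and $\bbP_{\rm pt}\bG_\omega=\bbP_{+\Op}\bG_\omega+\bbP_{-\Op}\bG_\omega$ in the critical case with $\bG_\omega\in\Hxydiv$ (second assertion), since $\Hxydiv$ is orthogonal to $\ker\bbA\oplus\ker(\bbA\pm\Om)$. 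As $\bbP_{\pm\Op}\bG_\omega$ is an eigenvector of $\bbA$ at eigenvalue $\pm\Op$, the functional calculus produces precisely the resonant contribution $\sum_{\pm}\phi_{\omega,t}(\pm\Op)\,\bbP_{\pm\Op}\bG_\omega$ appearing in \eqref{eq.lim-ampl-reson}. Both cases therefore reduce to proving \eqref{eq.lim-ampl} under the assumption $\bG_\omega\in\Hps$ with $\bG_\omega=\bbP_{\rm ac}\bG_\omega$.

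\emph{Step 2 (spectral-density representation).} By Stone's formula and the H\"older limits $R^{\pm}_{\rm ac}(\lambda)$ from Theorem~\ref{thm.limabs}, the absolutely continuous spectral measure of $\bbA$ admits an operator-valued density
\[
\rho(\lambda) \;:=\; \tfrac{1}{2\pi\rmi}\bigl(R^+_{\rm ac}(\lambda)-R^-_{\rm ac}(\lambda)\bigr) \;\in\; B(\Hps,\Hms),
\]
locally H\"older continuous on $\bbR\setminus\sigma_{\rm exc}$, with the bounds at infinity furnished by Section~\ref{sec.spectral-density}. Setting $f(\lambda):=\rho(\lambda)\bG_\omega\in\Hms$, functional calculus yields $\bU(t)=\int_{\bbR}\phi_{\omega,t}(\lambda)\,f(\lambda)\,\rmd\lambda$, while the operator-valued Sokhotski--Plemelj formula gives
\[
\bU_\omega^+ = R^+_{\rm ac}(\omega)\bG_\omega \;=\; \mathrm{PV}\!\int_\bbR\frac{f(\lambda)}{\lambda-\omega}\,\rmd\lambda \;+\; \rmi\pi\,f(\omega).
\]
Inserting the explicit form of $\phi_{\omega,t}$ and cancelling the principal-value terms then produces the key identity
\[
\bU(t)+\rmi\,\bU_\omega^+\,\rme^{-\rmi\omega t}
\;=\; \rmi\,\mathrm{PV}\!\int_\bbR\frac{\rme^{-\rmi\lambda t}\,f(\lambda)}{\lambda-\omega}\,\rmd\lambda\;-\;\pi\,\rme^{-\rmi\omega t}\,f(\omega).
\]

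\emph{Step 3 (oscillatory estimate).} It remains to show that the right-hand side tends to $0$ in $\Hms$. Pick $\delta>0$ with $[\omega-\delta,\omega+\delta]\subset\bbR\setminus\sigma_{\rm exc}$ and split the integral at $|\lambda-\omega|=\delta$. On the far region, $(\lambda-\omega)^{-1}f(\lambda)$ is Bochner integrable in $\Hms$ thanks to the bounds on $\rho$ at infinity from Section~\ref{sec.spectral-density}, so Riemann--Lebesgue for vector-valued integrands gives a vanishing contribution. On the local region, write $f(\lambda)=f(\omega)+(f(\lambda)-f(\omega))$: the constant piece, after the change $\mu=\lambda-\omega$, becomes $\rmi\,\rme^{-\rmi\omega t}\,f(\omega)\cdot\mathrm{PV}\!\int_{-\delta}^{\delta}\rme^{-\rmi\mu t}/\mu\,\rmd\mu$, whose value converges to $\pi\,\rme^{-\rmi\omega t}\,f(\omega)$ by the classical Dirichlet integral and exactly cancels the remaining term. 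The H\"older remainder satisfies $\|(f(\lambda)-f(\omega))/(\lambda-\omega)\|_{\Hms}\leq C\,|\lambda-\omega|^{\gamma-1}$ by Theorem~\ref{thm.limabs}, hence is absolutely integrable on $(-\delta,\delta)$ and yields a vanishing contribution by Riemann--Lebesgue.

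\emph{Main obstacle.} The delicate part is not the scalar oscillatory manipulations but the rigorous setup of $\rho$ as a Bochner-measurable element of $B(\Hps,\Hms)$ with the required H\"older continuity away from $\sigma_{\rm exc}$ and integrability at infinity, together with the operator-valued Sokhotski--Plemelj identity (interpreted weakly against $\Hps$). These ingredients are precisely the output of the spectral analysis of Section~\ref{sec.spectral-density}; once available, Steps~1--3 go through by testing against $\bV\in\Hps$ and invoking the corresponding scalar results.
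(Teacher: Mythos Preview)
Your overall architecture matches the paper's proof in \S\ref{sec.limamplproof}: one reduces to the absolutely continuous part via the eigenspace projections, writes $\bU(t)$ and $\bU_\omega^+$ through the spectral density (your $\rho(\lambda)$ is exactly $\bbM_\lambda$), cancels the principal values, and is left with the oscillatory integral $\bV(t)=\mathrm{PV}\int_\bbR e^{-i(\lambda-\omega)t}(\lambda-\omega)^{-1}\bbM_\lambda\bG_\omega\,\rmd\lambda$ that must converge to $-i\pi\,\bbM_\omega\bG_\omega$. Your treatment of the near-$\omega$ part (Dirichlet integral plus H\"older remainder plus Riemann--Lebesgue) is essentially identical to part~(i) of the paper's Lemma~\ref{lem-lim-ampl}.

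The gap is in your Step~3, far region. You assert that $(\lambda-\omega)^{-1}f(\lambda)$ is Bochner integrable in $\Hms$ ``thanks to the bounds on $\rho$ at infinity from Section~\ref{sec.spectral-density}'', and then invoke Riemann--Lebesgue directly. But Section~\ref{sec.spectral-density} provides no such bounds: Theorems~\ref{th.dens-spec} and~\ref{th.Holder-dens-spec} control $\bbM_\lambda$ only on compact intervals of $\bbR\setminus\sigma_{\rm exc}$, and the functional calculus formula \eqref{eq.calc-fonct-ac-lim} is explicitly stated as a strong limit (``$\slim$'') precisely because $\lambda\mapsto\bbM_\lambda\bG_\omega$ is not known to be Bochner integrable in $\Hms$ over all of $\bbR$. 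Both the behaviour as $|\lambda|\to\infty$ and the behaviour near the excluded points $\sigma_{\rm exc}$ are unresolved by your argument.

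The paper handles this in part~(ii) of Lemma~\ref{lem-lim-ampl} by a genuinely different mechanism: it recognises that the far piece $\bV^{\rm reg}(t)=f_t^{\rm reg}(\bbA)\bbP_{\rm ac}\bG_\omega$ actually lies in $\Hxy$ (not merely $\Hms$), approximates it by $\bbE(S_n)\bV^{\rm reg}(t)$ for an exhaustion $(S_n)$ of $\bbR\setminus(J\cup\sigma_{\rm exc})$ by compacts, and proves this approximation is \emph{uniform in $t$} using only the scalar integrability $\lambda\mapsto\langle\bbM_\lambda\bG_\omega,\bG_\omega\rangle_s\in L^1(\bbR)$ from Corollary~\ref{cor.specmes} together with the bound $\|f_t^{\rm reg}\|_\infty\leq\rho^{-1}$. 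Riemann--Lebesgue is then applied on each fixed compact $S_{n_\delta}$, where Bochner integrability does hold. You need to replace your far-region argument by this two-step approximation, or else supply an independent proof that $\|\bbM_\lambda\bG_\omega\|_{\Hms}/|\lambda-\omega|$ is integrable away from $J$, which the paper does not claim.
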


This theorem, which is proved in \S\ref{sec.limamplproof}, tells us that in the non-critical case, that is $\Oe \neq \Om,$ the limiting amplitude principle holds true for any $\bG_{\omega} \in \Hxydiv \cap \Hps$ (since $\Hxydiv$ is exactly the range of $\bbP_{\rm ac}$ in this case, see \eqref{eq.def-Pac}). The assumption $\bG_{\omega} \in \Hxydiv$ forces the solution to remain orthogonal to the eigenspaces associated to the point spectrum $\{0,\pm\Om\},$ which is a natural physical assumption (see \S\ref{ss.math-model}). The frequencies which are excluded here are $0,$ $\pm\Om$ and $\pm\Op$. Let us mention that the limiting amplitude principle holds also for  $\omega=\pm\Op$. This case, which is not covered by Theorem \ref{th.ampllim}, requires a special treatment that is detailed in \S \ref{sec-limampl-limabs-threshold}.

On the other hand, in the critical case $\Oe=\Om$, the validity of the limiting amplitude principle depends on the spectral content of the excitation $\bG_\omega \in \Hxydiv \cap \Hps$ which can be decomposed as
\begin{equation*}
\bG_\omega = \bbP_{\rm ac}\bG_\omega + \bbP_{-\Op}\bG_\omega + \bbP_{+\Op}\bG_\omega.
\end{equation*}
If $\bbP_{-\Op}\bG_\omega = \bbP_{+\Op}\bG_\omega = 0$ (which means that $\bG_\omega$ belongs to the range of $\bbP_{\rm ac}$), the principle holds true for any $\omega \in \bbR \setminus \{0,\pm\Om\},$ which includes in particular the frequencies $\omega = \pm\Op.$ But if $\bbP_{-\Op}\bG_\omega \neq 0$ or $\bbP_{+\Op}\bG_\omega \neq 0$, the behavior of $\bU(t)$ is no longer time-harmonic at the frequency $\omega$. Two situations may occur. Firstly, if $\omega \in \bbR \setminus \{0,\pm\Op,\pm\Om\},$ the solution $\bU(t)$ remains bounded in time but oscillates at the two frequencies $\omega$ and $\Op$ (see the expression \eqref{eq.phiduhamel2} of $\phi_{\omega,t}(\pm \Op)$ for $\omega\neq \pm \Op$): it is a beat phenomenon. Secondly, if $\omega = \pm\Op,$ there is no stationary regime at all, since $\bU$ blows up linearly in time (since by  \eqref{eq.phiduhamel2}: $\phi_{\omega,t}(\pm \Op)=t\, \rme^{\mp \rmi \Op t}$ for $\omega=\pm \Op$). This conclusion confirms the strong ill-posedness of the time-harmonic problem described in \cite{Bon-14,Bon-14(2),Ngu-16}. This linear growth in time corresponds  to a resonance phenomena. Such a phenomenon is classical for vibration problems in bounded domains but quite unusual for unbounded domains. In our case, the fields $\bbP_{\pm\Op}\bG_\omega$ are trapped waves which belongs to $\Hxy$ and are defined as (continuous) superpositions of functions  which are exponentially decaying with the distance to the interface (\textit{i.e.}, \emph{plasmonic waves}): they give birth to an \emph{interface resonance phenomenon}. The linear behavior in time is characteristic to a resonance due to an eigenvalue of the operator. Here, the eigenvalues $\pm\Op$ are of infinite multiplicities and embedded in the continuous spectrum of $\bbA$. This is a very interesting and new resonance phenomenon for transmission problem in a electromagnetic stratified media which does not occur with standard dielectric materials (see \cite{Wed-91}) since such non-zero eigenvalue of the Maxwell operator does not exist.

\subsection{Recap on the diagonalization of the Hamiltonian $\bbA$}
\label{ss.diag-Hamilt}
The proofs of both limiting absorption and limiting amplitude principles rely on the spectral analysis of the Schr\"{o}dinger equation \eqref{eq.schro} performed in \cite{Cas-Haz-Jol-17}. The final result of this previous paper is the explicit construction of a generalized Fourier transform $\bbF$ which diagonalizes the Hamiltonian operator $\bbA$, in the sense that $\bbF$ is a unitary transformation from the \emph{physical} space $\Hxy$ (defined in \eqref{eq.defHxy}) into a second Hilbert space $\hatH$, named the \emph{spectral} space, in which the Hamiltonian $\bbA$ takes a diagonal form. The introduction of this transformation yields in particular modal representations of both the resolvent of $\bbA$ and  the solution $\bU$  of the evolution equation \eqref{eq.schro}. Thus, it appears as a key tool for proving Theorems \ref{thm.limabs} and \ref{th.ampllim}. Therefore, the goal of this subsection is to recall all the results and notations from \cite{Cas-Haz-Jol-17} that are necessary to use this spectral tool.

\subsubsection{Spectral zones and generalized eigenfunctions}\label{sec-eigenfunc}
The generalized Fourier transform $\bbF$ is based on the knowledge of a family of time-harmonic solutions of our Schr\"{o}dinger equation, which are referred to as generalized eigenfunctions or generalized eigenmodes. These modes are non-zero bounded solutions for real $\lambda$'s of the equation $\bbA \, \mathbb{W}=\lambda\, \mathbb{W}$, which has to be understood in distributional sense since these solutions do not belong to $\Hxy$. Thanks to the stratified geometry, they are expressed here as separable functions of the variables $x$ and $y.$ They appear as superpositions of planes waves on each side of the interface $x=0$. 

In the family defined below, the generalized eigenfunctions functions are denoted by  $\bbW_{k,\lambda,j}$. They are indexed by three variables $k,$ $\lambda$ and $j$. The two first ones are real parameters: $k$ represents a wavenumber in the $y$-direction, that is, the direction of the interface, whereas $\lambda$ is a spectral parameter. The last one $j$ is an integer that indicates a multiplicity: its possible values depend on the pair $(k,\lambda)$. To make this precise, we first have to introduce various subsets of the $(k,\lambda)$-plane, called here \emph{spectral zones}, which correspond to various propagation regimes. In each of them, the set of possible values for $j$ will be constant.

The definition of the spectral zones is linked to the sign of the piecewise-constant function
\begin{equation}\label{eq.defThetalkfunction}
\Thetakl(x) := k^2-\varepsilon_{\lambda}(x)\, \mu_{\lambda}(x) \lambda^2.
\end{equation}
From \eqref{eq.defepsmu}, we have more explicitly
\begin{equation*}\label{eq.defTheta}
\Thetakl(x) = \left\lbrace
\begin{array}{ll} 
\Thetakl^{-} := k^2-\eps_0\,\mu_0 \,\lambda^2 & \mbox{if }x < 0,\\[4pt]
\Thetakl^{+} := \displaystyle - \frac{\eps_0\mu_0\, \lambda^4 - \left(k^2+\eps_0\mu_0(\Oe^2+\Om^2)\right) \lambda^2 + \eps_0\mu_0 \, \Oe^2\Om^2}{\lambda^2} & \mbox{if }x > 0.
\end{array}
\right.
\end{equation*}
Physically $|\Thetakl^{\pm}|$ represents the square of the wavenumber in the $x$-direction inside $\mathbb{R}^2_\pm$, for a plane wave of frequency $\lambda$ whose wavenumber in the $y$-direction is $k$. Notice first that $\Thetakl^{\pm} = \Theta_{|k|,|\lambda|}^\pm$ for all $(k,\lambda) \in \bbR^2,$ so that we can restrict ourselves to the quadrant $k \geq 0$ and $\lambda \geq 0.$ In this quadrant, there are three curves through which the sign of $\Thetakl^{-}$ or $\Thetakl^{+}$ changes. These curves, which are referred to as \emph{spectral cuts}, have been defined in \cite{Cas-Haz-Jol-17} as the graphs of three functions $\lambda_0(k),$ $\lambda_\scD(k)$ and $\lambda_\scI(k)$. In the present paper, we use instead their respective inverses $k_0(\lambda),$ $k_\scD(\lambda)$ and $k_\scI(\lambda)$ defined as follows:
\begin{equation} \label{defk0DI}
\begin{array}{ll}
\Thetakl^{-} = 0 & \Longleftrightarrow \quad |k| = k_0 (\lambda) :=\sqrt{ \eps_0 \, \mu_0} \, |\lambda|, \\ [8pt]
\Thetakl^{+} = 0 & \Longleftrightarrow \quad |k| = \left\{ \begin{array}{ll}
k_{\scD} (\lambda) := \sqrt{\eps_\lambda^+ \, \mu_\lambda^+} \, |\lambda|
& \text{if }  |\lambda| \geq \max(\Oe,\Om)\\
\quad\text{or} &  \\ 
k_{\scI} (\lambda) := \sqrt{\eps_\lambda^+ \, \mu_\lambda^+} \, |\lambda|
& \text{if }  0 < |\lambda| \leq \min(\Oe,\Om).
\end{array} \right.
\end{array}
\end{equation}
Note that $k_{\scD} (\lambda)$ and $k_{\scI} (\lambda)$ are given by the same formula but define two different curves since they differ by their domain of definition. The spectral cuts are represented in Figure \ref{fig.speczones1} in the cases $\Oe < \Om$, $\Oe = \Om$ and $\Oe > \Om$. The grey area represents the part of the quadrant where $\Thetakl^{-} < 0,$ which corresponds to the \emph{propagative} regime along the $x$-direction in the vacuum, whereas the white remaining sector corresponds to the \emph{evanescent} regime (that is, non propagative). Similarly, the hatched areas represent the parts of the quadrant where $\Thetakl^{+} < 0,$ that is, the propagative regime in the Drude material (again along the $x$-direction). In the area with vertical hatches, \emph{direct} propagation occurs, which means that the group and phase velocities of a plane wave have the same direction, as in vacuum. On the other hand, in the area with horizontal hatches, the propagation is called \emph{inverse}, since these velocities point in opposite  directions (which is related to the fact that both $\eps_\lambda^+$ and $\mu_\lambda^+$ are negative in this area, see \cite{Loh-09} or \cite[\S 3.3.2]{Cas-Haz-Jol-17} for more complete explanations). This justifies the use of the indices $\scD,$ $\scI$ and $\scE$, meaning respectively \emph{direct}, \emph{inverse} and \emph{evanescent}, to name the various spectral zones. Each of them is actually indexed by a pair of indices: the first one indicates the behavior in the vacuum ($\scD$ or $\scE$) and the second one, in the Drude material ($\scD$, $\scI$ or $\scE$). 
We thus define
\begin{equation*}
\begin{array}{ll}
\zDD & := \ \left\{(k,\lambda) \in \bbR^2 \mid \  
|\lambda| > \max(\Oe,\Om) \text{ and } |k| < k_\scD(\lambda) \, \right\},\\[4pt]
\zDI & := \ \left\{(k,\lambda) \in \bbR^2 \mid \ 
0 < |\lambda| < \min(\Oe,\Om) \text{ and } |k| < \min\big( k_0(\lambda),k_\scI(\lambda) \big) \, \right\}, \\[4pt]
\zEI & := \ \left\{(k,\lambda) \in \bbR^2 \mid \ 
0 < |\lambda| < \min(\Oe,\Om),\ k_0(\lambda) < |k| < k_\scI(\lambda) \, \right\}, \\[4pt]
\zDE & := \ \left\{(k,\lambda) \in \bbR^2 \mid \ 
|\lambda| \neq \Om  \text{ and } |k| < k_0(\lambda)\, \right\} 
\setminus \overline{\zDD \cup \zDI}.
\end{array}
\end{equation*}
In the following, the above sets will be referred as  {\em surfacic spectral zones}. The parts of these spectral zones located in the quadrant $\bbR^+ \times \bbR^+$ are represented in Figure \ref{fig.speczones1}. 
\begin{figure}\begin{minipage}{8.0cm}%
		\includegraphics[width=1.1\textwidth]{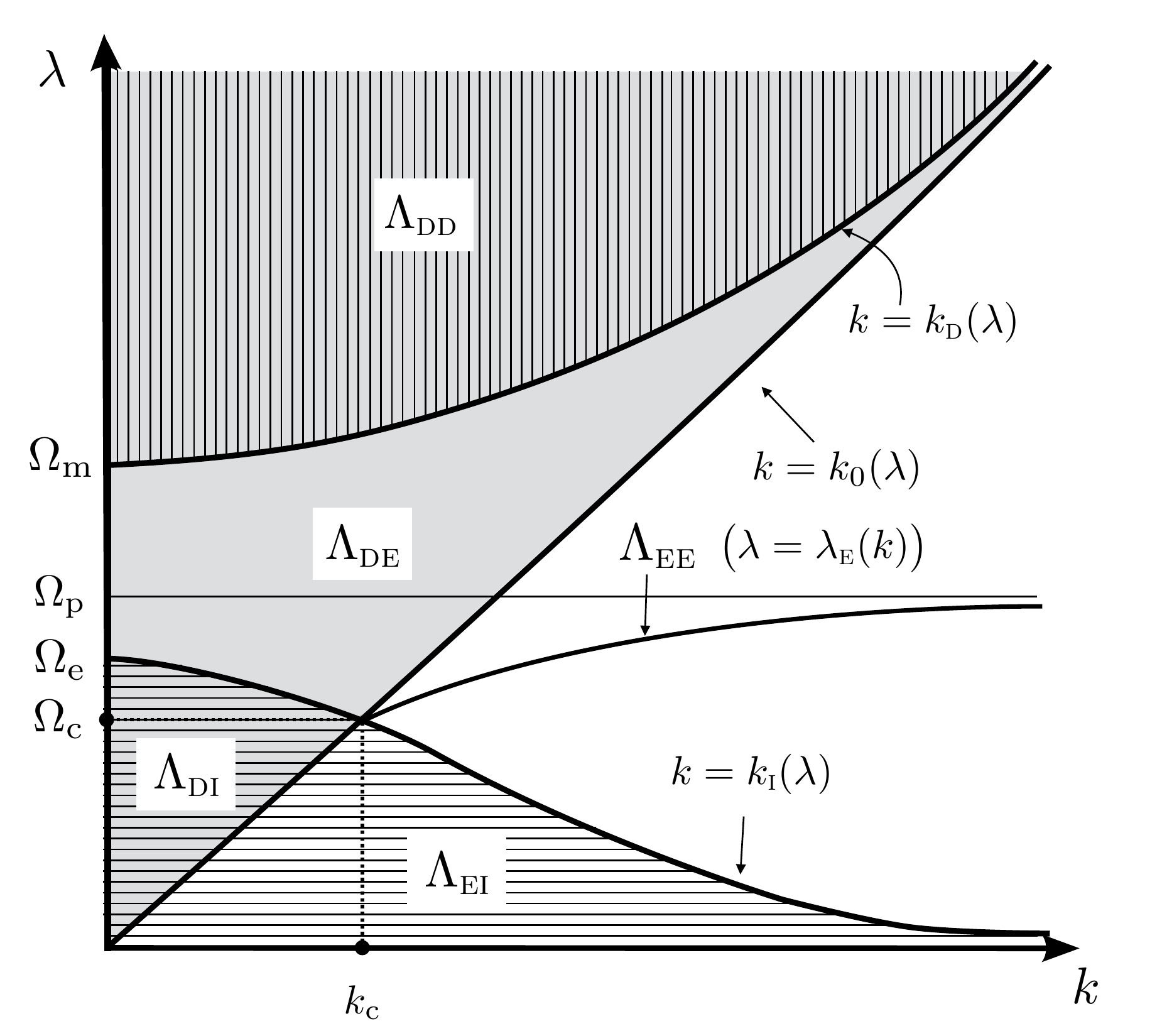}
	\end{minipage}
	\hspace{0.5cm}
	\begin{minipage}{8.0cm}%
		\includegraphics[width=1.1\textwidth]{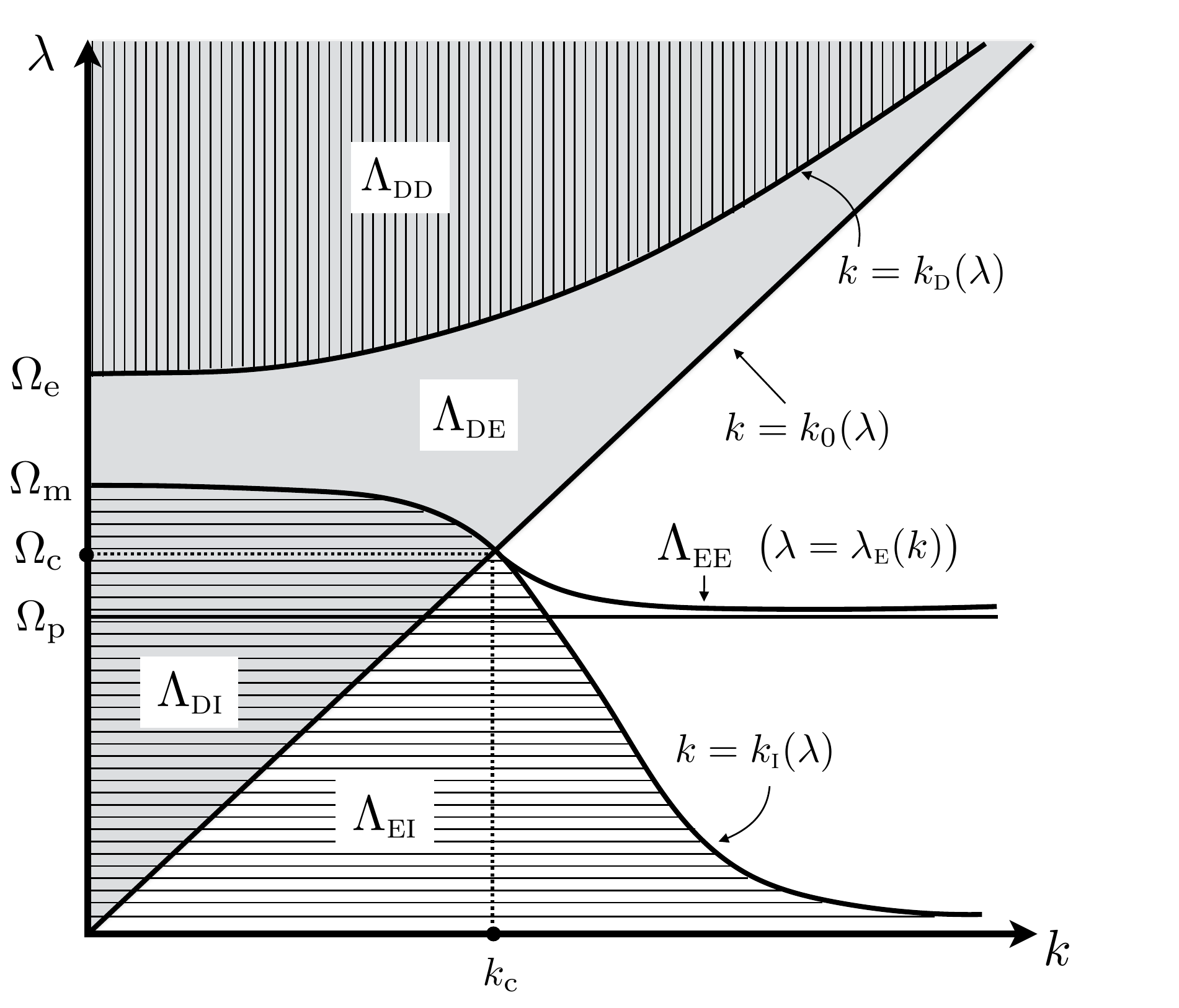}
	\end{minipage}
	\begin{center}
		\includegraphics[width=0.55\textwidth]{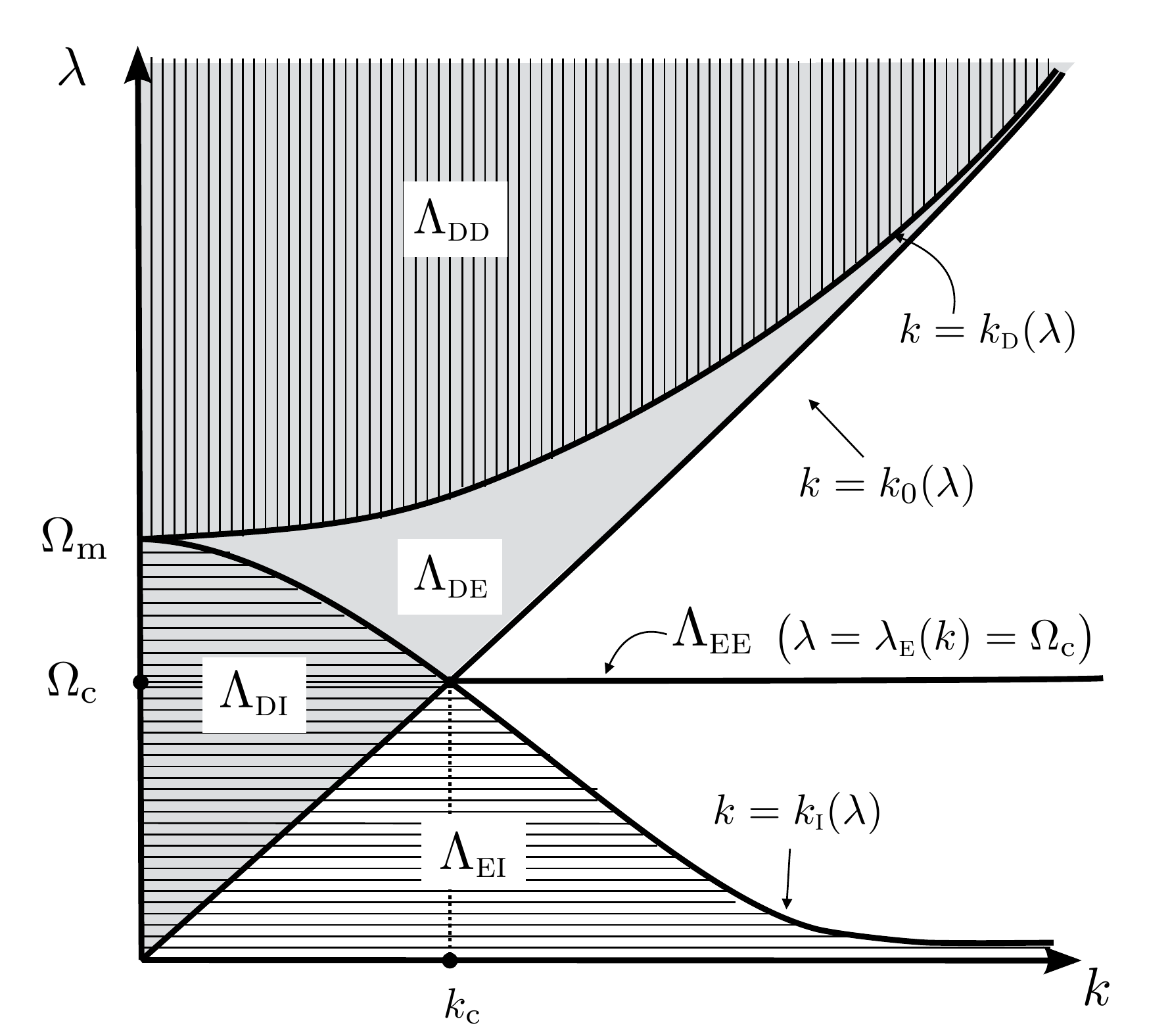}
		
	\end{center}
\caption{Parts of the spectral zones located in the quadrant $\bbR^+ \times \bbR^+$ for $\Oe<\Om$ (top left), $\Oe>\Om$ (top right) and  $\Oe=\Om$ (bottom).}
\label{fig.speczones1}
\end{figure}

The expression of the generalized eigenfunctions given below involves an appropriate square root $\thetakl^\pm$ of $\Thetakl^\pm$  that has the property to be either purely imaginary or positive real (the choice of the square root is justified by a limiting absorption process \cite[\S 3.3.1]{Cas-Haz-Jol-17}). We thus define
\begin{eqnarray}
\thetakl(x) & := &\thetakl^\pm \quad\text{if }\pm x > 0 \quad\text{where} \label{eq.defthetax} \\[8pt]
\thetakl^-  & := & \left\lbrace \begin{array}{ll} 
- \rmi \,\sgn(\lambda)\,|\Thetakl^-|^{1/2} & \mbox{if } (k,\lambda) \in \zDI \cup \zDE \cup \zDD,\\[4pt]
|\Thetakl^-|^{1/2} & \mbox{otherwise},
\end{array}\right.
\label{eq.def-thetam} \\[8pt]
\thetakl^+ & := & \left\lbrace \begin{array}{ll} 
+ \rmi \,\sgn(\lambda)\,|\Thetakl^+|^{1/2} & \mbox{if } (k,\lambda) \in \zEI \cup \zDI,\\[4pt]
- \rmi \,\sgn(\lambda)\,|\Thetakl^+|^{1/2} & \mbox{if } (k,\lambda) \in \zDD,\\[4pt]
|\Thetakl^+|^{1/2} & \mbox{otherwise}.
\end{array}\right.
\label{eq.def-thetap}
\end{eqnarray} 

We have to introduce a last spectral zone $\zEE $, which is associated to \emph{plasmonic waves}, {\it i.e.}, guided modes that are localized and propagates alongside the interface between both media  \cite{Mai-07}. Unlike the four other spectral zones which are surface areas, $\zEE$ is composed of one-dimensional curves which originate at the intersection points of the spectral cuts, called here the \emph{cross points}. These are the points where $\Thetakl^{-} = \Thetakl^{+} = 0,$ that is, the four points $(k,\lambda)$ such that $|k| = \kc$ and $|\lambda| = \Oc$, where $\kc =  k_0 (\Oc) = k_{\scI} (\Oc),$ which yields the definition \eqref{eq.def-Op-Oc} of $\Oc$, that is,
\begin{equation*}
\Oc := \frac{\Oe \, \Om}{\sqrt{\Oe^2+\Om^2}} 
\quad \text{and} \quad
\kc = \sqrt{\eps_0 \mu_0} \, \Oc.
\end{equation*}
The spectral zone $\zEE $ is composed of the solutions $(k,\lambda)$ of the following \emph{dispersion equation}:
\begin{equation}\label{eq.disp}
\calW_{k,\lambda} = 0 
\quad\text{where}\quad
\calW_{k,\lambda} := \frac{\thetakl^{-}}{\mu_{\lambda}^{-}}+ \frac{\thetakl^{+}}{\mu_{\lambda}^{+}} = 0.
\end{equation}
We know from \cite[Lemma 13]{Cas-Haz-Jol-17} that for a given $k$, this equation admits no solution if $|k|<\kc$, and two opposite solutions  $\pm\lambda_\scE(k)$  if $|k|\geq \kc$, where
\begin{equation}\label{eq.expressionlambdae}
\lambda_\scE(k) := 
\left\{ \begin{array}{ll}
\displaystyle \Om \ \sqrt{\frac{1}{2} + \frac{k^2}{K} - \sgn(K)\,\sqrt{\frac{1}{4} + \frac{k^4}{K^2}}} 
& \text{if } \Om \neq \Oe, \\
\displaystyle \Om / \sqrt{2} 
& \text{if } \Om = \Oe,
\end{array}\right.
\end{equation}
with $K := \eps_0 \mu_0 \,(\Om^2-\Oe^2).$ The function $k \mapsto \lambda_\scE(k)$ is strictly decreasing on $[\kc,+\infty)$ if $\Om < \Oe$ and strictly increasing if $\Om > \Oe$. Moreover $\lambda_\scE(k) = \Om / \sqrt{2} + O(k^{-2})$ as $|k| \to +\infty.$ In the case where $\Om \neq \Oe,$ we denote by $\kE$  the inverse of $\lambda_\scE$, originally defined for positive $\lambda$ and $k$ and extended to negative $\lambda$ by setting $\kE(-\lambda)=\kE(\lambda)$, that is,
\begin{equation} \label{defkE}
|\lambda| = \lambda_\scE(k) \quad \Longleftrightarrow \quad |k| = \kE (\lambda) 
\quad\text{if } |k| \in [\kc, +\infty) \text{ and } |\lambda| \in \lambda_\scE\big([\kc, +\infty)\big),
\end{equation}
where $\lambda_\scE\big((\kc, +\infty)\big) = 
\big(\min(\Op,\Oc),\max(\Op,\Oc)\,\big)$. We finally define 
\begin{equation*}\label{eq.defZEE}
\zEE := \left\{ (k,\lambda)\in \bbR^2 \mid |k| > \kc \text{ and } |\lambda| = \lambda_\scE(k) \right\}.
\end{equation*}
Since, it is a curve,  $\zEE$ will be referred  as \emph{the lineic spectral zone}.
Note that, for technical reasons which will appear later, we have excluded the cross points from this definition, although they are also solutions to \eqref{eq.disp}. In other words, $\overline{\zEE}$ yields all the solutions to \eqref{eq.disp}. Figure \ref{fig.speczones1} shows the location of $\zEE$ in the three cases $\Oe<\Om$, $\Oe=\Om$ and $\Oe>\Om$. 

We can now introduce the family of generalized eigenfunctions $\bbW_{k,\lambda,j}$ related to the various spectral zones $\zZ$ for
\begin{equation*}
\scZ \in \calZ := \{ \DD,\DE,\DI,\EI,\EE \}.
\end{equation*}
Before giving their mathematical expression, let us discuss their physical interpretation, which make clear our choice of possible values for the index $j$. Consider first the case of the surface zones, that is, $\scZ \in \calZ \setminus \{ \EE \}$. In this case, each $\bbW_{k,\lambda,j}$ represents an incident plane wave which scatters on the interface between both media and produces a reflected plane wave and a transmitted wave. In the half-plane where both incident and reflected waves coexist, the regime of vibration is necessarily propagative (direct or inverse) in the $x$-direction. On the other hand, in the half-plane where the transmitted wave occurs, the regime can be propagative or evanescent. This explains that for a given pair $(k,\lambda)$ in the spectral zones $\zDD$ and $\zDI$ where both half-planes are propagative, two generalized eigenfunctions $\bbW_{k,\lambda,j}$ are considered: they are indexed by $j=\pm 1$ which indicates the half-plane $\bbR^2_\pm$ where the transmitted wave takes place. Following the same interpretation, for a given pair $(k,\lambda)$ in the spectral zones $\zEI$ and $\zDE$, only one $\bbW_{k,\lambda,j}$ is considered, with $j=-1$ in $\zEI$ and $j=+1$ in $\zDE$. On the other hand, for the one-dimensional spectral zone $\zEE$, the regime is evanescent in both media. For a given pair $(k,\lambda) \in \zEE,$ only one $\bbW_{k,\lambda,j}$ which  represents now a guided wave that propagates along the interface is considered. Since there is no longer transmitted wave, we use the index $j=0$ in this case. Summing up, the set $J_\scZ$ of possible values of $j$ when $(k,\lambda) \in \zZ$ with $\scZ \in \{ \DD, \DE, \DI, \EI\}$ is given by
\begin{equation}\label{eq.def-Jz}
J_\scZ :=
\left\lbrace\begin{array}{ll}
\{ -1,+1 \} & \mbox{ if } \scZ = \DD \mbox{ or }\DI, \\
\{ +1 \}    & \mbox{ if } \scZ = \DE,\\
\{ -1 \}    & \mbox{ if } \scZ = \EI, \\
\{ 0 \}    & \mbox{ if } \scZ = \EE.
\end{array}
\right.
\end{equation}

The generalized eigenfunctions are then defined by
\begin{equation}\label{eq.def-W}
\forall \scZ \in \calZ,\ \forall (k,\lambda) \in \zZ,\ \forall j\in J_\scZ,\quad 
\bbW_{k,\lambda,j} := \bbV_\lambda\ w_{k,\lambda,j},
\end{equation}
where $\bbV_\lambda$ is a ``vectorizator'' in the sense that it expresses each $\bbW_{k,\lambda,j}$ in term of its first scalar component $w_{k,\lambda,j}$ (the component associated with the electrical field), via the formula
\begin{equation}\label{eq.def-Vk}
\bbV_\lambda \, w :=  
\left(  w \,,\, -\frac{\rmi}{\mu_{\lambda}\, \lambda}\,\bcurl w \,,\, \frac{\rmi\, \eps_0 \,\Oe^2 }{\lambda}\,\Rop \,w \,,\, \frac{\mu_0\, \Om^2 }{\mu_{\lambda}^+\, \lambda^2}\,\bR \,\bcurl w \right)^{\top}.
\end{equation}
The scalar function $w_{k,\lambda,j}$ is given by
\begin{equation}\label{eq.def-w}
w_{k,\lambda,j}(x,y) := A_{k,\lambda,j}\ \psi_{k,\lambda,j}(x)\ \rme^{\rmi k y}
\end{equation}
where the expressions of $A_{k,\lambda,j}$ and $\psi_{k,\lambda,j}(x)$ depend on the spectral zones. 
Note that, thanks to \eqref{eq-defcurls} and because of \eqref{eq.def-w}, \eqref{eq.def-W} can be rewritten as 
	\begin{equation}\label{eq.def-Vkbis}
	\bbW_{k,\lambda,j} =  
	\Big(  w_{k,\lambda,j} \,,\, \frac{k \, w_{k,\lambda,j}}{\mu_{\lambda}\, \lambda} \, ,
	\rmi \, \frac{\partial_x w_{k,\lambda,j}}{\mu_{\lambda}\, \lambda}, 
	 \frac{\rmi \eps_0 \,\Oe^2 }{\lambda}\,\Rop \,w_{k,\lambda,j} \,, \,
	\rmi  k \,  \frac{\mu_0\, \Om^2 }{\mu_{\lambda}^+\, \lambda^2}\, \Rop \,w_{k,\lambda,j}\,,
	\, - \frac{\mu_0\, \Om^2 }{\mu_{\lambda}^+\, \lambda^2}\, \Rop \,\partial_x w_{k,\lambda,j}
	\Big)^{\! \!\top}.
	\end{equation}
	
On the one hand, in the surface spectral zones $\zDD$, $\zDE$, $\zDI$ and $\zEI$, we have
\begin{align}
A_{k,\lambda,\pm 1} & := \frac{1}{\pi\,\left| \calW_{k,\lambda} \right|} \,
\left| \frac{\lambda}{2}\, \thetakl^\mp / \mu_\lambda^\mp \right|^{1/2} 
\quad\text{and} \label{def-A-gen} 
\\
\psi_{k,\lambda,\pm 1}(x) & := \cosh\big( \thetakl(x)\, x \big) \mp \frac{ \thetakl^\pm / \mu_\lambda^\pm }{ \thetakl(x) / \mu_\lambda(x)}\ \sinh\big( \thetakl(x)\, x \big), \label{def-psi-gen}
\end{align}
where $\calW_{k,\lambda}$ and $\thetakl(x)$ are defined respectively in \eqref{eq.disp} and \eqref{eq.defthetax}. Note that the latter expression of $\psi_{k,\lambda,\pm 1}$ can be rewritten equivalently
\begin{equation}\label{eq-def-phi}
\psi_{k,\lambda,\pm 1}(x) = 
\left\{ \begin{array}{ll}
\displaystyle \cosh\big( \thetakl^\mp\, x \big) \mp \frac{ \thetakl^\pm / \mu_\lambda^\pm }{ \thetakl^\mp / \mu_\lambda^\mp}\ \sinh\big( \thetakl^\mp\, x \big) & \text{if } \pm x \leq 0, \\
\exp\big( \mp \thetakl^\pm\, x \big) & \text{if } \pm x \geq 0,
\end{array} \right.
\end{equation}
which justifies the above-mentioned physical interpretation of the $\bbW_{k,\lambda,j}$.

On the other hand, in the plasmonic spectral zone $\zEE,$ we have
\begin{align}
A_{k,\lambda,0} & := \frac{\lambda^2\, \left|\mu_{\lambda}^{+} \, \thetakl^{+}\right|^{1/2}}{\sqrt{2\pi}\,\Om \big( 4k^4 +(\eps_0\mu_0)^2(\Oe^2-\Om^2)^2 \big)^{1/4}} \quad\text{and}  \label{def-A-plasm} \\[4pt]
\psi_{k,\lambda,0}(x) & := \exp\big( - \thetakl(x)\, |x| \big),   \label{def-psi-plasm}
\end{align}
which shows clearly that $\bbW_{k,\lambda,0}$ is a guided wave localized near the interface.

\begin{remark}
Let us mention that the notations introduced here are slightly different from \cite{Cas-Haz-Jol-17}, which results from the fact that the scalar function $w_{k,\lambda,j}$ defined in \eqref{eq.def-w} includes the contribution $\rme^{\rmi ky}$. As a consequence, the normalizing coefficient $A_{k,\lambda,j}$ differs from \cite{Cas-Haz-Jol-17} by a factor $\sqrt{2\pi}$. But of course, the $\bbW_{k,\lambda,j}$'s remain unchanged.
\end{remark}

\subsubsection{Generalized Fourier transform and  diagonalization theorem}
We introduce now the spectral space 
\begin{equation*}
\hatH := \bigoplus \limits_{\scZ \in \calZ} L^{2}(\zZ)^{\operatorname{card}(J_{\scZ})}=L^2(\zDD)^2 \oplus L^2(\zDE)\oplus L^2(\zDI)^2 \oplus  L^2(\zEI)\oplus   L^2(\zEE),
\label{eq.ident-hatH}
\end{equation*}
in which the action of the Hamiltonian $\bbA$ will be reduced to a simple multiplication by the spectral variable $\lambda$. This space is a direct sum of $L^2$ spaces of each spectral zone.  More precisely, each $L^2(\zZ)$ for $\scZ \in \calZ$  is repeated $\operatorname{card}(J_{\scZ})$ times, that is, the number of generalized eigenfunctions associated to the spectral zone $\zZ$. As we did for the $\bbW_{k,\lambda,j}$'s, we denote somewhat abusively by $\bhatU(k,\lambda,j)$ the fields of $\hatH$, where it is understood that the set $J_{\scZ}$ of possible values for $j$ depends on the spectral zone $\zZ$ to which the pair $(k,\lambda)$ belongs. Using these notations, the Hilbert space $\hatH$ is endowed with the following norm:
$$
\| \bhatU \|_{\hatH}^2 := \sum_{\scZ \in \calZ\setminus\{\EE\}}\sum_{j\in J_\scZ} \int_{ \zZ} |\bhatU(k,\lambda,j) |^2 \,\rmd \lambda \,\rmd k+  \sum_{\pm }\int_{|k|>k_{\rm c}}| \bhatU(k,\pm \lambda_\scE(k),0) |^2  \,\rmd k.
$$

Theorem \ref{th.diagA} below gathers the results of Theorem 20 and Proposition 21 in \cite{Cas-Haz-Jol-17}. It provides us the expression of the generalized Fourier transform $\bbF$ and its adjoint $\bbF^{*}$. The former appears as a ``decomposition'' operator on the family of generalized eigenfunctions $(\bbW_{k,\lambda,j})$, whereas the latter can be interpreted as a ``recomposition'' operator in the sense that its ``recomposes'' a function $\bU\in \Hxy$ from its spectral components $\bhatU(k,\lambda,j)\in \hatH$ which appear as ``coordinates'' on the ``generalized spectral basis'' $(\bbW_{k,\lambda,j})$. Both of these operators are (partial) isometries and thus bounded. So it is sufficient to know their expression on a dense subspace, exactly as for the usual Fourier transform or its inverse. For $\bbF$, the dense subspace of $\Hxy$ is $\Hps$ (with $s>1/2$) whereas for $\bbF^{*}$, we introduce below $\hatH_{\rm comp}$.

\begin{theorem}[Diagonalization Theorem, cf. \cite{Cas-Haz-Jol-17}]\label{th.diagA}
	Let $s > 1/2.$ 
	
	{\rm (i)} The generalized Fourier transform $\bbF: \Hxy \mapsto \hatH$ is a partial isometry, defined for all $\bU$ in  $\Hps$  by  
	\begin{equation}\label{eq.Four-gen}
	\forall \scZ \in \calZ,\ \forall (k,\lambda) \in \zZ,\ \forall j\in J_{\scZ}, \quad
	\bbF\bU(k,\lambda,j) =\langle  \bU, \bbW_{k,\lambda,j}  \rangle_{s},   
	\end{equation}
	where the $\bbW_{k,\lambda,j}$'s are defined in {\rm (\ref{eq.def-W})}.
	
	{\rm (ii)} Let $\hatH_{\rm comp}$ denote the dense subspace of $\hatH$ composed of compactly supported functions whose supports do not intersect the boundaries of the spectral zones $\zZ$ for $\scZ \in \{\DD,\DE,\DI,\EI\}$ (\textit{i.e.}, the spectral  cuts and the three lines $\bbR \times \{0,\pm\Om \}$). The adjoint $\bbF^*: \hatH \mapsto \Hxy$  of $\bbF$ is an isometry defined for all $\bhatU \in \hatH_{\rm comp}$ by
	\begin{equation}\label{eq.adj-Four-gen}
	\bbF^{*}\bhatU = \sum_{\scZ \in \calZ \setminus \{\EE \} } \sum_{j\in J_{\scZ}} \int_{\zZ } \bhatU(k,\lambda,j)\, \bbW_{k,\lambda, j} \,\rmd \lambda\, \rmd k 
	+ \sum_{\pm} \int_{|k|>k_{\rm c} } \bhatU(k,\pm \lambda_\scE(k) ,0)\, \bbW_{ k,\pm \lambda_\scE(k) ,0} \, \rmd k,
	\end{equation}
	where the integrals are understood as Bochner integrals with values in $\Hms$. 	
	
	{\rm (iii)} Furthermore, we have $ \bbF\,\bbF^{*} =\mathrm{Id}_{\hatH},
	$ while $ \bbF^{*} \bbF\,=\bbP_{\rm div0}$ where we recall that $\bbP_{\rm div0}$ is the orthogonal projector in $\Hxy$ onto $\Hxydiv$ (see \eqref{eq.Hxydiv}).
	In particular, the restriction of $\bbF$ to $\Hxydiv$ is a unitary operator. Furthermore $\bbF$ diagonalizes $\bbA$ in the sense that for any measurable function $f:\bbR \to \bbC$,
	\begin{equation}\label{eq.diagA}
	f(\bbA)\bbP_{\rm div0}=\bbP_{\rm div0} f(\bbA)=\bbF^{*}\,f(\lambda)\,\bbF \ \mbox{ in } \rmD(f(\bbA)).
	\end{equation}
\end{theorem}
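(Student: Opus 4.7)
The plan is to reduce the 2D spectral analysis of $\bbA$ to a one-parameter family of 1D problems and then reassemble. Since the coefficients of $\bbA$ depend only on $x$, I would exploit translation invariance in $y$ by applying the partial Fourier transform $\calF_y$, which decomposes $\Hxy$ as a direct integral over $k\in\bbR$ and unitarily conjugates $\bbA$ to a direct integral of selfadjoint fibre operators $\bbAk$ acting on functions of $x$. In this picture, each $\bbW_{k,\lambda,j}$ defined by \eqref{eq.def-W}--\eqref{eq.def-w} is, up to the factor $\rme^{\rmi k y}$, a bounded generalized eigenfunction of $\bbAk$ at the spectral value $\lambda$, so $\bbF$ should factor as $\calF_y$ composed with a family of 1D generalized Fourier transforms $\bbF_k$ built on the $\psi_{k,\lambda,j}$.

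Next I would diagonalize each $\bbAk$ by the classical outgoing-resolvent method: construct $(\bbAk - \lambda\mp \rmi 0)^{-1}$ explicitly in terms of two independent bounded solutions on either side of the interface (these are the $\psi_{k,\lambda,\pm 1}$ in the surfacic zones, while in the plasmonic case the homogeneous equation admits an $L^2$ solution $\psi_{k,\lambda,0}$ precisely at the discrete zeros $\lambda = \pm\lambda_\scE(k)$ of the dispersion function $\calW_{k,\lambda}$ from \eqref{eq.disp}). Computing the jump of the resolvent across the real axis and applying Stone's formula then yields the spectral measure of $\bbAk$ and hence a Plancherel identity for $\bbF_k$. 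The Wronskian of the two interface solutions is essentially $\calW_{k,\lambda}$, which is why the factor $|\calW_{k,\lambda}|^{-1}$ appears in the normalizing constants \eqref{def-A-gen}; the ``vectorizator'' $\bbV_\lambda$ in \eqref{eq.def-Vk} is determined by the requirement that $\bbA\bbW = \lambda\,\bbW$ hold distributionally once the electric component $w_{k,\lambda,j}$ is prescribed.

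Assembling the 1D identities via Fubini will give the formulas \eqref{eq.Four-gen} and \eqref{eq.adj-Four-gen} on the dense subspaces $\Hps$ and $\hatH_{\rm comp}$, extending by continuity to all of $\Hxy$ and $\hatH$, and will simultaneously prove $\bbF^{*}\bbF = \bbP_{\rm ac}$ and $\bbF\bbF^{*} = \mathrm{Id}_{\hatH}$. The identification $\bbP_{\rm ac} = \bbP_{\rm div0}$ in the non-critical case (respectively, the corrected identity in the critical case) is a consequence of the kernel description in Proposition \ref{prop.spectrumA} together with the divergence constraints satisfied by each $\bbW_{k,\lambda,j}$. The diagonalization \eqref{eq.diagA} itself then follows from the intertwining relation $\bbA\,\bbW_{k,\lambda,j} = \lambda\,\bbW_{k,\lambda,j}$ (checked from \eqref{eq.def-Vk}, with the jump of $\partial_x w$ at $x=0$ reproducing the transmission conditions built into $\bbA$), first for polynomial $f$ and then by approximation and a functional calculus argument for arbitrary measurable $f$.

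The main obstacle is the explicit 1D spectral analysis fibre-by-fibre. As $\lambda$ varies, $\eps_\lambda^+$ and $\mu_\lambda^+$ change sign, producing the five qualitatively distinct regimes (the surfacic zones $\zDD, \zDE, \zDI, \zEI$ and the lineic zone $\zEE$) separated by the spectral cuts \eqref{defk0DI}. Along each cut, one of the $\thetakl^\pm$ vanishes and the Green function degenerates; at the cross points $(k,\lambda)=(\pm\kc,\pm\Oc)$ both vanish simultaneously. A careful, case-by-case choice of the square roots dictated by a limiting absorption procedure, as prescribed by \eqref{eq.def-thetam}--\eqref{eq.def-thetap}, is needed to keep the $\bbW_{k,\lambda,j}$ bounded and the spectral density well-defined; this is the part where essentially all the technical effort of \cite{Cas-Haz-Jol-17} is concentrated, and which for the present paper is simply imported.
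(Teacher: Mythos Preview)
Your sketch is essentially the strategy carried out in \cite{Cas-Haz-Jol-17}, and the present paper does not reprove Theorem~\ref{th.diagA} at all: it is stated here purely as a recap and imported wholesale, exactly as you note in your final sentence. So at the level of ``what the paper does,'' your proposal matches.

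One conceptual slip is worth flagging. You write that the assembly will yield $\bbF^{*}\bbF = \bbP_{\rm ac}$, with the identification $\bbP_{\rm ac} = \bbP_{\rm div0}$ holding in the non-critical case and some ``corrected identity'' in the critical case. This is not quite right: the theorem asserts $\bbF^{*}\bbF = \bbP_{\rm div0}$ in \emph{both} cases, and this is what the 1D construction actually delivers. In the critical case $\Oe = \Om$, the plasmonic branch degenerates to the horizontal lines $\lambda = \pm\Op$, and the functions $\bbW_{k,\pm\Op,0}$ become genuine $L^2$ eigenfunctions of $\bbA$; they are still part of the family \eqref{eq.def-W} and are still captured by $\bbF$ via the $\zEE$ component of $\hatH$. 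Hence $\bbF$ diagonalizes not only the absolutely continuous part but also the embedded point spectrum at $\pm\Op$, and the range of $\bbF^{*}$ is all of $\Hxydiv$, strictly larger than the range of $\bbP_{\rm ac}$ when $\Oe=\Om$ (cf.\ \eqref{eq.def-Pac}). Your phrasing suggests the plasmonic eigenfunctions fall outside the transform, which would make the statement of (iii) false in the critical case.
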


\begin{remark} \label{rem.F}
	{\rm (i)} First notice that we use of duality product $\langle \cdot,\cdot \rangle_{s}$ (which extends the inner product of $\Hxy$, see \eqref{eq.innerproduct}) in the definition \eqref{eq.Four-gen} of $\bbF$. The reason is that the $\bbW_{k,\lambda,j}$'s do not belong to $\Hxy$ since their modulus does not decay at infinity (this is why they are called \emph{generalized eigenfunctions}). But the fact that they are bounded shows that they belong to $\Hms$ for any $s > 1/2$ (indeed $L^\infty(\bbR^2) \subset L^2_{-s}(\bbR^2)$ if and only if $s > 1/2$).

	{\rm (ii)} Let us now explain why we restrict ourselves to functions of $\hatH_{\rm comp}$ in \eqref{eq.adj-Four-gen}. First one can easily check that the $\Hms$-norm of $\bbW_{k,\lambda,j}$ remains uniformly bounded if $(k,\lambda)$ is restricted to vary in a compact set of $\bbR^2$ that does not intersect the boundaries of the spectral zones. Hence, for $\bhatU\in \hatH_{\rm comp}$, the integrals considered in \eqref{eq.adj-Four-gen}, whose integrands are valued in $\Hms$, are Bochner integrals {\rm \cite{Hil-96}} in $\Hms$. However, as $\bbF^{*}$ is bounded from $\hatH$ to $\Hxy,$ the values of these integrals belongs to $\Hxy$. The same holds true for all $\bhatU \in \hatH$ such that the integrands are integrable functions valued in $\Hms$.
	
	{\rm (iii)} In the general case, the integrands are not always integrable functions valued in $\Hms$, which  may happen for instance if $\bhatU$ does not vanish near some part of the boundaries of the spectral zones, because of the singular behavior of some $\bbW_{k,\lambda,j}$. For such a $\bhatU$, the integrals in \eqref{eq.adj-Four-gen} are no longer Bochner integrals in $\Hms$, but limits of Bochner integrals. Indeed, thanks to the density of $\hatH_{\rm comp}$ in $\hatH$, we can approximate $\bhatU$ by its restrictions to an increasing sequence of compact subsets of $\cup_{\scZ \in \calZ} \zZ$ as in the definition of $\hatH_{\rm comp}$, which yields an approximation of $\bbF^{*}\bhatU$. Of course, the limit we obtain belongs to $\Hxy$ and does not depend on the sequence. We will indicate this limiting process before each integral as follows:
	\begin{equation}\label{eq.adj-F-lim}
	\bbF^{*}\bhatU = \sum_{\scZ \in \calZ \setminus \{\EE \} } \sum_{j\in J_{\scZ}} \lim_{\Hxy}\int_{\zZ } \bhatU(k,\lambda,j)\, \bbW_{k,\lambda, j} \,\rmd \lambda\, \rmd k 
	+ \sum_{\pm} \lim_{\Hxy}\int_{|k|>k_{\rm c} } \bhatU(k,\pm \lambda_\scE(k) ,0)\, \bbW_{ k,\pm \lambda_\scE(k) ,0} \, \rmd k,
	\end{equation}
	for all $\bhatU \in \hatH$.
\end{remark}

\section{The spectral density}
\label{sec.spectral-density}
\subsection{Motivation and main results}\label{sec.motiv-main-results}
This quite technical section is somehow the keystone of the present paper. It provides us the basic ingredients for the proofs of Theorem \ref{thm.limabs} and \ref{th.ampllim}, which both mainly consist in using Theorem \ref{th.diagA} to investigate the asymptotic behavior of a family of functions of $\bbA.$ On the one hand, for the limiting absorption principle at a given frequency $\omega \in \bbR,$ we have to consider the families of functions $r_{\omega\pm\rmi\eta}: \bbR \to \bbC$ for $\eta > 0$ defined by
\begin{equation}
r_{\omega\pm\rmi\eta}(\lambda) := \frac{1}{\lambda - (\omega \pm \rmi\eta)}
\label{eq.def-r-eta}
\end{equation}
and we study the limits of the resolvent $R(\omega \pm \rmi\eta) = r_{\omega\pm\rmi\eta}(\bbA)$ as $\eta \searrow 0.$ On the other hand, for the limiting amplitude principle, we have to examine the behavior of $\phi_{\omega,t}(\bbA)$ as $t \to +\infty,$ where $\phi_{\omega,t}(\cdot)$ is defined in \eqref{eq.phiduhamel2}. As we will see in \S\ref{sec.limamplproof}, both limiting processes are intimately connected. We focus here on the former to explain the motivation of this section.

Roughly speaking, the basic idea is to rewrite the diagonal expression \eqref{eq.diagA} of $f(\bbA)$ as
\begin{equation}\label{eq.specdensity}
f(\bbA)\,\bbP_{\rm ac} = \int_{\bbR} f(\lambda) \, \bbM_{\lambda} \,\rmd \lambda,
\end{equation}
where $\lambda \mapsto \bbM_{\lambda}$ is a family of bounded operators from $\Hps$ to $\Hms$, and $\bbP_{\rm ac}$ is defined in \eqref{eq.def-Pac}. This formula can be interpreted as a continuous block diagonalization of $f(\bbA)\,\bbP_{\rm ac}$  where the diagonal blocks are the operators $\bbM_{\lambda}$. Using this formula for the functions defined in \eqref{eq.def-r-eta}, the absolutely continuous part of the resolvent of $\bbA$ (see \eqref{eq.def-Rac}) appears as a Cauchy integral
\begin{equation*}
R_{\rm ac}(\omega \pm \rmi\eta) := R(\omega \pm \rmi\eta)\, \bbP_{\rm ac} = 
\int_{\bbR} \frac{\bbM_{\lambda}}{\lambda - (\omega \pm \rmi\eta)} \,\rmd \lambda,
\end{equation*}
whose limits as $\eta \searrow 0$ will be given by a suitable version of the well-known Sokhotski--Plemelj formula \cite{Hen-86}, provided that $\lambda \mapsto \bbM_{\lambda}$ is locally H\"{o}lder continuous. This gives actually the main objectives of the present section which consists first in establishing \eqref{eq.specdensity}, then proving the local H\"{o}lder continuity of $\bbM_{\lambda}.$ These are the respective subjects of Theorems \ref{th.dens-spec} and \ref{th.Holder-dens-spec} below.

Formula \eqref{eq.specdensity} provides us a fundamental property of the spectral measure $\bbE$ (see \cite[\S 2.3]{Cas-Haz-Jol-17} for a brief reminder about this notion) of  $\bbA$, namely the fact that  in the orthogonal complement of the point subspace,  it is \emph{absolutely continuous}, which means that it is ``proportional'' to the Lebesgue measure (see Corollary \ref{cor.specmes}). This explains the terminology \emph{spectral density} for $\bbM_{\lambda},$ as well as the notation $\bbP_{\rm ac}.$\\

\textbf{Formal construction of $\bbM_{\lambda}$ in the non-critical case.} Let us first consider the case $\Oe \neq \Om$ for which the orthogonal projection $\bbP_{\rm ac} := \bbI - \bbP_{\rm pt}$ coincides with $\bbP_{\rm div0}$ (see \eqref{eq.def-Pac}). In order to prove \eqref{eq.specdensity}, we start from the diagonalization Theorem \ref{th.diagA} applied to the spectral measure of $\bbA:$ for any Borel set $S \subset \bbR,$ we have $\bbE(S) = \boldsymbol{1}_{S}(\bbA)$ where $\boldsymbol{1}_{S}$ denotes the indicator function of $S$. We assume here that
\begin{equation}
S \text{ is a bounded set such that } \overline{S} \cap \sigma_{\rm exc} = \varnothing,
\label{e.asumpt-S}
\end{equation}
where we recall that $\sigma_{\rm exc} := \{0,\pm\Op,\pm\Om\}$ in the non-critical case (see \eqref{eq.def-sigma-exc}). In other words, we exclude not only the eigenvalues $0$ and $\pm\Om,$ which shows in particular that
\begin{equation}
\bbE(S)\,\bbP_{\rm div0} = \bbE(S) = \bbE(S)\,\bbP_{\rm ac} 
\label{eq.Eac-non-crit}
\end{equation}
(since $\bbE(S)\,\bbP_{\rm div0} = \bbE(S)\,\bbE\big(\bbR \setminus \{ 0, \pm \Om \}\big) = \bbE\big(S \cap (\bbR \setminus \{ 0, \pm \Om \})\big) = \bbE(S)$), but also the plasmonic frequencies $\pm\Op$. Applying \eqref{eq.diagA} to $\boldsymbol{1}_{S}(\bbA)$ then yields
\begin{equation*}
\bbE(S)\,\bbP_{\rm div0} = \bbF^{*}\,\boldsymbol{1}_{S}(\lambda)\,\bbF.
\end{equation*}
Using the expressions \eqref{eq.Four-gen} and \eqref{eq.adj-F-lim} of $\bbF$ and $\bbF^{*},$ this formula writes more explicitly as
\begin{multline}
\label{eq.spect-rep-E}
\bbE(S)\,\bbP_{\rm div0} \, \bU = \sum_{\scZ \in \calZ\setminus\{\EE\}} \sum_{j \in J_{\scZ}} \lim_{\Hxy} \int_{\zZ } \boldsymbol{1}_{S}(\lambda) \, \langle  \bU, \bbW_{k,\lambda,j} \rangle_{s}  \; \bbW_{k,\lambda,j} \,\rmd\lambda\,\rmd k 
\\
+ \sum_{\pm } \lim_{\Hxy} \int_{|k|>k_{\rm c}} \boldsymbol{1}_{S}(\pm \lambda_\scE(k))\, \langle  \bU,  \bbW_{k,\pm \lambda_\scE(k),0}  \rangle_{s} \,   \bbW_{k,\pm \lambda_\scE(k),0} \,\rmd k,
\end{multline}
for all $\bU \in \Hps$, where we recall that the limit (in $\Hxy$) is obtained by considering an increasing sequence of compact subsets of each $\zZ$ whose union covers $\zZ$. We are going to see that thanks to assumption \eqref{e.asumpt-S}, on the one hand such a limiting process is useless here, and on the other hand, we can apply Fubini's theorem for the surface integrals on the $\zZ$ for $\scZ \in \calZ\setminus\{\EE\}$, as well as the change of variable $k = \pm k_\scE(\lambda)$ in the last integral. Admitting this provisionally and using \eqref{eq.Eac-non-crit}, we finally obtain that for all $\bU \in \Hps$,
\begin{equation}
\label{eq.expr-Eac-non-crit}
\bbE(S)\,\bbP_{\rm ac} \, \bU = \bbE(S) \, \bU = \int_\bbR \boldsymbol{1}_{S}(\lambda) \, \bbM_\lambda \bU \,\rmd\lambda
\quad\text{with}
\end{equation}
\begin{equation}
\label{eq.density-non-crit}
\bbM_\lambda \bU := \sum_{\scZ \in \calZ\setminus\{\EE\}} \sum_{j \in J_{\scZ}} \int_{\zZ(\lambda)} \langle \bU,\bbW_{k,\lambda,j} \rangle_{s}  \; \bbW_{k,\lambda,j} \,\rmd k 
+ \sum_{k \in \zEE(\lambda)} \JacE(\lambda)\ \langle  \bU,  \bbW_{k,\lambda,0}  \rangle_{s} \,   \bbW_{k,\lambda,0},
\end{equation}
for almost every $\lambda \in \bbR,$ where $\JacE(\lambda)$ is the Jacobian of the change of variable $k= \pm k_\scE(\lambda)$ defined by
$$
\JacE(\lambda) := \big| \kE'(\lambda)\big| = \Big|\frac{\rmd \lambda_\scE }{\rmd k} \big(\kE(\lambda)\big)\Big|^{-1} 
$$  
and $\zZ(\lambda)$ denotes the set of $k \in \bbR$ corresponding to the horizontal section of $\zZ$ at the ``height'' $\lambda,$ \textit{i.e.,}
\begin{equation}
\zZ(\lambda) := \left\{ k \in \bbR \mid (k,\lambda) \in \zZ \right\}.
\label{eq.def-section}
\end{equation}
A glance at Figure \ref{fig.speczones1} clearly shows that if $\scZ \in \calZ\setminus\{\EE\}$,  then $\zZ(\lambda)$ is either empty (in this case the corresponding integral vanishes) or a bounded set composed of one or two intervals. For instance, if $\lambda > \max(\Oe,\Om),$ then 
$\zDD(\lambda) = \big( -k_\scD(\lambda),+k_\scD(\lambda) \big)$ whereas 
$\zDE(\lambda) = \big( -k_0(\lambda),-k_\scD(\lambda) \big) \cup 
\big( +k_\scD(\lambda),+k_0(\lambda) \big)$. Moreover, we have
\begin{equation*}
\zEE(\lambda) = \left\{
\begin{array}{ll}
\{ \pm k_\scE(\lambda) \} & \text{if } |\lambda| \in \lambda_\scE\big((\kc, +\infty)\big) = \big(\min(\Op,\Oc),\max(\Op,\Oc)\big),  \\[5pt]
\varnothing & \text{otherwise,}
\end{array}
\right.
\end{equation*}
which shows that the last term in \eqref{eq.density-non-crit} appears only if $|\lambda| \in \lambda_\scE\big((\kc, +\infty)\big).$ \\

\textbf{The critical case.} What about the case $\Oe = \Om$? We keep the assumption \eqref{e.asumpt-S} for $S$, but now $\sigma_{\rm exc} := \{0,\pm\Om\}$ (see \eqref{eq.def-sigma-exc}), so that \eqref{eq.Eac-non-crit} is no longer true. From \eqref{eq.def-Pac}, it has to be replaced by
\begin{equation}
\bbE(S)\,\bbP_{\rm div0} = \bbE(S) = \bbE(S)\,\bbP_{\rm ac} + \bbE\big(S)\,\bbP_{-\Op} + \bbE\big(S)\,\bbP_{+\Op}.
\label{eq.Eac-crit}
\end{equation}
Formula \eqref{eq.spect-rep-E} is still valid. The difference with the non-critical case lies in the last integrals for which it is no longer possible to make the change of variable $k = \pm k_\scE(\lambda)$ since $\lambda_\scE(k) = \Op$ for all $|k| > k_{\rm c}$. These integrals represent exactly the quantities $\bbE\big(S)\,\bbP_{\pm\Op}\, \bU$ related to the eigenvalues $\pm\Om$ of infinite multiplicity. We have actually
\begin{equation*}
\bbP_{\pm\Op} \, \bU 
= \lim_{\Hxy} \int_{|k|>k_{\rm c}} \langle  \bU,  \bbW_{k,\pm \Op,0}  \rangle_{s} \,   \bbW_{k,\pm \Op,0} \,\rmd k,
\end{equation*}
for all $\bU \in \Hps$. Formula \eqref{eq.Eac-crit} then shows that the last integrals in 
\eqref{eq.spect-rep-E} have to be removed to express $\bbE(S)\,\bbP_{\rm ac}.$ Using the same arguments as above for the surface integrals on $\zZ$ for $\scZ \in \calZ\setminus\{\EE\}$, we obtain instead of \eqref{eq.expr-Eac-non-crit}-\eqref{eq.density-non-crit}
\begin{equation}
\label{eq.expr-Eac-crit}
\bbE(S)\,\bbP_{\rm ac} \, \bU = \bbE(S\setminus\{\pm\Op\}) \, \bU = \int_\bbR \boldsymbol{1}_{S}(\lambda) \, \bbM_\lambda \bU \,\rmd\lambda
\quad\text{with}
\end{equation}
\begin{equation}
\label{eq.density-crit}
\bbM_\lambda \bU := \sum_{\scZ \in \calZ\setminus\{\EE\}} \sum_{j \in J_{\scZ}} \int_{\zZ(\lambda)} \langle \bU,\bbW_{k,\lambda,j} \rangle_{s}  \; \bbW_{k,\lambda,j} \,\rmd k.
\end{equation}

\textbf{Main results.} The properties of the spectral density are stated in the two following theorems, which are proved in the remainder of this section.

\begin{theorem}
	Let $s > 1/2.$ For every bounded function $f: \bbR \to \bbC$ with a compact support that does not contain any point of $\sigma_{\rm exc}$ (see \eqref{eq.def-sigma-exc}), the operator $f(\bbA)\,\bbP_{\rm ac}$ is given by
	\begin{equation*}
	f(\bbA)\,\bbP_{\rm ac} = \int_{\bbR} f(\lambda) \, \bbM_{\lambda} \,\rmd \lambda,
	\end{equation*}
	where the spectral density $\bbM_\lambda$ is defined for all $\lambda \in \bbR \setminus \sigma_{\rm exc}$ as a bounded operator from $\Hps$ to $\Hms$ by \eqref{eq.density-non-crit} if $\Oe \neq \Om$ and by \eqref{eq.density-crit} if $\Oe = \Om.$ The above integral is understood as a Bochner integral in $B(\Hps,\Hms)$.
	\label{th.dens-spec}
\end{theorem}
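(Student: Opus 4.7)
The plan is to make rigorous the formal derivation carried out in \S\ref{sec.motiv-main-results}, and then to pass from indicator functions to arbitrary bounded $f$. I would proceed in four main steps.

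\textbf{Step 1: Uniform $\Hms$-bounds on generalized eigenfunctions.} The crucial prerequisite, which is implicitly used when invoking Fubini and Bochner integrability, is a quantitative version of the estimate sketched in Remark \ref{rem.F}(ii): namely, for every compact $K \subset \bbR^2$ disjoint from the spectral cuts and from the horizontal lines $\bbR \times \sigma_{\rm exc}$, there exists $C_K>0$ such that
\begin{equation*}
\|\bbW_{k,\lambda,j}\|_{\Hms} \leq C_K \qquad \text{for all } (k,\lambda)\in K \cap \zZ,\ j\in J_\scZ,\ \scZ\in\calZ.
\end{equation*}
I would establish this by inspecting formulas \eqref{eq.def-Vkbis}, \eqref{def-A-gen}--\eqref{eq-def-phi} and \eqref{def-A-plasm}--\eqref{def-psi-plasm}: avoiding the spectral cuts keeps $\thetakl^\pm$, $1/\calW_{k,\lambda}$ and $1/\mu_\lambda^\pm$ bounded, while $s>1/2$ ensures $L^\infty(\bbR^2)\hookrightarrow L^2_{-s}(\bbR^2)$, so the bounded plane-wave factors of \eqref{eq-def-phi} contribute a finite $\Hms$-norm.

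\textbf{Step 2: Rigorous derivation for an indicator function.} Fix a bounded Borel set $S$ with $\overline{S}\cap \sigma_{\rm exc}=\varnothing$ and take $\bU\in\Hps$. Applying Theorem \ref{th.diagA}(iii) to $f=\boldsymbol{1}_S$ yields \eqref{eq.spect-rep-E}. For each $\scZ\in\calZ\setminus\{\EE\}$, the integration set $S\cap\zZ$ is contained in a compact $K$ of the type of Step 1, so Step 1 reduces the $\liminf_{\Hxy}$ to a genuine Bochner integral in $\Hms$, and Fubini applies to the scalar-valued pairing integral defining $\langle \bbF^{*}\!\!\int\cdots \,,\,\bV\rangle_{s}$ for any $\bV\in\Hps$. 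Hence, for a.e.\ $\lambda\in\bbR$,
\begin{equation*}
\bbM_\lambda \bU := \sum_{\scZ\in\calZ\setminus\{\EE\}}\sum_{j\in J_\scZ}\int_{\zZ(\lambda)}\langle \bU,\bbW_{k,\lambda,j}\rangle_s\,\bbW_{k,\lambda,j}\,\rmd k
\end{equation*}
is a well-defined element of $\Hms$, and $\lambda\mapsto \bbM_\lambda\bU$ is measurable into $\Hms$. For the lineic zone $\zEE$, I would use the strict monotonicity of $\lambda_\scE$ on $[\kc,+\infty)$ (cf.\ \eqref{eq.expressionlambdae}) and the fact that $S$ avoids the thresholds $\pm\Op,\pm\Oc$ to perform the change of variable $k=\pm k_\scE(\lambda)$, producing the Jacobian factor $\JacE(\lambda)$ in the non-critical case. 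In the critical case $\Oe=\Om$, $\lambda_\scE\equiv\Op$ so the $\zEE$-integral is concentrated at $\pm\Op\notin S$; it is precisely $\bbE(S)(\bbP_{+\Op}+\bbP_{-\Op})\bU$, and subtracting it from $\bbE(S)\bbP_{\rm div0}\bU$ via \eqref{eq.Eac-crit} yields \eqref{eq.density-crit}.

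\textbf{Step 3: Boundedness of $\bbM_\lambda$ in $B(\Hps,\Hms)$ and integrability.} From Step 1, the integrand in the definition of $\bbM_\lambda$ has, for $\lambda$ in a compact set away from $\sigma_{\rm exc}$, a uniformly bounded $\Hms$-norm with respect to $k$, on a compact range of $k$, whence $\|\bbM_\lambda\|_{\Hps,\Hms}$ is locally bounded on $\bbR\setminus\sigma_{\rm exc}$. This shows that $\lambda\mapsto f(\lambda)\bbM_\lambda$ is strongly measurable and bounded in $B(\Hps,\Hms)$, hence Bochner integrable whenever $f$ is bounded with support away from $\sigma_{\rm exc}$. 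Combining with Step 2 gives the identity $\bbE(S)\bbP_{\rm ac}=\int_\bbR\boldsymbol{1}_S(\lambda)\bbM_\lambda\,\rmd\lambda$ in $B(\Hps,\Hms)$.

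\textbf{Step 4: Extension to bounded $f$.} By linearity the identity extends to simple functions $f=\sum_n c_n\boldsymbol{1}_{S_n}$ with $\operatorname{supp}f$ a compact set disjoint from $\sigma_{\rm exc}$. A general bounded $f$ with compact support away from $\sigma_{\rm exc}$ is the uniform limit of such simple functions on $\operatorname{supp}f$; on the operator side, $f_n(\bbA)\bbP_{\rm ac}\to f(\bbA)\bbP_{\rm ac}$ in $B(\Hxy)$ by the functional calculus, while on the integral side, dominated convergence in the Bochner sense (using the local bound on $\|\bbM_\lambda\|_{\Hps,\Hms}$ from Step 3 as the dominating function) yields convergence in $B(\Hps,\Hms)$. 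Both limits must agree, which proves the theorem.

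The main obstacle is Step 1: obtaining the uniform $\Hms$-bound on $\bbW_{k,\lambda,j}$ requires a careful bookkeeping of the denominators $\calW_{k,\lambda}$, $\mu_\lambda^\pm$ and $\thetakl^\pm(x)$ appearing in \eqref{def-A-gen}--\eqref{eq-def-phi} and \eqref{def-A-plasm}. The cross-point set $|k|=\kc,\ |\lambda|=\Oc$ and the thresholds $\lambda=\pm\Om,\pm\Op$ are precisely where these denominators degenerate, which is the intrinsic reason for excluding $\sigma_{\rm exc}$ from the hypotheses. The other steps (Fubini, change of variables, extension to bounded $f$) are then standard once the uniform bound is in hand.
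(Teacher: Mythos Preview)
Your Step~1 proves a uniform bound $\|\bbW_{k,\lambda,j}\|_{\Hms}\le C_K$ only on compacta $K$ \emph{disjoint from the spectral cuts}. But the integration domains $\zZ(\lambda)$ appearing in the definition \eqref{eq.density-non-crit} of $\bbM_\lambda$ are intervals whose \emph{endpoints lie on the spectral cuts}: for instance $\zDI(\lambda)=(-\kO(\lambda),\kO(\lambda))$ when $\kO(\lambda)<\kI(\lambda)$. At these endpoints $\thetakl^{\mp}\to 0$, and Proposition~\ref{prop.decay-estim-surfacic-zones} shows that $\|\bbW_{k,\lambda,\pm1}\|_{\Hms}$ genuinely blows up there, like $|\thetakl^{\mp}|^{-1/2}$. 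Hence the claim in Step~2 that the effective integration set ``is contained in a compact $K$ of the type of Step~1'' is false, and the assertion in Step~3 that the integrand ``has a uniformly bounded $\Hms$-norm with respect to $k$'' is false as well. Your argument as written does not establish that $\bbM_\lambda\bU$ is even well defined in $\Hms$.

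What the paper actually uses (Proposition~\ref{p.estim-fpg}) is that $(k,\lambda)\mapsto\|\bbW_{k,\lambda,j}\|_{\Hms}$ is merely \emph{square-integrable} on each slice $\Lambda_\scZ([a,b])$ for $\scZ\ne\EE$. This follows from the sharp bound $\|\bbW_{k,\lambda,\pm1}\|_{\Hms}\lesssim|\thetakl^{\mp}|^{-1/2}$ of Proposition~\ref{prop.decay-estim-surfacic-zones} combined with the estimate $|\thetakl^{\pm}|^{-1}\lesssim|\,|k|-k_{\rm cut}(\lambda)\,|^{-1/2}$ of Lemma~\ref{eq.lemsingularitythetalk}, so that $\|\bbW_{k,\lambda,j}\|_{\Hms}^2$ has an integrable $|\,|k|-k_{\rm cut}\,|^{-1/2}$ singularity at the boundary. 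That square-integrability (not a uniform bound) is what makes the $\Hms$-valued integrand $\langle\bU,\bbW_{k,\lambda,j}\rangle_s\,\bbW_{k,\lambda,j}$ Bochner-integrable and justifies Fubini. Your last paragraph even flags the denominator issue, but misidentifies where the degeneration matters: the cross points and $\pm\Oc$ are not the obstruction here---the ordinary spectral cuts bounding every $\zZ(\lambda)$ are.

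A smaller point: in Step~4, a general bounded measurable $f$ is not the uniform limit of simple functions, so $f_n(\bbA)\to f(\bbA)$ only strongly, not in $B(\Hxy)$. The paper sidesteps this by observing that once the $L^2$-in-$k$ estimate is in hand, the Fubini/Bochner argument works directly for any bounded $f$ supported in $\bbR\setminus\sigma_{\rm exc}$, without going through indicators first.
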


Note that if $f$ is a bounded function whose support $S$ is no longer compact and/or contains points of $\sigma_{\rm exc},$ the expression of $f(\bbA)\,\bbP_{\rm ac}$ follows from Theorem \ref{th.dens-spec} by considering an increasing sequence $(S_n)$ of compacts subsets of $S \setminus \sigma_{\rm exc}$ whose union covers this set. Setting $f_n := f\,\boldsymbol{1}_{S_n},$ Theorem \ref{th.diagA} shows that
\begin{equation*}
\Big\| \big(f(\bbA) - f_n(\bbA)\big)\,\bbP_{\rm ac}\bU \Big\|_{\Hxy} =
\left\{ \begin{array}{ll}
\Big\| \big(f(\lambda) - f_n(\lambda)\big)\,\bbF\bU \Big\|_{\hatH} & \text{if }\Oe \neq \Om, \\[8pt]
\Big\| \big(f(\lambda)\boldsymbol{1}_{\bbR\setminus\{\pm\Op\}} - f_n(\lambda)\big)\,\bbF\bU \Big\|_{\hatH} & \text{if }\Oe = \Om,
\end{array}
\right.
\end{equation*}
which tends to 0 by the Lebesgue dominated convergence theorem. Hence, using the same notation as in \eqref{eq.adj-F-lim}, we have 
\begin{equation*}
\forall \bU \in \Hps, \quad
f(\bbA)\,\bbP_{\rm ac}\bU = \lim_{\Hxy} \int_{\bbR} f(\lambda) \, \bbM_{\lambda}\bU \,\rmd \lambda,
\end{equation*}
that we rewrite in the condensed form
\begin{equation}
f(\bbA)\,\bbP_{\rm ac} = \slim_{B(\Hps,\Hxy)}\ \int_{\bbR} f(\lambda) \, \bbM_{\lambda} \,\rmd \lambda,
\label{eq.calc-fonct-ac-lim}
\end{equation}
where ``$\slim$'' means that the limit is taken for the strong operator topology of $B(\Hps,\Hxy)$.

\begin{theorem}
	Let $s > 1/2.$	The spectral density $\lambda \mapsto \bbM_\lambda \in B(\Hps,\Hms)$ is locally H\"{o}lder-continuous on $\bbR \setminus \sigma_{\rm exc}$. More precisely, let  $[a,b]$ be an interval of $ \bbR \setminus \sigma_{\rm exc}$ and $\Gamma_{[a,b]} \subset (0,1)$ be the set of H\"{o}lder exponents defined by \eqref{eq.def-Gamma-K} for  $K=[a,b]$. Then for any $\gamma \in \Gamma_{[a,b]}$, there exists a constant $C_{a,b}^{\gamma}>0$ such that
	\begin{equation}\label{eq.holderestim-specdensity}
	\forall \lambda', \, \lambda \in [a,b], \quad
	\big\| \bbM_{\lambda'} - \bbM_\lambda \big\|_{\Hps,\Hms} \leq C_{a,b}^{\gamma} \ |\lambda'-\lambda|^{\gamma}.
	\end{equation}
	\label{th.Holder-dens-spec}
\end{theorem}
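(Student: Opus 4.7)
The plan is to decompose $\bbM_\lambda$ according to the spectral zones, $\bbM_\lambda = \sum_{\scZ \in \calZ\setminus\{\EE\}}\bbM_\lambda^\scZ + \bbM_\lambda^\EE$ (with $\bbM_\lambda^\EE$ absent in the critical case), and to prove the estimate \eqref{eq.holderestim-specdensity} for each summand separately. For each surfacic zone $\scZ \in \calZ\setminus\{\EE\}$, the difference $(\bbM_{\lambda'}^\scZ - \bbM_\lambda^\scZ)\bU$ naturally splits into two pieces: an \emph{interior} contribution, the integral over the common domain $\zZ(\lambda) \cap \zZ(\lambda')$ of the $\lambda$-difference of the bilinear integrand $\langle \bU,\bbW_{k,\lambda,j}\rangle_s \, \bbW_{k,\lambda,j}$, and a \emph{boundary} contribution, the integrals over the symmetric difference $\zZ(\lambda) \triangle \zZ(\lambda')$.

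For the interior term, one needs jointly in $(k,\lambda)$ a uniform $\Hms$-bound on $\bbW_{k,\lambda,j}$ and a Hölder estimate on $\lambda \mapsto \bbW_{k,\lambda,j}$ valued in $\Hms$, together with the analogous uniform Hölder regularity of $\lambda \mapsto \langle \bU,\bbW_{k,\lambda,j}\rangle_s$ for $\|\bU\|_{\Hps} \leq 1$. Thanks to the explicit form \eqref{eq.def-W}--\eqref{def-psi-gen} and the separable structure $\psi_{k,\lambda,j}(x)\,\rme^{\rmi k y}$, these reduce to a tangential Fourier analysis in $y$ together with a one-dimensional integration against the piecewise-exponential profile $\psi_{k,\lambda,j}(x)$ in $x$. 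The assumption $s > 1/2$ is then exploited via a weighted-Sobolev embedding of Agmon type in the variable $k$, which yields the Hölder exponent $s - 1/2$; a Lipschitz estimate obtained by direct differentiation in $\lambda$ caps this at $1$.

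For the boundary term, the length of $\zZ(\lambda) \triangle \zZ(\lambda')$ is controlled by the moduli of continuity of the boundary functions $k_0(\lambda), k_\scD(\lambda), k_\scI(\lambda)$. These are smooth on the interior of their respective domains, but vanish like $\sqrt{|\lambda \mp \Oe|}$ at $\pm\Oe$ and meet tangentially at the cross points $(\pm\kc,\pm\Oc)$; similarly $k_\scE(\lambda)$ has an infinite derivative at $\pm\Oc$. Consequently, on any compact $[a,b]\subset\bbR\setminus\sigma_{\rm exc}$ disjoint from $\{\pm\Oe,\pm\Oc\}$ the symmetric difference has length of order $|\lambda'-\lambda|$, which is compatible with any exponent $\gamma < \min(s-1/2,1)$; but as soon as $[a,b]$ touches one of $\pm\Oe$ or $\pm\Oc$, this length is only of order $|\lambda'-\lambda|^{1/2}$ in the worst case, which forces the restriction $\gamma < 1/2$ in \eqref{eq.def-Gamma-K}. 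The lineic piece $\bbM_\lambda^\EE$ is a finite sum of rank-one operators depending smoothly on $\lambda$ except at $\pm\Oc$, where the Jacobian $\JacE$ has a square-root singularity, and is handled by the same reasoning.

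The main obstacle is the behaviour of the normalizing coefficient $A_{k,\lambda,\pm 1} \propto |\calW_{k,\lambda}|^{-1}\,|\thetakl^\mp/\mu_\lambda^\mp|^{1/2}$ near a cross point $(\pm\kc,\pm\Oc)$: both the denominator $\calW_{k,\lambda}$ (which vanishes on the dispersion curve \eqref{eq.disp} emanating from these points) and the factor $\thetakl^\mp$ in the numerator vanish there, so that the a priori $\Hms$-bound on $\bbW_{k,\lambda,\pm 1}$ has to be obtained through a careful asymptotic analysis. A Taylor expansion of $\calW_{k,\lambda}$, of $\thetakl^\pm$, and of the hyperbolic profiles in \eqref{eq-def-phi} about such a cross point shows that the competing singularities compensate, leaving an integrand whose modulus of continuity in $\lambda$ is exactly of order $\sqrt{|\lambda - \Oc|}$, which precisely delivers the Hölder-$1/2$ estimate in this delicate regime.
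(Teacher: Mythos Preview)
Your overall architecture---decompose $\bbM_\lambda$ by spectral zones, split the surfacic difference into an ``interior'' integral over $\zZ(\lambda)\cap\zZ(\lambda')$ and a ``boundary'' integral over the symmetric difference, and treat the lineic part as a finite-rank perturbation---is exactly the skeleton of the paper's proof (see \S\ref{sec.Holderspectraldensity}). However, several of the mechanisms you invoke to flesh it out are incorrect and would not deliver the stated result.

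First, the constraint $\gamma<s-1/2$ does \emph{not} come from a weighted-Sobolev embedding in the dual variable $k$; nothing of the sort is available, since the $k$-dependence of $\psi_{k,\lambda,j}$ enters through $\thetakl^\pm$ in the profile, not only through the phase $\rme^{\rmi ky}$. In the paper it arises in the physical variable $x$: the pointwise H\"older estimates on $w_{k,\lambda,j}$ and $\partial_x w_{k,\lambda,j}$ carry a factor $(1+|x|)^\gamma$ (obtained by interpolating $L^\infty$-bounds and mean-value bounds on $\partial_\lambda$, see Propositions~\ref{prop.decay-estim-surfacic-zones} and \ref{prop.holdreggeneralizedeigenfunctions}), and one needs $(1+|x|)^\gamma\in L^2_{-s}$. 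Second, there is no ``uniform $\Hms$-bound'' on the $\bbW_{k,\lambda,j}$: one has only $\|\bbW_{k,\lambda,\pm1}\|_{\Hms}\lesssim|\thetakl^{\mp}|^{-1/2+\gamma}$ (or $|\thetakl^{\mp}|^{-1/2}$ near $\pm\Oc$), which blows up at the spectral cuts. Hence your boundary argument ``length of $\zZ(\lambda)\triangle\zZ(\lambda')$ $\times$ uniform bound'' fails; instead one must integrate a power of $|\thetakl^{\mp}|$ over the thin strip and exploit cancellation between the singularity and the shrinking domain (Lemma~\ref{l.int-theta2}). The same singularity means the interior term requires a nontrivial integration lemma (Lemma~\ref{l.int-theta1}) for $\int_{\zZ(\lambda)\cap\zZ(\lambda')}\sup_{\tilde\lambda}(\theta_{k,\tilde\lambda}^{\min})^{\alpha+\alpha'}\,\rmd k$. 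Third, your description of the cross-point analysis is off in two respects: near $\pm\Oc$ the relevant control on $|\calW_{k,\lambda}|^{-1}$ is not a Taylor-expansion compensation but the algebraic bound $|\calW_{k,\lambda}|^{-1}\lesssim(|\thetakl^+|+|\thetakl^-|)^{-1}$ of Lemma~\ref{lem.boundwronsk}, and the H\"older loss to $\gamma<1/2$ there comes from a worse interpolation exponent ($\alpha'=-1/2-2\gamma$ instead of $-1/2-\gamma$), not from the modulus of continuity of the boundary curves. Finally, $\kE$ is $C^\infty$ at $\pm\Oc$ (the paper checks $\lambda_\scE'(\kc)\neq0$), so $\JacE$ is bounded there; the square-root behaviour at $\Oc$ is that of $\theta^+_{\kE(\lambda),\lambda}$ (Lemma~\ref{Dl-disp-kc}), not of the Jacobian.
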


\begin{remark}
Let us mention that the formulation of Theorem \ref{th.Holder-dens-spec} is not entirely optimal in the sense that it holds true for two particular values of $\gamma$ which are not contained in the definition \eqref{eq.def-Gamma-K} of $\Gamma_{[a,b]}$.

On the one hand, in the case where $[a,b] \cap \{ \pm\Oe,\pm\Oc \} = \varnothing,$ the value $\gamma = 1$ can be included in $\Gamma_{[a,b]}$, provided that $s > 3/2$. The proof of Theorem \ref{th.Holder-dens-spec} presented in the following actually includes this particular case. However, as the proof of the limiting absorption principle (Theorem \ref{thm.limabs}) is no longer valid for this particular value (see \S\ref{sec-thabslproof}), we keep the same definition of $\Gamma_{[a,b]}$ here.

On the other hand, in the case where $[a,b]$ contains $+\Oe$ or $-\Oe$ but not $\pm\Oc$, the value $\gamma = 1/2$ is allowed, provided that $s > 1$. As mentioned in Remark \ref{rem.cas-un-demi}, this particular value has been excluded for the sake of clarity, but we show in Appendix \ref{app.cas-un-demi} how to deal with this case.
\label{rem.excluded}
\end{remark}

The above theorems are based on properties of the generalized eigenfunctions which are studied in the next subsections. These properties will be established in bounded ``slices'' of the spectral zones $\Lambda_\scZ$ defined for any closed interval $[a,b] \subset \bbR$ by
\begin{equation}
\Lambda_\scZ([a,b]) 
:= \Lambda_\scZ \cap \big( \bbR \times [a,b] \big)
= \bigcup_{\lambda\in [a,b]} \{(k,\lambda) \mid k \in \Lambda_{\scZ}(\lambda)\},
\label{eq.defLambdaspeczoneab}
\end{equation}
where $\Lambda_{\scZ}(\lambda)$ is defined in \eqref{eq.def-section}. In particular, we are able to show now that Theorem \ref{th.dens-spec} follows from the following Proposition which is proved in \S\ref{sec_defMlambda}. 

\begin{proposition}
	Let $s>1/2$ and $[a,b] \subset \bbR\setminus\sigma_{\rm exc}$. 
	\begin{enumerate}
		\item If $\scZ \in \calZ\setminus\{\EE\}$ and $\Lambda_{\scZ}([a,b]) \neq \varnothing$, then for $j\in J_{\scZ}$, the map $(k, \lambda) \mapsto  \| \bbW_{k,\lambda,j} \|_{\Hms}$ is square integrable in $\Lambda_{\scZ}([a,b])$.
		\item In the non-critical case $\Oe\neq \Om$, if $\Lambda_{\EE}([a,b]) \neq \varnothing$, then the map $(k, \lambda) \mapsto \| \bbW_{k,\lambda,0} \|_{\Hms}$ is bounded on $\Lambda_{\EE}([a,b])$.
	\end{enumerate}
	\label{p.estim-fpg}
\end{proposition}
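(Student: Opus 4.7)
The idea is to substitute the explicit formulas \eqref{eq.def-W}–\eqref{def-psi-plasm} into $\|\bbW_{k,\lambda,j}\|_{\Hms}^2$, exploit the separation of variables $(x,y)$, and reduce both statements to bounds on the amplitude coefficient $A_{k,\lambda,j}$ and the profile $\psi_{k,\lambda,j}$. More precisely, since every component of $\bbW_{k,\lambda,j}$ in \eqref{eq.def-Vkbis} is of the form $e^{\rmi k y}$ times either $w_{k,\lambda,j}$, $\partial_x w_{k,\lambda,j}$ or their restriction to $\bbR^2_+$, all multiplied by factors built out of $k$, $\lambda$, and $1/(\mu_\lambda^\pm \lambda)$, integration against the tensorized weight $\eta_{-s}(x,y)^2 = (1+x^2)^{-s}(1+y^2)^{-s}$ yields
\begin{equation*}
\|\bbW_{k,\lambda,j}\|_{\Hms}^2 \le C_s \, Q(k,\lambda)\, |A_{k,\lambda,j}|^2 \int_{\bbR} (1+x^2)^{-s}\,\bigl(|\psi_{k,\lambda,j}(x)|^2 + |\psi_{k,\lambda,j}'(x)|^2\bigr)\,\rmd x,
\end{equation*}
where $C_s := \int_\bbR(1+y^2)^{-s}\rmd y < \infty$ (because $s>1/2$) and $Q(k,\lambda)$ gathers polynomial factors in $k$ and $\thetakl^\pm$ together with the rational functions $1/(\mu_\lambda^\pm\lambda)$, $1/(\mu_\lambda^+\lambda^2)$, etc. On any slice $\Lambda_\scZ([a,b])$ with $[a,b]\subset\bbR\setminus\sigma_{\rm exc}$, the quantities $\lambda$, $1/\lambda$, $1/\mu_\lambda^\pm$ are bounded, so everything reduces to controlling $|A_{k,\lambda,j}|^2$ and the $x$-integral of $|\psi_{k,\lambda,j}|^2$.

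\textbf{The profiles and the plasmonic case.} By inspection of \eqref{eq-def-phi} and \eqref{def-psi-plasm}, in every surface zone the function $\psi_{k,\lambda,\pm1}$ is a bounded combination of trigonometric functions (propagative side) and pure exponentials (evanescent side), with bounds that are uniform as $(k,\lambda)$ varies over any bounded subset of $\Lambda_\scZ$. Similarly $\psi_{k,\lambda,0}(x)=\exp(-\thetakl(x)|x|)$ is a bounded exponential. Consequently the weighted $x$-integral above is bounded by a constant depending only on $(a,b)$. For part~2 (plasmonic zone), the slice $\Lambda_{\EE}([a,b])$ is compact: indeed, since $\lambda_\scE(k)\to\Op$ as $|k|\to\infty$ and $\pm\Op\notin[a,b]$, the set $\{k:\pm\lambda_\scE(k)\in[a,b]\}$ is bounded. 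In the non-critical case $\Oe\neq\Om$, the denominator $(4k^4+(\eps_0\mu_0)^2(\Oe^2-\Om^2)^2)^{1/4}$ in \eqref{def-A-plasm} is bounded below, and $|\mu_\lambda^+\thetakl^+|^{1/2}$ is continuous and finite on the compact slice; hence $A_{k,\lambda,0}$ is bounded and so is $\|\bbW_{k,\lambda,0}\|_{\Hms}$.

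\textbf{The surface zones and the main obstacle.} For part~1, the amplitude $A_{k,\lambda,\pm1}$ from \eqref{def-A-gen} contains the factor $1/|\calW_{k,\lambda}|$, so the key question is where $\calW$ may vanish or become small. By \cite[Lemma~13]{Cas-Haz-Jol-17}, equation \eqref{eq.disp} admits real solutions only on $\overline{\zEE}$; inside each surface zone $\calW_{k,\lambda}$ stays nonzero, and the potential degeneracies occur only near the cross points $(|k|,|\lambda|)=(\kc,\Oc)$, which lie on the common closure of $\overline{\zEE}$ and of the four surface zones. The main difficulty, and the crux of the proof, is the local integrability of $|A_{k,\lambda,\pm1}|^2$ near such a cross point. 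A Taylor expansion shows that $\Thetakl^\pm$ vanishes linearly in $(k-\kc,\lambda-\Oc)$ there, hence
\begin{equation*}
|\thetakl^\pm|= O\bigl(r^{1/2}\bigr),\quad |\calW_{k,\lambda}|= O\bigl(r^{1/2}\bigr),\quad |A_{k,\lambda,\pm1}|^2 = O\bigl(r^{-1/2}\bigr),
\end{equation*}
with $r:=|(k-\kc,\lambda-\Oc)|$. Since $r^{-1/2}$ is locally integrable in two variables, $(k,\lambda)\mapsto\|\bbW_{k,\lambda,\pm1}\|_{\Hms}^2$ is integrable on $\Lambda_\scZ([a,b])$. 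The main obstacle is thus a careful zone-by-zone verification that the cross points are the only singularities of $A_{k,\lambda,\pm1}$ and that the $1/|\calW|$ blow-up there is exactly compensated by the $|\thetakl^\mp|^{1/2}$ numerator up to an $r^{-1/4}$ factor.
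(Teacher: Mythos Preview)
Your proof contains a genuine gap: the claim that ``$\psi_{k,\lambda,\pm1}$ is a bounded combination of trigonometric functions \ldots\ with bounds that are uniform as $(k,\lambda)$ varies over any bounded subset of $\Lambda_\scZ$'' is false. Inspect the second line of \eqref{eq-def-phi}: on the half-line $\pm x\le 0$ the coefficient in front of $\sinh(\thetakl^\mp x)$ is
\[
\frac{\thetakl^{\pm}/\mu_\lambda^{\pm}}{\thetakl^{\mp}/\mu_\lambda^{\mp}},
\]
which blows up like $|\thetakl^{\mp}|^{-1}$ whenever $(k,\lambda)$ approaches a spectral cut where $\thetakl^{\mp}\to 0$ while $\thetakl^{\pm}$ stays away from zero. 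Such cuts form the \emph{entire} boundary of each surface zone, not just the cross points. Hence your weighted $x$-integral of $|\psi|^2$ is \emph{not} bounded uniformly on $\Lambda_\scZ([a,b])$.

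Your identification of the cross points as ``the only singularities'' is therefore incorrect: it is true that $|\calW_{k,\lambda}|$ stays bounded below away from the cross points (so $A_{k,\lambda,\pm1}$ is bounded there), but the blow-up of $\psi$ reintroduces a singularity along every spectral cut. The correct pointwise bound, obtained by playing the vanishing of $A$ against the blow-up of $\psi$, is $\|\bbW_{k,\lambda,\pm1}\|_{\Hms}\lesssim |\thetakl^{\mp}|^{-1/2}$ (this is Proposition~\ref{prop.decay-estim-surfacic-zones}). One must then show that $|\thetakl^{\mp}|^{-1}$ is integrable over $\Lambda_\scZ([a,b])$; the paper does this via Lemma~\ref{eq.lemsingularitythetalk}, which controls $|\thetakl^{\mp}|^{-1}$ by $\big||k|-k_0(\lambda)\big|^{-1/2}$ or $\big||k|-k^+(\lambda)\big|^{-1/2}$ and then integrates first in $k$ over each section $\Lambda_\scZ(\lambda)$. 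Note also that this last step requires some care when $\pm\Oe\in[a,b]$, since $k^+(\lambda)$ vanishes there and the factor $|k^+(\lambda)|^{-1/2}$ in \eqref{eq.singtheta-spectralcut-2} must be absorbed by the shrinking length of $\Lambda_\scZ(\lambda)$.

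Your treatment of part~2 is essentially correct.
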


\begin{proof}[Proof of Theorem \ref{th.dens-spec}]
The properties of Proposition \ref{p.estim-fpg} allow us to justify the path from \eqref{eq.spect-rep-E} to \eqref{eq.expr-Eac-non-crit}-\eqref{eq.density-non-crit} if $\Oe \neq \Om$ and to \eqref{eq.expr-Eac-crit}-\eqref{eq.density-crit} if $\Oe = \Om$. Indeed, thanks to assumption \eqref{e.asumpt-S}, this lemma tells us that the functions involved in the surface integrals in \eqref{eq.spect-rep-E} are integrable functions valued in $\Hms$. More precisely, for all $\scZ \in \calZ\setminus\{\EE\}$ and $\bU\in \Hps$, the map $(k, \lambda) \mapsto  \boldsymbol{1}_{S}(\lambda) \, \langle  \bU, \bbW_{k,\lambda,j} \rangle_{s}  \; \bbW_{k,\lambda,j}$ belongs to $L^1 \big(\Lambda_{\scZ},\Hms\big)$ since
\begin{equation*}
\left\| \int_{\zZ } \boldsymbol{1}_{S}(\lambda) \, \langle  \bU, \bbW_{k,\lambda,j} \rangle_{s}  \; \bbW_{k,\lambda,j} \,\rmd\lambda\,\rmd k \right\|_{\Hms}
\leq  \|\bU \|_{\Hps} \ \int _{\zZ} \, \boldsymbol{1}_{S}(\lambda)\,\|\bbW_{k,\lambda,j} \|_{\Hms}^2\,\rmd\lambda\,\rmd k.  
\end{equation*}
On the one hand, this shows that the limiting process in $\Hxy$ is useless (by the Lebesgue's dominated convergence theorem for Bochner integrals \cite[Theorem 3.7.9]{Hil-96}). On the other hand, this justifies the use of Fubini's theorem \cite[Theorem 3.7.13]{Hil-96}, which tells us in particular that the integrals on $\zZ(\lambda)$ in \eqref{eq.density-non-crit} or \eqref{eq.density-crit} are defined for almost every $\lambda.$ Noticing that Proposition \ref{p.estim-fpg} holds true for $a=b$, we see that these integrals are actually defined for all $\lambda \notin \sigma_{\rm exc}.$

In the non-critical case, it remains to deal with the last integrals in \eqref{eq.spect-rep-E}, related to the 1D spectral zone $\zEE$. This is where we use the fact that  $\overline{S} \cap \{\pm\Op\} = \varnothing$ (contained in assumption \eqref{e.asumpt-S}), which implies that the support of $k \mapsto \boldsymbol{1}_{S}(\pm \lambda_\scE(k))$ is bounded. In other words, the integrals actually cover a bounded part of $\zEE$. Hence Proposition \ref{p.estim-fpg} tells us that the functions involved in these integrals are integrable functions valued in $\Hms$, which shows again that the limit process is useless and allows the change of variable which yields \eqref{eq.density-non-crit}.

To conclude, we simply have to notice that all the above arguments remain valid if, instead of the spectral measure $\bbE(S),$ one considers the spectral representation of $f(\bbA)\,\bbP_{\rm ac}$ where $f$ is a bounded function with compact support $S$ that satisfies \eqref{e.asumpt-S}.
\end{proof}

The following corollary of Theorem \ref{th.dens-spec} shows that outside the eigenvalues of $\bbA,$ the spectrum of $\bbA$ is absolutely continuous.

\begin{corollary}\label{cor.specmes}
	The spectral measure of $\bbA$ satisfies
	\begin{equation}\label{eq.absolutecontinuousmes2}
	\forall \bU, \bV \in \Hps,\quad 
	\rmd\big(\bbE(\lambda)\bbP_{\rm ac} \bU, \bbP_{\rm ac} \bV\big)_{\Hxy} =
	\rmd\big(\bbE(\lambda)\bbP_{\rm ac} \bU, \bV\big)_{\Hxy} =  \langle\bbM_{\lambda}\bU,\bV\rangle_s \,\rmd\lambda.
	\end{equation}
	Moreover, for any Borel set $S \subset \bbR$ (bounded or not), we have
	\begin{equation}
	\forall \bU \in \Hps,\quad 
	\left\| \bbE(S)\,\bbP_{\rm ac} \, \bU \right\|_{\Hxy}^2 
	= \int_\bbR \boldsymbol{1}_{S}(\lambda) \, \langle\bbM_{\lambda} \bU, \bU \rangle_s\,\rmd\lambda,
	\label{eq.norm-Eac}
	\end{equation}
	where $\lambda \mapsto \langle\bbM_{\lambda} \bU, \bU \rangle_s$ is non-negative and integrable on $\bbR$.
\end{corollary}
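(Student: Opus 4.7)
The plan is to deduce all three identities directly from Theorem \ref{th.dens-spec} (applied to indicator functions $f=\boldsymbol{1}_S$) together with standard manipulations of the spectral measure; no fresh estimate on the generalized eigenfunctions will be needed.

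First I will establish the leftmost equality in \eqref{eq.absolutecontinuousmes2}. Since $\bbP_{\rm ac}$ is a bounded Borel function of $\bbA$, it commutes with every spectral projector $\bbE(S)$, and being an orthogonal projection it is self-adjoint and satisfies $\bbP_{\rm ac}^{2}=\bbP_{\rm ac}$. Working at the level of the finite scalar measures $S\mapsto(\bbE(S)\bbP_{\rm ac}\bU,\bbP_{\rm ac}\bV)_{\Hxy}$ and $S\mapsto(\bbE(S)\bbP_{\rm ac}\bU,\bV)_{\Hxy}$, the chain
\[
(\bbE(S)\bbP_{\rm ac}\bU,\bbP_{\rm ac}\bV)_{\Hxy}=(\bbP_{\rm ac}\bbE(S)\bbP_{\rm ac}\bU,\bV)_{\Hxy}=(\bbE(S)\bbP_{\rm ac}^{2}\bU,\bV)_{\Hxy}=(\bbE(S)\bbP_{\rm ac}\bU,\bV)_{\Hxy}
\]
identifies the two measures, hence their densities.

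Next I will establish the right equality in \eqref{eq.absolutecontinuousmes2}. For any bounded Borel set $S$ with $\overline{S}\cap\sigma_{\rm exc}=\varnothing$, Theorem \ref{th.dens-spec} applied to $f=\boldsymbol{1}_S$ yields $\bbE(S)\bbP_{\rm ac}=\int_{\bbR}\boldsymbol{1}_S(\lambda)\,\bbM_\lambda\,\rmd\lambda$ as a Bochner integral in $B(\Hps,\Hms)$. Pairing both sides with $\bV\in\Hps$ through the duality extension \eqref{eq.innerproduct} of the $\Hxy$-inner product gives $(\bbE(S)\bbP_{\rm ac}\bU,\bV)_{\Hxy}=\int_{S}\langle\bbM_\lambda\bU,\bV\rangle_s\,\rmd\lambda$. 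To upgrade this identity from such $S$ to arbitrary Borel sets in $\bbR$, I will verify that both sides assign zero mass to the finite set $\sigma_{\rm exc}$, after which a standard exhaustion (writing $\bbR\setminus\sigma_{\rm exc}$ as the increasing countable union of its compact subsets) concludes. The right-hand side obviously charges no finite set; for the left-hand side it suffices to show $\bbE(\{p\})\bbP_{\rm ac}=0$ for every $p\in\sigma_{\rm exc}$. This is the only verification that requires genuine care: if $p\notin\sigma_{\rm pt}(\bbA)$ then $\bbE(\{p\})=0$; otherwise $\bbE(\{p\})$ is the orthogonal projection onto $\ker(\bbA-p)$, and a direct inspection based on Proposition \ref{prop.spectrumA} and the definition \eqref{eq.def-Pac} of $\bbP_{\rm ac}$ shows that this eigenspace is annihilated by $\bbP_{\rm ac}$ in both the non-critical and the critical cases.

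Finally, I will prove \eqref{eq.norm-Eac} by taking $\bV=\bU$ in the extended identity. Since $\bbE(S)$ and $\bbP_{\rm ac}$ are two commuting orthogonal projections, their product $Q:=\bbE(S)\bbP_{\rm ac}$ is itself an orthogonal projection, so $\|Q\bU\|_{\Hxy}^{2}=(Q\bU,\bU)_{\Hxy}$, which is precisely $\int_{\bbR}\boldsymbol{1}_S(\lambda)\,\langle\bbM_\lambda\bU,\bU\rangle_s\,\rmd\lambda$. Non-negativity of $\lambda\mapsto\langle\bbM_\lambda\bU,\bU\rangle_s$ is automatic, as it is the Radon--Nikodym density of the non-negative measure $S\mapsto\|\bbE(S)\bbP_{\rm ac}\bU\|_{\Hxy}^{2}$, and integrability follows by taking $S=\bbR$, which gives $\int_{\bbR}\langle\bbM_\lambda\bU,\bU\rangle_s\,\rmd\lambda=\|\bbP_{\rm ac}\bU\|_{\Hxy}^{2}\leq\|\bU\|_{\Hxy}^{2}<\infty$. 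Altogether the only genuine sticking point is the check that $\bbE(\{p\})\bbP_{\rm ac}=0$ on the excluded frequencies; every other step is formal.
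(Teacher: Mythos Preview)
Your proof is correct and follows essentially the same approach as the paper's: apply Theorem~\ref{th.dens-spec} to $f=\boldsymbol{1}_S$ for bounded $S$ with $\overline{S}\cap\sigma_{\rm exc}=\varnothing$, pair with $\bV$ via the Bochner integral, verify $\bbE(\sigma_{\rm exc})\bbP_{\rm ac}=0$ by the same case analysis on $\sigma_{\rm pt}(\bbA)$, and then extend. You are slightly more explicit than the paper in spelling out why $\|\bbE(S)\bbP_{\rm ac}\bU\|^{2}=(\bbE(S)\bbP_{\rm ac}\bU,\bU)_{\Hxy}$ (the product of commuting orthogonal projections is again one), whereas the paper uses this implicitly.
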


\begin{proof}
	By virtue of \eqref{eq.expr-Eac-non-crit} and \eqref{eq.expr-Eac-crit}, for any bounded Borel set $S \subset \bbR$ such that  $\overline{S} \cap \sigma_{\rm exc} = \varnothing$, we have
	\begin{equation*}
	\forall \bU, \bV \in \Hps,\quad 
	\big( \bbE(S)\,\bbP_{\rm ac} \, \bU, \bV \big)_{\Hxy} = 
	\left\langle \int_\bbR \boldsymbol{1}_{S}(\lambda) \, \bbM_\lambda \bU \,\rmd\lambda\,, \bV \right\rangle_s
	\end{equation*}
	As the above integral is Bochner, we can permute it with the duality product \cite[Theorem 3.7.12]{Hil-96}, which yields
	\begin{equation}
	\forall \bU, \bV \in \Hps,\quad 
	\big( \bbE(S)\,\bbP_{\rm ac} \, \bU, \bV \big)_{\Hxy} = 
	\int_\bbR \boldsymbol{1}_{S}(\lambda) \, \langle \bbM_\lambda \bU, \bV \rangle_s \,\rmd\lambda.
	\label{eq.expres-Eac}
	\end{equation}
	
	Besides, we have $\bbE(\sigma_{\rm exc})\,\bbP_{\rm ac} = 0$. Indeed, from  \eqref{eq.def-Pac}, we see that $\bbE(\sigma_{\rm exc})\,\bbP_{\rm ac} = \bbE(\sigma_{\rm exc}\setminus \sigma_{\rm pt}(\bbA))$, where \eqref{eq.def-sigma-exc} and \eqref{eq.specpt} show that $\sigma_{\rm exc}\setminus \sigma_{\rm pt}(\bbA)$ is equal to $\{\pm\Op\}$ for $\Oe \neq \Om$, whereas it is empty  for $\Oe = \Om$ (which implies that  $\bbE\big(\sigma_{\rm exc}\setminus \sigma_{\rm pt}(\bbA)\big)=0$). For $\Oe \neq\Om$, $\{\pm\Op\}$ has zero Lebesgue's measure and does not contain eigenvalues, thus we have also $\bbE(\{\pm\Op\}) = 0$ and the conclusion follows. This shows by sigma-additivity that \eqref{eq.expres-Eac} actually holds true if $\overline{S} \cap \sigma_{\rm exc} \neq \varnothing$, thus for any bounded Borel set $S$, which amounts to the second equality of \eqref{eq.absolutecontinuousmes2}.
	
	The first equality simply follows from the fact that $ \bbP_{\rm ac}$ is an  orthogonal projection which commutes with $\bbE(S)$.
	
	Finally, if we choose $\bV = \bU$ in \eqref{eq.expres-Eac}, we obtain \eqref{eq.norm-Eac} for $S$ bounded. The fact that it holds true for unbounded $S$ follows from the spectral theorem which ensures that for all $\bU \in \Hxy,$ the map $S \mapsto (E(S)\bU,\bU)_{\Hxy}$ defines a non-negative finite measure.
\end{proof}

In what follows, in order to avoid the appearance of non meaningful constants in inequalities in  the {\bf proofs} of the estimates involved in this paper, we use the symbols $\lesssim$ and $\gtrsim$. More precisely, this will be employed for non-negative functions  $f_{\gamma}$ and $g_{\gamma}$ of the real variable $\lambda$, that may depend on a parameter $\gamma\in (0,1]$. By definition, one has:
$$
	f_{\gamma}(\lambda) \lesssim g_{\gamma}(\lambda) \mbox{ on } [a,b] \quad \Longleftrightarrow \quad \exists \ C_{a,b}^{\gamma}>0, \ \forall  \lambda\in[a,b], \quad f_{\gamma}(\lambda) \leq C_{a,b}^{\gamma}  \  g_{\gamma}(\lambda),.
$$
However, for clarity purposes, we decide to keep these constants explicit  in the {\bf statements} of the results.

\subsection{Pointwise estimates of generalized eigenfunctions} \label{sec_EF-estimates}

In this section, we  establish pointwise (in $(k, \lambda) \in \Lambda_\scZ$)) estimates of $\bbW_{k,\lambda,j}$ for $j \in J_\scZ$ and $\scZ \in \calZ$, in the space $\Hms$ for $s>1/2$,  essentially based on the continuous embedding $ L^\infty(\mathbb{R}^2) \subset L^2_{-s}(\mathbb{R}^2)$ which relies on the following inequality:
\begin{equation}\label{Linf-L-s}
 \|\phi \|_{L^2_{-s}(\mathbb{R}^2)}\leq C_s \, \|\phi\|_{ L^\infty(\mathbb{R}^2)}, \  \forall \phi \in L^{\infty}(\mathbb{R}^2) \quad  \mbox{ with } C_s=\Big(\int_{\mathbb{R}^2} \eta_{-s}(x,y)^2 \rmd x \, \rmd y\Big)^{\frac{1}{2}}<+\infty \mbox{ for } s>\frac{1}{2}.
 \end{equation}
According to the formula  \eqref{eq.def-Vkbis}  for $\bbW_{k,\lambda,j}$, this  relies on  estimates on the 2D scalar functions $w_{k,\lambda,j}$ \eqref{eq.def-w}.

\subsubsection{Generalized eigenfunctions of surface spectral zones}
We deal first with the case $  \scZ \in \calZ \setminus \{\EE\}$ and their associated generalized eigenfunctions $\bbW_{k,\lambda,j}$ for $j=\pm 1 \in J_{\scZ}$ (often referred as bulk modes in physics).
In that case, the estimates will be used in integrals along the $\Lambda_{\scZ}(\lambda)$, see \eqref{eq.density-non-crit} and  \eqref{eq.density-crit}, where the integrand is quadratic in the generalized
eigenfunctions $\bbW_{k,\lambda,j}$. Thus, we need estimates of $\bbW_{k,\lambda,j}$  which are {\bf square integrable} over the variable $k$ in each set $\Lambda_{\scZ}(\lambda)$. The pointwise upper bounds at $(k, \lambda)$ will depend on the zone $\Lambda_\scZ$ that contains $(k, \lambda)$ and may blow up when the
will approach  boundary of each spectral zone.  This is more or less clear from the expressions of the  $\bbW_{k,\lambda,j}$ (\eqref{eq.def-W}, \eqref{eq.def-Vk}): these estimates must take care of the presence of negative powers of the functions $\thetakl^\pm$ (see in particular \eqref{eq.def-w}) that precisely vanish on the boundary of the spectral zones. 

More precisely, in our estimates, negative powers of $|\thetakl^{+}|$ and $|\thetakl^{-}|$ can be accepted, but not too large in order to respect the square integrability criterion in $k$ over each $\Lambda_{\scZ}(\lambda)$ (it will be discussed in more details in Lemma \ref{eq.lemsingularitythetalk} and  relation \eqref{L2integrability}). 

The following proposition provides $\Hms$-estimate  for the surface  spectral zones. It is preceded by a preliminary lemma which gives a useful estimate on the Wronskian $\calW_{k,\lambda}$ that appears in the expression of the  generalized eigenfunctions (see \eqref{eq.def-w} and \eqref{eq.def-Vkbis}).

\begin{lemma}\label{lem.boundwronsk}
	Let  $\scZ \in \calZ\setminus\{\EE\}$ and $[a,b] \subset \bbR\setminus\sigma_{\rm exc}$ such that $\Lambda_{\scZ}([a,b]) \neq \varnothing$. Then, there  exists $C_{a,b}>0$ such that:
	\begin{equation}\label{eq.ineqA}
	|\calW_{k,\lambda} |^{-1} \leq C_{a,b}  \, (|\thetakl^+|+|\thetakl^-|) ^{-1}, \   \, \forall (k, \lambda)\in\Lambda_{\scZ}([a,b]).
	\end{equation}
	
\end{lemma}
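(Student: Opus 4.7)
My plan is to prove the bound case by case on the four surface spectral zones $\scZ \in \{\DD,\DE,\DI,\EI\}$, exploiting the sign information for $\thetakl^\pm$ given by \eqref{eq.def-thetam}--\eqref{eq.def-thetap} and for $\mu_\lambda^\pm$ deduced from \eqref{eq.defepsmu}. The dispersion quantity $\calW_{k,\lambda} = \thetakl^-/\mu_\lambda^- + \thetakl^+/\mu_\lambda^+$ is a sum of two complex numbers, each of which is either purely real or purely imaginary. The only way to get $\calW_{k,\lambda} = 0$ would be to land in the lineic zone $\zEE$, which by hypothesis is excluded.

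The key preliminary remark is that, since $[a,b]$ avoids $\sigma_{\rm exc}\supset\{0,\pm\Om\}$, the function $\lambda \mapsto \mu_\lambda^+ = \mu_0(1-\Om^2/\lambda^2)$ is bounded above and below (in absolute value) by positive constants depending only on $[a,b]$, and $\mu_\lambda^- = \mu_0$ is just a constant. Likewise $\mu_\lambda^+$ has a definite sign on $[a,b]$, determined by whether $|\lambda|>\Om$ or $|\lambda|<\Om$. This will reduce everything to comparing $\calW_{k,\lambda}$ with a weighted sum of $|\thetakl^+|$ and $|\thetakl^-|$.

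In zones $\zEI$ and $\zDE$, one of $\thetakl^\pm$ is real positive while the other is purely imaginary, so the two terms of $\calW_{k,\lambda}$ are orthogonal in $\bbC$ and
\[
|\calW_{k,\lambda}|^2 = \left(\tfrac{|\thetakl^-|}{|\mu_\lambda^-|}\right)^2 + \left(\tfrac{|\thetakl^+|}{|\mu_\lambda^+|}\right)^2 \gtrsim |\thetakl^-|^2 + |\thetakl^+|^2,
\]
from which $|\calW_{k,\lambda}| \gtrsim |\thetakl^-|+|\thetakl^+|$ follows. In zone $\zDD$, both $\thetakl^\pm$ are purely imaginary of the same sign $-\rmi\sgn(\lambda)|\cdot|^{1/2}$, while both $\mu_\lambda^\pm$ are positive (since $|\lambda|>\max(\Oe,\Om)$), so the two terms of $\calW_{k,\lambda}$ have coefficients of the same sign and add constructively, giving $|\calW_{k,\lambda}| = |\thetakl^-|/\mu_\lambda^- + |\thetakl^+|/\mu_\lambda^+ \gtrsim |\thetakl^-|+|\thetakl^+|$. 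In zone $\zDI$, the signs flip in the numerator but also in the denominator, and the careful check shows again that the two contributions reinforce each other rather than cancel: $\thetakl^+ = +\rmi\sgn(\lambda)|\thetakl^+|$ and $\mu_\lambda^+<0$ make $\thetakl^+/\mu_\lambda^+$ have the same sign as $\thetakl^-/\mu_\lambda^-$, so once again $|\calW_{k,\lambda}|$ equals the sum of the moduli of the two terms.

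Taking the reciprocal in all four cases, and using the uniform lower bound on $\mu_\lambda^\pm$ over $[a,b]$ to fold the constants into a single $C_{a,b}$, yields \eqref{eq.ineqA}. The only obstacle is the bookkeeping of the signs of $\thetakl^\pm$ and $\mu_\lambda^\pm$ across the four zones; there is no analytic difficulty beyond that, and the fact that $[a,b]\cap\sigma_{\rm exc}=\varnothing$ is exactly what ensures that the prefactors $1/|\mu_\lambda^\pm|$ do not blow up.
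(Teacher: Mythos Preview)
Your proof is correct and follows essentially the same approach as the paper's: a case-by-case analysis over the four surface zones, using the sign information on $\thetakl^\pm$ and $\mu_\lambda^\pm$ to show that the two summands of $\calW_{k,\lambda}$ are either orthogonal in $\bbC$ (zones $\zDE,\zEI$) or collinear with the same orientation (zones $\zDD,\zDI$), together with the uniform bounds on $|\mu_\lambda^\pm|$ away from $\sigma_{\rm exc}$. The only cosmetic difference is that for the orthogonal case you invoke the Pythagorean identity whereas the paper writes the equivalent inequality $|a+\rmi b|\geq\tfrac{1}{\sqrt{2}}(|a|+|b|)$ directly.
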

\begin{proof}
The proof of the estimate  \eqref{eq.ineqA} depends on the spectral zone. More precisely:
\begin{itemize}
		\item  By virtue of (\ref{eq.def-thetam}, \ref{eq.def-thetap}), one has  $\thetakl^{-}/\mu_{\lambda}^{-} \in \rmi \, \mathbb{R}$ and $\thetakl^{+}/\mu_{\lambda}^{+} \in \mathbb{R}$  in $\zDE$ and $\thetakl^{-}/\mu_{\lambda}^{-} \in \mathbb{R}$ and $\thetakl^{+}/\mu_{\lambda}^{+} \in \rmi \, \mathbb{R}$ in $\zEI$. Thus, from \eqref{eq.disp},
		$$ 
		|\calW_{k,\lambda}| \geq \frac{1}{\sqrt{2}} \; \Big( \frac{|\thetakl^{-}|}{|\mu_{\lambda}^{-}|}+ \frac{|\thetakl^{+}|}{|\mu_{\lambda}^{+}|}  \Big)  \gtrsim    |\thetakl^{+}| + |\thetakl^{-}| ,
		$$
		since $\mu_{\lambda}^{-} = \mu_0$ and $\mu_{\lambda}^{+}$ is continuous and does not vanish on $[a,b]$.
		\item In $\zDD$ and $\zDI$, $\thetakl^{\pm}/\mu_\pm(\lambda)$ are imaginary numbers whose imaginary parts have the same sign. Thus,  
		$$
		|\calW_{k,\lambda}| = \;  \frac{|\thetakl^{-}|}{|\mu_{\lambda}^{-}|}+ \frac{|\thetakl^{+}|}{|\mu_{\lambda}^{+}|}  \gtrsim   |\thetakl^{+}| + |\thetakl^{-}| .
		$$
	\end{itemize}
	The above lower bounds for $|\calW_{k,\lambda}|$ prove \eqref{eq.ineqA}.
\end{proof}
\begin{proposition}\label{prop.decay-estim-surfacic-zones}
	Let  $s > 1/2$, $\scZ \in \calZ\setminus\{\EE\}$, and $[a,b] \subset \bbR\setminus\sigma_{\rm exc}$ such that $\Lambda_{\scZ}([a,b]) \neq \varnothing$. Then, there exists $C_{a,b}>0$  such that if  $\pm 1\in J_{\scZ}$ then:
	\begin{equation}\label{eq.ineqbound1_crosspoint}
	\| \bbW_{k,\lambda,\pm 1}\|_{\Hms}\leq C_{a,b} \;  |\thetakl^{\mp}|^{-\frac{1}{2}}, \quad \forall (k, \lambda)\in\Lambda_{\scZ}([a,b]) .
	\end{equation}
     If moreover $\pm \Oc\notin [a,b]$, for any $\gamma \in (0,1]\cap (0,s-1/2)$, one has 
\begin{equation}\label{eq.ineqbound1}
\| \bbW_{k,\lambda,\pm 1}\|_{\Hms}\leq C_{a,b}^\gamma  \;  |\thetakl^{\mp}|^{-\frac{1}{2} + \gamma}, \quad \forall (k, \lambda)\in\Lambda_{\scZ}([a,b]) .
\end{equation}
\end{proposition}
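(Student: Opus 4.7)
\textbf{Proof proposal for Proposition \ref{prop.decay-estim-surfacic-zones}.}
The idea is to reduce the $\Hms$-estimate to weighted $L^2$-estimates in the variable $x$ only. Indeed, from the explicit vectorial form \eqref{eq.def-Vkbis}, each component of $\bbW_{k,\lambda,\pm 1}$ equals $w_{k,\lambda,\pm 1}$ or $\partial_x w_{k,\lambda,\pm 1}$ multiplied by a coefficient depending on $k$, $\lambda$ and $\mu_\lambda^\pm$ which remains uniformly bounded on $\Lambda_\scZ([a,b])$ (this set is bounded in $k$, $\lambda$ avoids $\{0,\pm\Om\}$, and $\mu_\lambda^\pm$ is continuous and nonvanishing on $[a,b]$). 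Since $w_{k,\lambda,\pm 1}(x,y) = A_{k,\lambda,\pm 1}\,\psi_{k,\lambda,\pm 1}(x)\,\rme^{\rmi k y}$ and the weight $\eta_{-s}$ factorizes, one has
\begin{equation*}
\|w_{k,\lambda,\pm 1}\|_{\Hms}^2 \;=\; |A_{k,\lambda,\pm 1}|^2\ \|(1+y^2)^{-s/2}\|_{L^2(\bbR)}^2\ \int_{\bbR}(1+x^2)^{-s}|\psi_{k,\lambda,\pm 1}(x)|^2\,\rmd x,
\end{equation*}
and similarly for $\partial_x w_{k,\lambda,\pm 1}$. The $y$-integral is finite thanks to $s>1/2$, so everything reduces to one-dimensional weighted $L^2$ estimates of $\psi_{k,\lambda,\pm 1}$ and $\psi_{k,\lambda,\pm 1}'$ on $\bbR$.

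For the basic bound \eqref{eq.ineqbound1_crosspoint}, I would combine Lemma \ref{lem.boundwronsk} with direct $L^\infty$ estimates of $\psi_{k,\lambda,\pm 1}$ and its derivative. On the half-line $\pm x\leq 0$, the relevant quantity $\thetakl^\mp$ is purely imaginary in $\Lambda_\scZ$ for $\scZ\in\{\DD,\DE,\DI,\EI\}$ with $\pm 1\in J_\scZ$, so $|\cosh(\thetakl^\mp x)|,|\sinh(\thetakl^\mp x)|\leq 1$ and \eqref{eq-def-phi} yields $\|\psi_{k,\lambda,\pm 1}\|_{L^\infty}\lesssim 1+|\thetakl^\pm|/|\thetakl^\mp|$. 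On the other half-line $\pm x\geq 0$, $\exp(\mp\thetakl^\pm x)$ has modulus $\leq 1$ (either purely oscillatory, or decaying). Similarly $\|\psi_{k,\lambda,\pm 1}'\|_{L^\infty}\lesssim |\thetakl^+|+|\thetakl^-|$. Using \eqref{def-A-gen} and \eqref{eq.ineqA},
\begin{equation*}
|A_{k,\lambda,\pm 1}|\,\|\psi_{k,\lambda,\pm 1}\|_{L^\infty}
\ \lesssim\ \frac{|\thetakl^\mp|^{1/2}}{|\thetakl^+|+|\thetakl^-|}\cdot\frac{|\thetakl^+|+|\thetakl^-|}{|\thetakl^\mp|}\ =\ |\thetakl^\mp|^{-1/2},
\end{equation*}
while $|A_{k,\lambda,\pm 1}|\,\|\psi_{k,\lambda,\pm 1}'\|_{L^\infty}\lesssim |\thetakl^\mp|^{1/2}$. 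The embedding \eqref{Linf-L-s} transfers these $L^\infty$ bounds into $L^2_{-s}$ bounds for $w_{k,\lambda,\pm 1}$ and $\partial_x w_{k,\lambda,\pm 1}$. Finally, since $|\thetakl^\mp|$ stays bounded from above on $\Lambda_\scZ([a,b])$, we have $|\thetakl^\mp|^{1/2}\lesssim |\thetakl^\mp|^{-1/2}$, yielding \eqref{eq.ineqbound1_crosspoint}.

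The main obstacle is the refined bound \eqref{eq.ineqbound1} which forbids crosspoints. The crude $L^\infty$ estimate of $\psi_{k,\lambda,\pm 1}$ on $\pm x\leq 0$ is wasteful because the bound $1+|\thetakl^\pm|/|\thetakl^\mp|$ is attained only for large $|x|$. I would instead use the alternative pointwise inequality
\begin{equation*}
|\sinh(\thetakl^\mp x)|\ \leq\ |\thetakl^\mp|\,|x|,\qquad \pm x\leq 0,
\end{equation*}
together with $|\cosh(\thetakl^\mp x)|\leq 1$, which gives $|\psi_{k,\lambda,\pm 1}(x)|\lesssim 1+|\thetakl^\pm|\,|x|$ pointwise. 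The weighted integral then splits in the two regimes $|\thetakl^\mp x|\leq 1$ and $|\thetakl^\mp x|>1$: the linear bound is used on the first and the bounded bound $|\sinh|\leq 1$ on the second. A direct computation shows
\begin{equation*}
\int_{\bbR}(1+x^2)^{-s}\,|\sinh(\thetakl^\mp x)|^2\,\rmd x\ \lesssim\ \min\bigl(1,|\thetakl^\mp|^{2\gamma}\bigr)\qquad\text{for any }\gamma\in(0,1]\cap(0,s-1/2),
\end{equation*}
(by interpolating between $\gamma=0$ and the critical exponent $\gamma=s-1/2$, taking into account the $L^2$-integrability of $(1+x^2)^{-s/2}$), which yields $\|\psi_{k,\lambda,\pm 1}\|_{L^2_{-s}(\bbR)}^2\lesssim 1+(|\thetakl^\pm|/|\thetakl^\mp|)^2|\thetakl^\mp|^{2\gamma}$. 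Inserting this and Lemma \ref{lem.boundwronsk} into the formula for $\|w_{k,\lambda,\pm 1}\|_{\Hms}^2$, and using $|\thetakl^\pm|/(|\thetakl^+|+|\thetakl^-|)\leq 1$ together with the fact that $|\thetakl^\pm|$ stays bounded above away from the crosspoints $\pm\Oc$, yields $\|w_{k,\lambda,\pm 1}\|_{\Hms}\lesssim |\thetakl^\mp|^{-1/2+\gamma}$. The same interpolation argument applied to $\partial_x\psi_{k,\lambda,\pm 1}$ gives the compatible bound $\|\partial_x w_{k,\lambda,\pm 1}\|_{\Hms}\lesssim |\thetakl^\mp|^{1/2+\gamma}$, completing \eqref{eq.ineqbound1}. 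The delicate point is that the hypothesis $\pm\Oc\notin[a,b]$ ensures that $|\thetakl^\pm|$ (the component on the \emph{opposite} side from where $\thetakl^\mp$ vanishes) is bounded below on $\Lambda_\scZ([a,b])$, which is precisely what prevents a simultaneous degeneracy of both $\thetakl^+$ and $\thetakl^-$ and allows the interpolation gain to be exploited.
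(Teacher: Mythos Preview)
Your proof follows the same strategy as the paper: bound $A_{k,\lambda,\pm 1}$ via Lemma~\ref{lem.boundwronsk}, estimate $\psi_{k,\lambda,\pm 1}$ and $\psi_{k,\lambda,\pm 1}'$ in $L^\infty$, and use the embedding~\eqref{Linf-L-s}. For the refined bound~\eqref{eq.ineqbound1} the paper interpolates pointwise via $|\sinh(\thetakl^\mp x)|\le|\thetakl^\mp|^\gamma|x|^\gamma$ and then integrates against $(1+|x|)^\gamma\in L^2_{-s}$ for $\gamma<s-1/2$, whereas you interpolate at the level of the weighted integral; these are equivalent manoeuvres.

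There is one slip in your last two sentences. You claim the hypothesis $\pm\Oc\notin[a,b]$ ensures that $|\thetakl^\pm|$ is bounded below on $\Lambda_\scZ([a,b])$. This is false: $|\thetakl^\pm|$ can still vanish at its own spectral cut. What the crosspoint exclusion actually yields is a positive lower bound on the \emph{sum} $|\thetakl^+|+|\thetakl^-|$, since $\thetakl^+$ and $\thetakl^-$ cannot vanish simultaneously away from the cross points. This is exactly what is needed: it converts the Wronskian estimate into the sharper form $|A_{k,\lambda,\pm 1}|\lesssim|\thetakl^\mp|^{1/2}$ (the paper's Step~3), and in your computation it lets you bound the ``constant'' contribution $\dfrac{|\thetakl^\mp|}{(|\thetakl^+|+|\thetakl^-|)^2}\lesssim|\thetakl^\mp|\lesssim|\thetakl^\mp|^{2\gamma-1}$. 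With the correct statement your argument goes through; the ingredients you list (``$|\thetakl^\pm|/(|\thetakl^+|+|\thetakl^-|)\le 1$'' and ``$|\thetakl^\pm|$ bounded above'') are always true and by themselves do not exploit the crosspoint hypothesis.
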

\begin{proof}
From symmetry reasons in the $(k, \lambda)$ plane, it is quite obvious that we can restrict ourselves to the intersections of the spectral zones with the quadrant $\{ \lambda > 0, k \geq 0\}$. Moreover, for simplicity, we give the proof for $j=1$ (which means that $\scZ \in \{\DI, \DE, \DD \}$ since $1 \notin J_\EI$, see the definition \eqref{eq.def-Jz} of $J_\scZ$) and let the reader check by simple symmetry arguments (between $-1$ and $+1$ on one hand, $x< 0$ and $x > 0$ on the other hand) that it also works for $j=-1$.  \\ [12pt]
{\it Step 1.} To begin, we estimate  the first component $w_{k,\lambda,1}= A_{k,\lambda,1}\ \psi_{k,\lambda,1}(x)\ \rme^{\rmi k y}$ of $\bbW_{k,\lambda,1}$ (see \eqref{eq.def-Vkbis}).
First, using \eqref{eq.ineqA} and the fact that $\mu_{\lambda}^{-} = \mu_0$ and $\mu_{\lambda}^{+}$ is continuous and does not vanish on $[a,b]$ (since $\pm \Om\notin [a,b]$), it follows that the coefficient $A_{k,\lambda,1}$ defined by \eqref{def-A-gen} is bounded  by:
\begin{equation}\label{eq.boundAcoefficient}
|A_{k,\lambda,1}|\lesssim \frac{|\thetakl^-|^{\frac{1}{2}}}{|\thetakl^-|+|\thetakl^+|} .
\end{equation}  
Then, we  estimate the function $\psi_{k,\lambda, 1}$:
\begin{itemize} 
	\item{(i)} For $x\geq 0$, by definition (cf. \eqref{eq-def-phi}), $\psi_{k,\lambda, 1}(x)=\rme^{- \thetakl^{+}x}$ with $\thetakl^{+}$ positive or purely imaginary.     		         
	Hence,   
\begin{equation} \label{estpsigeq0} 
\forall \; x \geq 0, \quad |\psi_{k,\lambda, 1}(x)| \leq 1.
\end{equation}
	\item{(ii)} For $x < 0$,  by formula \eqref{eq.def-thetam} for $\scZ \in \{\DI, \DE, \DD\}$, one has $\thetakl^{-} = - \, \rmi  \, |\thetakl^{-}| \in \rmi \, \bbR $.  Thus, by virtue of the inequalities  $|\cosh(u)|\leq 1$ and $|\sinh(u)|\leq 1$ for $u\in \rmi \bbR$ , one deduces from  the expression of \eqref{eq-def-phi} of $\psi_{k,\lambda, 1}$ that:
	\begin{equation} \label{estpsileq0} 
	\forall \; x < 0, \quad |\psi_{k,\lambda, 1}(x)|\lesssim 1+{|\thetakl^+|}/{|\thetakl^-|}.
	\end{equation}
\end{itemize}
From \eqref{estpsigeq0} and \eqref{estpsileq0} , we deduce the uniform bound
\begin{equation}\label{eq-boundpsi}
\forall \; x \in \bbR, \quad |\psi_{k,\lambda, 1}(x)| \lesssim \frac{|\thetakl^+| + |\thetakl^-|}{|\thetakl^-|}.
\end{equation} 
Thus, from \eqref{eq.boundAcoefficient} and \eqref{eq-boundpsi},  we get 
\begin{equation}\label{eq.boundcrosspoint-w}
|w_{k,\lambda,1}(x,y)|\lesssim |\thetakl^-|^{-\frac{1}{2}},  \ \forall (x,y)\in \bbR^2,
\end{equation} 
and as $s>1/2$, it follows that
\begin{equation}\label{eq-boundw4}
\|w_{k,\lambda,1}\|_{L^2_{-s}(\bbR^2)} \lesssim  |\thetakl^-|^{-\frac{1}{2}}.
\end{equation}
{\it Step 2.} 
The second step consists in showing that similar estimates hold for the other five components of the vector $\bbW_{k,\lambda,1}$.  According to \eqref{eq.def-Vkbis}, the second  component  of this vector is
$k  \, (\lambda \, \mu_\lambda)^{-1}   \,w_{k,\lambda,1}.$ 
Since  $k \, (\lambda \, \mu_\lambda)^{-1}$ is bounded in $\Lambda_{\scZ}([a,b])$ (since  $0, \, \pm \Om\notin [a,b]$), the estimate of this component follows from the one for the first component \eqref{eq-boundw4}. 
\\ [12pt]
The third component of $\bbW_{k,\lambda,1}$ is   $ \rmi (\lambda \, \mu_\lambda)^{-1}  \partial_x w_{k,\lambda,1} = \rmi (\lambda \, \mu_\lambda)^{-1}  A_{k,\lambda,1} \, \partial_x \psi_{k,\lambda,1} \, \rme^{\rmi k y} $ (cf. \eqref{eq.def-Vkbis}).
This component is less singular  than $w_{k,\lambda,1} $ since differentiating $\psi_{k,\lambda, 1}$ with respect to $x$ leads to a multiplication by factors $\thetakl^{\pm}$ that regularizes the expression. Indeed, one has
\begin{equation}\label{eq-express-deriv-x}
\partial_x\psi_{k,\lambda,1}(x) = -\thetakl^{+} \; \rme^{- \thetakl^{+}x}, \quad \mbox{for } x > 0, \quad  \thetakl^{-} \sinh\big( \thetakl^{-} x \big)- \thetakl^{+} \, \frac{\mu_{\lambda}^{-}}{\mu_{\lambda}^{+}} \, \cosh\big( \thetakl^{-} x \big) , \quad \mbox{for } x < 0.
\end{equation}
Again, as $ \thetakl^{+}>0$ or $\thetakl^{-} \in \rmi \mathbb{R}$,  the exponential function  and  the hyperbolic sine and cosine involved in \eqref{eq-express-deriv-x} are bounded by $1$.
Furthermore  as $\mu_{\lambda}^{-}/\mu_{\lambda}^{+}$ is bounded (since $\pm \Om\notin [a,b]$), it follows that:
\begin{equation}\label{eq.deriv-1}
\big|\partial_x\psi_{k,\lambda,1}(x)\big| \lesssim |\thetakl^{-}|+ |\thetakl^{+}|.
\end{equation}
Thus, combining with \eqref{eq.boundAcoefficient} we get 
\begin{equation}\label{eq.etimatethirdcomponent}
 \ \forall x\in \mathbb{R}^*, \ \  \ |\displaystyle \rmi (\lambda \, \mu_\lambda)^{-1}  \partial_x w_{k,\lambda,1} |=
\big|\rmi (\lambda \, \mu_\lambda)^{-1}  A_{k,\lambda,1}\, \, \partial_x\psi_{k,\lambda,1}(x)\, \rme^{\rmi k y}\big| \lesssim  |\thetakl^{-}|^{\frac{1}{2}},
\end{equation}
and thus
$
\big\|\rmi (\lambda \, \mu_\lambda)^{-1}  \partial_x w_{k,\lambda,1} \big\|_{L^2_{-s}(\bbR^2)} \lesssim |\thetakl^{-}|^{\frac{1}{2}}\lesssim |\thetakl^{-}|^{-\frac{1}{2}}.
$
Once the first three components have been treated, the work is finished since the last three components are $0$ for $x < 0$ and proportional 
(with a coefficient that depends on $\lambda$ but is bounded in $\Lambda_{\scZ}([a,b])$) to the first three for $x > 0$ (see again  \eqref{eq.def-Vkbis}). Thus one concludes  to the estimate \eqref{eq.ineqbound1_crosspoint}.
\\ [12pt]
{\it Step 3}. We now explain how to obtain the improved estimates when 
$\pm \Oc\notin [a,b]$. These are an improvement in the sense that, since $\gamma \geq 0$, the bound \eqref{eq.ineqbound1} is better than \eqref{eq.ineqbound1_crosspoint} when $\theta_{k,\lambda}^-$ tends to 0. \\ [12pt]
We shall concentrate ourselves on the estimate of the first component $w_{k,\lambda,1}$ of $\bbW_{k,\lambda,1}$. Passing to the other five components is essentially a matter of repeating the arguments of Step 2. \\ [12pt]
The improvement is obtained from a ``new" estimate for $A_{k,\lambda,1}$ that exploits $\pm \Oc\notin [a,b]$, and a different estimate for $\psi_{k,\lambda,1}$ when $x< 0$ in which we introduce $\gamma$:
\begin{itemize} 
	\item because $\pm \Oc\notin [a,b]$, $\overline{\Lambda_{\scZ}([a,b])}$ does not contain any cross-point so that
$\thetakl^{+}$ and  $\thetakl^{-}$ do not vanish simultaneously.  As a consequence $|\thetakl^-|+|\thetakl^+|$ is  bounded from below and 
\begin{equation}\label{eq.boundAcoefficientnew}
|A_{k,\lambda,1}|\lesssim |\thetakl^-|^{\frac{1}{2}}.
\end{equation}  
\item The inequality $| \sinh( \thetakl^{-} x)|\leq 1$, used of {\it Step 1} (ii) for deriving  \eqref{estpsileq0}, is  inaccurate for small  $\thetakl^{-}$. Given $\gamma \in (0,1]$, we can replace it by
$$
|\sinh( \thetakl^{-} x)| = \big|\sin\big(|\thetakl^{-}|\, x)\big|^\gamma \; \big|\sin\big(|\thetakl^{-}|\, x)\big|^{1-\gamma} \leq |\thetakl^{-}|^\gamma |x|^\gamma .
$$
Using this estimate yields $|\psi_{k,\lambda, 1}(x)| \lesssim 1 + |\thetakl^{-}|^{\gamma-1} \, |x|^\gamma$ for $x<0$. This,  together with \eqref{estpsigeq0}, yields the following new estimate for $\psi_{k,\lambda, 1}$ (instead of  \eqref{eq-boundpsi})
\begin{equation}\label{eq-boundpsi-new}
\forall \; x \in \bbR, \quad |\psi_{k,\lambda, 1}(x)| \lesssim 1 + |\thetakl^{-}|^{\gamma-1} \, |x|^\gamma .
\end{equation} 
\end{itemize}
Thus, from \eqref{eq.boundAcoefficientnew} and \eqref{eq-boundpsi-new},  we get 
\begin{equation*}\label{eq.improveestimateinterpola}
|w_{k,\lambda,1}(x,y)|\lesssim  |\thetakl^-|^{\frac{1}{2}} + |\thetakl^{-}|^{\gamma-\frac{1}{2}} |x|^\gamma = |\thetakl^{-}|^{\gamma-\frac{1}{2}} \, \big( |\thetakl^{-}|^{1-\gamma} + |x|^\gamma\big) \lesssim |\thetakl^{-}|^{\gamma-\frac{1}{2}}  (1 + |x|^\gamma)
\end{equation*}
since, as $\gamma \leq 1$, $|\thetakl^{-}|^{1-\gamma}$ is bounded.
As the function $(x,y)\mapsto (1+|x|)^{\gamma}\in L^2_{-s}(\bbR^2)$ for $0\leq\gamma<s-1/2$, this yields immediately the expected estimate
\begin{equation*}\label{eq-firstestim}
\|\, w_{k,\lambda,1}\|_{L^2_{-s}(\bbR^2)} \lesssim  |\thetakl^-|^{-\frac{1}{2}+\gamma} ,
\end{equation*}
which is nothing but the estimate \eqref{eq.ineqbound1} for the first component $w_{k,\lambda,1}$ of $\bbW_{k,\lambda,1}$.
\end{proof}
\subsubsection{Generalized eigenfunctions of the lineic spectral zone}
We deal now with the $\Hms$ estimates of  the generalized  eigenfunctions $\bbW_{k,\lambda,0}$ (also referred as plasmonic waves in the physical literature) when $(k, \lambda)$ belongs to the spectral zone $\zEE$ for $\Oe \neq \Om$. Let us recall that for $(k,\lambda)\in\zEE$, $k$ and $\lambda$ are related by $k = \pm \, k_{\scE}(\lambda)$ (cf. (\ref{eq.expressionlambdae}, \ref{defkE}, \ref{eq.defZEE}). Thus, the set $\Lambda_{\EE}([a,b])$ involved in this  result and  defined by \eqref{eq.defLambdaspeczoneab} can be  rewritten as 
$$ 
\Lambda_{\EE}([a,b])=\{ (\pm k_{\scE}(|\lambda|),\lambda)\in \Lambda_{\EE}  \mid  \lambda \in [a,b]\}.
$$

\begin{proposition}\label{prop.decay-estim-plasmon}
Assume that  $\Oe\neq \Om$ and let $s > 1/2$ and $[a,b] \subset \bbR\setminus\sigma_{\rm exc}$ such that $\Lambda_{\EE}([a,b]) \neq \varnothing$. Then, there exists $C_{a,b}>0$ (depending only on $a, b$) such that 
\begin{equation}\label{eq.ineqboundplasm}
\| \bbW_{k,\lambda,0}\|_{\Hms}\leq C_{a,b} \ (\thetakl^{+})^{\frac{1}{2}}, \quad \forall (k, \lambda)\in \Lambda_{\EE}([a,b]).
\end{equation}
\end{proposition}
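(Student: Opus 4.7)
The plan is to mimic the strategy already used for the surface-zone modes in Proposition \ref{prop.decay-estim-surfacic-zones}: by \eqref{eq.def-Vkbis}, the six components of $\bbW_{k,\lambda,0}$ are obtained from the scalar function $w_{k,\lambda,0}$ and its derivative $\partial_x w_{k,\lambda,0}$ multiplied by coefficients that are rational in $k$, $\lambda$, $\mu_\lambda^\pm$. So the proof reduces to estimating $\|w_{k,\lambda,0}\|_{L^2_{-s}(\bbR^2)}$ and $\|\partial_x w_{k,\lambda,0}\|_{L^2_{-s}(\bbR^2)}$, after checking that all scalar prefactors are uniformly bounded on $\Lambda_\EE([a,b])$.

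The first step is therefore to collect the uniform bounds on $\Lambda_\EE([a,b])$. Since $\pm\Op\in\sigma_{\rm exc}$, the interval $[a,b]$ stays at positive distance from $\pm\Op$; combined with the facts that $k=\pm\kE(\lambda)$ on $\Lambda_\EE$ and that $\kE(\lambda)\to+\infty$ only as $|\lambda|\to\Op$, this gives $|k|\lesssim 1$ and a fortiori $\lambda$, $k$, $\mu_\lambda^+$, $\mu_\lambda^-=\mu_0$ are all bounded, with $|\mu_\lambda^+|$ bounded away from $0$ because $\pm\Om\notin[a,b]$. Since $\Oe\neq\Om$, the quantity $4k^4+(\eps_0\mu_0)^2(\Oe^2-\Om^2)^2$ is bounded above and away from $0$, and the explicit expression \eqref{def-A-plasm} then yields $|A_{k,\lambda,0}|\lesssim |\thetakl^+|^{1/2}$. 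Moreover, the dispersion relation \eqref{eq.disp} gives $\thetakl^-=-(\mu_\lambda^-/\mu_\lambda^+)\,\thetakl^+=(\mu_0/|\mu_\lambda^+|)\,\thetakl^+$ on $\zEE$ (note that the positivity of both $\thetakl^\pm$ forces $\mu_\lambda^+<0$, i.e.\ $|\lambda|<\Om$, which is consistent), so that $\thetakl^-\asymp\thetakl^+$ on $\Lambda_\EE([a,b])$ with constants depending only on $[a,b]$; in particular $\thetakl^+$ is also bounded above on that set.

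The last step is a direct computation exploiting the factored form $w_{k,\lambda,0}(x,y)=A_{k,\lambda,0}\,e^{-\thetakl(x)|x|}\,e^{\rmi ky}$. Since $|\psi_{k,\lambda,0}|\le 1$ and $\int_\bbR(1+x^2)^{-s}\,\rmd x<+\infty$ for $s>1/2$, one obtains immediately
\[
\|w_{k,\lambda,0}\|_{L^2_{-s}(\bbR^2)}\lesssim |A_{k,\lambda,0}|\lesssim (\thetakl^+)^{1/2}.
\]
For the derivative, $|\partial_x\psi_{k,\lambda,0}(x)|=\thetakl(x)\,e^{-\thetakl(x)|x|}$ and a direct integration gives
\[
\int_\bbR|\partial_x\psi_{k,\lambda,0}|^2\,\rmd x\le\int_0^{+\infty}(\thetakl^+)^2 e^{-2\thetakl^+ x}\,\rmd x+\int_{-\infty}^0(\thetakl^-)^2 e^{2\thetakl^- x}\,\rmd x=\frac{\thetakl^++\thetakl^-}{2}\lesssim\thetakl^+,
\]
hence $\|\partial_x w_{k,\lambda,0}\|_{L^2_{-s}(\bbR^2)}\lesssim|A_{k,\lambda,0}|\,(\thetakl^+)^{1/2}\lesssim\thetakl^+\lesssim(\thetakl^+)^{1/2}$, the final inequality using that $\thetakl^+$ is bounded above on $\Lambda_\EE([a,b])$.

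Combining the two weighted $L^2$-bounds with the uniform boundedness of the prefactors appearing in \eqref{eq.def-Vkbis} yields the announced estimate \eqref{eq.ineqboundplasm}. There is no real obstacle here: unlike the surface-zone case, the mode $\bbW_{k,\lambda,0}$ is already exponentially decaying in $x$ on both sides of the interface, so the weight $\eta_{-s}$ is essentially not needed, and the only delicate point is the extraction of the bounds via the dispersion relation and the non-degeneracy of the denominator of $A_{k,\lambda,0}$ in the non-critical regime $\Oe\neq\Om$.
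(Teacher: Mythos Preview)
Your proof is correct and follows essentially the same approach as the paper's: both exploit that $\pm\Op\in\sigma_{\rm exc}$ forces $\Lambda_\EE([a,b])$ to be bounded, deduce $|A_{k,\lambda,0}|\lesssim(\thetakl^+)^{1/2}$ from \eqref{def-A-plasm}, use $|\psi_{k,\lambda,0}|\le 1$, and conclude via the boundedness of the prefactors in \eqref{eq.def-Vkbis}. The only cosmetic difference is that for $\partial_x w_{k,\lambda,0}$ the paper uses the cruder pointwise $L^\infty$ bound $|\partial_x\psi_{k,\lambda,0}|\le\thetakl^\pm\lesssim 1$ directly, whereas you compute the $L^2(\bbR_x)$ norm of $\partial_x\psi_{k,\lambda,0}$ exactly; both routes give the same final estimate.
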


\begin{proof}
As $[a,b]$  does not contain $\pm \Op$, $\Lambda_{\EE}([a,b])$ is a bounded subset of $\Lambda_{\EE}$.
One one hand,  from  \eqref{def-A-plasm}, one deduces that $A_{k,\lambda,0}\lesssim (\thetakl^+)^{1/2}$ ($\thetakl^+>0$ in $\Lambda_{\EE}([a,b])$, see \eqref{eq.def-thetap}). 

\noindent On the other hand,  one has $|\psi_{k,\lambda,0}(x)|\lesssim 1$ (cf. \eqref{def-psi-plasm}) since $\thetakl^{\pm}>0$.  It follows that:
\begin{equation}\label{eq.interlinf}
|w_{k,\lambda,0}(x,y)| \lesssim (\thetakl^+)^{1/2} \  \mbox{ and }  \ \ | \partial_x w_{k,\lambda,0}(x,y)|  \lesssim (\thetakl^+)^{1/2} \  , \forall (k, \lambda)\in \Lambda_{\EE}([a,b]),
\end{equation}
(we use here to estimate  $\partial_x w_{k,\lambda,0}$ that $\thetakl^{\pm}$ is bounded as a  continuous function of $(k,\lambda)$ on  $\overline{\Lambda_{\EE}([a,b])}$).
Finally, as the lines $\lambda=0$ and $\lambda=\pm \Om$ do not intersect the compact set $\overline{\Lambda_{\EE}([a,b])}$ all the coefficients  in $(k,\lambda)$ that are involved in the expression  \eqref{eq.def-Vkbis}  of $\bbW_{k,\lambda,0}$ in terms of $w_{k,\lambda,0}$ and $\partial_x w_{k,\lambda,0}$ are bounded. Thus, the inequality \eqref{eq.ineqboundplasm} for $s>1/2$ comes immediately from \eqref{eq.interlinf}  and the relation  \eqref{eq.def-Vkbis} that define the generalized eigenfunction $ \bbW_{k,\lambda,0}$.
\end{proof}

\subsubsection{Proof of Proposition \ref{p.estim-fpg} } \label{sec_defMlambda}
The following preliminary  lemma gives local estimates of the inverse of $|\thetakl^{\pm}|$ with respect to $(k,\lambda)$ in each spectral zone $\zZ$ for $\scZ \in \calZ\setminus\{\EE\}$. 
These estimates will be used to prove Proposition \ref{p.estim-fpg} and thus  Theorems \ref{th.dens-spec} and \ref{th.Holder-dens-spec}.  To simplify their presentation,  we introduce the function $\kp$ defined as follows:
\begin{eqnarray}
\kp(\lambda) & := & \left\lbrace \begin{array}{ll} \kI(\lambda) \in \mathbb{R}^+& \mbox{if } 0 < |\lambda|\leq \min(\Oe,\Om), \\[5pt]
 \rmi \,\sqrt{-\eps^+_\lambda\, \mu^+_\lambda} \,  |\lambda|  \in \rmi \, \mathbb{R}^{+}& \mbox{if } \min(\Oe,\Om) < |\lambda| < \max(\Oe, \Om), \\[5pt]
\kD(\lambda)  \in \mathbb{R}^+  &   \mbox{if } \max(\Oe,\Om)\leq |\lambda|.
\end{array}\right.
\label{eq.def-kp}
\end{eqnarray} 
We notice that the even function $\kp$ vanishes at $\pm \Oe$  and $\pm \Om $ since $\kp(\pm \Oe)=\kI(\pm \min(\Om,\Oe))=\kD(\pm \max(\Om,\Oe))= 0$ (see  \eqref{eq.defepsmu} and  \eqref{defk0DI} and figure \ref{fig.speczones1}). Moreover, one easily checks that $\kp$ is  locally H{\"o}lder of index $\gamma=1/2$ on $\bbR^*$ and $C^{\infty}$ (thus locally Lipschitz continuous) on $\bbR^*\setminus \{\pm \Oe, \pm \Om \}$.

\begin{lemma}\label{eq.lemsingularitythetalk}
For all $\lambda \in \bbR \setminus \{0,\pm\Oe,\pm\Om\},$ we have
\begin{align} 
\forall |k| \neq k_0(\lambda), \quad
\big|\thetakl^{-}\big|^{-1} & \leq k_0(\lambda)^{-1/2}\ \big| |k|-k_0(\lambda) \big|^{-1/2},\label{eq.singtheta-spectralcut-1} \\
\forall |k| \neq k^+(\lambda), \quad
\big|\thetakl^{+}\big|^{-1} & \leq \big|k^+(\lambda)\big|^{-1/2}\ \big| |k|-k^+(\lambda) \big|^{-1/2}.\label{eq.singtheta-spectralcut-2}
\end{align}
\end{lemma}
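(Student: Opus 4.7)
My plan is to observe that, thanks to the definitions in \eqref{defk0DI} and \eqref{eq.def-kp}, both $\thetakl^{\pm}$ can be expressed as simple algebraic differences of squares involving $k$ and $k_0(\lambda)$ or $\kp(\lambda)$. Indeed, from \eqref{eq.defThetalkfunction} and the definitions of $k_0$ and $\kp$, one has
\begin{equation*}
\Thetakl^{-} = k^2 - k_0(\lambda)^2 \quad\text{and}\quad \Thetakl^{+} = k^2 - \kp(\lambda)^2,
\end{equation*}
where in the latter, $\kp(\lambda)^2 = \eps_\lambda^{+}\mu_\lambda^{+}\lambda^2 \in \bbR$ is either positive (when $|\lambda| \leq \min(\Oe,\Om)$ or $|\lambda| \geq \max(\Oe,\Om)$) or negative (for $|\lambda|$ between $\Oe$ and $\Om$). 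These identities are the only real content; the bounds will then follow from elementary inequalities.

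For the first estimate, since $k_0(\lambda) > 0$, I factor as
\begin{equation*}
\big|\Thetakl^{-}\big| = \big| |k| - k_0(\lambda) \big| \cdot \big( |k| + k_0(\lambda) \big) \geq k_0(\lambda)\,\big| |k| - k_0(\lambda) \big|,
\end{equation*}
and take the square root, then invert, to obtain \eqref{eq.singtheta-spectralcut-1}.

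For the second estimate, I split into two cases according to the sign of $\kp(\lambda)^2$. If $\kp(\lambda) \in \bbR^+$, the same factorization yields $|\Thetakl^{+}| \geq \kp(\lambda)\,\big||k|-\kp(\lambda)\big|$ and I conclude as above. If instead $\kp(\lambda) = \rmi\,|\kp(\lambda)|$ is purely imaginary, then $\Thetakl^{+} = k^2 + |\kp(\lambda)|^2$ is positive, and I simply note that $\big||k| - \kp(\lambda)\big|^2 = k^2 + |\kp(\lambda)|^2 = \Thetakl^{+}$, whence
\begin{equation*}
|\thetakl^{+}| = \big| |k| - \kp(\lambda) \big| \geq |\kp(\lambda)|^{1/2}\,\big||k|-\kp(\lambda)\big|^{1/2},
\end{equation*}
the last inequality being equivalent to $\big||k|-\kp(\lambda)\big| \geq |\kp(\lambda)|$, which is immediate. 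Taking the reciprocal yields \eqref{eq.singtheta-spectralcut-2}.

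I do not expect any serious obstacle here: the statement is essentially an elementary algebraic bound, and the only subtlety is the treatment of the regime $\min(\Oe,\Om)<|\lambda|<\max(\Oe,\Om)$ in which $\kp(\lambda)$ becomes imaginary, handled separately as above.
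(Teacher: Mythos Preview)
Your proof is correct and follows essentially the same approach as the paper: both rely on the factorization $|\Thetakl^{\pm}| = \big||k|-\kp(\lambda)\big|\cdot\big||k|+\kp(\lambda)\big|$ (resp.\ with $k_0$) and bound the second factor from below by $|\kp(\lambda)|$ (resp.\ $k_0(\lambda)$). The only minor difference is that the paper handles the real and purely imaginary cases for $\kp(\lambda)$ uniformly via the single inequality $\big||k|+\kp(\lambda)\big|\geq |\kp(\lambda)|$, whereas you split into two cases; both are equally valid.
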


\begin{proof}
From the definitions \eqref{eq.defThetalkfunction} and \eqref{eq.def-thetam} of $\Thetakl^{+}$ and $\thetakl^{+}$, one has
\begin{equation*}
\forall \lambda \in \bbR, \ \forall |k| \neq k^+(\lambda), \quad |\thetakl^{+}|^{-1}= |\Thetakl^{+}|^{-1/2}= \big||k|-\kp(\lambda)\big|^{-1/2} \ \big||k|+\kp(\lambda)\big|^{-1/2}.
\end{equation*}
Inequality \eqref{eq.singtheta-spectralcut-2} simply follows by noticing that
\begin{equation*}
\big||k| + \kp(\lambda) \big| \geq |\kp(\lambda)|,
\end{equation*}
since $\kp(\lambda)$ is either a positive real number or a purely imaginary number, which does not vanish if $\lambda \notin \{\pm\Oe,\pm\Om\}$.

For inequality \eqref{eq.singtheta-spectralcut-1}, one proceeds similarly by  substituting respectively $\thetakl^{+}$, $\Thetakl^{+}$ and $\kp$ by $\thetakl^{-}$, $\Thetakl^{-}$ and $\kO$ (the only difference is  that $\kO(\lambda)$ is always real-valued).
\end{proof}
\begin{remark}
The estimates of Lemma \ref{eq.lemsingularitythetalk}  are optimal in the sense that they take into account  in a sharp way their singular behaviour of $| \thetakl^{\pm} |^{-1} $ when approaching the boundary of the spectral zone (see \eqref{eq.singtheta-spectralcut}). Indeed, these functions approach $0$ as the square root of the (horizontal) distance to the spectral cuts,  more precisely:
\begin{equation*}\label{eq.singtheta-spectralcut}
\hspace*{-0.4cm} \begin{array}{ll}
| \thetakl^{-} |^{-1} \sim  (2 \kO(\lambda))^{-\frac{1}{2}} \; |k\pm \kO(\lambda)|^{-\frac{1}{2}}  \mbox{ as }  k \in \zZ(\lambda) \rightarrow \mp \;\kO(\lambda),  & \; \scZ = \EI,  \DE, \DI   \\[12pt]
| \thetakl^{+} |^{-1} \sim (2 \kp(\lambda))^{-\frac{1}{2}} \; |k\pm \kp(\lambda)|^{-\frac{1}{2}}  \mbox{ as }  k \in \zZ(\lambda) \rightarrow \mp \; \kp(\lambda),  \,  \lambda\neq \pm \Oe & \scZ = \EI,   \DI, \DD. \\ [12pt]
\end{array}
\end{equation*}
Therefore,  one has for $\lambda\neq \pm \Oe$,
\begin{equation} \label{L2integrability}
  \left\{ \begin{array}{lll} k \mapsto |\thetakl^{-}|^{-r} \in L^2 \big(\Lambda_{\scZ}(\lambda)\big),  \, \scZ =   \DE, \DI  \\ [8pt]
k \mapsto |\thetakl^{+}|^{-r} \in L^2 \big(\Lambda_{\scZ}(\lambda)\big), \; \scZ =  \EI,   \DI, \DD
\end{array}  \right. \quad \Longleftrightarrow \quad r < 1.
\end{equation}
\end{remark}
\noindent As $ |\thetakl^{-}|^{-1}$ is bounded on $\Lambda_{\DD}(\lambda)$, it is straightforward to see that  \eqref{eq.ineqbound1_crosspoint} and   \eqref{L2integrability} (used with $r=1/2$)  imply that  if $\pm 1 \in  J_{\scZ}$, the function  $k\mapsto \bbW_{k,\lambda,\pm 1}$  is {\bf square integrable}  in each set $\Lambda_{\scZ}(\lambda)$.
Thanks to Lemma  \ref{eq.lemsingularitythetalk}, and  Propositions \ref{prop.decay-estim-surfacic-zones} and \ref{prop.decay-estim-plasmon}, we can now prove Proposition  \ref{p.estim-fpg} of  section \ref{sec.motiv-main-results}.

\begin{proof}
{\it Proof of point 1.}
Let  $s>1/2$, $\scZ \in \calZ\setminus\{\EE\}$  and $[a,b] \subset \bbR\setminus\sigma_{\rm exc}$ such that $\Lambda_{\scZ}([a,b]) \neq \varnothing$.
We know from the relation  \eqref{eq.ineqbound1_crosspoint} of Proposition \ref{prop.decay-estim-surfacic-zones} that if $ \pm 1 \in J_{\scZ}$ then
\begin{equation}\label{eq.generalbound}
\| \bbW_{k,\lambda,\pm 1} \|_{\Hms}^2 \lesssim  |\thetakl^{\mp }|^{-1}, \quad  \forall (k, \lambda)\in\Lambda_{\scZ}([a,b])  .
\end{equation}
Thus, it is  simply a matter of using Lemma \ref{eq.lemsingularitythetalk}. We distinguish two cases: \\ [12pt]
(i) : $ j=1$ (which is  only possible by \eqref{eq.def-Jz}  for $\scZ=\DD, \DE, \DI$)\\ [12pt]
\noindent $\bullet$ For $\scZ=\DD$, as $(k,\lambda)\mapsto  |\thetakl^{-}|$ does not vanish in  $\overline{\Lambda_{\DD}([a,b])}$, the 
$\Hms$-norm of $\bbW_{k,\lambda,j} $ is bounded when $(k,\lambda) \in \Lambda_{\scZ}([a,b])$. This proves the point 1 for $\scZ=\DD$  since $\Lambda_{\DD}([a,b])$ has a finite Lebesgue measure. \\ [12pt]
\noindent $\bullet$ Now if $\scZ=\DE$ or $\scZ= \DI$, then
it follows   from   \eqref{eq.singtheta-spectralcut-1} and \eqref{eq.generalbound} that 
\begin{equation}\label{eq.dom}
\forall  (k,\lambda)\in \Lambda_{\scZ}([a,b]), \quad 
\| \bbW_{k,\lambda,1} \|_{\Hms}^2  \lesssim \big| |k| - \kO(\lambda)\big|^{-\frac{1}{2}},
\end{equation}
since $\kO(\lambda)^{-1/2}$ remains bounded in $[a,b]$.
Let $\lambda\in [a,b]$. One has  $\Lambda_{\scZ}(\lambda)\subset \; ] -\kO(\lambda), \kO(\lambda) [ $. Then, by  \eqref{eq.dom} and evenness in $k$, it follows 
$$
\int_{\Lambda_{\scZ}(\lambda)} \| \bbW_{k,\lambda,1} \|_{\Hms}^2  \rmd k \lesssim \int_{0}^{\kO(\lambda)}  (\kO(\lambda)-k)^{-\frac{1}{2}}\, \rmd k \lesssim 2 \kO(\lambda)^{\frac{1}{2}} \lesssim 1 .
$$
Thus, the point 1 is a consequence of the  Fubini-Tonelli theorem and the fact  that  $[a,b]$ is bounded.\\
[6pt]
(ii) : $j=-1$ (which is  only possible by \eqref{eq.def-Jz}  for $\scZ= \DI, \DD, \EI$)\\ [12pt]
The slight difference with the case (i) is when $[a,b]$ contains $\pm \Oe$ where $\kp$ vanish.\\
[6pt]
\noindent $\bullet$ For  the case where $\scZ=\DD, \DI, \EI$, by virtue of the estimates   \eqref{eq.singtheta-spectralcut-2}  and  \eqref{eq.generalbound}, one gets 
\begin{equation}\label{eq.dom1}
\forall  (k,\lambda)\in \Lambda_{\scZ}([a,b]),\quad 
\| \bbW_{k,\lambda,-1} \|_{\Hms}^2 \leq |\kp(\lambda)|^{-\frac{1}{2}} \,
\big| |k| - \kp(\lambda)\big|^{-\frac{1}{2}}.
\end{equation}
Using that $\kp(\lambda)\in \mathbb{R}^{+,*}$ in \eqref{eq.dom1}, $\Lambda_{\scZ}(\lambda) \subset  \; ] \! -\!\kp(\lambda), \kp(\lambda) [ $ (see figure \ref{fig.speczones1}) and a parity argument in $k$  give
$$
\int_{\Lambda_{\scZ}(\lambda)} \| \bbW_{k,\lambda,-1} \|_{\Hms}^2 \, \rmd k \lesssim  \kp(\lambda)^{-\frac{1}{2}}  \int_{0}^{\kp(\lambda)}  (\kp(\lambda)-k)^{-\frac{1}{2}} \, \rmd k \lesssim 1.
$$
This implies the point 1 since the bound is uniform with respect to $\lambda\in [a,b]$.\\
[6pt]

{\it Proof of point 2.} The key point is that, as $[a,b]$ does not contain $\pm \Op$,  $\Lambda_{\EE}([a,b])$ is a bounded subset of $\Lambda_{\EE}$. 
Thus, the point $2$ is an immediate consequence of the  estimate  \eqref{eq.ineqboundplasm}  and the fact that $\thetakl^{+}$ is bounded on $\Lambda_{\EE}([a,b])$.
\end{proof}

\subsection{H\"{o}lder regularity of generalized eigenfunctions}\label{sec.Holdereigenfunctions}

\subsubsection{Orientation} \label{sec_GEForient}
In this section,  we  show what we shall call local ``H\"{o}lder  type estimates'' (see Propositions \ref{prop.holdreggeneralizedeigenfunctions} and \ref{prop.holdestimateplasmon}) on generalized eigenfunctions.
\subsubsection*{The case of the functions $\bbW_{k,\lambda,\pm 1}$} 
In this case we study the functions  $(k, \lambda) \in \Lambda_\scZ \mapsto  \bbW_{k,\lambda,j} \in \Hms$ ,  such that $j \in J_{\scZ}$ and $\scZ \in \calZ\setminus\{\EE\}$. By local ``H\"{o}lder  type estimate''  we mean an estimate in which $k$ plays the role of a parameter with respect to the spectral variable $\lambda$, i.e. of the form (given $a \leq b$)
\begin{equation} \label{formHolder}
\forall \big((k,\lambda), (k,\lambda' )) \in \Lambda_Z([a,b])^2, \ \|  \bbW_{k,\lambda',j}-\bbW_{k,\lambda,j}\|_{\Hms}\leq  F_{\gamma}(k; \lambda, \lambda' ) \,   |\lambda'-\lambda|^{\gamma} ,
\end{equation}
where $F_{\gamma} :  \big\{ (k; \lambda, \lambda' ) \in \mathbb{R}^3 \; / \;  \big((k,\lambda), (k,\lambda' )) \in \Lambda_{\scZ}([a,b])^2\}  \rightarrow \mathbb{R}^+$  is smooth and $\gamma \in (0,1]$.
\\ [6pt]
There are many different ways to obtain estimates of the form \eqref{formHolder}. Before going to technical developments (based, as we shall see, on lengthy hand computations that require to be done with a lot of care) and precise results, it is worthwhile to make three observations that guided us in the derivation of \eqref{formHolder}.
\\ [8pt] 
{\bf Observation 1}: it is clear from the integral expression $\bbM_\lambda$ of the spectral density that
in order to transform the estimates \eqref{formHolder} into H\"{o}lder regularity for  $\lambda \mapsto \bbM_\lambda$ in $B(\Hps, \Hms)$,  for given $(\lambda, \lambda')$ the function $k \mapsto F_{\gamma}(k; \lambda, \lambda' )$ should have appropriate square integrability properties (we shall be more precise later).  Natural candidates for $F_{\gamma}(k; \lambda, \lambda' )$ automatically involve negative powers of $|\thetakl^{\pm}|$ and $|\theta_{k, \lambda'}^{\pm}|$ (cf. section \ref{sec_EF-estimates}), which means that they may blow up when $(k, \lambda)$ or $(k, \lambda')$  approaches a spectral cut. 
That is why we  have to pay attention to control this blow up (i.e. to the allowed powers of $\thetakl^{\pm}$): this brings us back criterion \eqref{L2integrability}.\\ [8pt]
{\bf Observation 2}: similarly to what was done  in section \ref{sec_EF-estimates},  the desired estimates \eqref{formHolder} will be derived from similar pointwise H\"older estimates for the functions $\lambda \mapsto w_{k,\lambda,j}(x,y) \in \mathbb{C}$ and $\lambda \mapsto \partial_x w_{k,\lambda,j}(x,y) \in \mathbb{C}$, that involve the space variable $(x,y)$ as an additional parameter. These estimates will be of the form  
\begin{equation} \label{formHolderpointpointwise}
| v_{k,\lambda',j}(x,y)-v_{k,\lambda,j}(x,y)| \leq  (1+|x|)^{\gamma} \, f_{\gamma}(k; \lambda, \lambda' ) \,   |\lambda'-\lambda|^{\gamma} , 
\end{equation}
for $v_{k,\lambda,j} = w_{k,\lambda,j}$ or $v_{k,\lambda,j} = \partial_x w_{k,\lambda,j}$. 
The limitation for the set of possible H\"older exponents then comes from the double requirement that :
\begin{itemize} 
	\item[(i)] in order to use these estimates to get the $\Hms$-valued estimates, we must have $0<\gamma < s - {1}/{2}$ (the condition for which $ x \mapsto (1+|x|)^{\gamma}$ belongs to $L^2_{-s}(\mathbb{R}^2)$),
	\item[(ii)] the function $ f_{\gamma}(k; \lambda, \lambda' )$ is supposed to have the same square integrability properties in $k$ as $ F_{\gamma}(k; \lambda, \lambda' )$, cf. observation 1, which will generate another limitation on $\gamma$.  
\end{itemize}
{\bf Observation 3}: From the {\bf technical point of view}, the systematic path that we chose to use for getting estimates of the form \eqref{formHolderpointpointwise}, assuming that $\lambda \mapsto v_{k,\lambda,j}(x,y)$ is differentiable,  is the following: 
\begin{enumerate}
	\item Obtain Lipschitz estimates via the mean value theorem from $L^\infty$ estimates of the $\lambda$-derivative $\partial_\lambda v_{k,\lambda,j}$:
	$$
	| v_{k,\lambda',j}(x,y)-v_{k,\lambda,j}(x,y)| \leq \sup_{\tilde{\lambda}\in[\lambda,\lambda']} \big|\partial_\lambda v_{k,\lambda',j}(x,y)\big|  \; |\lambda - \lambda'| .
	$$
	\item Interpolate the previous estimate with $L^\infty$ estimates of $v_{k,\lambda,j}$
	$$
	\left| \begin{array}{lll}
	| v_{k,\lambda',j}(x,y)-v_{k,\lambda,j}(x,y)| & = & | v_{k,\lambda',j}(x,y)-v_{k,\lambda,j}(x,y)|^{\gamma} \;  | v_{k,\lambda',j}(x,y)-v_{k,\lambda,j}(x,y)|^{1-\gamma} \\ [12pt] 
	& \leq & \displaystyle 2^{1-\gamma} \big(\sup_{\tilde{\lambda}\in[\lambda,\lambda']} |(\partial_\lambda v)_{k,\tilde{\lambda},j}(x,y)|^\gamma\big) \; \big(\sup_{\tilde{\lambda}\in[\lambda,\lambda']} | v_{k,\tilde{\lambda},j}(x,y)|^{1-\gamma}\big)  \;  |\lambda - \lambda'|^\gamma
	\end{array} \right.
	$$
\end{enumerate} 
and the estimation of the two above sup leads to estimates of the form \eqref{formHolderpointpointwise}.\\[6pt]  
Even though step 1 seems to provide already the desired type of estimate with $\gamma = 1$,
the interpolation step 2 will be needed to fulfil the  integrability requirements mentioned in observations 1 and 2. \\[6pt] 
Finally, we want to mention that  estimates of the form \eqref{formHolder} with a function $F_{\gamma}$  with $0<\gamma\leq 1$ that involves negative powers  of $|\thetakl^{\pm}|$ and $|\theta_{k, \lambda'}^{\pm}|$  will allow us to obtain (in section \ref{sec.Holderspectraldensity}) better H\"{o}lder exponents for the spectral density (after an integration on the sets $\Lambda_{\scZ}(\lambda)$) than  the actual local H\"{o}lder regularity of the generalized eigenfunctions $\bbW_{k,\lambda,j}$ with respect to $\lambda$. Indeed,  H\"{o}lder estimates  for $\bbW_{k,\lambda,j}$ (with a constant function  $F_{\gamma}$) are limited to $0<\gamma\leq1/4$
since the expression of $\bbW_{k,\lambda,j}$ (see  \eqref{eq.def-Vkbis}, \eqref{eq.def-w}, \eqref{def-A-gen} and \eqref{def-psi-gen}) contains functions $ (k, \lambda) \mapsto |\thetakl^{\pm}|^{1/2}$ that are only   $1/4$ locally H\"{o}lder continuous in $\lambda$ at the vicinity of the spectral cut where $\thetakl^{\pm}$  vanishes.

\subsubsection*{The case of the functions $\bbW_{k,\lambda,0}$} 
In this case, we study the functions  $(k,\lambda) \in \mathbb{R}^2 \mapsto  \bbW_{k,\lambda,0} \in \Hms$ for $(\lambda,k) \in \Lambda_{\EE}$ and $\Oe\neq \Om$. This case is special since $\Lambda_{\EE}$ is a reunion of curves: $k = \pm \, \kE(\lambda)$ for  $|\lambda| \in (\min(\Op,\Oc),\max(\Op,\Oc))$.
Thus, $(k,\lambda)$ cannot be considered as independent variables and $k$ is no longer a parameter. We are in fact interested in the functions 
$$
|\lambda| \in \big(\min(\Op,\Oc),\max(\Op,\Oc)\big) \; \mapsto  \; \bbW_{\pm \kE(\lambda),\lambda,0} .
$$
Then given $[a,b] \subset [\min(\Op,\Oc),\max(\Op,\Oc)] $ which does not contain $\Op$, we look for estimates of the form (these replace \eqref{formHolder})
\begin{equation} \label{formHolder0}
\forall \; (\lambda,\lambda' )\in [a,b]^2 \quad \Longrightarrow \quad \|  \bbW_{\pm \kE(\lambda),\lambda,0} -\bbW_{\pm \kE(\lambda'),\lambda',0}\|_{\Hms}\leq  F_{\gamma}(\lambda, \lambda' ) \,   |\lambda'-\lambda|^{\gamma} ,
\end{equation}
that will be obtained from pointwise estimates (these replace \eqref{formHolderpointpointwise})
\begin{equation} \label{formHolderpoint0}
| v_{\pm \kE(\lambda),\lambda,0}(x,y)-v_{\pm \kE(\lambda),\lambda,0}(x,y)| \leq  (1+|x|+|y|)^{\gamma} \, 
f_{\gamma}( \lambda, \lambda' ) \,   |\lambda'-\lambda|^{\gamma} , 
\end{equation}
for $v_{\pm \kE(\lambda),\lambda,0}= w_{\pm \kE(\lambda),\lambda,0}$ or $v_{\pm \kE(\lambda),\lambda,0}= \partial_x w_{\pm \kE(\lambda),\lambda,0}$. A difference between \eqref{formHolderpointpointwise} and \eqref{formHolderpoint0} is that $1+|x|$ is replaced by $1+|x| + |y|$. However, this does not change the condition  $\gamma < s - {1}/{2}$ raised in the point (ii) of observation 2 since this is also the condition for which $(x,y) \mapsto (1+|x| + |y|)^{\gamma}$ belongs to $L^2_{-s}(\mathbb{R}^2)$. In this case, since $k$ is no longer a parameter of the estimate, the observations 1 and the point (ii) of 2 of the previous paragraph are no longer relevant. However, the technical approach described in observation 3 still makes sense.

\subsubsection{Generalized eigenfunctions of surface spectral zones}
From now on, the forthcoming estimates will be established for  $\scZ \in \calZ\setminus\{\EE\}$,  $j \in J_{\scZ}\subset\{-1,1\}$ and $(k,\lambda) \in \Lambda_{\scZ}([a,b])$ where $[a,b] \subset \bbR\setminus\sigma_{\rm exc}$ is such that $\Lambda_{\scZ}([a,b]) \neq \varnothing$.

Also,  in order to symmetrize our estimates  with respect to $\thetakl^+$ and $\thetakl^-$ we introduce the quantity
\begin{equation}\label{eq.defthetamin}
\thetakl^{\min}:=\min(|\thetakl^{+}|, |\thetakl^{-}|), \quad  \mbox{ for }  \scZ \in \calZ\setminus\{\EE\} \mbox { and  } (k,\lambda)\in\Lambda_{\scZ}.
\end{equation}
Note that while $\thetakl^{\min}$ (and positive powers of $\thetakl^{\min}$)  remain bounded when $(k.\lambda) \in \overline{\Lambda_{\scZ}([a,b])}$ (in other words $\thetakl^{\min} \lesssim 1$),  negative powers of $\thetakl^{\min}$ blow up when $(k, \lambda)$  approaches  a spectral cut. \\ [6pt]
Since pointwise estimates of the functions $ w_{k,\lambda,j}$ and $\partial_x w_{k,\lambda,j}$ have already been obtained in section \ref{sec_EF-estimates} (see \eqref{eq.boundcrosspoint-w} for instance), in order to implement the process described above (observation 3), we simply need to get pointwise estimates of their $\lambda$-derivatives on $\Lambda_{\scZ}([a,b])$  (where these functions are smooth in $\lambda$ since $\Lambda_{\scZ}([a,b])$  does not intersect the spectral cut). These require estimates of $\lambda$-derivatives of various intermediate quantities that are the object of the next subsection. 
\subsubsection*{(I) Preliminary $\lambda$-derivatives estimates.}

\noindent {\bf (Ia) Derivatives of powers of $|\thetakl^{\pm}|$}:
From (\ref{eq.def-thetam}, \ref{eq.def-thetap}), in each $\Lambda_{\scZ}$, one has $ \theta_{k,\lambda }^{\pm}=a |\theta_{k,\lambda }^{\pm}|$ with $a^2=\pm 1$. Thus, it follows 
$$\forall \; \alpha \in \mathbb{R}, \quad |\partial_{\lambda} ( \theta_{k,\lambda }^{\pm})^\alpha | =| \partial_{\lambda} (|\theta_{k,\lambda }^{\pm}|^\alpha)|.$$ Furthermore, the relation $|\thetakl^{\pm}|=|\Thetakl^{\pm}|^{1/2}$ gives:
$$
|\partial_{\lambda} ( \theta_{k,\lambda }^{\pm})^\alpha | =| \partial_{\lambda} (|\theta_{k,\lambda }^{\pm}|^\alpha)| =|\partial_{\lambda}(|\Theta_{k,\lambda }^{\pm}|^{\frac{\alpha}{2}})|=\Big|\frac{\alpha}{2}  \Big|  \      \big|\partial_{\lambda} |\Theta_{k,\lambda }^{\pm}|\big| \  |\theta_{k,\lambda }^{\pm}|^{\alpha-2} \mbox{ for $\alpha\in \bbR$ }.
$$
As $[a,b] \subset \bbR\setminus\sigma_{\rm exc}$, $\Theta_{k,\lambda }^{\pm}$ is $C^{\infty}$ on $\overline{\Lambda_{\scZ}([a,b])}$ so that  $\big|\partial_{\lambda} |\Theta_{k,\lambda }^{\pm}|\big|= | \partial_{\lambda} \Theta_{k,\lambda }^{\pm}\big|\lesssim 1$. Thus, it yields
\begin{equation}\label{eq.deritheta-surfac}
|\partial_{\lambda} ( \theta_{k,\lambda }^{\pm})^\alpha | =| \partial_{\lambda} (|\theta_{k,\lambda }^{\pm}|^\alpha)|  \lesssim  |\theta_{k,\lambda }^{\pm}|^{\alpha-2}, \quad \forall  (k, \lambda) \in \Lambda_{\scZ}([a,b]) \mbox{ and $\alpha\in \bbR$}.
\end{equation}
{\bf(Ib) Derivatives of  the Wronskian $\calW_{k,\lambda}$}:
From the definition  \eqref{eq.disp}  of $\calW_{k,\lambda}$ and $\mu_{\lambda}^-=\mu_0$, one gets:
$$
\partial_{\lambda }\calW_{k,\lambda}=\frac{\partial_{\lambda}   \theta_{k,\lambda }^{-} }{\mu_0}+\frac{\partial_{\lambda}   \theta_{k,\lambda }^{+} }{\mu_{\lambda}^+}+   \theta_{k,\lambda }^{+} \, \partial_{\lambda}\, \Big(\frac{1}{\mu_{\lambda}^+}\Big).
$$
As $[a,b] \subset \bbR\setminus\sigma_{\rm exc}$,  $|\mu_{\lambda}^+|\lesssim 1$,  $|\partial_{\lambda} (1/ \mu_{\lambda}^+)|\lesssim 1$ for $\lambda \in [a,b]$.
Moreover $|\thetakl^{\pm}|\lesssim 1$ since $\thetakl^{\pm}$ is continuous on $\overline{\Lambda_{\scZ}([a,b])}$. Then it follows, with  \eqref{eq.deritheta-surfac} with $\alpha=1$:
\begin{equation}\label{eq.derivwronskian}
|\partial_{\lambda} \calW_{k,\lambda}|\lesssim |\thetakl^-|^{-1}+ |\thetakl^+|^{-1}.
\end{equation}
Furthermore, one has $\partial_{\lambda }(1/\calW_{k,\lambda} )= -\partial_{\lambda }\calW_{k,\lambda}/(\calW_{k,\lambda})^2$. Thus, combining  \eqref{eq.ineqA}  and  \eqref{eq.derivwronskian} leads to:
\begin{equation}\label{eq.derivinversewronskian}
|\partial_{\lambda}( \calW_{k,\lambda}^{-1})|\lesssim \frac{ |\thetakl^-|^{-1}+ |\thetakl^+|^{-1}}{ (|\thetakl^-|+  |\thetakl^+|)^2}, \quad \forall (k, \lambda) \in \Lambda_{\scZ}([a,b]).
\end{equation}
{\bf (Ic) Derivative of  the coefficients  $A_{k,\lambda,\pm 1}$}. 
From formula  \eqref{def-A-gen} and the fact that $\lambda\mapsto |\lambda/(2 \mu_\lambda^\mp) |^{1/2}$ is $C^{\infty}$ smooth for $(k,\lambda)$ in $\overline{\Lambda_{\scZ}([a,b])}$, one gets
$$
|\partial_{\lambda }A_{k,\lambda,\pm 1}| \lesssim (|\partial_{\lambda}(| \thetakl^{\mp}|^{\frac{1}{2}}) | +| \thetakl^{\mp}|^{\frac{1}{2}} )\ | \calW_{k,\lambda}|^{-1} +  |\thetakl^{\mp}|^{\frac{1}{2}}   \, |  \partial_{\lambda}( \calW_{k,\lambda}^{-1}) | .
$$
Thus combining  the estimate \eqref{eq.ineqA} for $\calW_{k,\lambda}^{-1}$,  \eqref{eq.deritheta-surfac} applied with $\alpha=1/2$, $| \thetakl^{\mp}|^{\frac{1}{2}} \lesssim  |\theta_{k\lambda }^{\mp}|^{-\frac{3}{2}}$  and  \eqref{eq.derivinversewronskian} gives:
\begin{equation}\label{eq.deriv-coefA}
|\partial_{\lambda}A_{k,\lambda,\pm 1} |\lesssim  \frac{ |\theta_{k\lambda }^{\mp}|^{-\frac{3}{2}}}{ |\thetakl^-|+ |\thetakl^+|}+ |\thetakl^{\mp}|^{\frac{1}{2}}  \frac{ |\thetakl^-|^{-1}+ |\thetakl^+|^{-1}}{ (|\thetakl^-|+  |\thetakl^+|)^2}, \quad  \forall \; (k, \lambda) \in \Lambda_{\scZ}([a,b]).
\end{equation}
As $|\thetakl^{\mp}|^{1/2}/ (|\thetakl^-|+  |\thetakl^+|)\leq |\thetakl^{\mp}|^{-1/2}$ and  we have by definition \eqref{eq.defthetamin} of $\thetakl^{\min}$:
$$
|\theta_{k\lambda }^{\mp}|^{-\frac{3}{2}} \leq (\thetakl^{\min})^{-\frac{3}{2}} , \quad |\theta_{k\lambda }^{\mp}|^{-\frac{1}{2}} \leq (\thetakl^{\min})^{-\frac{1}{2}} , \quad |\thetakl^-|^{-1}+ |\thetakl^+|^{-1} \lesssim (\thetakl^{\min})^{-1}, \quad |\thetakl^-|+  |\thetakl^+| \geq \thetakl^{\min},
$$
we deduce from \eqref{eq.deriv-coefA} that
\begin{equation}\label{eq.derivA-1}
|\partial_{\lambda} A_{k, \lambda,\pm 1}| \lesssim (\thetakl^{\min})^{-\frac{5}{2}}, \quad \forall (k, \lambda) \in \Lambda_{\scZ}([a,b]).
\end{equation}
Note that moreover, if $\pm \, \Oc \notin [a,b]$, we can exploit in \eqref{eq.deriv-coefA} the fact $|\thetakl^{+}|$ and  $|\thetakl^{-}| $ cannot vanish simultaneously, in other words that $|\thetakl^{+}| + |\thetakl^{-}|$ is bounded from below, to obtain the improved estimate 
\begin{equation}\label{eq.derivA-1.improved}
|\partial_{\lambda} A_{k, \lambda,\pm 1}| \lesssim (\thetakl^{\min})^{-\frac{3}{2}}, \quad \forall (k, \lambda) \in \Lambda_{\scZ}([a,b]).
\end{equation}

\subsubsection*{(II) H\"older-type estimates of  generalized eigenfunctions for $j = \pm 1$}\label{section.spec derivplasm} 
Each $\bbW_{k,\lambda,j}$ is constructed from $w_{k,\lambda,j}$ (and its $x$ derivative), which is itself  constructed from $\psi_{k,\lambda,j}$ (and its $x$ derivative). We study below the $\lambda$-derivatives of these functions in the reverse order. \\ [12pt]
\noindent {\bf (IIa) Pointwise estimates the $\lambda$-derivative of  the functions  $\psi_{k,\lambda,\pm 1}(x)$ and $\partial_x \psi_{k,\lambda,\pm 1}(x)$  }

\begin{lemma} \label{lem_estimderpsiA} 
	Let $\scZ \in \calZ\setminus\{\EE\}$ and $[a,b] \subset \bbR\setminus\sigma_{\rm exc}$ such that $\Lambda_{\scZ}([a,b]) \neq \varnothing$.
	If $\pm 1 \in J_{\scZ}$, then one has  for all $(k,\lambda)\in \Lambda_{\scZ}([a,b])$  the following pointwise estimates 
	\begin{eqnarray}
	& \forall \; x \in \mathbb{R}, \quad 	\displaystyle \big|\partial_{\lambda}\psi_{k,\lambda,\pm 1}(x)\big|& \lesssim (1+|x|)\,  \Big(  \frac{1}{|\thetakl^{+ } |} +  \frac{1}{|\thetakl^{-}| } +  \frac{| \thetakl^\pm|}{|\thetakl^{\mp}|^2}\Big)  \label{eq.maj-deriv-psi-spec},\\[6pt]
	&\forall \; x \in \mathbb{R}, \quad 	\big|\partial_{\lambda}\psi_{k,\lambda,\pm 1}(x)\big| &\lesssim  (1+|x|) \,  (\thetakl^{\min})^{-2} \label{eq.maj-deriv-psi-spec-simpl},\\[6pt]
	& \quad \  \forall \; x \in \mathbb{R}^*,  \quad	\big|  \partial_{\lambda}\partial_x \psi_{k,\lambda,\pm 1}(x)\big|&\lesssim  (1+|x|) \,  (\thetakl^{\min})^{-1}  . \label{eq.estimate-deriv-lambda-deriv-psi-2-mp}
	\end{eqnarray}
\end{lemma}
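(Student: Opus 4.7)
The plan is to prove the three estimates by direct computation, starting from the explicit expression \eqref{eq-def-phi} of $\psi_{k,\lambda,\pm 1}$. By the symmetry $(j,x) \leftrightarrow (-j,-x)$, it suffices to treat the case $j=+1$, which means $\scZ \in \{\DD, \DE, \DI\}$ so that $\thetakl^-$ is purely imaginary (see \eqref{eq.def-thetam}). We then split according to the sign of $x$.

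For $x \geq 0$, one has $\psi_{k,\lambda,+1}(x) = \exp(-\thetakl^+ x)$ (and $\partial_x \psi_{k,\lambda,+1}(x) = -\thetakl^+ \exp(-\thetakl^+ x)$). Since $\thetakl^+$ is either positive real or purely imaginary in $\zDD \cup \zDE \cup \zDI$, the exponential factor is bounded by $1$. Differentiating in $\lambda$ and applying \eqref{eq.deritheta-surfac} with $\alpha=1$ immediately yields bounds of the form $|x|/|\thetakl^+|$ for $\partial_\lambda \psi$ and $|\thetakl^+|^{-1}(1+|\thetakl^+||x|)$ for $\partial_\lambda \partial_x \psi$, both compatible with the three claimed estimates.

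For $x \leq 0$, write $\psi_{k,\lambda,+1}(x) = \cosh(\thetakl^- x) - \alpha \sinh(\thetakl^- x)$ with $\alpha := (\thetakl^+/\mu_\lambda^+)/(\thetakl^-/\mu_\lambda^-)$, so $|\alpha| \lesssim |\thetakl^+|/|\thetakl^-|$. Because $\thetakl^- \in \rmi\bbR$ here, both $|\cosh(\thetakl^- x)|$ and $|\sinh(\thetakl^- x)|$ are bounded by $1$, and we additionally have the refined bound $|\sinh(\thetakl^- x)| \leq |\thetakl^-||x|$. Differentiating gives three contributions: $(\partial_\lambda \thetakl^-)\, x\, \sinh(\thetakl^-x)$, $-\alpha (\partial_\lambda \thetakl^-)\, x\, \cosh(\thetakl^-x)$ and $-(\partial_\lambda \alpha)\sinh(\thetakl^-x)$. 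The first two are estimated by $(1+|x|)/|\thetakl^-|$ and $(1+|x|)|\thetakl^+|/|\thetakl^-|^2$, using $|\partial_\lambda \thetakl^-| \lesssim |\thetakl^-|^{-1}$. The delicate term is the third: a crude computation using (Ic) only gives $|\partial_\lambda \alpha| \lesssim 1/(|\thetakl^+||\thetakl^-|) + |\thetakl^+|/|\thetakl^-|^3$, which is too singular. The key idea is to use the refined bound $|\sinh(\thetakl^-x)|\leq |\thetakl^-||x|$ to absorb one power of $|\thetakl^-|^{-1}$, producing $|x|/|\thetakl^+| + |x||\thetakl^+|/|\thetakl^-|^2$, which fits into \eqref{eq.maj-deriv-psi-spec}. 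Estimate \eqref{eq.maj-deriv-psi-spec-simpl} is a direct consequence of \eqref{eq.maj-deriv-psi-spec}: since $|\thetakl^\pm|$ are bounded above on $\overline{\Lambda_\scZ([a,b])}$, each of the three summands in \eqref{eq.maj-deriv-psi-spec} is majorized by $(\thetakl^{\min})^{-2}$.

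For \eqref{eq.estimate-deriv-lambda-deriv-psi-2-mp}, proceed analogously on $\partial_x \psi_{k,\lambda,+1}$. On $x \geq 0$ the computation is immediate. On $x \leq 0$, $\partial_x \psi_{k,\lambda,+1}(x) = \thetakl^-\sinh(\thetakl^- x) - \beta \cosh(\thetakl^- x)$ with $\beta := \alpha \thetakl^- = \thetakl^+ \mu_\lambda^-/\mu_\lambda^+$; the quantity $\beta$ is smooth with $|\partial_\lambda \beta| \lesssim |\thetakl^+|^{-1}$, so the terms arising from $\partial_\lambda(\beta \cosh(\thetakl^- x))$ are bounded by $(\thetakl^{\min})^{-1}$ and $(1+|x|)$ respectively. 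The only term requiring the refined sinh inequality is $\beta\, x\, (\partial_\lambda \thetakl^-)\sinh(\thetakl^- x)$, whose modulus is controlled by $|\thetakl^+||x|/|\thetakl^-|$; using that $|\thetakl^+|/|\thetakl^-| \lesssim (\thetakl^{\min})^{-1}$ (separating the cases $\thetakl^{\min}=|\thetakl^+|$ and $\thetakl^{\min}=|\thetakl^-|$, and exploiting $|\thetakl^+|\lesssim 1$ in the latter) completes the estimate.

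The main obstacle is really the treatment of the $\partial_\lambda\alpha$ contribution in the $x\leq 0$ regime: a naive estimate loses a power of $|\thetakl^-|^{-1}$ and produces a term of order $|\thetakl^+|/|\thetakl^-|^3$, incompatible with the claimed bound; the resolution consists in systematically exploiting $|\sinh(\thetakl^- x)| \leq |\thetakl^-||x|$ (rather than $1$) whenever $\partial_\lambda \alpha$ or $\partial_\lambda \thetakl^-$ multiplies a sine factor, which restores the correct order.
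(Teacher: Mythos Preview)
Your proof is correct and follows essentially the same approach as the paper: direct differentiation of the explicit formula \eqref{eq-def-phi}, reduction to $j=+1$ by symmetry, and the crucial use of $|\sinh(\thetakl^- x)|\leq |\thetakl^-|\,|x|$ to tame the $(\partial_\lambda\alpha)\sinh(\thetakl^- x)$ contribution. One small slip in the last paragraph: the term $\beta\, x\,(\partial_\lambda\thetakl^-)\sinh(\thetakl^- x)$ does \emph{not} need the refined sinh bound---the crude $|\sinh|\leq 1$ gives exactly the $|\thetakl^+|\,|x|/|\thetakl^-|$ you write, and your case analysis then correctly shows this is $\lesssim (1+|x|)(\thetakl^{\min})^{-1}$; also, you should mention that the two terms coming from $\partial_\lambda\big(\thetakl^-\sinh(\thetakl^- x)\big)$ are trivially bounded by $(\thetakl^{\min})^{-1}$ and $|x|$ respectively.
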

\begin{proof}
As for proposition \ref{prop.decay-estim-surfacic-zones}, we give the proof for $j=1$ (which means that $\scZ \in \{\DI, \DE, \DD \}$) and let the reader check by simple symmetry arguments (between $-1$ and $+1$ on one hand, $x< 0$ and $x > 0$ on the other hand) that it also works for $j=-1$. \\[6pt]
	{\it Step 1: proof of \eqref{eq.maj-deriv-psi-spec}.}
	\begin{itemize}
		\item[(i)] For $x\geq 0$, one has  $\psi_{k,\lambda, 1}(x)=\rme^{- \thetakl^{+}x}$ (cf. formula \eqref{eq-def-phi}) thus:
		$$
		\partial_{\lambda}\psi_{k,\lambda,1}(x)=- x \, (\partial_{\lambda} \thetakl^{+} )\, \rme^{- \thetakl^{+} x}
		$$
		with $\thetakl^{+}>0$ or $\thetakl^{+}\in \rmi\mathbb{R}$. Thus, combining the inequality \eqref{eq.deritheta-surfac}  for $\alpha=1$ and  $| \rme^{- \thetakl^{+} x}|\leq 1$ yields immediately:
		\begin{equation}\label{eq.mapj-psi-spec-positif}
		\forall \; x \geq 0, \quad \big|\partial_{\lambda}\psi_{k,\lambda,1}(x)\big| \lesssim   |\theta_{k,\lambda }^{+}|^{-1} \,  |x| .
		\end{equation}
		\item[(ii)]  For $x<0$, setting $ q_\lambda :=(\thetakl^+/ \mu_\lambda^+)/( \thetakl^-/ \mu_\lambda^-)$, formula \eqref{eq-def-phi} for $\psi_{k,\lambda,\pm 1}$ that, 
		$$
		\psi_{k,\lambda,1}(x) = \cosh\big( \thetakl^-\, x \big) - q_\lambda\ \sinh\big( \thetakl^-\, x \big).
		$$
		Therefore, one computes that
		\begin{equation}\label{eq.deriv-psi-spec-negatif}
		\partial_{\lambda}\psi_{k,\lambda,1}(x)=x \, \partial_{\lambda}( \thetakl^{-}) \, \sinh(\thetakl^{-} \, x)- 
		\partial_{\lambda} q_\lambda \,  
		\sinh(\thetakl^{-} \, x)- q_\lambda  \, x\,  \partial_{\lambda}( \thetakl^{-})\,  \cosh(\thetakl^- \, x).
		\end{equation}
		We now bound successively the three terms of the right hand side of \eqref{eq.deriv-psi-spec-negatif}. For the first term,  thanks to \eqref{eq.deritheta-surfac}  for $\alpha=1$ and  $| \sinh(\thetakl^{-} \, x)|\leq 1$ since $\thetakl^{-} \in \rmi \mathbb{R}$  (see \eqref{eq.def-thetam}), one gets:
		\begin{equation}\label{eq.maj-deriv-psi-spec-negatif1}
		|x \, \partial_{\lambda}( \thetakl^{-})\, \sinh(\thetakl^{-} \, x)|  \lesssim |x| \,  |\thetakl^{-}|^{-1}.
		\end{equation}
		For the second term,  
		as $[a,b] \subset \bbR\setminus\sigma_{\rm exc}$,  $| \mu_\lambda^-/ \mu_\lambda^{+}|\lesssim 1$ and $|\partial_{\lambda} (\mu_\lambda^-/ \mu_\lambda^{+}) | \lesssim 1$, one obtains  
		$$
		\big|\partial_{\lambda}q_\lambda\big| \lesssim \Big| \frac{\thetakl^+}{\thetakl^-} \Big|+ | \partial_{\lambda} \thetakl^+| \, |\thetakl^-|^{-1}+| \thetakl^+| \, |\partial_{\lambda} (\thetakl^-)^{-1}|,
		$$
		which gives with  \eqref{eq.deritheta-surfac} applied successively  for $\alpha=1$ and $\alpha=-1$:
		$$
		\big|\partial_{\lambda}q_\lambda\big|   \lesssim \Big| \frac{\thetakl^+}{\thetakl^-} \Big|+ \frac{1}{|\thetakl^-| \, |\thetakl^+|}+ \frac{| \thetakl^+|}{|\thetakl^-|^3}  .
		$$
		As $|\thetakl^-|^{-1} \lesssim |\thetakl^-|^{-3} $ (since $|\thetakl^-|^{2}\lesssim 1$), the previous  inequality simplifies to:
		\begin{equation}\label{eq.maj-deriv-psi-spec-negatif2-inter}
		\big|\partial_{\lambda}q_\lambda\big|   \lesssim  \frac{1}{|\thetakl^-| \, |\thetakl^+|}+ \frac{| \thetakl^+|}{|\thetakl^-|^3}  .
		\end{equation}
		Hence, combining \eqref{eq.maj-deriv-psi-spec-negatif2-inter} and $| \sinh(\thetakl^{-} \, x)|\leq  |\thetakl^{-} | \, |x| $ yields:
		\begin{equation}\label{eq.maj-deriv-psi-spec-negatif2}
		\big|\partial_{\lambda}q_\lambda \,  \sinh(\thetakl^{-} \, x) \big| \lesssim \Big(  \frac{1}{|\thetakl^+| } +  \frac{| \thetakl^+|}{|\thetakl^-|^2}\Big) \, |x| .
		\end{equation}
		For the third term,  $ |\mu_\lambda^-|/|\mu_\lambda^+|\lesssim 1$ gives $|q_\lambda| \lesssim |\thetakl^+| / |\thetakl^-|$ and $| \cosh(\thetakl^{+} \, x)|\leq  1$. Thus,  one gets
		\begin{equation}\label{eq.maj-deriv-psi-spec-negatif3}
		\big| q_\lambda \, x\,  \partial_{\lambda}( \thetakl^{-})\,  \cosh(\thetakl^{-} \, x)\big| \lesssim 
		\frac{|\thetakl^+|}{|\thetakl^-|} \; \big|\partial_{\lambda} \thetakl^{-} \big| \, |x| \lesssim  \frac{| \thetakl^+|}{|\thetakl^-|^2}\, |x| .
		\end{equation}
		where we have used \eqref{eq.deritheta-surfac} with $\alpha=1$ for the last inequality.
		Gathering the estimates \eqref{eq.maj-deriv-psi-spec-negatif1}, \eqref{eq.maj-deriv-psi-spec-negatif2} and \eqref{eq.maj-deriv-psi-spec-negatif3} in \eqref{eq.deriv-psi-spec-negatif} gives \eqref{eq.maj-deriv-psi-spec}  for $x<0$. One observes with \eqref{eq.mapj-psi-spec-positif}  that   \eqref{eq.maj-deriv-psi-spec}   holds also for $x\geq 0$ and thus \eqref{eq.maj-deriv-psi-spec} is proved.
	\end{itemize}
	
	\noindent {\it Step 2 : proof of \eqref{eq.maj-deriv-psi-spec-simpl}.}
	Inequality \eqref{eq.maj-deriv-psi-spec-simpl} is an immediate consequence of \eqref{eq.maj-deriv-psi-spec} by using  the bounds: $|\thetakl^{+}|\lesssim 1$ and $|\thetakl^\pm|^{-1}\lesssim (\thetakl^{\min})^{-1}\lesssim (\thetakl^{\min})^{-2}$.\\

	\noindent {\it Step 3: proof of \eqref{eq.estimate-deriv-lambda-deriv-psi-2-mp}.} From formula \eqref{eq-express-deriv-x}, we compute, using the chain rule, that for $x>0$
	$$
	\partial_{\lambda} \partial_x \psi_{k,\lambda,1}(x) = - \, \big( 1 - x \, \thetakl^{+} \big) \, \partial_{\lambda}  \thetakl^{+} \; \rme^{- \thetakl^{+}x}
	$$
	so that, using \eqref{eq.deritheta-surfac}  for $\alpha = 1$,
	\begin{equation*}\label{eq.estimate-deriv-lambda-deriv-psi-2-pm}
	\big|  \partial_{\lambda} \partial_x \psi_{k,\lambda,1}(x) \big| \lesssim |\thetakl^{+}| \, |\partial_{\lambda}  \thetakl^{+} |  \,|x|+ \, |\partial_{\lambda}  \thetakl^{+} |  \lesssim |x|+ |\thetakl^{+}|^{-1} \lesssim (1+|x|) \,  (\thetakl^{\min})^{-1},
	\end{equation*} 
	that is to say \eqref{eq.estimate-deriv-lambda-deriv-psi-2-mp} for $x>0$. On the other hand, for  $x<0$, using again \eqref{eq-express-deriv-x}, one has 
	$$
	\left| \begin{array}{lll}
	\partial_{\lambda} \partial_x \psi_{k,\lambda,1}(x) & = & \partial_{\lambda}  \thetakl^{-} \big(  \sinh (\thetakl^{-} x)+x \, \thetakl^{-} \cosh (\thetakl^{-} x) \big) \\ [10pt]
	& &\displaystyle -\Big[ \partial_{\lambda}  \thetakl^{+}  \; \frac{\mu_\lambda^-}{\mu_\lambda^+}+  \thetakl^{+} \, \partial_\lambda\Big(\frac{\mu_\lambda^-}{\mu_\lambda^+}\Big)  \Big] \; \cosh (\thetakl^{-} x)-  \thetakl^+ \;  \frac{\mu_\lambda^-}{\mu_\lambda^+} \   \partial_{\lambda}  \thetakl^{-}  \ x \, \sinh (\thetakl^{-} x) .
	\end{array} \right.
	$$
	Then, since  $\thetakl^{-} \in \rmi \mathbb{R}$, see (\ref{eq.def-thetam}), $|\mu_{\lambda}^{-}/\mu_{\lambda}^{+}| \lesssim 1$, $|\partial_{\lambda}(\mu_{\lambda}^{-}/\mu_{\lambda}^{+})| \lesssim 1$, $|\thetakl^{\mp}|\lesssim 1 $ and $|\partial_{\lambda}\thetakl^{\mp}| \lesssim  (\thetakl^{\min})^{-1}$ (by \eqref{eq.deritheta-surfac}  for $\alpha=1$),
	one obtains \eqref{eq.estimate-deriv-lambda-deriv-psi-2-mp}  for  $x<0$.
\end{proof}

\noindent{\bf (IIb) Pointwise estimates the $\lambda$-derivative of  the functions  $w_{k,\lambda,\pm 1}(x,y)$ and  $\partial_x w_{k,\lambda,\pm 1}(x,y)$.} 
\begin{lemma} \label{lem_estimderpsiB}
	Let   $\scZ \in \calZ\setminus\{\EE\}$ and $[a,b] \subset \bbR\setminus\sigma_{\rm exc}$ such that $\Lambda_{\scZ}([a,b]) \neq \varnothing$. Then, for $j \in J_{\scZ}$, one has   for all $(k,\lambda)\in \Lambda_{\scZ}([a,b])$  the following pointwise estimates 
	\begin{equation}\label{eq.deriv-w-surfac-inter-crosspointsA}
	\forall (x,y)\in \bbR^2, \quad 	|\partial_{\lambda} w_{k,\lambda, j}(x,y)| \lesssim  (\thetakl^{\min})^{-\frac{5}{2}} \, (1+|x|), \  .
	\end{equation}
	\begin{equation}\label{eq.estimate-deriv-lambda-deriv-psiA}
	\forall \; (x,y) \in \bbR^* \times \bbR, \quad | \partial_{\lambda} \partial_x w_{k,\lambda,j}(x,y)|  \lesssim  (\thetakl^{\min})^{-\frac{3}{2}} (1+|x|) .
	\end{equation} 
	If moreover  $\pm \, \Oc \notin [a,b]$, \eqref{eq.deriv-w-surfac-inter-crosspointsA} can be improved into 
	\begin{equation}\label{eq.deriv-w-surfac-inter-crosspointsA-improved}
	\forall (x,y)\in \bbR^2, \quad 	|\partial_{\lambda} w_{k,\lambda,j}(x,y)| \lesssim  (\thetakl^{\min})^{-\frac{3}{2}} \, (1+|x|)  .
	\end{equation}
\end{lemma}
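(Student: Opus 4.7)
The plan is to apply the Leibniz rule to the factored form $w_{k,\lambda,j}(x,y) = A_{k,\lambda,j}\psi_{k,\lambda,j}(x)\,\rme^{\rmi k y}$, which immediately gives
\[
\partial_\lambda w_{k,\lambda,j} = \bigl[(\partial_\lambda A_{k,\lambda,j})\,\psi_{k,\lambda,j}(x) + A_{k,\lambda,j}\,\partial_\lambda\psi_{k,\lambda,j}(x)\bigr]\rme^{\rmi k y},
\]
and similarly a sum of two terms for $\partial_\lambda\partial_x w_{k,\lambda,j}$, where $\psi$ is replaced by $\partial_x \psi$. All the ingredients to bound these terms are already available: for the $A$-factors, the bounds \eqref{eq.boundAcoefficient} and \eqref{eq.derivA-1} (improved to \eqref{eq.boundAcoefficientnew} and \eqref{eq.derivA-1.improved} when $\pm\Oc\notin[a,b]$), and for the $\psi$-factors, the $L^\infty_x$ estimates \eqref{eq-boundpsi}, \eqref{eq.deriv-1}, together with the $\lambda$-derivative estimates of Lemma~\ref{lem_estimderpsiA}. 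The bounds $|\thetakl^\pm|\lesssim 1$ on $\overline{\Lambda_\scZ([a,b])}$ and the trivial lower bounds $|\thetakl^\pm|\geq \thetakl^{\min}$ then convert every expression into a power of $\thetakl^{\min}$.

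For \eqref{eq.deriv-w-surfac-inter-crosspointsA}, the product $(\partial_\lambda A)\psi$ is handled by pairing \eqref{eq.derivA-1} (actually its more detailed form \eqref{eq.deriv-coefA}) with \eqref{eq-boundpsi}: the cancellation of the common factor $(|\thetakl^+|+|\thetakl^-|)$ is the essential observation, and each of the two contributions of $\partial_\lambda A$ combines cleanly with $|\psi|\lesssim (|\thetakl^+|+|\thetakl^-|)/|\thetakl^\mp|$ to produce $(\thetakl^{\min})^{-5/2}$. The second product, $A\cdot\partial_\lambda\psi$, is bounded by $(\thetakl^{\min})^{-1/2}\cdot(1+|x|)(\thetakl^{\min})^{-2} = (\thetakl^{\min})^{-5/2}(1+|x|)$ using \eqref{eq.boundAcoefficient} and \eqref{eq.maj-deriv-psi-spec-simpl}. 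For \eqref{eq.estimate-deriv-lambda-deriv-psiA}, exactly the same bookkeeping is carried out, but with \eqref{eq.deriv-1} (which is better than \eqref{eq-boundpsi} by exactly a factor $|\thetakl^\mp|$) and \eqref{eq.estimate-deriv-lambda-deriv-psi-2-mp} (better than \eqref{eq.maj-deriv-psi-spec-simpl} by exactly a factor $\thetakl^{\min}$); this extra factor is what upgrades the exponent from $-5/2$ to $-3/2$.

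The improved estimate \eqref{eq.deriv-w-surfac-inter-crosspointsA-improved} is the main technical obstacle. The naive route of estimating $|\partial_\lambda\psi|$ alone cannot succeed, since the term coming from $\partial_\lambda q_\lambda$ in \eqref{eq.deriv-psi-spec-negatif} carries a factor $|\thetakl^+|/|\thetakl^-|^3$ that can be as large as $(\thetakl^{\min})^{-3}$ even when $\pm\Oc\notin[a,b]$. The idea is instead to keep the product $A\,\partial_\lambda\psi$ together: one exploits that whenever $\thetakl^{\min} = |\thetakl^\mp|$, the refined bound $|A|\lesssim|\thetakl^\mp|^{1/2} = (\thetakl^{\min})^{1/2}$ from \eqref{eq.boundAcoefficientnew} supplies the missing half-power, while in the complementary regime $\thetakl^{\min} = |\thetakl^\pm|$, the dangerous factors $|\thetakl^\mp|^{-k}$ are bounded and the standard Leibniz bound already yields $(\thetakl^{\min})^{-3/2}$. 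Equivalently, one can rewrite $q_\lambda\sinh(\thetakl^\mp x) = (\mu_\lambda^-/\mu_\lambda^+)\thetakl^\pm\,x\,S(\thetakl^\mp x)$ with $S(\zeta) := \sinh(\zeta)/\zeta$ analytic at $0$ and bounded on the imaginary axis, which makes manifest that the singularity of $q_\lambda$ is merely apparent. For the $(\partial_\lambda A)\psi$ part, one pairs \eqref{eq.derivA-1.improved} with the sharper estimate $|\psi_{k,\lambda,\pm 1}(x)|\lesssim 1+|x|$ obtained by the interpolation $|\sinh(\thetakl^\mp x)|\leq |\thetakl^\mp||x|$ already used in Step~3 of Proposition~\ref{prop.decay-estim-surfacic-zones}, giving the required $(\thetakl^{\min})^{-3/2}(1+|x|)$.
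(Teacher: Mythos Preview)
Your proposal is correct and follows essentially the same Leibniz-rule strategy as the paper, combining the preliminary bounds on $A_{k,\lambda,\pm1}$, $\partial_\lambda A_{k,\lambda,\pm1}$, $\psi_{k,\lambda,\pm1}$, $\partial_x\psi_{k,\lambda,\pm1}$, and their $\lambda$-derivatives, with the key cancellation in the product $(\partial_\lambda A)(\partial_x\psi)$ for \eqref{eq.estimate-deriv-lambda-deriv-psiA}. The only minor differences are cosmetic: for the first term in \eqref{eq.deriv-w-surfac-inter-crosspointsA} the paper pairs \eqref{eq.derivA-1} directly with the simpler bound $|\psi|\lesssim 1+|x|$ rather than exploiting a cancellation against \eqref{eq-boundpsi}, and for the $A\,\partial_\lambda\psi$ term in \eqref{eq.deriv-w-surfac-inter-crosspointsA-improved} the paper avoids your case split by multiplying the detailed estimates \eqref{eq.boundAcoefficient} and \eqref{eq.maj-deriv-psi-spec} directly and then invoking $(|\thetakl^-|+|\thetakl^+|)^{-1}\lesssim 1$; both routes lead to the same $(\thetakl^{\min})^{-3/2}(1+|x|)$ bound.
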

\begin{proof} As in Lemma \ref{lem_estimderpsiA}, we give only  the proof for $j=1$. We naturally separate this proof in three steps. \\ [12pt]
	{\it Step 1 : proof of \eqref{eq.deriv-w-surfac-inter-crosspointsA}.} It follows from the formula \eqref{eq.def-w} for $w_{k,\lambda, 1}$ that:
	\begin{equation}\label{eq.deriv-w-surfac-inter}
	|\partial_{\lambda} w_{k,\lambda, 1}(x,y)| \lesssim  |\partial_{\lambda} A_{k, \lambda,1}| \,   |\psi_{k,\lambda, 1}(x)|+| A_{k, \lambda,1} | \,  |\partial_{\lambda} \psi_{k,\lambda, 1}(x)|. 
	\end{equation}
	One one hand, we have the estimate \eqref{eq.derivA-1} for $  \partial_{\lambda} A_{k, \lambda,1}$, namely
	\begin{equation}\label{eq.derivA-1bis}
	|\partial_{\lambda} A_{k, \lambda,1}| \lesssim (\thetakl^{\min})^{-\frac{5}{2}}.
	\end{equation}
	On the other hand, we already showed (see \eqref{eq-boundpsi-new}  in the proof of proposition \ref{prop.decay-estim-surfacic-zones}, used for $\gamma = 1$) 
	\begin{equation}\label{eq-boundpsibis}
	\forall \; x \in \bbR, \quad |\psi_{k,\lambda, 1}(x)| \leq 1+|x|.
	\end{equation}  
	Next, using $|\thetakl^-|+|\thetakl^+| \geq |\thetakl^-|$ in \eqref{eq.boundAcoefficient}, we deduce
	\begin{equation}\label{eq.boundAcoefficient2}
	|A_{k,\lambda,1}|\lesssim |\thetakl^-|^{-\frac{1}{2}} \lesssim (\thetakl^{\min})^{-\frac{1}{2}},
	\end{equation}  
	while, from Lemma  \ref{lem_estimderpsiA} (estimate \eqref{eq.maj-deriv-psi-spec-simpl}), we also have 
	\begin{equation*}\label{eq. boundderpsi}
	\forall \; x \in \mathbb{R}, \quad 	\big|\partial_{\lambda}\psi_{k,\lambda,1}(x)\big| \lesssim   (\thetakl^{\min})^{-2} \, (1+|x|).
	\end{equation*} 
	Finally, using (\ref{eq.derivA-1bis}, \ref{eq-boundpsibis}, \ref{eq.boundAcoefficient2}, \ref{eq. boundderpsi}) in \eqref{eq.deriv-w-surfac-inter} yields
	\eqref{eq.deriv-w-surfac-inter-crosspointsA}. \\ [12pt]
	{\it Step 2 : proof of \eqref{eq.estimate-deriv-lambda-deriv-psiA}.}
	Using \eqref{eq.def-w} again, one has for all $x\in \bbR^*$  and  $y \in \bbR$:
	\begin{equation}\label{eq-deriv-lambda-deriv-x-ineq-triang}
	| \partial_{\lambda} \partial_x w_{k,\lambda,1}(x,y)| \leq  |\partial_{\lambda} A_{k, \lambda,1}|  \, \big| \partial_x  \psi_{k,\lambda,1}(x)\big| +   |A_{k,\lambda,1}| \, \big|  \partial_{\lambda}\partial_x  \psi_{k,\lambda,1} (x)\big| .
	\end{equation}
	For the first term, we have to be careful since using directly would lead to a non optimal estimate. Instead of this, we have to take profit of the cancellation of terms when doing the product of the two estimates \eqref{eq.deriv-1} (established in section \ref{sec_EF-estimates}, proof of proposition \ref{prop.decay-estim-surfacic-zones}) and \eqref{eq.deriv-coefA}, namely
	\begin{equation*}\label{twoestimates}
	\big| \partial_x  \psi_{k,\lambda,1}(x)\big|\lesssim |\thetakl^{-}|+ |\thetakl^{+}|, \quad |\partial_{\lambda}A_{k,\lambda,1} |\lesssim  \frac{ |\theta_{k\lambda }^{-}|^{-\frac{3}{2}}}{ |\thetakl^-|+ |\thetakl^+|}+ |\thetakl^{-}|^{\frac{1}{2}}  \frac{ |\thetakl^-|^{-1}+ |\thetakl^+|^{-1}}{ (|\thetakl^-|+  |\thetakl^+|)^2}
	\end{equation*}
	which yields
	\begin{equation} \label{firstterm}
	|\partial_{\lambda} A_{k, \lambda,1}|  \, \big| \partial_x  \psi_{k,\lambda,1}(x)\big| \lesssim |\theta_{k\lambda }^{-}|^{-\frac{3}{2}}+ |\thetakl^{-}|^{\frac{1}{2}}  \frac{ |\thetakl^-|^{-1}+ |\thetakl^+|^{-1}}{ |\thetakl^-|+  |\thetakl^+|} \lesssim   (\thetakl^{\min})^{-\frac{3}{2}}.
	\end{equation}
	The second term is easier. We simply combine \eqref{eq.boundAcoefficient2} with the estimate \eqref{eq.estimate-deriv-lambda-deriv-psi-2-mp} of Lemma \ref{lem_estimderpsiA}, namely 
	\begin{equation*}\label{rappel}
	\Big|  \partial_{\lambda}\partial_x \psi_{k,\lambda,1}(x)\Big| \lesssim  (1+|x|) \,  (\thetakl^{\min})^{-1}  .
	\end{equation*} 
	to obtain 
	\begin{equation}\label{secondterm}
	|A_{k,\lambda,1}| \, \big|  \partial_{\lambda}\partial_x  \psi_{k,\lambda,1} (x)\big| \lesssim (1+|x|) \,  (\thetakl^{\min})^{-\frac{3}{2}}.
	\end{equation}
	~\\[-6pt]Finally,  \eqref{eq.estimate-deriv-lambda-deriv-psiA}  follows from \eqref{eq-deriv-lambda-deriv-x-ineq-triang}, \eqref{firstterm} and \eqref{secondterm}.
	\\ [12pt]
	{\it Step 3: proof of \eqref{eq.deriv-w-surfac-inter-crosspointsA-improved}.}
	Since $\pm \, \Oc \notin [a,b]$, we can use the improved estimates \eqref{eq.derivA-1.improved} for $\partial_{\lambda } A_{k,\lambda,1}$ (instead of \eqref{eq.derivA-1}) and with \eqref{eq.deriv-w-surfac-inter} and \eqref{eq-boundpsibis}, one obtains:
	\begin{equation}\label{eq-deriv-lambda-deriv-x-noOC}
	| \partial_{\lambda} w_{k,\lambda,1}(x,y)| \lesssim   (\thetakl^{\min})^{-\frac{3}{2}} (1+|x|) +   |A_{k,\lambda,1}| \, \big|  \partial_{\lambda}  \psi_{k,\lambda,1} (x)\big| .
	\end{equation}
	For the second term of the right hand side of \eqref{eq-deriv-lambda-deriv-x-noOC}, combining \eqref{eq.boundAcoefficient} and \eqref{eq.maj-deriv-psi-spec} leads to:
	\begin{equation*}\label{eq.secondterm-interm}
	| A_{k, \lambda, 1} | \,  |\partial_{\lambda} \psi_{k,\lambda,1}(x)| \lesssim (|\thetakl^-|+  |\thetakl^+|)^{-1}\Big(  \frac{|\thetakl^-|^{\frac{1}{2}} }{|\thetakl^{+}| } + \frac{1}{|\thetakl^{-}|^{\frac{1}{2}} } +  \frac{| \thetakl^+|}{|\thetakl^-|^{\frac{3}{2}}}\Big) \,(1+ |x|) , \ \forall x\in \bbR.
	\end{equation*}
	We point out that in the last expression, we use the more precise inequality  \eqref{eq.maj-deriv-psi-spec}  instead of  \eqref{eq.maj-deriv-psi-spec-simpl} to simplify in the product a  $|\thetakl^-|^{-1/2}$ term.
	As $( |\thetakl^-|+  |\thetakl^+|)^{-1}\lesssim 1$ (since $\pm \Oc\notin[a,b]$), $|\thetakl^\mp|\lesssim1$ and  $|\thetakl^{\pm}|^{-1}\lesssim   (\thetakl^{\min})^{-1}$, one deduces that:
	\begin{equation}\label{eq.secondterm}
	| A_{k, \lambda, 1} | \,  |\partial_{\lambda} \psi_{k,\lambda, 1}(x)| \lesssim    (\thetakl^{\min})^{-\frac{3}{2}}   \,(1+ |x| ), \ \forall x\in \bbR.
	\end{equation}
	Combining \eqref{eq-deriv-lambda-deriv-x-noOC} and  \eqref{eq.secondterm} yields finally the estimate \eqref{eq.deriv-w-surfac-inter-crosspointsA-improved}.
\end{proof}

\subsubsection*{(IIc) H\"older-type estimates for $\bbW_{k,\lambda,\pm 1}$.} 
We are now in position to prove our ``H\"older type'' inequalities for $\bbW_{k,\lambda,\pm 1}$.
\begin{proposition}\label{prop.holdreggeneralizedeigenfunctions}
	Let  $s > 1/2$, $\scZ \in \calZ\setminus\{\EE\}$, $\gamma \in (0,1]\cap (0,s-1/2)$ and $[a,b] \subset \bbR\setminus\sigma_{\rm exc}$ such that $\Lambda_{\scZ}([a,b]) \neq \varnothing$. Then,  there exists $C_{a,b}^{\gamma} > 0$  such that for  $j\in J_{\scZ}$:
	\begin{equation}\label{eq.ineqboundhold1crosspoint}
	\|  \bbW_{k,\lambda',j}-\bbW_{k,\lambda,j}\|_{\Hms}\leq  C_{a,b}^{\gamma} \, \sup_{\tilde{\lambda}\in [\lambda,\lambda']}  (\thetaklt^{\min})^{-\frac{1}{2}- 2 \gamma} \   |\lambda'-\lambda|^{\gamma}  , \ \forall (k, \lambda), (k,\lambda')\in\Lambda_{\scZ}([a,b]) \mbox{ and } \lambda\leq \lambda'.
	\end{equation}
	If moreover,  $\pm \Oc\notin [a,b]$ , then there exists  $C_{a,b}^{\gamma} > 0$  such that for  $j\in J_{\scZ}$:
	\begin{equation}\label{eq.ineqboundhold1nocrosspoint}
	\| \bbW_{k,\lambda',j}- \bbW_{k,\lambda,j}\|_{\Hms}\leq C_{a,b}^{\gamma} \, \sup_{\tilde{\lambda}\in [\lambda,\lambda']}  (\thetaklt^{\min})^{-\frac{1}{2}- \gamma} \ \,  |\lambda'-\lambda|^{\gamma}  , \ \forall (k, \lambda), (k,\lambda')\in\Lambda_{\scZ}([a,b]) \mbox{ and } \lambda\leq \lambda'.
	\end{equation}
\end{proposition}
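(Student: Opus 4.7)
The plan is to implement the interpolation strategy outlined in Observation 3 of \S\ref{sec_GEForient}. I will first establish pointwise-in-$(x,y)$ H\"older estimates for the scalar functions $w_{k,\cdot,j}(x,y)$ and $\partial_x w_{k,\cdot,j}(x,y)$ by interpolating between the $L^\infty$ bounds of \S\ref{sec_EF-estimates} and the $L^\infty$ bounds on their $\lambda$-derivatives obtained in Lemma \ref{lem_estimderpsiB}; I will then reassemble the vector $\bbW_{k,\lambda,j}$ via \eqref{eq.def-Vkbis} and take the $\Hms$-norm through the embedding \eqref{Linf-L-s}, which is legitimate precisely because $(1+|x|)^\gamma \in L^2_{-s}(\bbR^2)$ whenever $\gamma<s-1/2$ and $\gamma\leq 1$.

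Fix $(k,\lambda),(k,\lambda')\in \Lambda_{\scZ}([a,b])$ with $\lambda\leq \lambda'$, and let $v_{k,\tilde\lambda,j}$ denote either $w_{k,\tilde\lambda,j}(x,y)$ or $\partial_x w_{k,\tilde\lambda,j}(x,y)$, with $(x,y)$ treated as a parameter. Along the vertical segment $\tilde\lambda\in[\lambda,\lambda']$, the mean value theorem combined with the elementary interpolation
\begin{equation*}
|v_{k,\lambda',j}-v_{k,\lambda,j}| \leq 2^{1-\gamma}\Big(\sup_{\tilde\lambda\in[\lambda,\lambda']}|\partial_\lambda v_{k,\tilde\lambda,j}|\Big)^{\gamma}\Big(\sup_{\tilde\lambda\in[\lambda,\lambda']}|v_{k,\tilde\lambda,j}|\Big)^{1-\gamma}\,|\lambda'-\lambda|^\gamma
\end{equation*}
yields a pointwise H\"older estimate. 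Plugging in \eqref{eq.boundcrosspoint-w} and \eqref{eq.deriv-w-surfac-inter-crosspointsA} when $v=w$ produces the exponent $-\tfrac{5\gamma}{2}-\tfrac{1-\gamma}{2}=-\tfrac{1}{2}-2\gamma$ on $\theta^{\min}_{k,\tilde\lambda}$ together with the weight $(1+|x|)^\gamma$, while plugging in \eqref{eq.etimatethirdcomponent} and \eqref{eq.estimate-deriv-lambda-deriv-psiA} when $v=\partial_x w$ produces the weaker exponent $-\tfrac{3\gamma}{2}$, which is subsumed since $\theta^{\min}\lesssim 1$. Reconstituting the six components of $\bbW_{k,\lambda,j}$ via \eqref{eq.def-Vkbis}, each a product of a smooth $\lambda$-dependent coefficient (bounded together with its $\lambda$-derivative on $[a,b]$, since $[a,b]\cap\sigma_{\rm exc}=\varnothing$) with $w_{k,\lambda,j}$ or $\partial_x w_{k,\lambda,j}$, and splitting each product difference as $c_{\lambda'}(v_{k,\lambda',j}-v_{k,\lambda,j})+(c_{\lambda'}-c_\lambda)v_{k,\lambda,j}$, the first piece inherits the pointwise H\"older bound, while the second is controlled by $(\theta^{\min})^{-1/2}|\lambda'-\lambda|\lesssim (\theta^{\min})^{-1/2-2\gamma}|\lambda'-\lambda|^\gamma$ (using $|\lambda'-\lambda|\lesssim 1$ on $[a,b]$). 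Taking the $L^2_{-s}$-norm of this pointwise bound and invoking \eqref{Linf-L-s} yields \eqref{eq.ineqboundhold1crosspoint}; the improved estimate \eqref{eq.ineqboundhold1nocrosspoint} follows identically after replacing \eqref{eq.deriv-w-surfac-inter-crosspointsA} by the sharpened bound \eqref{eq.deriv-w-surfac-inter-crosspointsA-improved}, which changes the exponent to $-\tfrac{3\gamma}{2}-\tfrac{1-\gamma}{2}=-\tfrac{1}{2}-\gamma$.

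The main technical difficulty is the bookkeeping of singular exponents of $\theta^{\min}$ through both the interpolation and the assembly steps: one must confirm at each stage that the dominant factor is precisely $(\theta^{\min})^{-1/2-2\gamma}$ (respectively $(\theta^{\min})^{-1/2-\gamma}$ in the improved case), that every subordinate contribution---notably those from the prefactors in \eqref{eq.def-Vkbis} and from the sup of $v$ in the interpolation applied to $\partial_x w$---is genuinely absorbed, and that the constraint $\gamma\leq 1$ permits direct use of the derivative estimates of Lemma \ref{lem_estimderpsiB} without further loss. The restriction $\gamma<s-1/2$ arises only at the final step, as the precise integrability threshold of the spatial weight $(1+|x|)^\gamma$ against $\eta_{-s}^2$.
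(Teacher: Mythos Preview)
Your proposal is correct and follows essentially the same approach as the paper: interpolate between the $L^\infty$ bounds of Proposition \ref{prop.decay-estim-surfacic-zones} and the $\lambda$-derivative bounds of Lemma \ref{lem_estimderpsiB} to obtain pointwise H\"older estimates for $w_{k,\lambda,j}$ and $\partial_x w_{k,\lambda,j}$ with weight $(1+|x|)^\gamma$, then reassemble the vector via \eqref{eq.def-Vkbis} using a product-difference splitting. The only cosmetic difference is organizational---the paper proceeds component by component whereas you treat $w$ and $\partial_x w$ in parallel before assembly---and your reference to \eqref{Linf-L-s} is slightly imprecise (that inequality is for $L^\infty$ functions, whereas here one needs directly that $(1+|x|)^\gamma\in L^2_{-s}(\bbR^2)$ for $\gamma<s-1/2$, which you in any case state explicitly).
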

\begin{proof}
	We detail the proof for  $j=1$ ($j=-1$ follows by ``symmetry arguments''). Let us first prove \eqref{eq.ineqboundhold1crosspoint}. \\ [12pt] 
	We proceed as explained in observation 3 at the beginning of this section. First of all, by using the mean value Theorem  and the estimate \eqref{eq.deriv-w-surfac-inter-crosspointsA} for $\partial_\lambda w_{k,\lambda,1}(x,y)$, one gets:
	\begin{equation}\label{eq.interpol-hold-1}
	| w_{k,\lambda',1}(x,y) -  w_{k,\lambda,1}(x,y)| \lesssim  (1+|x|) \sup_{\tilde{\lambda}\in [\lambda,\lambda']}  (\thetaklt^{\min})^{-\frac{5}{2}}    \ |\lambda'-\lambda|  . 
	\end{equation}
	On the other hand, from the pointwise estimate \eqref{eq.boundcrosspoint-w} for $w_{k,\lambda,1}$, we also have (by simply bounding the modulus of the difference by the sum of the moduli): 
	\begin{equation}\label{eq.interpol-hold-2}
	| w_{k,\lambda',1}(x,y) -  w_{k,\lambda,1}(x,y)|  \lesssim \sup_{\tilde{\lambda}\in [\lambda,\lambda']}   (\thetakl^{\min})^{-\frac{1}{2}} .
	\end{equation}
	Interpolating between \eqref{eq.interpol-hold-1} and \eqref{eq.interpol-hold-2} with $\gamma \in (0,1]$ , we get, as $\gamma \, (-\frac{5}{2}) + (1-\gamma)(-\frac{1}{2}) =-\frac{1}{2}-2\gamma$,  
	\begin{equation}\label{eq.interpol-hold-3}
	| w_{k,\lambda',1}(x,y) -  w_{k,\lambda,1}(x,y)| \lesssim  (1+|x|)^{\gamma} \sup_{\tilde{\lambda}\in [\lambda,\lambda']}  (\thetaklt^{\min})^{-\frac{1}{2}-2\gamma}   \ |\lambda'-\lambda|^{\gamma}.
	\end{equation}
	Thus, as  $x\mapsto  (1+|x|)^{\gamma}\in L^2_{-s}(\bbR^2)$ thanks to $\gamma < s - 1/2$, \eqref{eq.interpol-hold-3} implies:
	\begin{equation}\label{eq.hold-firstcomponentcrosspoint}
	\| w_{k,\lambda',1}-  w_{k,\lambda,1}\|_{L^2_{-s}(\bbR^2)} \lesssim   \sup_{\tilde{\lambda}\in [\lambda,\lambda']}  (\thetaklt^{\min})^{-\frac{1}{2}-2\gamma}   \ |\lambda'-\lambda|^{\gamma},\quad \forall (k,\lambda), (k,\lambda')\in\Lambda_{\scZ}([a,b])  \mbox{ and } \lambda\leq \lambda'.
	\end{equation}
	With the additional  assumption that $\pm \Oc \notin [a,b]$,  we can use  the better inequality  \eqref{eq.deriv-w-surfac-inter-crosspointsA-improved} for $\partial_\lambda w_{k,\lambda,1}$ instead of  \eqref{eq.deriv-w-surfac-inter-crosspointsA} , which leads to an improved version of \eqref{eq.interpol-hold-1}
	where $-5/2$ is replaced by $-3/2$. Interpolating again with \eqref{eq.interpol-hold-2}, one obtains that  for any $\gamma \in (0,1]$:
	\begin{equation}\label{eq.hold-firstcomponentnocrosspoint}
	\| w_{k,\lambda',1}-  w_{k,\lambda,1}\|_{L^2_{-s}(\bbR^2)} \lesssim   \sup_{\tilde{\lambda}\in [\lambda,\lambda']}  (\thetaklt^{\min})^{-\frac{1}{2}-\gamma}   \ |\lambda'-\lambda|^{\gamma}, \quad \forall (k,\lambda), (k,\lambda')\in\Lambda_{\scZ}([a,b])  \mbox{ and } \lambda\leq \lambda'.
	\end{equation}
	The estimate \eqref{eq.hold-firstcomponentcrosspoint} (resp.  \eqref{eq.hold-firstcomponentnocrosspoint}) is nothing but the inequality  \eqref{eq.ineqboundhold1crosspoint} (resp. \eqref{eq.ineqboundhold1nocrosspoint})  for the first component $w_{k,\lambda,1}$ of $\bbW_{k,\lambda,1}$. 
	It remains to show similar estimates for the other five components of $\bbW_{k,\lambda,1}$.  \\ [6pt] 
	According to \eqref{eq.def-Vkbis}, the second  component  of $\bbW_{k,\lambda,1}$ is
	$k  \, (\lambda \, \mu_\lambda)^{-1}   \,w_{k,\lambda,1}.$ On one hand,  the coefficient $k  \, (\lambda \, \mu_\lambda)^{-1}$ is bounded and smooth on the compact set $\overline{\Lambda_{\scZ}([a,b])}$. 
	On the other hand from \eqref{eq-boundw4}, one has for $\alpha=\gamma$ or $2\gamma$: $$\|w_{k,\lambda,1}\|_{L^2_{-s}(\bbR^2)}\lesssim (\thetakl^{\min})^{-1/2} \lesssim \sup_{\tilde{\lambda}\in [\lambda,\lambda']}  (\thetaklt^{\min})^{-1/2-\alpha}, \quad \forall (k,\lambda), (k,\lambda') \in \overline{\Lambda_{\scZ}([a,b])} .$$ 
	Thus,  from \eqref{eq.hold-firstcomponentcrosspoint} (resp. \eqref{eq.hold-firstcomponentnocrosspoint}), one derives  for the second component  (seen as the product  of $k  \, (\lambda \, \mu_\lambda)^{-1} $ by $ w_{k,\lambda,1}$)  an  H\"older  estimate 
	of the form \eqref{eq.hold-firstcomponentcrosspoint} (resp. \eqref{eq.hold-firstcomponentnocrosspoint} if $\pm \Oc\notin [a,b]$).

	The third component of $\bbW_{k,\lambda,1}$ is given by $\rmi/(\mu_{\lambda}\, \lambda) \, \partial_x w_{k,\lambda,1}$. 
	We first establish an estimates of the form \eqref{eq.hold-firstcomponentcrosspoint}  for $\partial_x w_{k,\lambda,1}$. 
	To do so, we proceed as for $w_{k,\lambda,1}$ at the beginning of this proof. First, by using the mean value Theorem and the estimate \eqref{eq.estimate-deriv-lambda-deriv-psiA} for $\partial_\lambda \partial_x w_{k,\lambda,1}(x,y)$, one gets:
	\begin{equation}\label{eq.interpol-hold-1bis}
	| \partial_x w_{k,
		\lambda',1}(x,y) -  \partial_x w_{k,\lambda,1}(x,y)| \lesssim  (1+|x|) \sup_{\tilde{\lambda}\in [\lambda,\lambda']}  (\thetaklt^{\min})^{-\frac{3}{2}}    \ |\lambda'-\lambda|,
	\end{equation}
	which is ``better'' than the same for $w_{k,\lambda,1}$ (cf. \eqref{eq.interpol-hold-1}) since $(\thetakl^{\min})^{-\frac{5}{2}}$ is replaced  by  $(\thetakl^{\min})^{-\frac{3}{2}}$. \\ [6pt]
	On the other hand, it follows from the estimate  \eqref{eq.etimatethirdcomponent} (since $|\lambda \, \mu_\lambda|\lesssim 1$) that
	\begin{equation}\label{eq.interpol-hold-2bis}
	| (\partial_x w_{k,\lambda',1}-  \partial_x w_{k,\lambda,1})(x,y)|  \leq | \partial_x w_{k,\lambda', 1}(x,y) |+ | \partial_x w_{k,\lambda, 1}(x,y) |\lesssim |\thetakl^{- }|^{\frac{1}{2}} + |\thetaklp^{-}|^{\frac{1}{2}} \lesssim 1 . 
	\end{equation}
	The interpolation between \eqref{eq.interpol-hold-1bis} and \eqref{eq.interpol-hold-2bis}  leads to 
	\begin{equation*}\label{eq.interpol-hold-3bis}
	| \partial_x w_{k,\lambda',1}(x,y) -  \partial w_{k,\lambda,1}(x,y)| \lesssim  (1+|x|)^{\gamma} \sup_{\tilde{\lambda}\in [\lambda,\lambda']}  (\thetaklt^{\min})^{-\frac{3}{2} \, \gamma}   \ |\lambda'-\lambda|^{\gamma}  ,
	\end{equation*}
	and yields 
	\begin{equation}\label{eq.estimderivxl2s}
	\| \partial_x w_{k,\lambda',1}-  \partial_x w_{k,\lambda,1}\|_{L^2_{-s}(\bbR^2)} \lesssim  \sup_{\tilde{\lambda}\in [\lambda,\lambda']}  (\thetaklt^{\min})^{-\frac{3}{2}\gamma}   \ |\lambda'-\lambda|^{\gamma},\  \forall (k,\lambda),  (k,\lambda')\in\Lambda_{\scZ}([a,b])  \mbox{ and } \lambda\leq \lambda'.
	\end{equation}
	Moreover,
	\eqref{eq.interpol-hold-2bis} implies that $\| \partial_x w_{k,\lambda,1}\|_{L^2_{-s}(\bbR^2)} \lesssim 1$ and  $ \lambda \mapsto \rmi/ (\mu_{\lambda}\, \lambda)$ is smooth in $\lambda $ on $\overline{\Lambda_{\scZ}([a,b])}$. 
	Thus,  the third component $\rmi/(\mu_{\lambda}\, \lambda) \, \partial_x w_{k,\lambda,1}$ (seen as the  product of $\rmi/(\mu_{\lambda}\, \lambda) $ by $ \partial_x w_{k,\lambda,1}$) satisfies also an estimate of the form \eqref{eq.estimderivxl2s}. We point out that 
	$$ (\thetaklt^{\min})^{-\frac{3}{2}\gamma}  \lesssim   (\thetaklt^{\min})^{-\frac{1}{2}-\gamma} \mbox{ for }  \gamma\leq 1, $$ thus the weaker estimates  \eqref{eq.hold-firstcomponentcrosspoint} and \eqref{eq.hold-firstcomponentnocrosspoint}  hold for the third component (which is less singular than than the first one). \\

Finally,  the last three components are $0$ for $x < 0$ and proportional  with a coefficient that  is smooth and  bounded in $\lambda$  to the first three components for $x > 0$ (see \eqref{eq.def-Vkbis}). Thus, the  estimates on these components are obtained  by using the estimates on the three first components and \eqref{eq.ineqbound1_crosspoint}.
\end{proof}

\subsubsection{Generalized eigenfunctions of the lineic spectral zone}
\label{sec.Holder-lineic}
According to what we said in section \ref{sec_GEForient}, in order to obtain the desired ${\Hms}$-estimates \eqref {formHolder0} for  $\bbW_{k,\lambda,0}$ via the pointwise estimates \eqref{formHolderpoint0}, we shall first 
obtain estimates of the $\lambda$-derivatives of the functions:
\begin{equation}\label{eq.holdmapw0}
\lambda \mapsto w_{\pm k_E(\lambda),\lambda,0}(x,y)  \quad \mbox{ and }  \lambda \mapsto \partial_x w_{\pm k_E(\lambda),\lambda,0}(x,y).
\end{equation}
By parity arguments in $k$, we  only need to give the proofs  of these  estimates  for $k=k_{\scE}(\lambda)$.

The forthcoming estimates  will be established for  $\Oe\neq \Om$, $\scZ=\EE$,  $j =0$ and $(k,\lambda) \in \Lambda_{\EE}([a,b])$ where $[a,b] \subset \bbR\setminus\sigma_{\rm exc}$ is such that $\Lambda_{\EE}([a,b]) \neq \varnothing$. In particular, $\pm \Op \notin [a,b]$ which ensures  that  $\Lambda_{\EE}([a,b])$ is a bounded subset of $\Lambda_{\EE}$, whereas the fact that $0\notin [a,b]$ implies that all points of $[a,b]$ have the same sign.
Moreover,  as $\overline{\Lambda_{\EE}([a,b])}$ does not intersect the lines $\lambda=0$ and $\lambda=\pm \Om$, the functions $\lambda \mapsto \mu_+(\lambda)^{-1}$ and $\lambda\mapsto \lambda^{-1}$ are bounded and $C^{\infty}$ smooth with respect to $\lambda$ on the compact set  $\overline{\Lambda_{\EE}([a,b])}$.
Concerning the regularity of the function   $\lambda\mapsto  \kE(\lambda)$ (defined by \eqref{defkE}) that appears in \eqref{eq.holdmapw0},  it is continuous (by the bijection theorem) on $[\Oc,+\infty[$ with  value $\kE(\Oc)=k_c$ at $\lambda=\Oc$.
Furthermore,  as $\lambda_{\scE}$  is $C^{\infty}$ on $[k_c,+\infty]$ and $\lambda_{\scE}'(k)\neq 0$ on this interval, using the inversion theorem, $\lambda \mapsto  \kE(\lambda)$ is  $C^{\infty}$ on $(\Oc,+\infty[$ but also on $[\Oc,+\infty[$ since $ \lambda_{\scE}'(k_c)\neq 0$. As $\kE$ is even, $\kE$ is $C^{\infty}$ for $ |\lambda| \in [\Oc,+\infty[$.
It implies in particular that  $| \kE(\lambda)|\lesssim  1$ and $| \kE'(\lambda) |\lesssim  1$ on  $\overline{\Lambda_{\EE}([a,b])}$.

\subsubsection*{(I) Preliminary $\lambda$-derivatives estimates.}
\noindent {\bf (Ia) Derivative of powers of }$\theta_{k, \lambda}^{\pm}$.
\begin{equation}\label{eq.derivativetheta}
\big|\partial_\lambda \big( \, \theta_{k, \lambda}^{\pm}\big)^\alpha\big| \lesssim    (\theta_{k,\lambda }^{+})^{\alpha-2},  \quad  \forall \; (k,\lambda)\in \Lambda_{\EE}([a,b]),
\end{equation}
(where we recall that $\theta^{\pm}_{k,\lambda}>0$ in $\zEE$, see (\ref{eq.def-thetam},\ref{eq.def-thetap})).
Using the chain rule formula, one can write somewhat abusively (see Remark \ref{rem.partialderiv}):
\begin{equation}\label{eq.derivativetheta-compos}
\partial_\lambda (\theta^{\pm}_{k_{\scE}(\lambda), \lambda})^\alpha = \partial_\lambda (\Theta_{k_{\scE}(\lambda),\lambda }^{\pm})^{\frac{\alpha}{2}} = 
\frac{\alpha}{2} \Big[\partial_{k}  \Theta^{\pm}_{k_{\scE}(\lambda),\lambda } \,   k_{\scE}'(\lambda) +\partial_{\lambda}  \Theta^{\pm}_{k_{\scE}(\lambda),\lambda }  \Big]  (\theta_{k_{\scE}(\lambda),\lambda }^{\pm})^{\alpha-2}.
\end{equation}
The quantity inside brackets is a continuous function of $\lambda$ and is thus bounded, hence  it follows that
$$
\big|\partial_\lambda  (\theta_{k_{\scE}(\lambda), \lambda}^{\pm})^\alpha\big|\lesssim   (\theta_{k_{\scE}(\lambda),\lambda }^{\pm})^{\alpha-2}.
$$
This yields \eqref{eq.derivativetheta} since  the dispersion relation \eqref{eq.disp} (or equivalently the definition of $\zEE$):
$$
\theta_{k_{\scE}(\lambda),\lambda }^{-} = -(\mu_\lambda^- /\mu_\lambda^+) \,  \theta_{k_{\scE}(\lambda),\lambda }^{+}
$$
implies that  $\theta_{k_{\scE}(\lambda),\lambda }^{-}\lesssim \theta_{k_{\scE}(\lambda),\lambda }^{+}$.	\\

\begin{remark}\label{rem.partialderiv}
The reader will note that in sections \ref{sec.Holder-lineic} and \ref{sec-Holdregulgeneralized-eigenfunc-thr} the symbol $\partial_{\lambda}$ is used  somewhat abusively for  the total derivative  $\rmd / \rmd \lambda$ except in equations  \eqref{eq.derivativetheta-compos} and   \eqref{eq.partialderivative2} where  $\partial_{\lambda}  \Theta^{\pm}_{k_{\scE}(\lambda),\lambda }$  is the partial derivative in $\lambda$ of the function $(k,\lambda)\mapsto\Theta^{\pm}_{k,\lambda}$ evaluated at $(\kE(\lambda),\lambda)$.
\end{remark}

\noindent {\bf (Ib) Derivatives of powers of $A_{k, \lambda,0}$}: we show that 
\begin{equation}\label{eq.coeffA}
\big|\partial_\lambda A_{k, \lambda,0}\big| \lesssim   (\theta_{k,\lambda }^{+})^{-\frac{3}{2}},\quad  \forall (k,\lambda)\in \Lambda_{\EE}([a,b]).
\end{equation}
We first rewrite the expression \eqref{def-psi-plasm} of 
$A_{k_{\scE}(\lambda), \lambda,0}$ in the form
$$
A_{k_{\scE}(\lambda), \lambda,0} = (\theta_{\kE(\lambda),\lambda }^{+})^{\frac{1}{2}} \, B_{\scE}(\lambda),
$$
where  $\lambda \mapsto B_{\scE}(\lambda) $ is of class $C^{\infty}$ in $[\Omega_c , + \infty)$. Differentiating with respect to $\lambda$,  one gets:
$$
\partial_\lambda A_{\kE(\lambda),\lambda,0}= 
\partial_\lambda (\theta_{\kE(\lambda),\lambda }^{+})^{\frac{1}{2}} \, B_{\scE}(\lambda) +  (\theta_{\kE(\lambda),\lambda }^{+})^{\frac{1}{2}} \;\partial_\lambda B_{\scE}(\lambda).
$$
As $ B_{\scE}(\lambda)$, $\partial_\lambda B_{\scE}(\lambda)$, and $\theta_{\kE(\lambda),\lambda }^{+}$ are bounded in $[a,b]$, using \eqref{eq.derivativetheta} for $\alpha=1/2$ we deduce \eqref{eq.coeffA}.
\\ [6pt]

\noindent{\bf (II) Pointwise estimates of  $\partial_{\lambda} \big(w_{k,\lambda,0}(x,y)\big)$ and $\partial_{\lambda} \big(\partial_x w_{k,\lambda,0}(x,y)\big)$.} \\ 
\begin{lemma} \label{lem_estimderpsiC}  Let $\Oe\neq \Om$  and $[a,b] \subset \bbR\setminus\sigma_{\rm exc}$ such that $\Lambda_{\EE}([a,b]) \neq \varnothing$. Then one has  for all $ (k,\lambda)\in \Lambda_{\EE}([a,b])$ the following  pointwise estimates:
	\begin{eqnarray}
	\forall (x,y)\in \bbR^2, &\quad 	\big|\partial_{\lambda} \big( w_{k,\lambda,0}(x,y)\big)\big| &\lesssim  (\theta_{k,\lambda}^+)^{-\frac{3}{2}} \, (1+|x| + |y|),  \label{eq.deriv-wo-lambda}\\
	\forall \; (x,y) \in \bbR^* \times \bbR, & \quad \big| \partial_{\lambda} \big( \partial_x w_{k,\lambda,0}(x,y)\big)\big|&\lesssim  (\theta_{k,\lambda}^+)^{-\frac{3}{2}} \, (1+|x| + |y|) .  \label{eq.deriv-wo-x-lambda} 
	\end{eqnarray}
	
\end{lemma}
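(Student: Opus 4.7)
The strategy is to differentiate the composed map $\lambda\mapsto w_{\kE(\lambda),\lambda,0}(x,y)$ via the chain rule and then control each resulting term using the preliminary estimates (Ia)--(Ib) above, together with the boundedness and smoothness of $\kE(\lambda)$, $\kE'(\lambda)$, $\mu_\lambda^+$ and $\lambda^{-1}$ on the compact set $\overline{\Lambda_{\EE}([a,b])}$. Throughout, I will use freely that $\theta^\pm_{k,\lambda}$ is positive and bounded on this compact set, so any positive power of $\theta^+_{k,\lambda}$ can be absorbed into a constant at the cost of turning it into the desired negative power $(\theta^+_{k,\lambda})^{-3/2}$.

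First I would write, using \eqref{eq.def-w} and \eqref{def-psi-plasm}, the decomposition
\begin{equation*}
\partial_\lambda\bigl(w_{\kE(\lambda),\lambda,0}(x,y)\bigr)=
\underbrace{\bigl(\partial_\lambda A_{\kE(\lambda),\lambda,0}\bigr)\,\psi_{\kE(\lambda),\lambda,0}(x)\,\rme^{\rmi\kE(\lambda)y}}_{T_1}
+\underbrace{A_{\kE(\lambda),\lambda,0}\,\bigl(\partial_\lambda\psi_{\kE(\lambda),\lambda,0}(x)\bigr)\,\rme^{\rmi\kE(\lambda)y}}_{T_2}
+\underbrace{\rmi\kE'(\lambda)\,y\,w_{\kE(\lambda),\lambda,0}(x,y)}_{T_3}.
\end{equation*}
For $T_1$, estimate \eqref{eq.coeffA} together with $|\psi_{k,\lambda,0}|\le 1$ (since $\theta^\pm>0$) immediately gives $|T_1|\lesssim (\theta^+_{k,\lambda})^{-3/2}$. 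For $T_3$, Proposition \ref{prop.decay-estim-plasmon}'s pointwise bound $|w_{k,\lambda,0}|\lesssim (\theta^+_{k,\lambda})^{1/2}$ combined with the boundedness of $\kE'(\lambda)$ yields $|T_3|\lesssim (\theta^+_{k,\lambda})^{1/2}|y|\lesssim (\theta^+_{k,\lambda})^{-3/2}|y|$. For $T_2$, a direct computation from $\psi_{k,\lambda,0}(x)=\rme^{-\theta_{k,\lambda}(x)|x|}$ gives $\partial_\lambda\psi_{\kE(\lambda),\lambda,0}(x)=-|x|\,\partial_\lambda(\theta^\pm_{\kE(\lambda),\lambda})\,\rme^{-\theta^\pm|x|}$ for $\pm x>0$; applying \eqref{eq.derivativetheta} with $\alpha=1$ and $|A|\lesssim (\theta^+)^{1/2}$ yields $|T_2|\lesssim (\theta^+_{k,\lambda})^{1/2}\cdot|x|(\theta^+_{k,\lambda})^{-1}=|x|(\theta^+_{k,\lambda})^{-1/2}\lesssim |x|(\theta^+_{k,\lambda})^{-3/2}$. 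Adding the three bounds gives \eqref{eq.deriv-wo-lambda}.

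For \eqref{eq.deriv-wo-x-lambda} I would repeat the same splitting for $\partial_\lambda\partial_x w$, starting from $\partial_x\psi_{k,\lambda,0}(x)=-\sgn(x)\,\theta^\pm_{k,\lambda}\,\rme^{-\theta^\pm_{k,\lambda}|x|}$ for $\pm x>0$. The analogue of $T_1$ is bounded using $|\partial_\lambda A|\lesssim(\theta^+)^{-3/2}$ and $|\partial_x\psi|\lesssim \theta^+\lesssim 1$; the analogue of $T_3$ uses $|A\,\partial_x\psi|\lesssim(\theta^+)^{3/2}\lesssim(\theta^+)^{-3/2}$ together with the factor $|\kE'(\lambda)y|$; and the only term requiring slightly more care is
\begin{equation*}
\partial_\lambda\partial_x\psi_{\kE(\lambda),\lambda,0}(x)=\sgn(x)\bigl(-\partial_\lambda\theta^\pm+|x|\,\theta^\pm\,\partial_\lambda\theta^\pm\bigr)\rme^{-\theta^\pm|x|},
\end{equation*}
which, by \eqref{eq.derivativetheta} with $\alpha=1$ and the boundedness of $\theta^\pm$, satisfies $|\partial_\lambda\partial_x\psi|\lesssim(\theta^+)^{-1}(1+|x|)$, giving a contribution $|A|\cdot|\partial_\lambda\partial_x\psi|\lesssim(\theta^+)^{-1/2}(1+|x|)\lesssim(\theta^+)^{-3/2}(1+|x|)$.

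There is no real obstacle in this lemma once the preliminary estimates (Ia)--(Ib) are in hand: the only point that requires vigilance is the systematic bookkeeping of powers of $\theta^+_{k,\lambda}$, using the compactness of $\overline{\Lambda_{\EE}([a,b])}$ (guaranteed by $\pm\Op\notin[a,b]$) to trade any bounded positive power for the target negative power $-3/2$, and the fact that the dispersion relation $\theta^-_{k,\lambda}=-(\mu_\lambda^-/\mu_\lambda^+)\theta^+_{k,\lambda}$ lets one uniformly compare the $+$ and $-$ sides. This makes the proof a direct assembly of the estimates already established.
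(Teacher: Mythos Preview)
Your proof is correct and follows essentially the same approach as the paper. The only cosmetic differences are that the paper groups your $T_2$ and $T_3$ into a single term $\partial_\lambda(\psi_{\kE(\lambda),\lambda,0}(x)\rme^{\rmi\kE(\lambda)y})$, and for \eqref{eq.deriv-wo-x-lambda} the paper uses the factorization $\partial_x w_{\kE(\lambda),\lambda,0}=\mp\theta^\pm_{\kE(\lambda),\lambda}\,w_{\kE(\lambda),\lambda,0}$ and differentiates the product (invoking \eqref{eq.deriv-wo-lambda} directly) rather than recomputing from $\partial_x\psi$; both routes are equivalent.
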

\begin{proof}

	\noindent {\it Step 1 : proof of \eqref{eq.deriv-wo-lambda}.} 
	From the expression (\ref{eq.def-w}) of $w_{\kE(\lambda),\lambda,0}$, one has 
	\begin{equation}\label{eq.express-derivpartialambda-wlo}
	|\partial_\lambda \big(w_{\kE(\lambda),\lambda,0}(x,y)\big) | \leq \big| \partial_\lambda \big(A_{\kE(\lambda),\lambda,0}\big)   \psi_{\kE(\lambda),\lambda,0}(x)\big|+\big|A_{\kE(\lambda),\lambda,0} \, \partial_\lambda \big(\psi_{\kE(\lambda),\lambda,0}(x)\rme^{\rmi k_{\scE}(\lambda)y}\big)\big|. 
	\end{equation} 
	We bound  now the two terms of the right hand side of  \eqref{eq.express-derivpartialambda-wlo}. First, $|\psi_{\kE(\lambda),\lambda,0}(x)| = \rme^{-\theta_{\kE(\lambda),\lambda,0}(x)\, |x|}\leq 1$ (cf. \eqref{def-psi-plasm}), thus by \eqref{eq.coeffA}, one gets:
	\begin{equation}\label{eq.derivwo-term1}
	|\partial_\lambda\big( A_{\kE(\lambda),\lambda,0}\big)  \, \psi_{\kE(\lambda),\lambda,0}(x)| \lesssim  
	(\theta_{\kE(\lambda),\lambda}^+)^{-\frac{3}{2}}.
	\end{equation}
	Then, for the second term, one computes the expression of $\partial_\lambda (\psi_{\kE(\lambda),\lambda,0}(x)\rme^{\rmi k_{\scE}(\lambda)y})$:
	\begin{equation}\label{eq.expression-deriv-lambda-psi}
	\partial_{\lambda}( \psi_{\kE(\lambda),\lambda,0}(x) \, \rme^{\rmi \kE(\lambda) y})= \Big[\mp  \partial_{\lambda} \theta_{\kE(\lambda),\lambda }^{\pm}\,  x +\rmi  \, \kE'(\lambda)   y\Big]\,\psi_{\kE(\lambda),\lambda,0}(x) \, \rme^{\rmi \kE(\lambda) y} \  \mbox{ for } \pm x\geq 0, \,   y\in \bbR.
	\end{equation}
As  $|\psi_{\kE(\lambda),\lambda,0}(x)|\leq 1$, $|\kE'(\lambda)|\lesssim 1$ and $\theta_{\kE(\lambda),\lambda }^{+}\lesssim 1$, by applying \eqref{eq.derivativetheta} for  $\alpha=1$, it follows:
	\begin{equation*}\label{eq.boundphi}
	|\partial_{\lambda}( \psi_{\kE(\lambda),\lambda,0}(x) \, \rme^{\rmi k y}) |\lesssim (|x|+|y|) \, (\theta_{\kE(\lambda),\lambda }^{+})^{-1},   \ \forall (x,y) \in \bbR^2.
	\end{equation*}
	The coefficient $A_{\kE(\lambda),\lambda,0}$ satifies $|A_{\kE(\lambda),\lambda,0}|\lesssim 1$ (see \eqref{def-A-plasm}), hence it leads to
	\begin{equation}\label{eq.derivwo-term2}
	|A_{\kE(\lambda),\lambda,0} \, \partial_\lambda \big(\psi_{\kE(\lambda),\lambda,0}(x)\big)| \lesssim (|x|+|y|) \, (\theta_{\kE(\lambda),\lambda }^{+})^{-1},   \ \forall (x,y) \in \bbR^2.
	\end{equation}
	As $(\theta_{\kE(\lambda),\lambda }^{+})^{-1}\lesssim (\theta_{\kE(\lambda),\lambda }^{+})^{-3/2}$, combining \eqref{eq.express-derivpartialambda-wlo}, \eqref{eq.derivwo-term1} and \eqref{eq.derivwo-term2} yields  the estimate \eqref{eq.deriv-wo-lambda}.\\

	\noindent {\it Step 2 : proof of \eqref{eq.deriv-wo-x-lambda}.} From the expression (\ref{eq.def-w},\ref{def-A-plasm},\ref{def-psi-plasm}) of $w_{\kE(\lambda),\lambda,0}$,
	one has 
	\begin{equation*}
	\partial_x w_{\kE(\lambda),\lambda,0}(x,y)= \mp \theta_{\kE(\lambda),\lambda}^{\pm} w_{\kE(\lambda),\lambda,0}(x,y)\mbox{ for $\pm x>0 \mbox{ and }  y\in \mathbb{R}$.}
	\end{equation*} 
	We let the reader show, by deriving in $\lambda$ this product,  that \eqref{eq.deriv-wo-x-lambda} is a consequence of the estimates:  \eqref{eq.derivativetheta} for $\alpha=1$, \eqref{eq.deriv-wo-lambda} and
	$$
	|\theta_{\kE(\lambda),\lambda}^{\pm}|\lesssim 1 \  \mbox{ and }  \ | w_{\kE(\lambda),\lambda,0}(x,y)|= A_{\kE(\lambda),\lambda,0}  \ \psi_{\kE(\lambda),\lambda,0}(x)\lesssim 1,  \ \forall (x,y) \in \bbR^2.
	$$  
\end{proof}

\noindent Thanks to the previous estimates, we are now able to give  H\"older type  inequalities for $\bbW_{k,\lambda,0}$ in  the following proposition.

\begin{proposition}\label{prop.holdestimateplasmon}
	Let $\Oe\neq \Om$, $s> 1/2$, $\gamma \in (0,1]\cap (0,s-1/2)$ and $[a,b] \subset \bbR\setminus\sigma_{\rm exc}$ such that $\Lambda_{\EE}([a,b]) \neq \varnothing$. Then, there exists a constant $C_{a,b}^{\gamma}>0$   such that  for all $ (\pm \kE(\lambda), \lambda), (\pm \kE(\lambda'),\lambda')\in\Lambda_{\EE}([a,b]) \mbox{ and } \lambda\leq \lambda'$, one has:
	\begin{equation}\label{eq.ineqboundholdplasmcrosspoint}
	\| \bbW_{\pm \kE(\lambda'),\lambda',0}  -\bbW_{\pm \kE(\lambda),\lambda,0}\|_{\Hms}\leq C_{a,b}^{\gamma} \,  \sup_{\tilde{\lambda}\in [\lambda,\lambda']}  (\theta_{\kE(\tilde{\lambda}),\tilde{\lambda}}^{+})^{\frac{1-\gamma}{2}} \, \sup_{\tilde{\lambda}\in [\lambda,\lambda']}  (\theta_{\kE(\tilde{\lambda}),\tilde{\lambda}}^{+})^{-\frac{3}{2}\gamma} \,  |\lambda'-\lambda|^{\gamma}.
	\end{equation}
Moreover,  if $\pm \Oc\notin [a,b]$, there exists $C_{a,b}^{\gamma}>0$  such that for all $(\pm \kE(\lambda), \lambda), (\pm \kE(\lambda'),\lambda')\in\Lambda_{\EE}([a,b]) \mbox{ and } \lambda\leq \lambda'$:
	\begin{equation}\label{eq.ineqboundholdplasm}
	\| \bbW_{\pm \kE(\lambda'),\lambda',0}  -\bbW_{\pm \kE(\lambda),\lambda,0}\|_{\Hms}\leq C_{a,b}^{\gamma} \,  |\lambda'-\lambda|^{\gamma}.
	\end{equation}
\end{proposition}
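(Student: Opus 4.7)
The strategy mirrors closely that of Proposition \ref{prop.holdreggeneralizedeigenfunctions}: produce pointwise Hölder-type estimates on the scalar functions $w_{\kE(\lambda),\lambda,0}$ and $\partial_x w_{\kE(\lambda),\lambda,0}$ by interpolating between a Lipschitz bound and an $L^{\infty}$ bound, then translate these pointwise estimates into the $\Hms$-norm via the embedding $L^\infty(\bbR^2) \subset L^2_{-s}(\bbR^2)$ for $s>1/2$. The main new feature compared to the surface-zone case is that $k$ is no longer an independent parameter but the smooth function $k = \pm\kE(\lambda)$, so one differentiates and estimates along the curve $\zEE$.

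By parity in $k$ it suffices to treat $k=+\kE(\lambda)$. On the one hand, the mean value theorem applied to the smooth function $\lambda \mapsto w_{\kE(\lambda),\lambda,0}(x,y)$, combined with the pointwise bound \eqref{eq.deriv-wo-lambda} from Lemma \ref{lem_estimderpsiC}, yields the Lipschitz estimate
\begin{equation*}
\bigl| w_{\kE(\lambda'),\lambda',0}(x,y) - w_{\kE(\lambda),\lambda,0}(x,y) \bigr|
\lesssim (1+|x|+|y|)\,\sup_{\tilde\lambda\in[\lambda,\lambda']} (\theta_{\kE(\tilde\lambda),\tilde\lambda}^{+})^{-3/2}\,|\lambda'-\lambda|.
\end{equation*}
On the other hand, the triangle inequality together with the pointwise bound $|w_{k,\lambda,0}(x,y)|\lesssim (\thetakl^{+})^{1/2}$ established in \eqref{eq.interlinf} gives
\begin{equation*}
\bigl| w_{\kE(\lambda'),\lambda',0}(x,y) - w_{\kE(\lambda),\lambda,0}(x,y) \bigr| \lesssim \sup_{\tilde\lambda\in[\lambda,\lambda']} (\theta_{\kE(\tilde\lambda),\tilde\lambda}^{+})^{1/2}.
\end{equation*}
Interpolating these two estimates with weight $\gamma\in(0,1]$ (i.e.\ writing $|a|=|a|^\gamma|a|^{1-\gamma}$ and bounding each factor separately) produces
\begin{equation*}
\bigl| w_{\kE(\lambda'),\lambda',0}(x,y) - w_{\kE(\lambda),\lambda,0}(x,y) \bigr|
\lesssim (1+|x|+|y|)^\gamma\,\sup(\theta^{+})^{(1-\gamma)/2}\,\sup(\theta^{+})^{-3\gamma/2}\,|\lambda'-\lambda|^\gamma.
\end{equation*}
The assumption $\gamma<s-1/2$ ensures that $(x,y)\mapsto(1+|x|+|y|)^\gamma$ lies in $L^2_{-s}(\bbR^2)$ (using $(1+|x|+|y|)\leq(1+|x|)(1+|y|)$ and the tensor structure of $\eta_s$), so integration in $(x,y)$ yields the desired bound in $L^2_{-s}(\bbR^2)$ for the first scalar component $w_{\kE(\lambda),\lambda,0}$.

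The remaining five components of $\bbW_{\kE(\lambda),\lambda,0}$ are obtained from $w_{\kE(\lambda),\lambda,0}$ and $\partial_x w_{\kE(\lambda),\lambda,0}$ via \eqref{eq.def-Vkbis} multiplied by coefficients depending smoothly and boundedly on $\lambda$ on the compact set $\overline{\Lambda_{\EE}([a,b])}$ (which is where we use that $0,\pm\Om\notin[a,b]$, so that $\mu_\lambda^{\pm}$ and $\lambda^{-1}$ are smooth and bounded, and also that $\kE(\lambda)$ is $C^\infty$ on $[\Oc,+\infty)$). For $\partial_x w_{\kE(\lambda),\lambda,0}$ the same interpolation argument works, combining the Lipschitz estimate coming from \eqref{eq.deriv-wo-x-lambda} with the pointwise bound $|\partial_x w_{k,\lambda,0}|\lesssim (\theta^{+})^{1/2}$ from \eqref{eq.interlinf}. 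Writing each component as a product of such a smooth bounded coefficient times $w$ or $\partial_x w$ and applying the Leibniz-like telescoping $fg-\tilde f\tilde g = (f-\tilde f)g + \tilde f(g-\tilde g)$ transfers the estimate to the full vector and establishes \eqref{eq.ineqboundholdplasmcrosspoint}.

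The improved estimate \eqref{eq.ineqboundholdplasm} is then immediate: when $\pm\Oc\notin[a,b]$, the compact curve $\overline{\Lambda_{\EE}([a,b])}$ stays away from the cross points (the only places on $\zEE$ where $\theta^{+}$ vanishes), so $\theta_{\kE(\tilde\lambda),\tilde\lambda}^{+}$ is bounded above and below on $[a,b]$ and the two $\sup$-factors in \eqref{eq.ineqboundholdplasmcrosspoint} collapse into a constant $C_{a,b}^\gamma$. The main technical care is precisely in the cross-point case: one must accept the singular factors $(\theta^{+})^{-3\gamma/2}$ and retain them explicitly in \eqref{eq.ineqboundholdplasmcrosspoint} since later, in \S\ref{sec.Holderspectraldensity}, they will be absorbed by the boundedness of $\Lambda_{\EE}([a,b])$ (no integration in $k$ appears for the lineic zone, so no integrability condition further constrains $\gamma$ here). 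Thus only $\gamma\in(0,1]\cap(0,s-1/2)$ is required, exactly as stated.
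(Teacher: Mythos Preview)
Your proposal is correct and follows essentially the same approach as the paper: interpolate between the Lipschitz estimate coming from the mean value theorem and \eqref{eq.deriv-wo-lambda} (resp.\ \eqref{eq.deriv-wo-x-lambda}) and the $L^\infty$ bound from \eqref{eq.interlinf}, then pass to $L^2_{-s}$ via $(1+|x|+|y|)^\gamma\in L^2_{-s}(\bbR^2)$ for $\gamma<s-1/2$, extend to the other components through the smooth bounded multipliers in \eqref{eq.def-Vkbis}, and finally observe that away from $\pm\Oc$ the factor $\theta^+_{\kE(\tilde\lambda),\tilde\lambda}$ is bounded above and below so the sup-factors reduce to a constant.
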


\begin{proof}
	{\it Step 1: proof of  \eqref{eq.ineqboundholdplasmcrosspoint} and  \eqref{eq.ineqboundholdplasm} for the first component  $w_{k,\lambda,0}$ of $\bbW_{k,\lambda,0}$}.  We use in this proof the notations $k=\kE(\lambda)$ and $k'=\kE(\lambda')$.
	From the mean value Theorem and the estimate and \eqref{eq.deriv-wo-lambda} (and the parity of $\thetakl^+$ with repsect to $k$), one gets:
	\begin{equation}\label{eq.interpolplasm1}
	| w_{\pm k',\lambda',0}(x,y) -  w_{\pm k,\lambda,0}(x,y)| \lesssim  (1+|x|+|y|) \sup_{\tilde{\lambda}\in [\lambda,\lambda']}  (\theta_{\kE(\tilde{\lambda}),\tilde{\lambda}}^{+})^{-\frac{3}{2}}  \ |\lambda'-\lambda|  ,\   \forall (x,y) \in \bbR^2.
	\end{equation}
	Then using \eqref{eq.interlinf}, one immediately obtains:
	\begin{equation}\label{eq.interpolplasm2}
	| w_{\pm k,\lambda',0}(x,y) -  w_{\pm k,\lambda,0}(x,y)|\leq  (\theta_{k',\lambda'}^+)^{\frac{1}{2}} + (\thetakl^+)^{\frac{1}{2}}\lesssim  \sup_{\tilde{\lambda}\in [\lambda,\lambda']}  (\theta_{\kE(\tilde{\lambda}),\tilde{\lambda}}^{+})^{\frac{1}{2}} 
	\end{equation}
	Thus, interpolating inequalities  \eqref{eq.interpolplasm1} and \eqref{eq.interpolplasm2} leads to
	$$
	| w_{\pm k',\lambda',0}(x,y) -  w_{\pm k,\lambda,0}(x,y)| \lesssim   \sup_{\tilde{\lambda}\in [\lambda,\lambda']}  (\theta_{\kE(\tilde{\lambda}),\tilde{\lambda}}^{+})^{\frac{1-\gamma}{2}}   \sup_{\tilde{\lambda}\in [\lambda,\lambda']}  (\theta_{\kE(\tilde{\lambda}),\tilde{\lambda}}^{+})^{-\frac{3\gamma}{2}}  \ |\lambda'-\lambda|^{\gamma}    (1+|x|+|y|)^{\gamma} ,
	$$
	for all $(x,y) \in \bbR^2, (k, \lambda), (k',\lambda')\in\Lambda_{\EE}([a,b]) \mbox{ and } \lambda\leq \lambda'$ and $\gamma\in (0,1]$. As $ (x,y)\to (1+|x|+|y|)^{\gamma}\in L^2_{-s}(\bbR^2)$ for $0<\gamma<s-1/2$, one obtains:
	\begin{equation}\label{eq.holdestfirstcomponent2}
	\| w_{\pm k',\lambda',0}  -w_{\pm k,\lambda,0}\|_{L^2_{s}(\bbR^2)}\lesssim     \sup_{\tilde{\lambda}\in [\lambda,\lambda']}  (\theta_{\kE(\tilde{\lambda}),\tilde{\lambda}}^{+})^{\frac{1-\gamma}{2}}   \sup_{\tilde{\lambda}\in [\lambda,\lambda']}  (\theta_{\kE(\tilde{\lambda}),\tilde{\lambda}}^{+})^{-\frac{3\gamma}{2}} \,  |\lambda'-\lambda|^{\gamma},
	\end{equation}
	for $(k, \lambda), (k',\lambda')\in\Lambda_{\EE}([a,b]) \mbox{ and } \lambda\leq \lambda'$ and $\gamma\in (0,1]\cap (0,s-1/2)$.
	Now, if one makes the additional assumption that $\pm \Oc\notin [a,b]$, it implies that the crosspoints do not belongs to the set $\Lambda_{\EE}([a,b])$. Thus, $\Lambda_{\EE}([a,b])$ is not only a bounded set but also a compact set of $\Lambda_{\EE}$ where  $\thetakl^+$ is a continuous function that does not vanish and therefore the estimate \eqref{eq.holdestfirstcomponent2} simplifies to   
	\begin{equation}\label{eq.holdestfirstcomponent1}
	\| w_{\pm k',\lambda',0}  -w_{\pm k,\lambda,0}\|_{L^2_{s}(\bbR^2)}\lesssim |\lambda'-\lambda|^{\gamma}.
	\end{equation}
	\eqref{eq.holdestfirstcomponent2} and \eqref{eq.holdestfirstcomponent1} are nothing but the estimates \eqref{eq.ineqboundholdplasmcrosspoint} and  \eqref{eq.ineqboundholdplasm} for the first component $w_{k,\lambda,0}$ of the $\bbW_{k,\lambda,0}$. \\
	[12pt]
	{\it Step 2 : Generalization to other components.}
	The estimates of the other components can be performed in the same way.  
	More precisely, the second component: $k \, w_{k,\lambda,0}/(\mu_{\lambda}\lambda)$ is the product
	of the smooth and bounded function $\lambda\mapsto k / (\mu_{\lambda}\lambda)$ for $(k,\lambda)\in \Lambda_{\EE}([a,b])$ by $w_{k,\lambda,0}$. Thus, as   $\|w_{k,\lambda,0}\|_{L^2_{s}(\bbR^2)}\lesssim (\thetakl^{+})^{\frac{1}{2}}$ (by \eqref{eq.interlinf}), an estimate of the form \eqref{eq.holdestfirstcomponent2} (resp. \eqref{eq.holdestfirstcomponent1} if $\pm \Oc\notin [a,b]$) is derived for $k \, w_{k,\lambda,0}/\mu_{\lambda}$   by using \eqref{eq.holdestfirstcomponent2} (resp. \eqref{eq.holdestfirstcomponent1} if $\pm \Oc\notin [a,b]$).
	
	The third component is the product of  the  partial derivative of $ \partial_x w_{k,\lambda,0}$  by  $ \rmi/ (\mu_{\lambda}\, \lambda)$.
	Thus,  in a first time,  one performs  exactly the same reasoning as in step 1 by using the  second estimate of \eqref{eq.interlinf} and  \eqref{eq.deriv-wo-x-lambda} (instead of  \eqref{eq.deriv-wo-lambda}) to obtain  the estimates  \eqref{eq.holdestfirstcomponent2} and \eqref{eq.holdestfirstcomponent1} if $\pm \Oc\notin [a,b]$  but  for $\partial _x w_{\pm k,\lambda,0}$ instead of $w_{\pm k,\lambda,0}$. In a second time, using again the second estimate of \eqref{eq.interlinf}, one observes that the  multiplication by the smooth and bounded coefficient in $\lambda$: $\rmi/ (\mu_{\lambda}\, \lambda)$ only change the constant $C_{a,b}^{\gamma}$ in  these estimates. \\ [6pt]
	\noindent Finally,  the last three components are $0$ for $x < 0$ and proportional  with a coefficient that  is smooth and  bounded in $\lambda$  to the first three components for $x > 0$ (see \eqref{eq.def-Vkbis}). Thus, the  estimates on these components are obtained by using the estimates on the three first components and \eqref{eq.ineqboundplasm}.
\end{proof}

\subsection{Proof of Theorem \ref{th.Holder-dens-spec}}
\label{sec.Holderspectraldensity}

\subsubsection{The various components of the spectral density}
\label{sss.various-comp-M}
We have now all the ingredients to prove the H\"{o}lder regularity of the spectral density, that is to say the local H\"older estimate \eqref{eq.holderestim-specdensity} for $[a,b] \ni \lambda \mapsto \bbM_{\lambda} \in B(\Hps,\Hms)$ where $[a,b]$ is a bounded interval of $\bbR\setminus \sigma_{\rm exc} $. We are going to prove this estimate for the various components of the spectral density which appear in its expression \eqref{eq.density-non-crit} (if $\Oe\neq\Om$), that we rewrite in the form
\begin{align}
	\bbM_\lambda & = \sum_{\scZ \in \calZ} \bbM_\lambda^\scZ \quad\text{where}  \label{eq.spec-meas-decomp}\\
	\bbM_\lambda^\scZ \bU & := \sum_{j \in J_{\scZ}} \int_{\zZ(\lambda)} \langle \bU,\bbW_{k,\lambda,j} \rangle_{s}  \; \bbW_{k,\lambda,j} \,\rmd k 
	\quad\text{if }\scZ \in \calZ\setminus\{\EE\}\quad\text{and} \label{eq.spec-meas-surf}\\
	\bbM_\lambda^\EE \bU & := \sum_{k \in \zEE(\lambda)}  \JacE(\lambda)\ \langle  \bU,  \bbW_{k,\lambda,0}  \rangle_{s} \,   \bbW_{k,\lambda,0}, \label{eq.spec-meas-lin}
\end{align}
where the last component $\bbM_\lambda^\EE$ has to be removed if $\Oe=\Om$ (see \eqref{eq.density-crit}). Thus the aim of this section is to prove that for all $\scZ \in \calZ$ and all $\gamma \in \Gamma_{[a,b]}$ (see \eqref{eq.def-Gamma-K}), we have
\begin{equation}\label{eq.holderestim-specdensity-Z}
	\forall \lambda, \lambda' \in [a,b], \quad
	\big\| \bbM_{\lambda'}^\scZ\bU - \bbM_\lambda^\scZ\bU \big\|_{\Hms} \lesssim \ |\lambda'-\lambda|^{\gamma}\ \|\bU\|_{\Hps}.
\end{equation}
The fact that the set $\Gamma_{[a,b]}$ of possible H\"older exponents depends on $[a,b]$ will become clear in the following. We must distinguish three cases, denoted by (A), (B) and (C), depending on the position of $\{ \pm\Oe,\pm\Oc \}$ with respect to the interval $[a,b]$:
\begin{equation}
\begin{array}{ll}
{\rm (A)}: & [a,b] \cap \{ \pm\Oe,\pm\Oc \} = \varnothing, \\[5pt] 
{\rm (B)}: & [a,b] \cap \{ \pm\Oe \} \neq \varnothing \text{ and } [a,b] \cap \{ \pm\Oc \} = \varnothing, \\[5pt] 
{\rm (C)}: & [a,b] \cap \{ \pm\Oc \} \neq \varnothing.
\end{array}
\label{eq.def-ABC}
\end{equation}
The reader will easily check that these cases 
	are mutually exclusive and cover all possibilities.
According to \eqref{eq.def-Gamma-K}, we have 
\begin{equation*}
	\Gamma_{[a,b]} := \left\{
	\begin{array}{ll}
	\big(0,\min(s-1/2,1)\big) & \text{in case (A),} \\[5pt] 
	\big(0,\min(s-1/2,1/2)\big) & \text{in cases (B) and (C).} 
	\end{array}\right.
\end{equation*}
In the following, \S\ref{sss.Holder-M-surface} and \ref{sss.proof-l-int-theta} are devoted to the proof of \eqref{eq.holderestim-specdensity-Z} for $\scZ \in \calZ\setminus\{\EE\}$, whereas \S\ref{sss.Holder-M-lineic} deals with the case $\scZ = \EE$. Recall that the points of $\sigma_{\rm exc}$ (see \eqref{eq.def-sigma-exc}) are always excluded from the considered interval $[a,b] \subset \bbR$. Moreover, as $\Gamma_{[a,b]}$ depends on whether $[a,b]$ contains $\pm\Oe$ or $\pm\Oc$, we will assume for simplicity that when $[a,b]$ contains one of these  points (that is, in cases (B) or (C)), \emph{it is located at the boundary of the interval}, i.e., equal to $a$ or $b$. There is no loss of generality since, in order to prove \eqref{eq.holderestim-specdensity-Z}, it suffices to prove the same property separately for two intervals $[a,c]$ and $[c,b]$ with $c \in (a,b)$. Then, \eqref{eq.holderestim-specdensity-Z}  follows from the triangle inequality and the fact that
\begin{equation*}
	\forall \lambda \in [a,c], \ \forall \lambda' \in [c,b],\quad
	|\lambda'-c|^\gamma + |c-\lambda|^\gamma \leq 2\,|\lambda'-\lambda|^\gamma.
\end{equation*}

\subsubsection{Components related to the surface spectral zones}
\label{sss.Holder-M-surface}
We focus here on the proof of \eqref{eq.holderestim-specdensity-Z} for $\scZ \in \calZ\setminus\{\EE\}$. From \eqref{eq.spec-meas-surf}, we see that we have to estimate the difference between two integrals defined of different domains, one on $\zZ(\lambda)$, the other on $\zZ(\lambda')$. Each of both contains a common part $\zZ(\lambda) \cap \zZ(\lambda')$ and an own part, respectively, $\zZ(\lambda) \setminus \zZ(\lambda')$ and $\zZ(\lambda') \setminus \zZ(\lambda)$ (some of these sets may be empty). Hence we can write
\begin{equation}
	\bbM_{\lambda'}^\scZ\bU - \bbM_\lambda^\scZ\bU =
	\bbD_{\lambda\cap\lambda'}^\scZ\bU + \bbD_{\lambda'\setminus\lambda}^\scZ\bU - \bbD_{\lambda\setminus\lambda'}^\scZ\bU,
	\label{eq.dif-meas-spec}
\end{equation}
where we have denoted
\begin{align}
	\bbD_{\lambda\cap\lambda'}^\scZ\bU & := \sum_{j \in J_{\scZ}} \int_{\zZ(\lambda) \cap \zZ(\lambda')} \Big\{\langle \bU,\bbW_{k,\lambda',j} \rangle_{s}  \; \bbW_{k,\lambda',j} - \langle \bU,\bbW_{k,\lambda,j} \rangle_{s}  \; \bbW_{k,\lambda,j}\Big\}\,\rmd k, \label{eq.common-part}\\
	\bbD_{\lambda\setminus\lambda'}^\scZ\bU & := \sum_{j \in J_{\scZ}} \int_{\zZ(\lambda) \setminus \zZ(\lambda')} \langle \bU,\bbW_{k,\lambda,j} \rangle_{s}  \; \bbW_{k,\lambda,j} \,\rmd k. \label{eq.own-part}
\end{align}
Figure \ref{fig.zoomholder} illustrates the various sets involved in these integrals in the particular case where
\begin{equation}
	\Om < \Oe \quad\text{and}\quad [a,b]\subset (\Op, \Oc).
	\label{eq.particular-case}
\end{equation}
In this situation, we have $\zDD([a,b])=\zDE([a,b])=\varnothing$, so that \eqref{eq.holderestim-specdensity-Z} has to be proved for $\scZ = \DI$ and $\EI$. For $\lambda$ and $\lambda'$ in $[a,b]$ with $\lambda < \lambda'$, the gray areas in Figure \ref{fig.zoomholder} represent the sets $\zZ([\lambda, \lambda'])$ for $\scZ = \DI$ and $\EI$ (more precisely their intersections with the half-plane $k > 0$), whose lower and upper boundaries are respectively $\zZ(\lambda) \times \{\lambda\}$ and $\zZ(\lambda') \times \{\lambda'\}$. The common part of the domains of integration corresponds to the rectangles in light gray defined by $\big( \zZ(\lambda) \cap \zZ(\lambda') \big) \times [\lambda,\lambda']$, whereas the own parts are associated to the triangles in dark gray. For $\scZ = \DI$, we see that $\zDI(\lambda) \setminus \zDI(\lambda')=\varnothing$ while  $\zDI(\lambda') \setminus \zDI(\lambda) = (\kO(\lambda),\kO(\lambda'))$ corresponds to the upper boundary of the dark triangle. For $\scZ = \EI$, the situation is reversed: $\zEI(\lambda') \setminus \zEI(\lambda)=\varnothing$ while  $\zEI(\lambda') \setminus \zEI(\lambda) = (\kO(\lambda),\kO(\lambda')) \cup (\kI(\lambda'),\kI(\lambda))$ corresponds to the lower boundaries of the two triangles.
\begin{figure}[t!]
	\centering
	\includegraphics[width=0.8\textwidth]{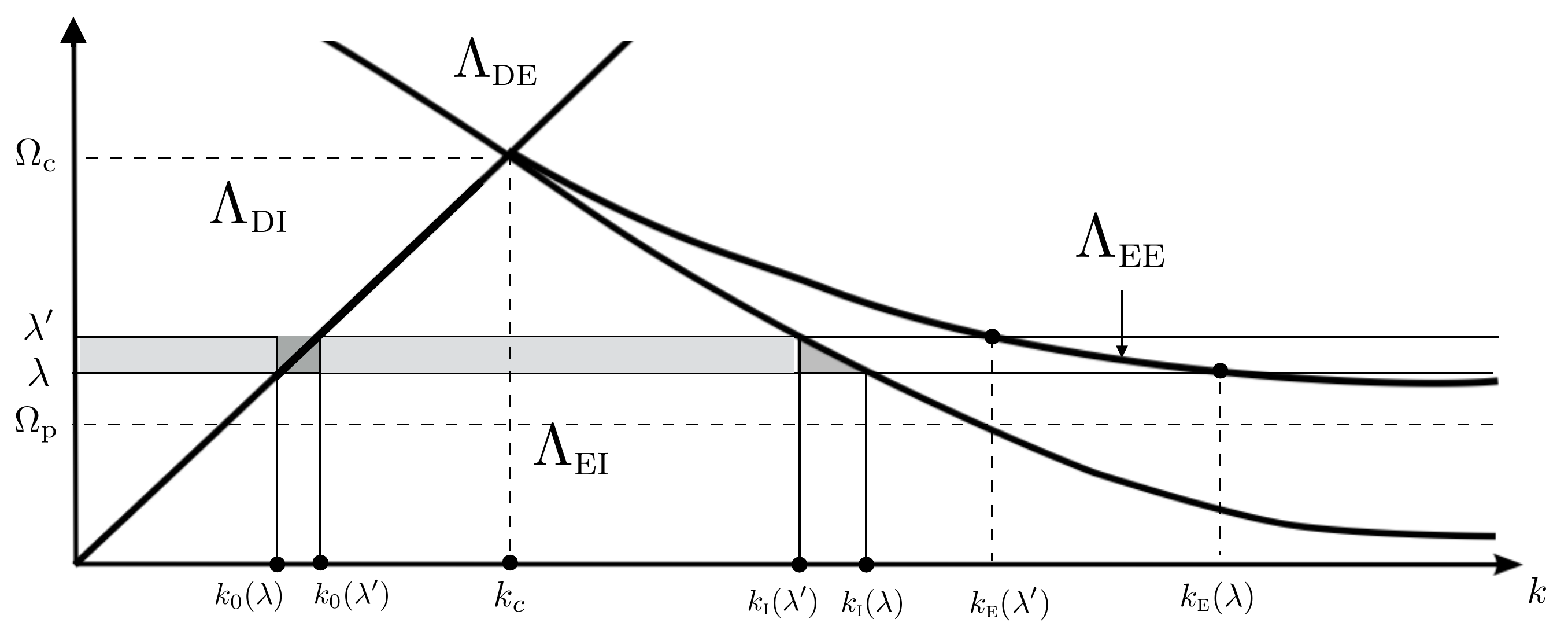}
	\caption{Case where $\Om < \Oe$ and $[a,b]\subset (\Op, \Oc)$.}
	\label{fig.zoomholder}
\end{figure}

\textbf{Step 1.} In the general case of an interval $[a,b] \subset \bbR \setminus \sigma_{\rm exc}$, we first consider the part \eqref{eq.common-part} associated with the common domain of integration. Let us prove that it satisfies the H\"older estimate \eqref{eq.holderestim-specdensity-Z}, i.e.,
\begin{equation}\label{eq.holderestim-common-part}
	\forall \lambda, \lambda' \in [a,b], \quad
	\big\| \bbD_{\lambda\cap\lambda'}^\scZ \bU \big\|_{\Hms} \lesssim \ |\lambda'-\lambda|^{\gamma}\ \|\bU\|_{\Hps},
\end{equation}
for $\gamma \in \Gamma_{[a,b]}$. Using the equality
\begin{multline*}
	\langle \bU,\bbW_{k,\lambda',j} \rangle_{s}  \; \bbW_{k,\lambda',j} - \langle \bU,\bbW_{k,\lambda,j} \rangle_{s}  \; \bbW_{k,\lambda,j}
	\\ = \langle \bU,\{\bbW_{k,\lambda',j} - \bbW_{k,\lambda,j}\}\rangle_{s}  \; \bbW_{k,\lambda',j} + \langle \bU,\bbW_{k,\lambda,j} \rangle_{s}  \; \{\bbW_{k,\lambda',j}-\bbW_{k,\lambda,j}\},
\end{multline*}
we infer that
\begin{align}
	\big\| \bbD_{\lambda\cap\lambda'}^\scZ\bU \big\|_{\Hms}
	& \leq \Big(\sum_{j \in J_{\scZ}} \int_{\zZ(\lambda) \cap \zZ(\lambda')} d_{\lambda,\lambda',j}(k)\,\rmd k \Big) \ \|\bU\|_{\Hps}
	\quad\text{where} \label{eq.estim-common-part} \\
	d_{\lambda,\lambda',j}(k) & := \Big\{ \big\| \bbW_{k,\lambda',j} \big\|_{\Hms} + \big\| \bbW_{k,\lambda,j} \big\|_{\Hms} \Big\} \ \big\| \bbW_{k,\lambda',j} - \bbW_{k,\lambda,j} \big\|_{\Hms}. \label{eq.def-d}
\end{align}
We see here that the H\"older regularity of $\bbD_{\lambda\cap\lambda'}^\scZ$ requires both the pointwise estimates and the H\"older regularity of the generalized eigenfunctions.  Indeed, $d_{\lambda,\lambda',j}(k)$ can be estimated thanks to Propositions \ref{prop.decay-estim-surfacic-zones} and \ref{prop.holdreggeneralizedeigenfunctions}.
We rewrite inequalities \eqref{eq.ineqbound1_crosspoint} and \eqref{eq.ineqbound1}, as well as the H\"older estimates \eqref{eq.ineqboundhold1crosspoint} and \eqref{eq.ineqboundhold1nocrosspoint} in the following condensed expressions, valid for $\gamma \in (0,1]\cap (0,s-1/2)$, $j \in J_\scZ$ and for all $(k,\lambda)$ and $(k,\lambda')$ in $\Lambda_{\scZ}([a,b])$ with $\lambda \leq \lambda'$:
\begin{align}
\big\| \bbW_{k,\lambda,j}\big\|_{\Hms} & \lesssim  \big|\thetakl^{-j}\big|^\alpha  \quad \text{and} \label{eq.estim-fpg} \\[5pt]
\big\| \bbW_{k,\lambda',j} - \bbW_{k,\lambda,j} \big\|_{\Hms}
& \lesssim \sup_{\tilde{\lambda}\in [\lambda,\lambda']}  \big(\thetaklt^{\min}\big)^{\alpha'} \,  |\lambda'-\lambda|^{\gamma}, \nonumber
\end{align}
where we have denoted
\begin{equation}\label{eq.def-alpha}
(\alpha,\alpha') := \left\{
	\begin{array}{ll}
		(-1/2+\gamma,-1/2-\gamma) & \text{if }[a,b] \cap \{\pm\Oc\}=\varnothing, \\[4pt]
		(-1/2,-1/2-2\gamma) & \text{if }[a,b] \cap \{\pm\Oc\} \neq \varnothing.
	\end{array}
	\right.
\end{equation}
Combining these estimates, we obtain 
\begin{equation*}
	d_{\lambda,\lambda',j}(k) \lesssim 
	\Big( \sup_{\tilde{\lambda}\in [\lambda,\lambda']}  \big|\thetaklt^{-j}\big|^\alpha \Big) \ 
	\Big(\sup_{\tilde{\lambda}\in [\lambda,\lambda']}  \big(\thetaklt^{\min}\big)^{\alpha'} \Big)\  |\lambda'-\lambda|^{\gamma}.
\end{equation*}
Hence \eqref{eq.holderestim-common-part} will be proved once we have verified that the integral on $\zZ(\lambda) \cap \zZ(\lambda')$ of the above product of suprema is bounded  by a constant depending only on $a, b$ and $\gamma$. This is the object of Lemma \ref{l.int-theta1} presented in \S\ref{sss.proof-l-int-theta}.

\textbf{Step 2.} Let us prove now the H\"older regularity of the part $\bbD_{\lambda\setminus\lambda'}^\scZ$ defined in \eqref{eq.own-part}, that is,
\begin{equation}\label{eq.holderestim-own-part}
	\forall \lambda, \lambda' \in [a,b], \quad
	\big\| \bbD_{\lambda\setminus\lambda'}^\scZ \bU \big\|_{\Hms} \lesssim \ |\lambda'-\lambda|^{\gamma}\ \|\bU\|_{\Hps}.
\end{equation}
We no longer assume here that $\lambda < \lambda',$ which allows us to simultaneously treat both quantities $\bbD_{\lambda'\setminus\lambda}$ and $\bbD_{\lambda\setminus\lambda'}$ involved in \eqref{eq.dif-meas-spec}. We are going to see that property \eqref{eq.holderestim-own-part} follows now from the smallness of the domain of integration (as $|\lambda'-\lambda| \to 0$) and the bounds of the generalized eigenfunctions given by Proposition \ref{prop.decay-estim-surfacic-zones}.

From the definition \eqref{eq.own-part}, we have 
\begin{equation*}
	\big\| \bbD_{\lambda\setminus\lambda'}^\scZ\bU \big\|_{\Hms}
	\leq \Big(\sum_{j \in J_{\scZ}} \int_{\zZ(\lambda) \setminus \zZ(\lambda')} \big\| \bbW_{k,\lambda,j} \big\|_{\Hms}^2\,\rmd k \Big) \ \|\bU\|_{\Hps}.
\end{equation*}
Using \eqref{eq.estim-fpg} then yields
\begin{equation*}
	\big\| \bbD_{\lambda\setminus\lambda'}^\scZ\bU \big\|_{\Hms}
	\lesssim \Big(\sum_{j \in J_{\scZ}} \int_{\zZ(\lambda) \setminus \zZ(\lambda')} |\thetakl^{-j}|^{2\alpha}\,\rmd k\Big) \ \|\bU\|_{\Hps}.
\end{equation*}
The object of Lemma \ref{l.int-theta2}, that is also presented in \S\ref{sss.proof-l-int-theta}, is to prove that
\begin{equation*}
	\sum_{j \in J_{\scZ}} \int_{\zZ(\lambda) \setminus \zZ(\lambda')} |\thetakl^{-j}|^{2\alpha}\,\rmd k 
	\lesssim \ |\lambda'-\lambda|^{\gamma},
\end{equation*}
for any $\gamma \in \Gamma_{[a,b]}$, which completes the proof of \eqref{eq.holderestim-own-part}. Combining \eqref{eq.dif-meas-spec},  \eqref{eq.holderestim-common-part} and  \eqref{eq.holderestim-own-part} shows \eqref{eq.holderestim-specdensity-Z}  for $\scZ \in \calZ\setminus\{\EE\}$.

\subsubsection{Two technical lemmas}
\label{sss.proof-l-int-theta}
We gather in this subsection the results about integrals of functions $\thetakl^{+}$ and $\thetakl^{-}$ that are needed in the above proof of the H\"older regularity of the components of the spectral density which are related to the surface spectral zones. 
The main ingredients are the estimates \eqref{eq.singtheta-spectralcut-1} and \eqref{eq.singtheta-spectralcut-2}.

\begin{lemma} \label{l.int-theta1}
Let $\scZ \in \calZ\setminus\{\EE\}$, $[a,b] \subset \bbR\setminus \sigma_{\rm exc}$ and $\gamma \in \Gamma_{[a,b]}$. Then for $j \in J_\scZ$ and for all $\lambda, \lambda' \in [a,b]$ such that $\lambda < \lambda'$, we have, with $\alpha$ and $\alpha'$ defined in \eqref{eq.def-alpha}.
	\begin{equation}\label{eq.estim-integ-d}
	\int_{\zZ(\lambda) \cap \zZ(\lambda')} \ \Big( \sup_{\tilde{\lambda}\in [\lambda,\lambda']}  \big|\thetaklt^{-j}\big|^\alpha \Big) \ 
	\Big(\sup_{\tilde{\lambda}\in [\lambda,\lambda']}  \big(\thetaklt^{\min}\big)^{\alpha'} \Big) \ \rmd k
	\lesssim 1.
	\end{equation}
\end{lemma}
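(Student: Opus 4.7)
The plan is to remove the suprema over $\tilde\lambda \in [\lambda, \lambda']$ in \eqref{eq.estim-integ-d} by replacing them with their values at the endpoints $\lambda$ and $\lambda'$, thereby reducing the question to a finite sum of elementary one-dimensional singular integrals in $k$ that can be estimated directly via Lemma \ref{eq.lemsingularitythetalk}.

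The structural ingredient is a \emph{monotonicity} property of the spectral cut functions $k_0$ and $\kp$ with respect to $\lambda$. Since $0 \notin [a,b]$, the values $\lambda, \lambda' \in [a,b]$ share a common sign, and by evenness of $\thetakl^{\pm}$ in $k$ we restrict to $k \geq 0$. On each connected component of $[a,b] \subset \bbR \setminus \{0, \pm\Oe, \pm\Om\}$, the explicit formulas \eqref{defk0DI} and \eqref{eq.def-kp} show that $\tilde\lambda \mapsto k_0(\tilde\lambda)$ and $\tilde\lambda \mapsto \kp(\tilde\lambda)$ are strictly monotone, and the boundaries of each surface zone lie on graphs of these two functions. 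Consequently, for $k \in \zZ(\lambda) \cap \zZ(\lambda')$, $k$ stays on a fixed side of each cut when $\tilde\lambda$ varies in $[\lambda,\lambda']$: $k \in \zZ(\tilde\lambda)$ for all such $\tilde\lambda$ and $\tilde\lambda \mapsto |\thetaklt^{\pm}|^{2}$ is itself monotone on $[\lambda,\lambda']$. Hence
\[
\inf_{\tilde\lambda \in [\lambda,\lambda']} |\thetaklt^{\pm}| = \min\bigl(|\thetakl^{\pm}|,\, |\thetaklp^{\pm}|\bigr),
\]
and the elementary identity $\inf \min(f,g) = \min(\inf f, \inf g)$ for continuous $f,g$ extends this to $\thetaklt^{\min}$.

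Since $\alpha' < 0$ in every case of \eqref{eq.def-alpha}, this yields $\sup_{\tilde\lambda}(\thetaklt^{\min})^{\alpha'} \leq (\thetakl^{\min})^{\alpha'} + (\thetaklp^{\min})^{\alpha'}$, and similarly for $\sup_{\tilde\lambda}|\thetaklt^{-j}|^{\alpha}$ when $\alpha < 0$; when $\alpha \geq 0$ (which only occurs in case (A)), this last supremum is controlled by a constant since $|\theta^{-j}|$ is bounded on $\overline{\Lambda_{\scZ}([a,b])}$. Invoking further the bound $\min(f,g)^{\alpha'} \leq f^{\alpha'} + g^{\alpha'}$ to split $(\theta^{\min})^{\alpha'} \leq |\theta^+|^{\alpha'} + |\theta^-|^{\alpha'}$, the left-hand side of \eqref{eq.estim-integ-d} becomes dominated by a finite sum of ``endpoint'' integrals
\[
I_{\lambda_\bullet,\lambda_\circ}^{\pm} := \int_{\zZ(\lambda)\cap\zZ(\lambda')} |\theta_{k,\lambda_\bullet}^{-j}|^{\alpha}\, |\theta_{k,\lambda_\circ}^{\pm}|^{\alpha'}\,\rmd k, \qquad \lambda_\bullet, \lambda_\circ \in \{\lambda,\lambda'\},
\]
with no remaining supremum in $\tilde\lambda$. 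Applying Lemma \ref{eq.lemsingularitythetalk} to each factor bounds the integrand by a product $|k - a|^{p_1}|k - b|^{p_2}$ of powers of distances to cut values $a, b$, multiplied by prefactors depending on $k_0(\lambda_\bullet)^{-1/2}$ or $\kp(\lambda_\circ)^{-1/2}$. In every case of \eqref{eq.def-alpha}, both exponents $p_1 = \alpha/2$ and $p_2 = \alpha'/2$ strictly exceed $-1$, and so does their sum $p_1 + p_2 \in \{-1/2,\,-1/2 - \gamma\} \subset (-1, 0)$, so these integrals over a bounded interval stay uniformly bounded, even when the two singular points coalesce.

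The main obstacle lies in maintaining uniformity of the $\lambda$-dependent prefactors arising from Lemma \ref{eq.lemsingularitythetalk}, and in handling case (C). When $\pm\Oe \notin [a,b]$ (which, after the splitting explained in \S\ref{sss.various-comp-M}, holds automatically in cases (A) and (C)), $\kp$ is bounded away from zero and the prefactors create no issue. When $\pm\Oe \in [a,b]$ (case (B)), $\kp(\lambda)$ may vanish; however the restriction $\gamma < 1/2$ imposed in (B) forces the relevant exponent $r$ to satisfy $r < 1$, and the combined quantity $\int_0^{\kp(\lambda)} \kp(\lambda)^{-r/2}(\kp(\lambda) - k)^{-r/2}\,\rmd k = O(\kp(\lambda)^{1-r})$ remains bounded as $\kp(\lambda) \to 0$. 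The most delicate situation is case (C), where the two cuts $|k|=k_0(\lambda)$ and $|k|=\kp(\lambda)$ meet at $(k_c, \pm\Oc)$ and the singularities of $|\theta^{-}|$ and $|\theta^{+}|$ in $k$ may coalesce as $\lambda, \lambda' \to \pm\Oc$. The weakening to $(\alpha, \alpha') = (-1/2, -1/2 - 2\gamma)$ with $\gamma \in (0,1/2)$ prescribed in \eqref{eq.def-alpha} is precisely what guarantees $p_1 + p_2 = -1/2 - \gamma > -1$ in this case, so that the merged singularity remains uniformly integrable, independently of the locations of the two cuts near $k_c$.
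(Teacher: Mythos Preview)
Your strategy coincides with the paper's: remove the suprema by monotonicity of the cut functions and reduce to one-dimensional singular integrals controlled via Lemma~\ref{eq.lemsingularitythetalk}. The paper organizes this through a more explicit zone-by-zone case analysis (cases (A)--(C) and the individual zones $\DD,\DE,\DI,\EI$), whereas you aim for a uniform product estimate $\int |k-a|^{p_1}|k-b|^{p_2}\,\rmd k$. That unified viewpoint is sound in cases (A) and (C), but your written argument has a real gap in case (B) for the zone $\scZ=\DE$, and a minor inaccuracy about monotonicity.

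\textbf{The gap in case (B), zone $\DE$.} Take $[a,b]=[\Oe,b]$ with $\Oe>\Om$. Here $J_\DE=\{+1\}$ and $\zDE(\lambda)\cap\zDE(\lambda')=(k_D(\lambda'),k_0(\lambda))$ (for $k>0$). Your splitting of $(\thetakl^{\min})^{\alpha'}$ produces the mixed term $|\theta^-_{k,\lambda_\bullet}|^{\alpha}\,|\theta^+_{k,\lambda_\circ}|^{\alpha'}$. Applying Lemma~\ref{eq.lemsingularitythetalk} to the second factor gives a prefactor $k_D(\lambda_\circ)^{\alpha'/2}$ with $\alpha'/2<0$, which blows up as $\lambda_\circ\searrow\Oe$. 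Your case (B) paragraph only treats the integral $\int_0^{\kp(\lambda)}\kp(\lambda)^{-r/2}(\kp(\lambda)-k)^{-r/2}\,\rmd k$, i.e.\ the $\DD$-type situation where the domain shrinks with $\kp(\lambda)$ and compensates the prefactor. For $\DE$ the domain length is $k_0(\lambda)-k_D(\lambda')\to k_0(\Oe)>0$, so there is no compensation and your bound is not uniform. The fix, which is exactly what the paper does in cases (B1)(ii) and (B2), is to bypass Lemma~\ref{eq.lemsingularitythetalk} for $\theta^+$ here: since $|\thetakl^+|^2=k^2-k_D(\lambda)^2\ge (k-k_D(\lambda))^2$ on $\DE$ (and $|\thetakl^+|^2=k^2+|\kp(\lambda)|^2\ge k^2$ in the sub-case $[a,\Oe]$), one has directly $|\thetakl^+|^{\alpha'}\le (k-k_D(\lambda))^{\alpha'}$ (resp.\ $\le k^{\alpha'}$) with \emph{no} prefactor, and integrability follows from $\alpha'>-1$, i.e.\ $\gamma<1/2$.

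\textbf{Minor inaccuracy on monotonicity.} You assert that $\tilde\lambda\mapsto|\kp(\tilde\lambda)|$ is strictly monotone on each component of $[a,b]\setminus\{0,\pm\Oe,\pm\Om\}$. On the interval $(\min(\Oe,\Om),\max(\Oe,\Om))$ one computes $|\kp(\tilde\lambda)|^2=-\eps_0\mu_0(\tilde\lambda^2-\Oe^2)(\tilde\lambda^2-\Om^2)/\tilde\lambda^2$, which has an interior \emph{maximum} at $\tilde\lambda=\sqrt{\Oe\Om}$ and is therefore not monotone. This does not damage your argument, because what you actually need is that $\inf_{\tilde\lambda\in[\lambda,\lambda']}|\thetaklt^\pm|$ is attained at an endpoint; an interior maximum still forces the infimum to the boundary. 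But the justification should be stated this way rather than via monotonicity.
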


 \begin{proof} 
We distinguish here the various cases (A), (B) and (C) defined in \eqref{eq.def-ABC}, which will be themselves divided in several subcases. In most of the subcases, the estimate \eqref{eq.estim-integ-d} will be deduced from a stronger
		result, namely (the fact that \eqref{eq.two-estim-d} implies \eqref{eq.estim-integ-d}  is explained just below)
		\begin{equation}
		\int_{\zZ(\lambda) \cap \zZ(\lambda')} \ \sup_{\tilde{\lambda}\in [\lambda,\lambda']}  \big|\thetaklt^{+}\big|^\beta \ \rmd k \lesssim 1
		\quad\text{and}\quad
		\int_{\zZ(\lambda) \cap \zZ(\lambda')} \ \sup_{\tilde{\lambda}\in [\lambda,\lambda']}  \big|\thetaklt^{-}\big|^\beta \ \rmd k \lesssim 1,
		\label{eq.two-estim-d}
		\end{equation}
		where
		\begin{equation}
		\beta := \min(\alpha,0) + \alpha' = \min(-1,-1/2-\gamma) \leq -1.
		\label{eq.def-beta}
		\end{equation}
		However, in some particular subcases, \eqref{eq.two-estim-d}  will not be true any longer and \eqref{eq.estim-integ-d} 
		will have to be proven directly in a different manner that will be explained separately. \\ [12pt]
		To see why \eqref{eq.two-estim-d} implies \eqref{eq.estim-integ-d}, we remark that  $|\thetakl^{\pm}|^{-1} \leq (\thetakl^{\min})^{-1}$ implies $|\thetakl^{\pm}|^{\alpha} \leq (\thetakl^{\min})^{\alpha}$ if $\alpha < 0$. \\ [12pt] 
		On the other hand, $|\thetakl^{\pm}|$ being bounded, $|\thetakl^{\pm}|^{\alpha} \lesssim 1$ if $\alpha \geq0 $. Gathering these two observations gives
			\begin{equation*}
		 \sup_{\tilde{\lambda}\in [\lambda,\lambda']}  \big|\thetaklt^{\pm}\big|^\alpha  
		\lesssim
		\sup_{\tilde{\lambda}\in [\lambda,\lambda']}  \big(\thetaklt^{\min}\big)^{\min(\alpha,0)}.
		\end{equation*}
		Consequently, by definition \eqref{eq.def-beta} of $\beta$
			\begin{equation*}
	\sup_{\tilde{\lambda}\in [\lambda,\lambda']}  \big|\thetaklt^{-j}\big|^\alpha  \cdot 
	\sup_{\tilde{\lambda}\in [\lambda,\lambda']}  \big(\thetaklt^{\min}\big)^{\alpha'}
		\lesssim \sup_{\tilde{\lambda}\in [\lambda,\lambda']}  \big(\thetaklt^{\min}\big)^{\min(\alpha,0)} \cdot \sup_{\tilde{\lambda}\in [\lambda,\lambda']}  \big(\thetaklt^{\min}\big)^{\alpha'} = \sup_{\tilde{\lambda}\in [\lambda,\lambda']}  \big(\thetaklt^{\min}\big)^{\beta}.
		\end{equation*}
		The last equality being true because both $\min(\alpha,0)$ and $\alpha'$ are negative. \\ [12pt] 
	It is thus clear that \eqref{eq.estim-integ-d} follows from \eqref{eq.two-estim-d} since $\displaystyle \sup_{\tilde{\lambda}\in [\lambda,\lambda']}  \big(\thetaklt^{\min}\big)^\beta \leq \sup_{\tilde{\lambda}\in [\lambda,\lambda']} |\thetaklt^{+}|^\beta + \sup_{\tilde{\lambda}\in [\lambda,\lambda']} |\thetaklt^{-}|^\beta.$ \\ [12pt]
	We shall also use the fact that, since $\beta < 0$, raising  \eqref{eq.singtheta-spectralcut-1}--\eqref{eq.singtheta-spectralcut-2} to the power $- \beta > 0$ yields
	\begin{equation} \label{ineqthetabeta}
	\big|\thetaklt^{-}\big|^{\beta}  \leq k_0(\tilde{\lambda})^{\beta/2}\ \big| |k|-k_0(\tilde{\lambda}) \big|^{\beta/2},\quad
	\big|\thetaklt^{+}\big|^{\beta}  \leq \big|k^+(\tilde{\lambda})\big|^{\beta/2}\ \big| |k|-k^+(\tilde{\lambda}) \big|^{\beta/2}.
	\end{equation}
	\textbf{Case (A)}. This corresponds $[a,b] \cap \{\pm\Oe,\pm\Oc\} = \varnothing$ and $\Gamma_{[a,b]} = \big(0,\min(s-1/2,1))$ (cf. \eqref{eq.def-Gamma-K}).
 We shall additionnally  consider the particular case \eqref{eq.particular-case} illustrated by Figure \ref{fig.zoomholder}. The reader will rely on the authors about the fact that this case actually involves all the technical difficulties that can be met in all other situations of case (A). \\ [12pt] In this particular case, we are going to prove \eqref{eq.two-estim-d} for $\scZ = \EI$ and $\DI$, the only zones that intersect $[a,b] \times \mathbb{R}$ (see Figure \ref{fig.zoomholder}). In the following, $\lambda$ and $\lambda'$ denote two points of $[a,b]$ such that $\lambda < \lambda'$.
	\\ [12pt]
	\textbf{(i)} The case $\scZ = \DI$.  Figure \ref{fig.zoomholder} shows that $\zDI(\lambda) \cap \zDI(\lambda') = (-\kO(\lambda),+\kO(\lambda))$.  As $k^+(\tilde{\lambda}) = \kI(\tilde{\lambda})$ (by definition \eqref{eq.def-kp} of $k^+(\lambda) \mbox{ for }  |\lambda|\leq \min(\Oe,\Om))$, and as both functions $\kO(\tilde{\lambda})^{\beta/2}$ and $ \kI(\tilde{\lambda})^{\beta/2}$ are  bounded (since $\{0,\pm\Om,\pm\Oe\} \cap [a,b] = \varnothing$), we deduce from \eqref{ineqthetabeta} that 
	\begin{equation} \label{eq.ingred-thetap-bis}
	(a) \quad \big|\thetaklt^{-}\big|^\beta 
	\lesssim \big| |k|-k_0(\tilde{\lambda}) \big|^{\beta/2}, \quad (b) \quad 
	\big|\thetaklt^{+}\big|^\beta 
	\lesssim \big| |k|-\kI(\tilde{\lambda}) \big|^{\beta/2}.
	\end{equation}	
	We next prove successively the two inequalities of \eqref{eq.two-estim-d}. \\ [12pt]
	\textbf{(i.1)} As the closure of $\zDI([a,b])$ does not intersect the spectral cuts $|k|=\kI(\tilde{\lambda})$, the right-hand side of \eqref{eq.ingred-thetap-bis}(b) also remains bounded whatever the sign of $\beta$, thus
	\begin{equation*} 
	\sup_{\tilde{\lambda}\in [\lambda,\lambda']}  \big|\thetaklt^{+}\big|^\beta \lesssim 1.
	\end{equation*}
	As $\lambda \mapsto \kO(\lambda)$ is bounded on $[a,b]$, this yields the first inequality of \eqref{eq.two-estim-d}. \\ [12pt]
	\textbf{(i.2)} Oppositely,  the right-hand side of \eqref{eq.ingred-thetap-bis}(a)  tends to $+ \infty$ when $|k|$  approaches $\kO(\tilde{\lambda})$. \\[12pt]
	However as  $\tilde{\lambda} \mapsto \kO(\tilde{\lambda})$ is increasing on $[a,b]$ and  $\beta < 0,$  $\tilde{\lambda} \mapsto |\,|k|-\kO(\tilde{\lambda})|^{\beta/2}$ is decreasing and  therefore
	\begin{equation*} 
	\sup_{\tilde{\lambda}\in [\lambda,\lambda']}  \big|\thetaklt^{-}\big|^\beta 
	\lesssim \big| |k|-\kO(\lambda)\big|^{\beta/2},
	\end{equation*}
	from which we de deduce that
	\begin{equation} 
	\int_{-\kO(\lambda)}^{+\kO(\lambda)} \ \sup_{\tilde{\lambda}\in [\lambda,\lambda']}  \big|\thetaklt^{-}\big|^\beta \ \rmd k 
	\lesssim \int_{-\kO(\lambda)}^{+\kO(\lambda)}\big| |k|-\kO(\lambda)\big|^{\beta/2}\ \rmd k
	= \frac{2\,\kO(\lambda)^{\beta/2+1}}{\beta/2+1},
	\label{eq.case-a-i}
	\end{equation}
	which is bounded since $\beta/2+1 = \min(1/2,3/4-\gamma/2) > 0$ for any $\gamma \in (0,1]\cap (0,s-1/2)$ i. e. $\gamma \in \Gamma_{[a,b]}$. Hence the second inequality of \eqref{eq.two-estim-d} is proved.
	\\ [12pt]
	\textbf{(ii)} The case $\scZ = \EI$.  In this case $\zEI(\lambda) \cap \zEI(\lambda')= (\kO(\lambda'),\kI(\lambda')) \cup (-\kI(\lambda'),-\kO(\lambda'))$. By parity in  $k$, to prove \eqref{eq.two-estim-d}, it suffices to consider the integrals on $(\kO(\lambda'),\kI(\lambda'))$. We use again inequalities \eqref{eq.ingred-thetap-bis}, but now  both right-hand sides tend to $+ \infty$  when $|k|$ approaches  $\kO(\tilde{\lambda})$ or $\kI(\tilde{\lambda})$. Let us show only the first inequality of \eqref{eq.two-estim-d}, the second one can be treated in the same way.  As this time $\tilde{\lambda} \mapsto \kI(\tilde{\lambda})$ is decreasing on $[a,b]$, $\tilde{\lambda} \mapsto |\,|k|-\kI(\tilde{\lambda})|^{\beta/2}$ is increasing. As a consequence,
	\begin{equation*}
	\sup_{\tilde{\lambda}\in [\lambda,\lambda']}  \big|\thetaklt^{+}\big|^\beta 
	\lesssim \big| |k|-\kI(\lambda')\big|^{\beta/2},
	\end{equation*}
	from which we infer that, since $\beta/2 + 1 > 0$ (cf \eqref{eq.def-beta})
	\begin{equation*} 
	\int_{\kO(\lambda')}^{\kI(\lambda')} \ \sup_{\tilde{\lambda}\in [\lambda,\lambda']}  \big|\thetaklt^{+}\big|^\beta \ \rmd k 
	\lesssim \int_{\kO(\lambda')}^{\kI(\lambda')}\big| |k|-\kI(\lambda')\big|^{\beta/2}\ \rmd k
	= \frac{\big(\kI(\lambda')-\kO(\lambda')\big)^{\beta/2+1}}{\beta/2+1}
	\lesssim 1,
	\end{equation*}
that is to say the first inequality of \eqref{eq.two-estim-d}. \\ [12pt]
	\textbf{Case (B)}. In this case, $[a,b]$ contains $+\Oe$ or $-\Oe$,  not $\pm\Oc$ and $\Gamma_{[a,b]} = \big(0,\min(s-1/2,1/2)\big)$.
	To fix ideas, we suppose that $\Oe > \Om$, but it is easy to  reproduce the same argument if $\Oe < \Om$. \\ [12pt] 
	According to the last paragraph of section \ref{sss.various-comp-M} and parity, we can restrict ourselves to $a = \Oe$ or $b=\Oe$. \\ [12pt]
	\textbf{ (B1)} $[a,b] = [\Oe,b]$. This case is illustrated by Figure \ref{fig.Cas-Omega_e} (left) which shows that we have to prove \eqref{eq.estim-integ-d} for $\scZ = \DD$ and $\DE$.  \\ [12pt]
	\begin{figure}[t!]
		\begin{center}
			\includegraphics[width=1\textwidth]{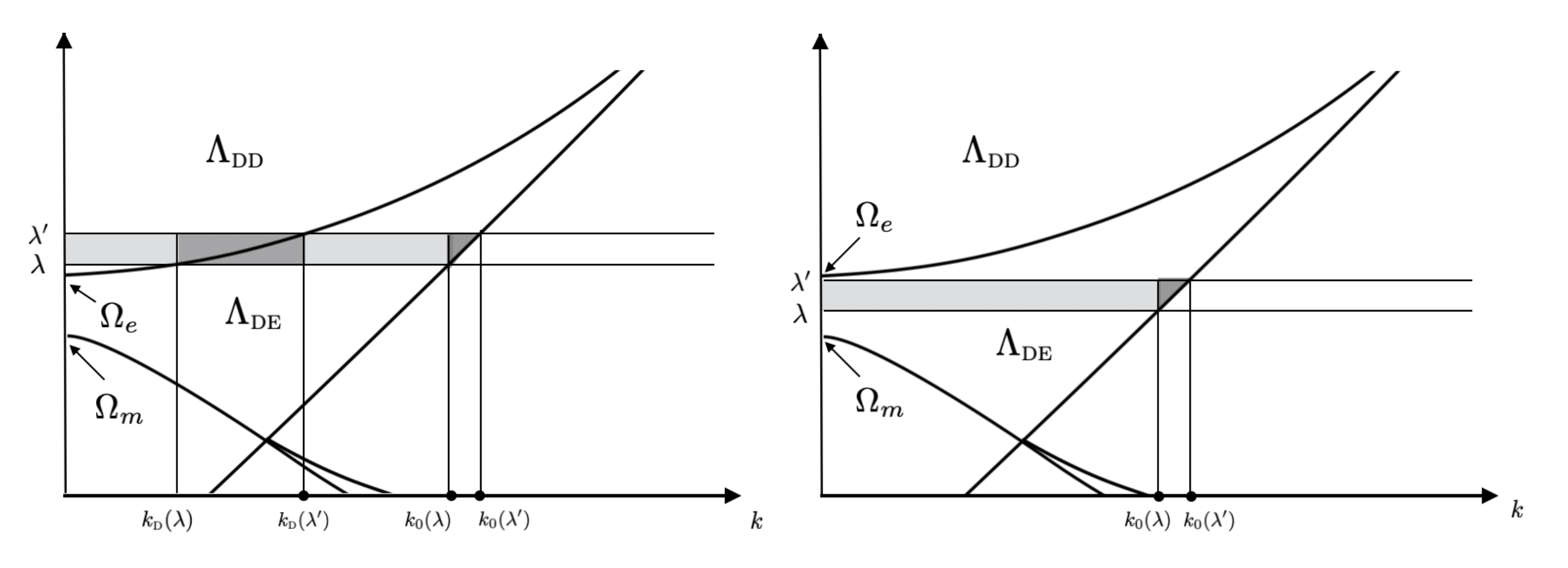}
		\end{center}
		\caption{Case where $\Om < \Oe$ with $[a,b] = [\Oe,b]$ (left) and $[a,b] = [a,\Oe]$ (right).}
		\label{fig.Cas-Omega_e}
	\end{figure}
	\textbf{(i)} The case $\scZ = \DD$. Figure \ref{fig.Cas-Omega_e} (left) then shows that 
	$\zDD(\lambda) \cap \zDD(\lambda') = (-\kD(\lambda),+\kD(\lambda))$. As in case (A), we shall prove \eqref{eq.two-estim-d}.
	\\ [12pt]
	Note first that the second inequality of \eqref{eq.two-estim-d} is easy in this case, since, as  the closure of $\zDD([\Oe,b])$ does not intersect the spectral cuts $|k|=\kO(\tilde{\lambda})$,  the arguments used in item (i.1) of case {\bf (A)}  for proving this inequality apply with obvious changes. 
\\ [12pt]
	The main difference with case {\bf (A)}  concerns the first inequality of \eqref{eq.two-estim-d}. Indeed, the function $k^+(\tilde{\lambda})^{\beta/2} = \kD(\tilde{\lambda})^{\beta/2}$ is no longer bounded in $[\Oe,b]$ (since $\kD(\tilde{\lambda})$ vanishes at $\tilde{\lambda}=\Oe$), so that \eqref{eq.ingred-thetap-bis} is no longer true. We can use instead \eqref{ineqthetabeta}
that gives, since $k^+(\tilde{\lambda}) = \kD (\tilde{\lambda})$ for $\tilde{\lambda} \geq \Omega_e$ (cf. \eqref{eq.def-kp})
\begin{equation*} \label{eq.ingred-thetap-ter}
	\forall \; \tilde{\lambda} \in [\Oe,b], \ \forall |k| \; \neq \kD(\tilde{\lambda}), \quad
	\big|\thetaklt^{+}\big|^\beta 
	\leq \kD(\tilde{\lambda})^{\beta/2}\ \big| |k|-\kD(\tilde{\lambda}) \big|^{\beta/2}.
	\end{equation*}
As $\tilde{\lambda} \mapsto \kD(\tilde{\lambda})$ is increasing on $[\Oe,b]$, the function in the right-hand side of this inequality is a decreasing function of $\tilde{\lambda} \in [\Oe,b]$ for any $k \in (-\kD(\lambda),+\kD(\lambda))$, therefore
	\begin{equation*} 
	\sup_{\tilde{\lambda}\in [\lambda,\lambda']}  \big|\thetaklt^{+}\big|^\beta 
	\leq \kD(\lambda)^{\beta/2}\ \big| |k|-\kD(\lambda)\big|^{\beta/2}.
	\end{equation*}
	We deduce that
	\begin{equation*} 
	\int_{-\kD(\lambda)}^{+\kD(\lambda)} \ \sup_{\tilde{\lambda}\in [\lambda,\lambda']}  \big|\thetaklt^{+}\big|^\beta \ \rmd k 
	\leq \kD(\lambda)^{\beta/2}\ \int_{-\kD(\lambda)}^{+\kD(\lambda)}\big| |k|-\kD(\lambda)\big|^{\beta/2}\ \rmd k
	= \frac{2\,\kD(\lambda)^{\beta+1}}{\beta/2+1},
	\end{equation*}
	which proves the first inequality of \eqref{eq.two-estim-d} provided that $\beta+1 = \min(0,1/2-\gamma) \geq 0$. This is why the possible H\"older exponents $\gamma$ are restricted to the interval $(0,\min(1/2,s-1/2))$ in this situation (recall that  $\gamma = 1/2$ is not considered here, see Appendix \ref{app.cas-un-demi}). \\ [12pt]
		\textbf{(ii)} The case $\scZ = \DE$ for which $\zDE(\lambda) \cap \zDE(\lambda') = (\kD(\lambda'),\kO(\lambda)) \cup(-\kO(\lambda), -\kD(\lambda'))$ and $J_{\EE}=\{1\}$ (by \eqref{eq.def-Jz}).  Again, by a parity argument in $k$, to prove it suffices to consider the integral over  $(\kD(\lambda'),\kO(\lambda))$.
		\\ [12pt]
		The main difference with previous cases is that we can no longer prove \eqref{eq.estim-integ-d} through \eqref{eq.two-estim-d} since the first inequality of \eqref{eq.two-estim-d} becomes false. Indeed, unlike for the case  (B1)(i), the measure of  $\zZ(\lambda) \cap \zZ(\lambda')$ does not shrink to $0$ when $\lambda'\to \Oe^{+}$ which makes the first integral of \eqref{eq.two-estim-d} divergent.
		That is why we shall use an alternative couple of inequalities namely 
		\begin{equation}
		\int_{\kD(\lambda')}^{\kO(\lambda)} \sup_{\tilde{\lambda}\in [\lambda,\lambda']}  \big|\thetaklt^{-}\big|^\alpha \ 
		\sup_{\tilde{\lambda}\in [\lambda,\lambda']}  \big|\thetaklt^{+}\big|^{\alpha'} \ \rmd k \lesssim 1
		\quad\text{and}\quad
		\int_{\kD(\lambda')}^{\kO(\lambda)} \sup_{\tilde{\lambda}\in [\lambda,\lambda']}  \big|\thetaklt^{-}\big|^{-1} \ \rmd k \lesssim 1,
		\label{eq.two-estim-DE}
		\end{equation}
		where we recall that $\alpha = -1/2+\gamma$	and $\alpha' = -1/2-\gamma$, so that $\alpha+\alpha'=-1.$ \\ [12pt]
		The first inequality of \eqref{eq.two-estim-DE} is derived from  \eqref{eq.estim-integ-d} for $j = 1$.
		More precisely,
		$$\sup_{\tilde{\lambda}\in [\lambda,\lambda']}(\thetaklt^{\min})^{\alpha'} \leq \sup_{\tilde{\lambda}\in [\lambda,\lambda']}|\thetaklt^{+}|^{\alpha'} + \sup_{\tilde{\lambda}\in [\lambda,\lambda']}|\thetaklt^{-}|^{\alpha'}$$
		so that, using $\alpha + \alpha'= -1$ with $\alpha,\alpha'<0$, one gets  
		\begin{equation*} \label{inequtile}
		\sup_{\tilde{\lambda}\in [\lambda,\lambda']}  \big|\thetaklt^{-}\big|^\alpha \cdot 
		\sup_{\tilde{\lambda}\in [\lambda,\lambda']}  \big(\thetaklt^{\min}\big)^{\alpha'} \leq \sup_{\tilde{\lambda}\in [\lambda,\lambda']}|\thetaklt^{-}|^{-1}+ \sup_{\tilde{\lambda}\in [\lambda,\lambda']}|\thetaklt^{-}|^{\alpha} \cdot \sup_{\tilde{\lambda}\in [\lambda,\lambda']}  \big|\thetaklt^{+}\big|^{\alpha'}
		\end{equation*}
		which shows that \eqref{eq.two-estim-DE} implies \eqref{eq.estim-integ-d} for $j = 1$ .
		\\[12pt]
		Let us now prove \eqref{eq.two-estim-DE}. For the second inequality, we can use the  arguments of item (i.2) of case {\bf(A)}. \\ [12pt] 
		Only the first inequality requires a new argument. We look separately at the integrand in the left-hand side of the second inequality of \eqref{eq.two-estim-DE} on the intervals $I_1 := \big(\kD(\lambda'),(\kD(\lambda')+\kO(\lambda))/2\big)$ and $I_2 := \big((\kD(\lambda')+\kO(\lambda))/2,\kO(\lambda)\big)$ respectively. More precisely:
		\begin{itemize}
			\item In $I_1$, $|\thetaklt^{-}|^{-1}$ is bounded so $\; \displaystyle \sup_{\tilde{\lambda}\in [\lambda,\lambda']}  \big|\thetaklt^{-}\big|^\alpha \cdot \sup_{\tilde{\lambda}\in [\lambda,\lambda']}  \big|\thetaklt^{+}\big|^{\alpha'} \lesssim \sup_{\tilde{\lambda}\in [\lambda,\lambda']}  \big|\thetaklt^{+}\big|^{\alpha'}$,
			\item In $I_2$, $|\thetaklt^{+}|^{-1}$ is bounded so $ \; \displaystyle \sup_{\tilde{\lambda}\in [\lambda,\lambda']}  \big|\thetaklt^{-}\big|^\alpha \cdot \sup_{\tilde{\lambda}\in [\lambda,\lambda']}  \big|\thetaklt^{+}\big|^{\alpha'} \lesssim \sup_{\tilde{\lambda}\in [\lambda,\lambda']}  \big|\thetaklt^{-}\big|^{\alpha}$.
		\end{itemize} 
	From the above observation, we deduce
		\begin{equation} \label{inequtile2}
		\int_{\kD(\lambda')}^{\kO(\lambda)} \sup_{\tilde{\lambda}\in [\lambda,\lambda']}  \big|\thetaklt^{-}\big|^\alpha \ 
		\sup_{\tilde{\lambda}\in [\lambda,\lambda']}  \big|\thetaklt^{+}\big|^{\alpha'} \ \rmd k \lesssim 
		\int_{\kD(\lambda')}^{\kO(\lambda)} \sup_{\tilde{\lambda}\in [\lambda,\lambda']}  \big|\thetaklt^{-}\big|^\alpha \ \rmd k
		+
		\int_{\kD(\lambda')}^{\kO(\lambda)} 
		\sup_{\tilde{\lambda}\in [\lambda,\lambda']}  \big|\thetaklt^{+}\big|^{\alpha'} \ \rmd k.
		\end{equation}	
		Using again the arguments of item (i.2) in case (A), we see that the first integral of the right-hand side of \eqref{inequtile2} is bounded. For the second integral, we come back to the definition \eqref{eq.def-thetap} of $\thetakl^{+}$ which shows that
		\begin{equation*} 
		\forall \; k > \kD(\tilde{\lambda}), \quad 
		\big|\thetaklt^{+}\big|^{\alpha'} 
		= \big( k^2-\kD(\tilde{\lambda})^2 \big)^{\alpha'/2}
		\leq \big( k-\kD(\tilde{\lambda}) \big)^{\alpha'}.
		\end{equation*}	
		Since $\alpha'<0$, as $\tilde{\lambda} \mapsto \kD(\tilde{\lambda})$ is increasing on $[\Oe,b]$, $\big( k-\kD(\tilde{\lambda}) \big)^{\alpha'}$ is  increasing and  therefore
		\begin{equation*} 
		\sup_{\tilde{\lambda}\in [\lambda,\lambda']}  \big|\thetaklt^{+}\big|^{\alpha'} 
		\leq \big( k-\kD(\lambda') \big)^{\alpha'}.
		\end{equation*}
		The right-hand side of this inequality is integrable in the interval $(\kD(\lambda'),\kO(\lambda))$ provided that $\alpha' > -1$, that is, $\gamma < 1/2.$ This completes the proof of \eqref{eq.two-estim-DE}.
		\\ [12pt]
		\textbf{(B2)} $[a,b] = [a,\Oe]$. Figure \ref{fig.Cas-Omega_e} (right) tells us that we have to prove \eqref{eq.estim-integ-d} only for $\scZ = \DE$ and that $\zDE(\lambda) \cap \zDE(\lambda') = (-\kO(\lambda),+\kO(\lambda))$. \\ [12pt]
		The treatment of this case is similar to the case {(\bf B1)(ii)} : instead of \eqref{eq.estim-integ-d}, we shall prove the stronger estimate (that differs from  \eqref{eq.two-estim-DE} only by integration bounds) 
		\begin{equation}
		\int_{0}^{\kO(\lambda)} \sup_{\tilde{\lambda}\in [\lambda,\lambda']}  \big|\thetaklt^{-}\big|^\alpha \ 
		\sup_{\tilde{\lambda}\in [\lambda,\lambda']}  \big|\thetaklt^{+}\big|^{\alpha'} \ \rmd k \lesssim 1
		\quad\text{and}\quad
		\int_{0}^{\kO(\lambda)} \sup_{\tilde{\lambda}\in [\lambda,\lambda']}  \big|\thetaklt^{-}\big|^{-1} \ \rmd k \lesssim 1.
		\label{eq.two-estim-DEbis}
		\end{equation}
		The second inequality of \eqref{eq.two-estim-DEbis} is treated exactly as in the item (i.2) of case {\bf(A)}. For the first one, as in the case {(\bf B1)(ii)}, by splitting $\big(0, \kO(\lambda)\big)$ into $I_1=(0,\kO(\lambda)/2)$ and $I_2=(\kO(\lambda)/2,\kO(\lambda))$ to separate the singularities at $k=0$ and $k=\kO(\lambda)$, one shows with the same arguments that the inequality  \eqref{inequtile2} holds by replacing the interval of integration   $(\kD(\lambda'),\kO(\lambda))$ by $\big(0, \kO(\lambda)\big)$. Then, using the arguments of item (i.2)  in case \textbf{(A)}, we see that the first integral of the right-hand side of \eqref{inequtile2} (with this new interval of integration)  is bounded. For the second one, from the definition \eqref{eq.def-kp} of $k^+(\lambda)$, we have
	\begin{equation*}
		\forall (k,\tilde{\lambda}) \in \Big(\bbR\times[a,\Oe]\Big) \setminus \{(0,\Oe)\}, \quad
		\big|\thetaklt^{+}\big|^{\alpha'} 
		= \Big( k^2 + \big|k^+(\tilde{\lambda})\big|^2 \Big)^{\alpha'/2}.
	\end{equation*}
and 
\begin{equation*} 
		\sup_{\tilde{\lambda}\in [\lambda,\lambda']}  \big|\thetaklt^{+}\big|^{\alpha'} 
		\leq  \Big( k^2 + \big|k^+(\lambda')\big|^2 \Big)^{\alpha'/2} \leq k^{\alpha'}
		\end{equation*}
is integrable in the interval $(0,\kO(\lambda))$ provided that $\alpha' > -1$, that is, $\gamma < 1/2.$ Thus the second integral of the right-hand side of \eqref{inequtile2} with interval  of  integration $(0,\kO(\lambda))$  is bounded. This completes the proof.\\[12pt]
	\textbf{Case (C)}. It remains to study the case where $[a,b]$ contains $\pm\Oc$, more precisely when $a$ or $b$ is equal to $\pm\Oc$. Fortunately, all the work has already been done in case {\bf (A)}. The same lines are still valid in the present case, the only difference is that the parameter $\beta$ defined by the first equality of \eqref{eq.def-beta} is now equal to $\beta = -1 -2\gamma$ (see \eqref{eq.def-alpha}).We have seen that the boundedness of both integrals in \eqref{eq.two-estim-d} is ensured provided $\beta/2 + 1 > 0$ (see \eqref{eq.case-a-i}), which explains why the possible H\"older exponents $\gamma$ are restricted to the interval $(0,\min(1/2,s-1/2))$ in this situation.
\end{proof}

\begin{lemma} \label{l.int-theta2}
	Let $\scZ \in \calZ\setminus\{\EE\}$, $[a,b] \subset \bbR\setminus \sigma_{\rm exc}$ and $\gamma \in \Gamma_{[a,b]}$. Then for all $\lambda, \lambda' \in [a,b],$ we have
	\begin{equation}\label{eq.estim-int-theta2}
		\sum_{j \in J_{\scZ}} \int_{\zZ(\lambda) \setminus \zZ(\lambda')} \big|\thetakl^{-j}\big|^{2\alpha} \ \rmd k \lesssim |\lambda'-\lambda|^{\gamma},
	\end{equation}
	where $\alpha$ is defined in \eqref{eq.def-alpha}.
\end{lemma}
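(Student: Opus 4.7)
The plan is to localize $\zZ(\lambda)\setminus\zZ(\lambda')$ near the spectral cuts that form the boundary of $\zZ$, to bound the integrand $|\thetakl^{-j}|^{2\alpha}$ pointwise via Lemma~\ref{eq.lemsingularitythetalk}, and to control the lengths of the resulting intervals through the H\"older regularity of the cut functions $k_0$, $\kD$, $\kI$ on $[a,b]$.

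First I would describe the geometry. The boundary of $\zZ(\lambda)$ for $\scZ\in\calZ\setminus\{\EE\}$ consists of points of the form $|k|=k_0(\lambda)$ and/or $|k|=k^+(\lambda)$, so $\zZ(\lambda)\setminus\zZ(\lambda')$ is a finite union of intervals of $k$ lying on one side of such a cut, of length bounded by $|k_0(\lambda')-k_0(\lambda)|$ or $|k^+(\lambda')-k^+(\lambda)|$. For each $j\in J_\scZ$, one of the two factors $\thetakl^\pm$ is bounded away from zero on the part of $\zZ(\lambda)\setminus\zZ(\lambda')$ not adjacent to its vanishing cut (here the assumption $\sigma_{\rm exc}\cap[a,b]=\varnothing$ is crucial). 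On such a subinterval, $|\thetakl^{-j}|^{2\alpha}$ is uniformly bounded (recall $\alpha<0$), and its contribution is at most a constant times the length of the interval. On the subinterval adjacent to the cut where $\thetakl^{-j}$ does vanish, raising \eqref{eq.singtheta-spectralcut-1} or \eqref{eq.singtheta-spectralcut-2} to the power $-2\alpha>0$ yields
\begin{equation*}
|\thetakl^{-j}|^{2\alpha}\lesssim \big|\,|k|-k^{(j)}(\lambda)\big|^{\alpha},
\end{equation*}
with $k^{(j)}=k_0$ for $j=1$ and $k^{(j)}=k^+$ for $j=-1$. Integrating this over a one-sided neighborhood of $k^{(j)}(\lambda)$ of length $w^{(j)}$ produces a contribution of order $(w^{(j)})^{\alpha+1}$, which is legitimate since $\alpha+1>0$ for every $\gamma\in\Gamma_{[a,b]}$.

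Next I would estimate $w^{(j)}$ from the modulus of continuity of the cut functions. Since $0\notin[a,b]$, the function $k_0(\lambda)=\sqrt{\eps_0\mu_0}\,|\lambda|$ is Lipschitz on $[a,b]$, so $w^{(1)}\lesssim |\lambda'-\lambda|$. The function $k^+$ defined in \eqref{eq.def-kp} is $C^\infty$ on $[a,b]\setminus\{\pm\Oe\}$, and a direct inspection of \eqref{defk0DI} shows that near $\pm\Oe$ it vanishes like $\sqrt{|\lambda\mp\Oe|}$, so it is H\"older of exponent $1/2$ there; the values $\pm\Om$ do not appear as they lie in $\sigma_{\rm exc}$. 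Consequently $w^{(-1)}\lesssim |\lambda'-\lambda|^\beta$ with $\beta=1$ in general and $\beta=1/2$ only in case~(B) for the $k^+$-cut. Combining with the bounded-integrand subintervals, the contribution of each piece of $\zZ(\lambda)\setminus\zZ(\lambda')$ is at most $|\lambda'-\lambda|^{\beta(\alpha+1)}+|\lambda'-\lambda|^\beta$.

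Finally I would check $\min\bigl(\beta,\beta(\alpha+1)\bigr)\geq\gamma$ in each case of \eqref{eq.def-ABC}. In case (A), $\alpha+1=\tfrac12+\gamma$ and $\beta=1$, giving the exponent $\tfrac12+\gamma\geq\gamma$. In case (B), the critical contribution arises from the $k^+$-cut close to $\pm\Oe$ with $\alpha+1=\tfrac12+\gamma$ and $\beta=\tfrac12$, yielding the exponent $\tfrac14+\tfrac{\gamma}{2}$, which is $\geq\gamma$ exactly when $\gamma\leq\tfrac12$, matching the definition of $\Gamma_{[a,b]}$ in this case. In case (C), $\alpha+1=\tfrac12$ and $\beta=1$, giving $\tfrac12\geq\gamma$ since $\gamma<\tfrac12$ on $\Gamma_{[a,b]}$. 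The main obstacle is case (B): the square-root vanishing of $k^+$ at $\pm\Oe$ exactly saturates the H\"older bound and forces the restriction $\gamma<\tfrac12$, which is precisely what $\Gamma_{[a,b]}$ encodes.
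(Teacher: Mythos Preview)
Your overall strategy (localize near the cuts, bound the integrand via Lemma~\ref{eq.lemsingularitythetalk}, control interval lengths by the H\"older regularity of $k_0$ and $k^+$) is exactly the paper's approach, and your treatment of cases~(A) and~(C) is correct, up to the minor slip that $\alpha=-1/2+\gamma$ is \emph{not} negative when $\gamma>1/2$ in case~(A); there the integrand is simply bounded and the contribution is at most the interval length.

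There is, however, a genuine gap in case~(B), for the zone $\zDD$ with $j=-1$. When you raise \eqref{eq.singtheta-spectralcut-2} to the power $-2\alpha>0$ you get
\[
|\thetakl^{+}|^{2\alpha}\le |k^+(\lambda)|^{\alpha}\,\big||k|-k^+(\lambda)\big|^{\alpha},
\]
and you absorb the prefactor $|k^+(\lambda)|^{\alpha}$ into the constant of your $\lesssim$. But $k^+(\pm\Oe)=0$, so for $\alpha<0$ this prefactor is \emph{not} uniformly bounded on $[a,b]$ in case~(B). Your claimed bound $\int\lesssim (w^{(-1)})^{\alpha+1}$ is then actually false: take $\lambda'=\Oe$, $\lambda>\Oe$ close, so the interval is $(0,\kD(\lambda))$; a direct computation (substitute $k=\kD(\lambda)u$) gives
\[
\int_0^{\kD(\lambda)}\!\!|\thetakl^{+}|^{2\alpha}\,\rmd k
= C_\alpha\,\kD(\lambda)^{2\alpha+1}
= C_\alpha\,\kD(\lambda)^{2\gamma}
\sim |\lambda-\lambda'|^{\gamma},
\]
which for $\gamma<1/2$ is \emph{larger} than your asserted $(w^{(-1)})^{\alpha+1}=\kD(\lambda)^{\gamma+1/2}\sim|\lambda-\lambda'|^{1/4+\gamma/2}$. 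So your exponent $\tfrac14+\tfrac{\gamma}{2}$ for case~(B), while numerically $\ge\gamma$, is obtained from an incorrect inequality.

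The paper's remedy is precisely to keep the blowing-up factor $\kD(\lambda')^{\alpha}$ and cancel it against part of the length: writing $\alpha=\gamma-1/2$,
\[
\kD(\lambda')^{\alpha}\big(\kD(\lambda')-\kD(\lambda)\big)^{\alpha+1}
=\Big(1-\tfrac{\kD(\lambda)}{\kD(\lambda')}\Big)^{1/2-\gamma}\big(\kD(\lambda')-\kD(\lambda)\big)^{2\gamma}
\le \big(\kD(\lambda')-\kD(\lambda)\big)^{2\gamma},
\]
and then the $1/2$-H\"older continuity of $\kD$ gives exactly $|\lambda'-\lambda|^{\gamma}$. Inserting this step fixes your argument.
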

\begin{proof}
	We distinguish again the  cases (A), (B) and (C) defined in \eqref{eq.def-ABC}. In order to use Figures \ref{fig.zoomholder} and \ref{fig.Cas-Omega_e}, we assume again that $\lambda < \lambda'$ and prove separately \eqref{eq.estim-int-theta2} for the domains $\zZ(\lambda) \setminus \zZ(\lambda')$ and  $\zZ(\lambda') \setminus \zZ(\lambda)$, which amount to the statement of the lemma.
\\ [12pt]
	\textbf{Case (A)}. As in the proof of Lemma \ref{l.int-theta1}, we can restrict ourselves to the particular situation \eqref{eq.particular-case} illustrated by Figure \ref{fig.zoomholder}, which is representative of all cases where $[a,b] \cap \{\pm\Oe,\pm\Oc\} = \varnothing$. We have to prove \eqref{eq.estim-int-theta2} for $\scZ = \DI$ and $\EI$.
\\ [12pt]	
	\textbf{(i)} Let us start with the case $\scZ = \DI$ for which $J_{\scZ}=\{-1,+1\}$. Figure \ref{fig.zoomholder} shows that $\zDI(\lambda) \setminus \zDI(\lambda') = \varnothing$, whereas $\zDI(\lambda') \setminus \zDI(\lambda) = (\kO(\lambda),\kO(\lambda')) \cup (-\kO(\lambda'),-\kO(\lambda))$ whose  measure is $2(\kO(\lambda')-\kO(\lambda))$. Since $|k|$ does not approach the spectral cut $|k|=\kI(\lambda),$ $|\thetaklp^{+}|^{2\alpha} \lesssim 1$ whatever the sign of $\alpha$. Therefore,
	\begin{equation*}
		\int_{\zDI(\lambda') \setminus \zDI(\lambda)} \big|\thetaklp^{+}\big|^{2\alpha} \,\rmd k \lesssim |\kO(\lambda')-\kO(\lambda)| \lesssim |\lambda'-\lambda| \lesssim |\lambda'-\lambda|^\gamma,
	\end{equation*}
	since $\lambda \to k_0(\lambda)$ is differentiable in $[a,b]$ and $\gamma < 1$.
\\ [12pt]
	For $\thetaklp^{-}$, $|k|$ can approach the spectral cut $|k|=\kO(\lambda')$, thus $|\thetaklp^{-}|^{-1}$ is no longer bounded. Nevertheless
          $$
	\big|\thetaklp^{-}\big|^{2\alpha} \lesssim \big|\thetaklp^{-}\big|^{\min(2\alpha,0)}
	\lesssim \big| |k|-k_0(\lambda') \big|^{\tilde{\alpha}}, \quad \tilde{\alpha} :=\min(\alpha,0),
	$$
	where the first inequality holds because $\thetaklp^{-}$ is bounded and the second one follows from \eqref{eq.singtheta-spectralcut-1}.  Thus	
	\begin{equation}
		\int_{\zDI(\lambda') \setminus \zDI(\lambda)} \big|\thetaklp^{-}\big|^{2\alpha} \,\rmd k 
		\lesssim \int_{\kO(\lambda)}^{\kO(\lambda')} \big| |k|-\kO(\lambda') \big|^{\tilde{\alpha}} \,\rmd k
		\lesssim \big( \kO(\lambda')-\kO(\lambda) \big)^{\tilde{\alpha}+1}
		\lesssim |\lambda'-\lambda|^{\tilde{\alpha}+1}.
		\label{eq.int-cas-a}
	\end{equation}
	As $\tilde{\alpha} + 1 = \min(\gamma+1/2,1) \geq \gamma$, inequality \eqref{eq.estim-int-theta2} follows.  \\ [12pt]
	\textbf{(ii)} Consider now the case $\scZ = \EI$ for which $J_{\scZ}=\{-1\}$. Figure \ref{fig.zoomholder} shows that $\zEI(\lambda') \setminus \zEI(\lambda) = \varnothing$, whereas $\zEI(\lambda) \setminus \zEI(\lambda')$ is composed by the set $(\kO(\lambda),\kO(\lambda')) \cup (\kI(\lambda'),\kI(\lambda))$ and its symmetric with respect to $k=0$. For each interval, we can use the same arguments as in item {\bf (i)}, depending on whether $k$ approaches or not the spectral cut near which $|\thetakl^{+}|^{2\alpha}$ becomes unbounded.
\\ [12pt]
	\textbf{Case (B)}. We examine now the case where $a$ or $b$ is equal to $\pm\Oe$. As in the proof of Lemma \ref{l.int-theta1}, we assume that $\Oe > \Om$ and detail the proof for $a = \Oe$ (Figure \ref{fig.Cas-Omega_e}) (the other cases can be done analogously). We have to prove \eqref{eq.estim-int-theta2} for $\scZ = \DD$ and $\DE$.
\\ [12pt]
	\textbf{(i)} Suppose that $\scZ = \DD$.  Then $J_{\scZ}=\{-1,+1\}$, 
	 $\zDD(\lambda') \setminus \zDD(\lambda) = (\kD(\lambda),\kD(\lambda')) \cup ( -\kD(\lambda'), -\kD(\lambda) ) $ while $\zDD(\lambda) \setminus \zDD(\lambda') = \varnothing$ (Figure \ref{fig.Cas-Omega_e}). For $j=1$, as  $\thetaklp^{-}$ does not vanish on $\overline{\Lambda_{\DD}[a,b])}$, $|\thetaklp^{-}|^{2\alpha}$ is bounded and thus  it follows that
	 \begin{equation*}
	\int_{\zDD(\lambda) \setminus \zDE(\lambda')} \big|\thetaklp^{-}\big|^{2\alpha} \,\rmd k \lesssim |\kD(\lambda')-\kD(\lambda)| \lesssim |\lambda'-\lambda|^{1/2} \lesssim |\lambda'-\lambda|^\gamma,
	\end{equation*}
	since $\lambda \to k_D(\lambda)$ is $1/2$-H\"older continuous in $[\Oe,b]$ (see \eqref{eq.kD-Holder}) and $\gamma \leq 1/2$. \\ [12pt]
Indeed, the main difference with case {\bf (A)} concerns \eqref{eq.estim-int-theta2} for $\thetaklp^{+}$ for the case $j=-1$. By definition \eqref{eq.def-Gamma-K} of $\Gamma_{[\Oe,b]}$, $\gamma < 1/2$ hence $\alpha = \gamma - 1/2 < 0$ (cf. \eqref{eq.def-alpha}). Since $k^+ = \kD$ (cf \eqref{eq.def-kp}),  \eqref{eq.singtheta-spectralcut-2}  implies
	 $$
	 \big|\thetaklp^{+}\big|^{2\alpha} 
	 \lesssim  \kD(\lambda')^{\alpha}\ \big| k-\kD(\lambda') \big|^{\alpha}.
	 $$
	 Therefore, we obtain
	\begin{equation} \label{latterestimate}
		\int_{\zDD(\lambda') \setminus \zDD(\lambda)} \big|\thetaklp^{+}\big|^{2\alpha} \,\rmd k 
		\lesssim \int_{\kD(\lambda)}^{\kD(\lambda')} \kD(\lambda')^{\alpha}\ \big| k-\kD(\lambda') \big|^{\alpha} \,\rmd k
		\lesssim \kD(\lambda')^{\alpha}\ \big( \kD(\lambda')-\kD(\lambda) \big)^{\alpha+1}.
	\end{equation}
	Even though $\lambda \mapsto \kD(\lambda)$ is no longer differentiable on $[\Oe,b]$, it is $1/2$-H\"older continuous. Indeed it is easy to see from the definition \eqref{defk0DI} of $\kD(\lambda)$ that
	\begin{equation}
	\kD(\lambda) = \sqrt{2 \varepsilon_0 \, \Oe \, \mu^+_{\Oe} } \ (\lambda-\Oe)^{1/2} \ \big(1+o(1)\big) 
	\quad\text{as } \lambda \searrow \Oe.
	\label{eq.kD-Holder}
	\end{equation}
	Hence, to get an estimate in $|\lambda - \lambda'|^\gamma$, it is natural to make appear $( \kD(\lambda')-  \kD(\lambda))^{2\gamma}$, which we do with the following trick 
	$$
	( \kD(\lambda')-  \kD(\lambda))^{1/2+\gamma} = ( \kD(\lambda')-  \kD(\lambda))^{2\gamma} \; ( \kD(\lambda')-  \kD(\lambda))^{1/2-\gamma}
	$$
	that we substitute into \eqref{latterestimate} to obtain, using again $\alpha = \gamma - 1/2$,
	\begin{equation*}
		\int_{\zDD(\lambda') \setminus \zDD(\lambda)} \big|\thetaklp^{+}\big|^{2\alpha} \,\rmd k 
		\lesssim F(\lambda,\lambda')\ \big( \kD(\lambda')-\kD(\lambda))^{2\gamma}
		\quad\text{where}\quad
		F(\lambda,\lambda') := \Big(1-\frac{\kD(\lambda)}{\kD(\lambda')}\Big) ^{1/2-\gamma}.
	\end{equation*}
	As $\gamma\leq 1/2$  and $F(\lambda,\lambda') \leq 1$, this completes the proof of \eqref{eq.estim-int-theta2} ($\kD$ is $1/2$-H\"older continuous on $[\Oe,b]$).
\\ [12pt]
\textbf{(ii)} For $\scZ = \DE,$ we have $J_{\scZ}=\{+1\}$ and  $\zDE(\lambda) \setminus \zDE(\lambda') = (\kD(\lambda),\kD(\lambda')) \cup  (-\kD(\lambda'), -\kD(\lambda))$ (see Figure \ref{fig.Cas-Omega_e}). Since  $\thetakl^{-}$ does not vanish, this case can be treated as the case \textbf{(B)}(i) for $j=1$.\\[6pt]
On the other hand $\zDE(\lambda') \setminus \zDE(\lambda) = (\kO(\lambda),\kO(\lambda')) \cup  (-\kO(\lambda'), -\kO(\lambda))$. This case can be dealt with as in case {\bf (A)-(i)}, see \eqref{eq.int-cas-a}.
\\ [12pt]
	\textbf{Case (C)}. As in the proof of Lemma \ref{l.int-theta1}, the only difference with case {\bf (A)} is that the parameter $\alpha$ is now defined by $\alpha = -1/2$ (see \eqref{eq.def-alpha}). Hence the most restrictive situation corresponds to \eqref{eq.int-cas-a} with $\alpha + 1 = 1/2$, which shows that all H\"older exponents $\gamma \in (0,\min(1/2,s-1/2))$ are allowed here.
\end{proof}

\subsubsection{Component related to the lineic spectral zone}
\label{sss.Holder-M-lineic}
We focus now on the proof of \eqref{eq.holderestim-specdensity-Z} for $\scZ = \EE$, which has to be considered only if $\Oe \neq \Om$. We consider a positive interval
\begin{equation*}
	[a,b] \subset \left\{  
	\begin{array}{ll}
		[\Oc,\Op) & \text{if }\Oe < \Om, \\[4pt]
		(\Op,\Oc] & \text{if }\Oe > \Om, 
	\end{array}\right.
\end{equation*}
but of course, the same approach applies for negative intervals. We can reduce to  such intervals since $\Lambda_{\EE}(\lambda)=\Lambda_{\EE}(\lambda')=\varnothing$ for $|\lambda|, |\lambda'|\notin\big(\min(\Oc, \Op),\max(\Oc,\Op)\big)$ and thus $\bbml^{\EE}=\bbmlp^{\EE}=0$ for these values of $\lambda$ and $\lambda'$ (see Figure \ref{fig.speczones1}).
Thus,  to show  the H\"{o}lder  continuity of  $\lambda \mapsto \bbml^{\EE}$  on a vicinity  of $\Oc$ (case (C)), it is sufficient to prove  the H\"{o}lder  continuity  on  $[a,\Oc)$ if $\Oe>\Om$ (resp. on $(\Oc, b]$ if $\Oe<\Om$) and  to show that the limit of $\bbml$ when  $\lambda\to \Oc^{-}$ (resp. $\lambda\to \Oc^{+}$) is zero to ensure that  $\bbml^{\EE}$ is continuous at $\Oc$.

We proceed as we did for obtaining \eqref{eq.estim-common-part}, but here one additional term appears due to the presence of the Jacobian $\JacE(\lambda)$ as a multiplicative factor. For conciseness, we drop the index 0 in the notation of the generalized eigenfunctions $\bbW_{k,\lambda,0}$. 
From the expression \eqref{eq.spec-meas-lin} of $\bbM_\lambda^\EE$ and the fact that $k \in \zEE(\lambda)$ if and only if $k = \pm \kE(\lambda)$, we have
\begin{multline*}
	\bbM_{\lambda'}^\EE \bU - \bbM_\lambda^\EE \bU
	= \sum_\pm \{ \JacE(\lambda') - \JacE(\lambda)\}\ \langle \bU,\bbW_{\pm \kE(\lambda'),\lambda'}\rangle_{s}  \; \bbW_{\pm \kE(\lambda'),\lambda'} \\
	+ \JacE(\lambda)\ \Big( \langle \bU,\{\bbW_{\pm \kE(\lambda'),\lambda'} - \bbW_{\pm \kE(\lambda),\lambda}\}\rangle_{s}  \; \bbW_{\pm \kE(\lambda'),\lambda'} + \langle \bU,\bbW_{\pm \kE(\lambda),\lambda} \rangle_{s}  \; \{\bbW_{\pm \kE(\lambda'),\lambda'}-\bbW_{\pm \kE(\lambda),\lambda}\} \Big).
\end{multline*}
As a consequence,
\begin{equation*}\label{eq.inequality-gen-Hol-ZEE}
	\big\| \bbM_{\lambda'}^\EE \bU - \bbM_\lambda^\EE \bU \big\|_{\Hms}
	\leq  \sum_\pm d_{\lambda,\lambda'}^\pm \ \|\bU\|_{\Hps},
\end{equation*}
where we have denoted
\begin{multline}
	d_{\lambda,\lambda'}^\pm  := 
	\big|\JacE(\lambda') - \JacE(\lambda)\big| \ \big\| \bbW_{\pm \kE(\lambda'),\lambda'} \big\|_{\Hms}^2 \\ + \big|\JacE(\lambda)\big| \
	\Big\{ \big\| \bbW_{\pm \kE(\lambda'),\lambda'} \big\|_{\Hms} + \big\| \bbW_{\pm \kE(\lambda),\lambda} \big\|_{\Hms} \Big\} \ \big\| \bbW_{\pm \kE(\lambda'),\lambda'} - \bbW_{\pm \kE(\lambda),\lambda} \big\|_{\Hms}.
	\label{eq.def-d-lineic}
\end{multline}
Hence, the proof of \eqref{eq.holderestim-specdensity-Z} for $\scZ = \EE$ will be complete once we have established the following property:
\begin{equation}\label{eq.Holder-d-lineic}
	\forall \lambda,\lambda' \in [a,b], \quad
	d_{\lambda,\lambda'}^\pm \lesssim |\lambda'-\lambda|^{\gamma},
\end{equation}
for any $\gamma \in \Gamma_{[a,b]}$.
We have now to distinguish cases (A) and (C) (case (B) cannot occur here).\\[6pt]
\textbf{Case (A)}. Using the estimate \eqref{eq.ineqboundplasm} for 
$\| \bbW_{\pm\kE(\lambda),\lambda}\|_{\Hms}$ and $\| \bbW_{\pm\kE(\lambda'),\lambda'}\|_{\Hms}$, the fact that $|\theta_{\pm\kE(\lambda),\lambda}| \lesssim 1$ (since $\lambda$ remains far from $\Op$, so $\kE(\lambda)$ remains in a compact set), as well as the fact that $\lambda \to \JacE(\lambda)$ is bounded in $[a,b]$, we have 
\begin{equation}
	d_{\lambda,\lambda'}^\pm \lesssim \big| \JacE(\lambda') - \JacE(\lambda) \big| +
	\| \bbW_{\pm\kE(\lambda'),\lambda'} - \bbW_{\pm\kE(\lambda),\lambda} \|_{\Hms}.
	\label{eq.estim-d-lineic}
\end{equation}
Therefore, using the H\"older estimate \eqref{eq.ineqboundholdplasm} of Proposition \ref{prop.holdestimateplasmon} and the fact that $\lambda \to \JacE(\lambda)$ is differentiable in $[a,b]$ (see the beginning of \S\ref{sec.Holder-lineic}), we obtain property \eqref{eq.Holder-d-lineic} for any $\gamma \in (0,1]\cap (0,s-1/2)$.\\[6pt]
\textbf{Case (C)}. The estimation of $d_{\lambda,\lambda'}^\pm$ is more delicate if $\Oc \in [a,b],$ that is, if $a=\Oc$ or $b=\Oc$.  To fix ideas, we assume that $\Oe>\Om$  and $[a,b]=[a,\Oc]$ (we can proceed similarly if $\Om>\Oe$ and $[a,b]=[\Oc,b]$). As we discussed  above, it is sufficient  to show that: $\lambda \mapsto \bbml$  is H\"{o}lder continuous on $[a,\Oc)$ and that $\bbml$ tends to $0$  at $\Oc^{-}$.\\[6pt]
\textbf{(i) a)} We show first that $\lambda \mapsto \bbml$  is H\"{o}lder continuous on $[a,\Oc)$.   Assume that $\lambda, \lambda'\in [a,\Oc)$ with  $\lambda \leq \lambda'$. We use again \eqref{eq.ineqboundplasm}, which shows that
\begin{equation*}
	\Big\{ \big\| \bbW_{\pm \kE(\lambda'),\lambda'} \big\|_{\Hms} + \big\| \bbW_{\pm \kE(\lambda),\lambda} \big\|_{\Hms} \Big\} 
	\lesssim \sup_{\tilde{\lambda}\in [\lambda,\lambda']}  (\theta_{\kE(\tilde{\lambda}),\tilde{\lambda}}^{+})^{1/2} 
\end{equation*}
The main difference with case (A) is that we can no longer use \eqref{eq.ineqboundholdplasm}. We have to use instead the H\"older estimate \eqref{eq.ineqboundholdplasmcrosspoint}. As  $\lambda \mapsto \JacE(\lambda)$ is still  differentiable in $[a,\Oc]$, the definition \eqref{eq.def-d-lineic} of $d_{\lambda,\lambda'}^\pm$ yields now
\begin{equation*}
	d_{\lambda,\lambda'}^\pm \lesssim \big(1+p_{\lambda,\lambda'}\big)\ |\lambda'-\lambda|^{\gamma},
\end{equation*}
where
\begin{equation*}
	p_{\lambda,\lambda'} := 
	\sup_{\tilde{\lambda}\in [\lambda,\lambda']}  (\theta_{\kE(\tilde{\lambda}),\tilde{\lambda}}^{+})^{1-\gamma/2} 
	\ \sup_{\tilde{\lambda}\in [\lambda,\lambda']}  (\theta_{\kE(\tilde{\lambda}),\tilde{\lambda}}^{+})^{-3\gamma/2}.
\end{equation*}
Thus, we need to prove that $p_{\lambda,\lambda'} \lesssim 1.$ To do so, we need to know the behaviour of $\theta_{\kE(\lambda),\lambda}^+$ near $\lambda = \Oc$. This is the object of  Lemma \ref{Dl-disp-kc} below. From \eqref{eq.asymptvoisinagekc1}, we deduce that
\begin{equation*}
	\sup_{\tilde{\lambda}\in [\lambda,\lambda']}  (\theta_{\kE(\tilde{\lambda}),\tilde{\lambda}}^{+})^{1-\gamma/2}\lesssim  (\Oc- \lambda)^{1/2-\gamma/4}
	\quad\mbox{and}\quad
	\sup_{\tilde{\lambda}\in [\lambda,\lambda']}  (\theta_{\kE(\tilde{\lambda}),\tilde{\lambda}}^{+})^{-3\gamma/2} \lesssim  (\Oc- \lambda')^{-3\gamma/4},
\end{equation*}
since $\lambda \leq \lambda'$ and $1/2-\gamma/4\geq 0$ while $-3/2\gamma\leq 0$. As a consequence
\begin{equation*}
	p_{\lambda,\lambda'} \lesssim  (\Oc- \lambda)^{1/2-\gamma/4}\ (\Oc- \lambda')^{-3\gamma/4}.
\end{equation*}
The problem is that the right-hand side of this inequality is not a continuous function on the line $\lambda'= \Oc$. However, using polar coordinates for $(\Oc - \lambda, \Oc - \lambda')$, we see that for any fixed $\kappa \in (0,1)$, it is continuous, thus bounded, in any domain of the form 
\begin{equation*}
	D_\kappa := \big\{ (\lambda, \lambda') \in [a, \Oc)^2\; \big| \; \kappa \; (\Oc - \lambda) < \Oc - \lambda' \leq \Oc - \lambda\big\} 
\end{equation*}
since $(1/2-\gamma/4)-3\gamma/4 = 1/2 - \gamma > 0$. This means that the above lines yield property \eqref{eq.Holder-d-lineic} not for the whole domain $\{ (\lambda, \lambda') \in [a, \Oc]^2\; | \; \lambda\leq \lambda'\}$, but only in $D_\kappa$.\\[6pt]
\textbf{(i) b)} To conclude, we have to prove that \eqref{eq.Holder-d-lineic} holds true in the complement of $D_\kappa$, that is, 
\begin{equation*}
	D_\kappa^{\rm c} := \big\{ (\lambda, \lambda') \in [a, \Oc)^2\; \big| \; \kappa \; (\Oc - \lambda) \geq \Oc - \lambda'\big\}.
\end{equation*}
The idea is to restart for instance from \eqref{eq.estim-d-lineic} (which is still valid here) and use simply the triangle inequality to bound the last term of the right-hand side, which yields
\begin{equation*}
	d_{\lambda,\lambda'}^\pm \lesssim |\lambda' - \lambda| + \big(\theta_{\kE(\lambda'),\lambda'}^+ + \theta_{\kE(\lambda),\lambda}^+\big),
\end{equation*}
thanks to the differentiability of $\lambda \to \JacE(\lambda)$ in $[a,\Oc]$ and the estimate \eqref{eq.ineqboundplasm}. Applying again Lemma \ref{Dl-disp-kc} below, we deduce that
\begin{equation*}
	\theta_{\kE(\lambda'),\lambda'}^+ + \theta_{\kE(\lambda),\lambda}^+
	\lesssim (\Oc-\lambda' )^{1/2}+(\Oc-\lambda)^{1/2}.
\end{equation*}
Noticing that the inequality $\kappa \, (\Oc - \lambda) \geq \Oc - \lambda'$ which characterizes points of $D_\kappa^{\rm c}$ can be written equivalently as
\begin{equation*}
	\Oc - \lambda \leq (1-\kappa)^{-1}\ (\lambda'-\lambda)
	\quad\text{or}\quad
	\Oc - \lambda' \leq (\kappa^{-1}-1)^{-1}\ (\lambda'-\lambda),
\end{equation*}
we finally obtain 
\begin{equation*}
	d_{\lambda,\lambda'}^\pm \lesssim |\lambda'-\lambda|^{1/2}.
\end{equation*}
As $\gamma < 1/2$, this implies \eqref{eq.Holder-d-lineic} for all $(\lambda,\lambda') \in D_\kappa^{\rm c}$.  \\[6pt]
{\bf (ii)} To  complete the proof for case (C), it only remains to prove that  that the limit of $\bbml$ at $\Op^{-}$ is zero. This is an immediate consequence of the expression \eqref{eq.spec-meas-lin} of $\bbM_\lambda^\EE$,  the fact that $\mathcal{J}_\scE$ is bounded on $[a,\Oc]$, the estimate \eqref{eq.ineqboundplasm}  and  the Lemma \ref{Dl-disp-kc} which implies that:
$$
\| \bbml\|_{\Hps,\Hms} \leq  \mathcal{J}_\scE(\lambda) \sum_{\pm }  \|\bbW_{\pm \kE(\lambda),\lambda}\|^2 \lesssim \thetakl^{+} \lesssim (\Oc-\lambda)\to 0 \ \ \mbox{ as } \lambda\to \Oc^{-}.
$$  

\begin{lemma}\label{Dl-disp-kc}
	For $\Oe\neq \Om$, the function $\lambda \mapsto \theta^+_{ \kE(\lambda), \lambda}$ can be continuously extended by zero at  $\Oc$ and admits  the following asymptotic behaviour
	\begin{equation}\label{eq.asymptvoisinagekc1}
		\theta_{ \kE(\lambda), \lambda}^+=A \, \big|\lambda-\Oc\big|^{1/2}+ o\big(|\lambda-\Oc| ^{1/2}\big) \quad\mbox{when } \lambda\to \Oc,
	\end{equation}
	for some $A > 0$.	 
\end{lemma}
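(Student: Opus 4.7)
The plan is to exploit the dispersion relation satisfied on $\zEE$ to eliminate $\kE(\lambda)$ and obtain an explicit algebraic expression for $\Theta^+_{\kE(\lambda),\lambda}$ as a rational function of $\lambda$ alone, whose behaviour near $\Oc$ can then be read off by a straightforward Taylor expansion.

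First, I would observe that on $\zEE$ both $\thetakl^+$ and $\thetakl^-$ are real and positive (evanescent regime on both sides of the interface), so squaring the dispersion equation $\thetakl^-/\mu_\lambda^- + \thetakl^+/\mu_\lambda^+ = 0$ yields the cleaner identity
\[
\Theta^+_{\kE(\lambda),\lambda} = \left(\frac{\mu_\lambda^+}{\mu_0}\right)^{\!2} \Theta^-_{\kE(\lambda),\lambda}.
\]
Since $\Oc \neq \Om$, the prefactor $(\mu_\lambda^+/\mu_0)^2$ is a smooth, non-vanishing function of $\lambda$ near $\Oc$ (its value there is $\Om^4/\Oe^4$), so the whole asymptotic behaviour of $\Theta^+_{\kE(\lambda),\lambda}$ is driven by that of $\Theta^-_{\kE(\lambda),\lambda} = \kE(\lambda)^2 - \kO(\lambda)^2$.

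Second, I would insert the same identity into the definitions of $\Theta^\pm_{k,\lambda}$ and solve algebraically for $\kE(\lambda)^2$. Setting $m := 1-\Om^2/\lambda^2$ and $e := 1-\Oe^2/\lambda^2$, the relation $\mu_0^2\,\Theta^+_{k,\lambda} = (\mu_\lambda^+)^2\,\Theta^-_{k,\lambda}$ is linear in $k^2$ and gives, after a short manipulation,
\[
\kE(\lambda)^2 = \kO(\lambda)^2\,\frac{m(e-m)}{1-m^2},
\qquad\text{hence}\qquad
\Theta^-_{\kE(\lambda),\lambda} = \kO(\lambda)^2\,\frac{me-1}{1-m^2}.
\]

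Third, I would Taylor-expand the right-hand side at $\lambda = \Oc$. The denominator $1-m^2$ takes the value $1-\Om^4/\Oe^4 \neq 0$ at $\Oc$ and $\kO(\lambda)^2$ is smooth and non-zero, so the entire vanishing comes from the numerator. Using $\Oc^2 = \Oe^2\Om^2/(\Oe^2+\Om^2)$, one computes
\[
me - 1 = \frac{(\Oe^2+\Om^2)(\Oc^2-\lambda^2)}{\lambda^4},
\]
which vanishes to first order at $\Oc$ with a nonzero derivative. This yields
\[
\Theta^-_{\kE(\lambda),\lambda} = C\,(\lambda-\Oc)(1+o(1))
\quad\text{or}\quad
C\,(\Oc-\lambda)(1+o(1)),
\]
the sign being opposite to that of $1-m^2|_{\Oc}$, i.e.\ depending on whether $\Oe>\Om$ or $\Oe<\Om$. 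Combining with step one and taking the positive square root then gives the claimed expansion with
\[
A = \frac{\Om^2}{\Oe^2}\sqrt{C} > 0.
\]

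The main delicate point is the sign reconciliation in step three: the Taylor expansion produces an expression \emph{linear} in $(\lambda-\Oc)$, which would seemingly change sign across $\Oc$, whereas $\Theta^\pm_{\kE(\lambda),\lambda}$ must remain nonnegative on $\zEE$. This apparent contradiction is resolved by recalling that $\lambda_\scE$ is strictly monotone on $[\kc,+\infty)$, so $\zEE$ approaches the cross point from one fixed side only (corresponding to $\lambda \in (\Op,\Oc)$ if $\Oe>\Om$ and $\lambda \in (\Oc,\Op)$ if $\Oe<\Om$); on that side the sign of $(\lambda-\Oc)$ is fixed, compatible with positivity of $\Theta^\pm_{\kE(\lambda),\lambda}$, and the expansion is effectively an expansion in $|\lambda-\Oc|$, which is precisely what yields the $|\lambda-\Oc|^{1/2}$ behaviour of $\theta^+_{\kE(\lambda),\lambda}$.
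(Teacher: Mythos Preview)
Your argument is correct and takes a genuinely different route from the paper's. Both proofs start from the dispersion relation, but the paper keeps $\kE(\lambda)$ as an implicit function: it writes $\theta^+_{\kE(\lambda),\lambda}=\frac{|\mu_\lambda^+|}{\mu_0}\big|\kE(\lambda)-\sqrt{\eps_0\mu_0}\,\lambda\big|^{1/2}\big|\kE(\lambda)+\sqrt{\eps_0\mu_0}\,\lambda\big|^{1/2}$, Taylor-expands the first factor using the smoothness of $\kE$ at $\Oc$, and then has to verify that $\kE'(\Oc)\neq\sqrt{\eps_0\mu_0}$. This last step is what the paper itself calls ``a simple but somewhat tedious calculation'' (it requires computing $\kE'(\Oc)=\sqrt{\eps_0\mu_0}\,F(\Oe^2/\Om^2)$ and checking $F>1$ when $\Oe<\Om$). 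Your approach sidesteps this entirely: by squaring the dispersion relation you solve for $\kE(\lambda)^2$ explicitly, which turns $\Theta^-_{\kE(\lambda),\lambda}$ into the rational function $\kO(\lambda)^2\,(me-1)/(1-m^2)$; the simple zero at $\Oc$ is then immediate from the factorisation $me-1=(\Oe^2+\Om^2)(\Oc^2-\lambda^2)/\lambda^4$. This is more elementary and yields the constant $A$ in closed form without ever touching $\kE'$. The paper's route, on the other hand, makes the geometric origin of the $|\lambda-\Oc|^{1/2}$ behaviour more transparent (it comes from the tangential contact of $\kE(\lambda)$ with the spectral cut $k=\kO(\lambda)$ at the cross point) and reuses the smoothness of $\kE$ that was established earlier anyway.
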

\begin{proof}
	Using the dispersion relation \eqref{eq.disp}, one gets that 
	$$
	\theta_{ \kE(\lambda),\lambda}^+=- \frac{\mu^+_\lambda}{\mu_0} \ \theta_{\kE(\lambda),\lambda}^-=\frac{|\mu^+_\lambda|}{\mu_0} \ \big|\kE(\lambda) -\sqrt{\eps_0 \, \mu_0}\,  \lambda\big|^{1/2} \ \big|\kE(\lambda) +\sqrt{\eps_0 \, \mu_0}\,  \lambda  \big|^{1/2},
	$$
	(where we use the fact that $\mu_+(\lambda)<0$ in $\Lambda_{\EE}$). Hence, it follows that
	\begin{equation}\label{eq.asymptvoisinagekc2}
		\theta_{ \kE(\lambda), \lambda}^+=(2 \kc)^{1/2}  \,  \frac{ \big|\mu^+_{\Oc}\big|}{\mu_0} \   \big|\kE(\lambda) -\sqrt{\eps_0 \, \mu_0}\, \lambda \big|^{1/2}  \  (1+o(1)) 
		\quad\mbox{when } \lambda\to \Oc.
	\end{equation}
	As $\kE$ is $C^\infty$ at $ \Oc$ and  $\kE( \Oc)=\kc=\sqrt{\eps_0 \, \mu_0}\, \Oc$  by definition of the cross points, one has
	\begin{equation}\label{eq.asymptvoisinagekc3}
		\kE(\lambda) -\sqrt{\eps_0 \, \mu_0}\,\lambda = \big(\kE'(\Oc) -\sqrt{\eps_0 \, \mu_0}\, \big) (\lambda- \Oc) + o\big(\lambda-\Oc\big)  
		\quad\mbox{as } \lambda\to \Oc.
	\end{equation}
	If $\Oe > \Om$, we know that $\kE'(\Oc) < 0$, so that $\kE'(\Oc) -\sqrt{\eps_0 \, \mu_0}$ cannot vanish. This holds true if  $\Oe < \Om$, but it is not so obvious. Indeed, starting from the definition \eqref{eq.expressionlambdae} of $\lambda_\scE(k)$, a simple but somewhat tedious calculation shows that
	\begin{equation*}
		\kE'(\Oc) = \sqrt{\eps_0 \, \mu_0}\ F\big(\Oe^2/\Om^2\big)
		\quad\text{where}\quad 
		F(X) := \frac{1-X}{1-2\big(X+X^{-1} \big)^{-1}}.
	\end{equation*}
	As $\Oe^2/\Om^2 < 1,$ it is easy to see that $F(\Oe^2/\Om^2) > 1$, which confirms that $\kE'(\Oc) -\sqrt{\eps_0 \, \mu_0}$ cannot vanish. The asymptotic behaviour \eqref{eq.asymptvoisinagekc1} then follows from \eqref{eq.asymptvoisinagekc2} and \eqref{eq.asymptvoisinagekc3} with
	\begin{equation*}
		A := (2 \kc)^{1/2}  \,  \frac{ \big|\mu^+_{\Oc}\big|}{\mu_0} \ \big|\kE'(\Oc) -\sqrt{\eps_0 \, \mu_0}\,\big|^{1/2}.
	\end{equation*}
\end{proof}


\section{The limiting absorption and limiting amplitude principles}
\label{sec.prolimabs}
The hardest part has been done. We are now able to prove both limiting principles, which is the subject of the present section. The main arguments of the proofs are classical (see, e.g., \cite{Der-86,Eid-65,Eid-69,Haz-07,Wed-91,Wil-84}). The basic ingredient is the H\"older regularity of the spectral density (Theorem \ref{th.Holder-dens-spec}).

\subsection{Proof of Theorem \ref{thm.limabs}}
\label{sec-thabslproof}
As briefly sketched in \S\ref{sec.motiv-main-results}, the proof of the limiting absorption principle amounts to a simple limiting process in a Cauchy integral. For the sake of clarity, we begin with the case where $\Oe \neq \Om.$

\textbf{The non-critical case}. In this case, we know that $\bbP_{\rm ac} = \bbP_{\rm div0}$ (see \eqref{eq.def-Pac}). The starting point is the Fourier representation of the absolutely continuous part of the resolvent (see \eqref{eq.def-Rac}) given by Theorem \ref{th.diagA}, which writes as 
\begin{equation}
R_{\rm ac}(\zeta) := R(\zeta)\,\bbP_{\rm ac} = \bbF^{*}\,\frac{1}{\lambda-\zeta}\ \bbF
\quad\text{for }\zeta \in \bbC\setminus\bbR.
\label{eq.Fourier-res}
\end{equation}
By virtue of Theorem \ref{th.dens-spec} (more precisely formula \eqref{eq.calc-fonct-ac-lim}), it can be rewritten as a Cauchy integral as follows:
\begin{equation}
R_{\rm ac}(\zeta)\ = \slim_{B(\Hps,\Hxy)} \ \int_{\bbR} \frac{\bbM_{\lambda}}{\lambda-\zeta} \,\rmd \lambda.
\label{eq.diag-res}
\end{equation}
We want to find the limit of the above integral  when $\zeta\in \bbC\setminus\bbR$ tends to a given $\omega \in \bbR\setminus\sigma_{\rm exc}$, which makes the function $\lambda \mapsto (\lambda-\zeta)^{-1}$ singular at $\lambda=\omega$. In order to isolate the role of this singularity, we choose some $\rho > 0$ small enough so that the interval $J := [\omega-\rho,\omega+\rho]$ does not contain any point of $\sigma_{\rm exc}$ and we decompose the latter function as
\begin{equation}
\frac{1}{\lambda-\zeta} = f_\zeta^{\rm sin}(\lambda) + f_\zeta^{\rm reg}(\lambda)
\quad\text{where}\quad
f_\zeta^{\rm sin}(\lambda) := \frac{\boldsymbol{1}_{J}(\lambda)}{\lambda-\zeta}
\text{ and }
f_\zeta^{\rm reg}(\lambda) := \frac{\boldsymbol{1}_{\bbR\setminus J}(\lambda)}{\lambda-\zeta}.
\label{eq.def-f-sin-reg}
\end{equation}
This leads us to split the resolvent into the sum of a ``singular part'' and a ``regular part'':
\begin{equation*}
R_{\rm ac}(\zeta) = R_{\rm sin}(\zeta) + R_{\rm reg}(\zeta)
\quad\text{where}\quad
\left\{\begin{array}{l}
R_{\rm sin}(\zeta) := \bbF^{*}\,f_\zeta^{\rm sin}(\lambda)\,\bbF, \\[4pt]
R_{\rm reg}(\zeta) := \bbF^{*}\,f_\zeta^{\rm reg}(\lambda)\,\bbF.
\end{array}\right.
\end{equation*}

On the one hand, the family of functions $\zeta \mapsto f_\zeta^{\rm reg}(\cdot)$ is differentiable with respect to $\zeta$ in a vicinity of $\omega$ uniformly with respect to $\lambda \in \bbR$. Hence, in this vicinity, the operator of multiplication by $f_\zeta^{\rm reg}(\cdot)$ is a holomorphic function of $\zeta$ for the operator norm of $B(\hatH)$. As $\bbF$ and $\bbF^*$ are bounded operators, $\zeta \mapsto R_{\rm reg}(\zeta)$ is also holomorphic in this vicinity for the operator norm of $B(\Hxy)$, thus a fortiori for the one of $B(\Hps,\Hms)$  for $s>1/2$. Its limit value at $\omega$ is simply given by 
\begin{equation}
R_{\rm reg}(\omega) = \bbF^{*}\,f_\omega^{\rm reg}(\lambda)\,\bbF 
= \slim_{B(\Hps,\Hxy)} \ \int_{\bbR\setminus J} \frac{\bbM_{\lambda}}{\lambda-\omega} \,\rmd \lambda,
\label{eq.lim-res-reg}
\end{equation}
where the last equality is obtained via the formula \eqref{eq.calc-fonct-ac-lim} applied to $f=f_\omega^{\rm reg}$.

On the other hand, the ``singular part'' $R_{\rm sin}(\zeta)$ is no longer continuous on the real axis. We denote by $R_{\rm sin}^\pm$ the restrictions of $\zeta \mapsto R_{\rm sin}(\zeta)$ to the complex half-planes $\bbC^\pm := \{ \zeta\in\bbC \mid \pm \Imag\zeta > 0 \},$ i.e.,
\begin{equation*}
\forall \zeta \in \bbC^\pm, \quad
R_{\rm sin}^\pm(\zeta) = \bbF^{*}\,f_\zeta^{\rm sin}(\lambda)\,\bbF 
= \int_{J} \frac{\bbM_{\lambda}}{\lambda-\zeta} \,\rmd \lambda.
\end{equation*}
Note that the ``$\slim$'' symbol has been removed here. Indeed, function $\lambda \mapsto f_\zeta^{\rm sin}(\lambda)$ is bounded and compactly supported in $\bbR \setminus \sigma_{\rm exc}$, so that Theorem \ref{th.dens-spec} applies: the  latter integral is a Bochner integral valued in $B(\Hms,\Hms)$. The local H\"{o}lder regularity of the spectral density $\lambda \mapsto \bbM_{\lambda}$, given in Theorem \ref{th.Holder-dens-spec}, allows to apply the Sokhotski--Plemelj formula \cite[theorem 14.1.c, p. 94]{Hen-86} which ensures the existence of the one-sided limits of $R_{\rm sin}^\pm(\zeta)$ when $\bbC^{\pm} \ni \zeta \to \omega $ for the operator norm of $B(\Hps,\Hms)$ for $s>1/2$. This formula provides us an explicit expression of these limits given by
\begin{equation}
R_{\rm sin}^\pm(\omega) 
= \dashint_{J} \frac{\bbM_{\lambda}}{\lambda-\omega}\, \rmd\lambda\  \pm\ \rmi \pi \, \bbM_{\omega}  \ \in B(\Hps,\Hms),
\label{eq.lim-res-sin}
\end{equation}
where the dashed integral denotes a Cauchy principal value at $\lambda=\omega$ (cf. Remark \ref{rem-Principalvalue}). Furthermore (see \cite{Hen-86,McL-88}), the local H\"{o}der regularity of the spectral density $\lambda \mapsto \bbM_{\lambda}$ also ensures that $\zeta \mapsto R_{\rm sin}^\pm(\zeta)$ is locally H\"{o}lder continuous on $\overline{\bbC^{\pm}} \setminus \sigma_{\rm exc}$ for the operator norm of $B(\Hps,\Hms)$, with the same H\"{o}lder exponents $\gamma \in (0,1)$ as those of $\lambda \mapsto \bbM_{\lambda}$. Let us notice that even if $\lambda \mapsto \bbM_{\lambda}$ was locally Lipschitz continuous (i.e., $\gamma = 1$), the Sokhotski--Plemelj theorem would not ensure that so is $\zeta \mapsto R_{\rm sin}^\pm(\zeta)$. This explains why the particular value $\gamma = 1$ has not been considered in Theorem \ref{th.Holder-dens-spec} (see Remark \ref{rem.excluded}).

Combining \eqref{eq.lim-res-reg} and \eqref{eq.lim-res-sin} yields the following complementary proposition of Theorem \ref{thm.limabs}, in the case where $\Oe \neq \Om$. 

\begin{proposition}
Let $s>1/2$. For all $\omega \in \bbR \setminus \sigma_{\rm exc},$ $R_{\rm ac}(\zeta)$ has one-sided limits 
$R^{\pm}_{\rm ac}(\omega) := \lim_{\eta \searrow 0} R_{\rm ac}(\omega \pm \rmi \eta)$ for the operator norm of $B(\Hps,\Hms)$, which are given by
\begin{equation}
R^{\pm}_{\rm ac}(\omega) = \slim_{B(\Hps,\Hms)}\ 
\dashint_{\mathbb{R}} \frac{\bbM_{\lambda}}{\lambda-\omega}\, \rmd \lambda\  \pm \ \rmi\pi\, \bbM_{\omega}.
\label{eq.lim-res}
\end{equation}
\label{prop.lim-res}
\end{proposition}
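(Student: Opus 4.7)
The plan is to combine the Fourier/spectral representation of the absolutely continuous resolvent with an operator-valued Sokhotski--Plemelj formula, reducing the question to the H\"older continuity of $\lambda \mapsto \bbM_\lambda$ that has just been established in Theorem \ref{th.Holder-dens-spec}. Most of the work is already done in the running text preceding the statement; what remains is to organize the observations into a clean argument and to check that the critical case $\Oe=\Om$ needs no real modification.

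First I would start from the diagonalization formula \eqref{eq.Fourier-res}, namely $R_{\rm ac}(\zeta)=\bbF^{*}(\lambda-\zeta)^{-1}\bbF$ for $\zeta\in\bbC\setminus\bbR$, and invoke Theorem \ref{th.dens-spec} via its extension \eqref{eq.calc-fonct-ac-lim} with $f(\lambda)=(\lambda-\zeta)^{-1}$, which is bounded on $\bbR$ for $\zeta\notin\bbR$. This yields the Cauchy representation \eqref{eq.diag-res} in $B(\Hps,\Hxy)$. In the critical case $\Oe=\Om$ the same identity holds, since $\pm\Op\in\sigma_{\rm exc}$ is a null set for Lebesgue measure and the spectral density already incorporates the restriction to $\bbR\setminus\{\pm\Op\}$ built into \eqref{eq.def-Pac}.

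Next I would fix $\rho>0$ small enough that $J:=[\omega-\rho,\omega+\rho]\subset \bbR\setminus\sigma_{\rm exc}$ (possible since $\omega\in\bbR\setminus\sigma_{\rm exc}$ and $\sigma_{\rm exc}$ is finite) and perform the splitting \eqref{eq.def-f-sin-reg}, giving $R_{\rm ac}(\zeta)=R_{\rm sin}(\zeta)+R_{\rm reg}(\zeta)$. For the regular part, the map $\zeta\mapsto f_\zeta^{\rm reg}$ is holomorphic in a complex neighborhood of $\omega$ uniformly in $\lambda\in\bbR$ (since $\operatorname{dist}(\omega,\bbR\setminus J)=\rho>0$), hence $R_{\rm reg}(\zeta)$ is holomorphic there in the operator norm of $B(\Hxy)$, and a fortiori of $B(\Hps,\Hms)$, with boundary value given by \eqref{eq.lim-res-reg}. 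This step is routine.

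The main obstacle, and the whole point of the preceding technical section, is the treatment of the singular part $R_{\rm sin}^{\pm}(\zeta)=\int_J(\lambda-\zeta)^{-1}\bbM_\lambda\,\rmd\lambda$. Because $J\subset\bbR\setminus\sigma_{\rm exc}$ is compact, Proposition \ref{p.estim-fpg} together with Theorem \ref{th.dens-spec} ensure that $\lambda\mapsto\bbM_\lambda$ is bounded in $B(\Hps,\Hms)$ on $J$, so the integral is a genuine Bochner integral in $B(\Hps,\Hms)$; this justifies removing the $\slim$ symbol. To extract the boundary limits I would then apply the scalar Sokhotski--Plemelj formula \cite[Thm.~14.1.c]{Hen-86} componentwise, which only requires the integrand density to be locally H\"older. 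This is exactly the content of Theorem \ref{th.Holder-dens-spec}, with admissible exponents $\gamma\in\Gamma_{J}\subset(0,1)$; the resulting limits are the operator-valued principal value plus the $\pm\rmi\pi\bbM_\omega$ jump, as in \eqref{eq.lim-res-sin}, and the limits are themselves locally H\"older in $\overline{\bbC^{\pm}}\setminus\sigma_{\rm exc}$ with the same exponents. Adding \eqref{eq.lim-res-reg} and \eqref{eq.lim-res-sin} and writing the two principal-value integrals as a single principal value on $\bbR$ (the contribution from $\bbR\setminus J$ being an ordinary integral) yields formula \eqref{eq.lim-res}, completing the proof.
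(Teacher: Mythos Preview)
Your proposal is correct and follows essentially the same approach as the paper: split the Cauchy-integral representation of $R_{\rm ac}(\zeta)$ into a regular part on $\bbR\setminus J$ (holomorphic near $\omega$ in $B(\Hxy)$) and a singular part on $J$, then apply the operator-valued Sokhotski--Plemelj formula to the latter using the H\"older regularity of $\lambda\mapsto\bbM_\lambda$ from Theorem \ref{th.Holder-dens-spec}. Your handling of the critical case is slightly more streamlined than the paper's (which explicitly inserts the indicator $\boldsymbol{1}_{\bbR\setminus\{\pm\Op\}}$ into $f_\zeta^{\rm sin}$ and $f_\zeta^{\rm reg}$), but your observation that $\{\pm\Op\}$ is Lebesgue-null and already absorbed into the construction of $\bbM_\lambda$ via $\bbP_{\rm ac}$ is equivalent.
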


These one-sided limits provide the respective continuous extensions of both restrictions $R^{\pm}_{\rm ac}(\zeta)$ of $\zeta \mapsto R_{\rm ac}(\zeta)$ to the complex half-planes $\bbC^\pm$ (see \eqref{eq.def-Rac-pm}). By virtue of the above mentioned properties of $R_{\rm reg}(\zeta)$ and $R^{\pm}_{\rm sin}(\zeta)$, these extensions are locally H\"older continuous respectively in $\overline{\bbC^\pm} \setminus \sigma_{\rm exc}$, with corresponding  local admissible H\"{o}lder exponents given by \eqref{eq.def-Gamma-K}. This completes the proof of Theorem \ref{thm.limabs} in the case where $\Oe \neq \Om$.

\begin{remark}\label{rem-Principalvalue}
Let us  recall that in formula \eqref{eq.lim-res}, the combination of the ``$\slim$'' symbol with the dashed integral implies two limit processes that can be considered independently by isolating a vicinity $J$ of $\omega$, exactly as we did above. The principal value represented by the dashed integral means that we remove from $J$ a symmetric neighborhood of $\omega$, i.e., considering $J_\delta := J \setminus (\omega-\delta,\omega+\delta)$, and we take the limit of the integral on $J_\delta$ as $\delta \searrow 0$ in the operator norm of $B(\Hps,\Hms)$. The ``$\slim$'' symbol means that we introduce an increasing sequence of compact subsets of $\bbR \setminus (\sigma_{\rm exc}\cup J)$ whose union covers this set, and we take the limit of the integral on these compacts subsets for the strong operator topology of $B(\Hps,\Hms)$. In both limit processes, the integrals on compact sets are Bochner integrals valued in $B(\Hps,\Hms)$.

We point out that to gather the terms \eqref{eq.lim-res-reg} and \eqref{eq.lim-res-sin} to obtain \eqref{eq.lim-res}, we replace  the $\slim_{B(\Hps,\Hxy)}$ by the   $\slim_{B(\Hps,\Hms)}$  to ensure the existence of the principal value in \eqref{eq.lim-res-sin} (this is justified since the existence of the  $\slim_{B(\Hps,\Hxy)}$ in \eqref{eq.lim-res-reg} implies  a fortiori the existence of the $\slim_{B(\Hps,\Hms)}$ of this term as the $\Hxy$-norm dominates  the $\Hms$-norm).
\end{remark}

\textbf{The critical case}. Suppose now that $\Oe = \Om$. In this case, $\bbP_{\rm ac}$ and $\bbP_{\rm div0}$ no longer coincide. They actually differ from the sum of the eigenprojection associated to $\pm\Op$ (see \eqref{eq.def-Pac}), which are eigenvalues of infinite multiplicity. Hence, the Fourier representation \eqref{eq.Fourier-res} of $R_{\rm ac}(\zeta)$ has to be replaced here by
\begin{equation*}
R_{\rm ac}(\zeta) = \bbF^{*}\,\frac{\boldsymbol{1}_{\bbR\setminus \{\pm\Op\}}(\lambda)}{\lambda-\zeta}\ \bbF
\quad\text{for }\zeta \in \bbC\setminus\bbR.
\end{equation*}
On the other hand, the diagonal expression \eqref{eq.diag-res} of $R_{\rm ac}(\zeta)$ holds true. As a consequence, the above proof remains valid if we simply replace the definition \eqref{eq.def-f-sin-reg} of $f_\zeta^{\rm sin}$ and $f_\zeta^{\rm reg}$ by
\begin{equation*}
f_\zeta^{\rm sin}(\lambda) := \boldsymbol{1}_{\bbR\setminus \{\pm\Op\}}(\lambda)\,\frac{\boldsymbol{1}_{J}(\lambda)}{\lambda-\zeta}
\text{ and }
f_\zeta^{\rm reg}(\lambda) := \boldsymbol{1}_{\bbR\setminus \{\pm\Op\}}(\lambda)\,\frac{\boldsymbol{1}_{\bbR\setminus J}(\lambda)}{\lambda-\zeta}.
\end{equation*}

\begin{remark}
Notice that we can get with little effort an improved version of the limiting absorption principle. The improvement lies in the fact that we can replace the topology of $B(\Hps,\Hms)$ by a finer one, namely that of $B(\Hps,\rmD(\bbA)_{-s})$ for $s > 1/2,$ where $\rmD(\bbA)_{-s}$ is simply a weighted version of the domain of $\bbA$ (see \S\ref{ss.math-model}) defined by
$$
\rmD(\bbA)_{-s} := \{\bU \in \Hms \mid \bbA  \, \bU \in \Hms \}
$$
(where $\bbA\, \bU$ has to be understood in the sense of distributions). This improvement results from the fact that the resolvent naturally appears as a bounded operator from $\Hxy$ to $\rmD(\bbA)$ equipped with the graph norm, since $\bbA R(\zeta) = I+\zeta R(\zeta)$ for $\zeta \in \bbC \setminus \bbR.$ The improved version is easily deduced from the use of the latter relation in the limiting process.
\label{rem.LAbP-improved}
\end{remark}

\subsection{Proof of Theorem \ref{th.ampllim}}
\label{sec.limamplproof}

In the following, $\bU(t)$ denotes the solution to our Schr\"{o}dinger equation \eqref{eq.schro} starting from rest (i.e., $\bU(0) = 0$) with a time-harmonic excitation $\bG(t) = \bG_\omega \, \rme^{-\rmi\omega\, t}$ kicked off at $t=0$. We assume that the circular frequency $\omega$ belongs to $\bbR \setminus \sigma_{\rm exc}$.

{\bf The general case}. We first prove the general result \eqref{eq.lim-ampl} which applies in both critical and non-critical cases, assuming that the excitation $\bG_\omega \in \Hps$ belongs to the range of $\bbP_{\rm ac}$. 

We have seen in \S\ref{ss.main-results} that $\bU(t) = \phi_{\omega,t}(\bbA) \,\bG_{\omega}$ where $\phi_{\omega,t}(\cdot)$ is defined in \eqref{eq.phiduhamel2}. As $\bG_{\omega}$ is in the range of $\bbP_{\rm ac}$, one has $\bG_{\omega}=\bbP_{\rm ac}\bG_{\omega}$. Thus, we can use the spectral representation \eqref{eq.calc-fonct-ac-lim} applied to the bounded function $\phi_{\omega,t}(\cdot)$, which yields
\begin{equation}\label{eq.decompUt}
\forall t \geq 0, \quad
\bU(t)=\phi_{\omega,t}(\bbA) \,  \bbP_{\rm ac} \,\bG_{\omega}= \lim_{\Hxy}\int_{\mathbb{R}} \rmi \,\frac{\rme^{-\rmi\lambda\, t} -\rme^{-\rmi\omega \, t}}{\lambda-\omega} \,\bbM_{\lambda}\bG_{\omega}  \ \rmd \lambda.
\end{equation}
The first step is to relate this expression to the time-harmonic solution $\bU_{\omega}^{+} := R^{+}_{\rm ac}(\omega) \, \bG_\omega\in \Hms$ given by the limiting absorption principle for $s>1/2$. Proposition \ref{prop.lim-res} provides us its spectral representation:
\begin{equation}
\bU_{\omega}^{+} = \lim_{\Hms}\ 
\dashint_{\mathbb{R}} \frac{\bbM_{\lambda}\bG_{\omega}}{\lambda-\omega}\, \rmd \lambda\  + \ \rmi\pi\, \bbM_{\omega}\bG_{\omega}.
\label{eq.decompU+}
\end{equation}
The idea is to rewrite \eqref{eq.decompUt} as
\begin{equation*}
\bU(t) = - \rmi\rme^{-\rmi\omega \, t} \lim_{\Hxy} \int_{\mathbb{R}}  \left( \frac{-\rme^{-\rmi \, (\lambda-\omega) \,  t}  }{\lambda-\omega}\, \bbM_{\lambda} \bG_{\omega}   + \frac{\bbM_{\lambda}\bG_{\omega}}{\lambda-\omega} \right) \rmd \lambda
\end{equation*}
and to split the integral into two parts, by integrating separately both functions inside the parentheses. Of course, this splitting has to be done carefully, since both functions are singular at $\lambda=\omega$, while $\phi_{\omega,t}(\lambda)$ is not. The proper way to do this is to introduce two Cauchy principal values at $\lambda=\omega$ defined in $\Hms$, i.e.,
\begin{equation*}
\bU(t) = - \rmi\rme^{-\rmi\omega \, t} 
\left( \lim_{\Hms} \dashint_{\mathbb{R}}   \frac{-\rme^{-\rmi \, (\lambda-\omega) \,t}}{\lambda-\omega}\, \bbM_{\lambda}\bG_{\omega}  \,\rmd \lambda 
+ \lim_{\Hms} \dashint_{\mathbb{R}} \frac{\bbM_{\lambda}  \bG_{\omega}}{\lambda-\omega} \, \rmd \lambda\right).
\end{equation*}
As the second Cauchy principal value is exactly the one involved in the expression \eqref{eq.decompU+} of $\bU_{\omega}^{+}$, we obtain
\begin{align}
\bU(t) & = - \rmi\rme^{-\rmi\omega \, t} \,\Big( \bU_{\omega}^{+} - \bV(t) -  \rmi\pi\, \bbM_{\omega}\,\bG_{\omega}\Big)
\quad\text{where}\quad \nonumber \\[4pt]
\bV(t) & := 
\lim_{\Hms} \dashint_{\mathbb{R}}   \frac{\rme^{-\rmi \, (\lambda-\omega) \,t} }{\lambda-\omega}\, \bbM_{\lambda}\bG_{\omega} \,\rmd \lambda.
\label{eq.def-V}
\end{align}
It is then clear that the proof of \eqref{eq.lim-ampl} will be complete once we have proved the following lemma.

\begin{lemma}\label{lem-lim-ampl}
Let $s>1/2$ and $\omega \in \bbR\setminus\sigma_{\rm exc}.$	Then for all $\bG_{\omega} \in \Hps,$ we have
\begin{equation*}
\lim_{t\to +\infty} \big\| \bV(t) +  \rmi\pi\, \bbM_{\omega}\,\bG_{\omega}\big\|_{\Hms} = 0,
\end{equation*}
where $\bV(t)$ is defined in \eqref{eq.def-V}.
\end{lemma}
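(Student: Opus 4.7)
The plan is to decompose $\bV(t)$ into a local piece near $\omega$ and a remote piece, show that the local piece already carries the entire limit $-\rmi\pi\,\bbM_\omega\bG_\omega$ in $\Hms$, and prove that the remote piece vanishes. Fix $\rho>0$ so small that $J:=[\omega-\rho,\omega+\rho]$ is disjoint from $\sigma_{\rm exc}$, and, following Remark~\ref{rem-Principalvalue}, decompose $\bV(t)=\bV_J(t)+\bV_{J^c}(t)$, where
\[
\bV_J(t):=\dashint_{J}\frac{e^{-\rmi(\lambda-\omega)t}}{\lambda-\omega}\,\bbM_\lambda\bG_\omega\,\rmd\lambda
\]
is the Cauchy principal value integral, and $\bV_{J^c}(t)$ is the strong limit in $\Hms$ of the integrals over $K_n\cap J^c$, with $(K_n)$ an increasing sequence of compacts exhausting $\bbR\setminus\sigma_{\rm exc}$.

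For the local piece, splitting $\bbM_\lambda=\bbM_\omega+(\bbM_\lambda-\bbM_\omega)$ yields
\[
\bV_J(t)=I(\rho,t)\,\bbM_\omega\bG_\omega+\int_J\frac{e^{-\rmi(\lambda-\omega)t}}{\lambda-\omega}(\bbM_\lambda-\bbM_\omega)\bG_\omega\,\rmd\lambda,
\]
where $I(\rho,t):=\dashint_{-\rho}^{\rho}e^{-\rmi u t}/u\,\rmd u=-2\rmi\int_0^{\rho t}(\sin u)/u\,\rmd u$ converges to $-\rmi\pi$ as $t\to+\infty$ by the classical Dirichlet integral. The remainder is a genuine Bochner integral valued in $\Hms$: the Hölder estimate of Theorem~\ref{th.Holder-dens-spec} (applicable on $J$ with some exponent $\gamma\in\Gamma_J\cap(0,1)$ since $J\cap\sigma_{\rm exc}=\varnothing$) gives $\|(\bbM_\lambda-\bbM_\omega)\bG_\omega\|_{\Hms}\lesssim|\lambda-\omega|^\gamma\|\bG_\omega\|_{\Hps}$, so the integrand is dominated by a function in $L^1(J;\Hms)$. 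The Riemann--Lebesgue lemma for Bochner integrals then drives the remainder to $0$, yielding $\bV_J(t)\to-\rmi\pi\,\bbM_\omega\bG_\omega$ in $\Hms$.

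For the remote piece, observe that $g_t(\lambda):=\boldsymbol{1}_{J^c}(\lambda)\,e^{-\rmi(\lambda-\omega)t}/(\lambda-\omega)$ satisfies $|g_t|\leq 1/\rho$, so formula~\eqref{eq.calc-fonct-ac-lim} identifies $\bV_{J^c}(t)$ with $g_t(\bbA)\bbP_{\rm ac}\bG_\omega\in\Hxy$. By the spectral theorem applied to $\bbA$,
\[
\Big\|\bV_{J^c}(t)-\int_{K_n\cap J^c}g_t(\lambda)\,\bbM_\lambda\bG_\omega\,\rmd\lambda\Big\|_{\Hxy}^{2}\leq\rho^{-2}\,\mu\big(\bbR\setminus K_n\big),
\]
where $\mu(S):=\|\bbE(S)\bbP_{\rm ac}\bG_\omega\|_{\Hxy}^{2}$ is the finite spectral measure of $\bbP_{\rm ac}\bG_\omega$. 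Because $\bbP_{\rm ac}$ annihilates every eigenspace of $\bbA$, and because the non-eigenvalue points $\pm\Op\in\sigma_{\rm exc}$ (in the non-critical case) carry no spectral mass, we have $\mu(\sigma_{\rm exc})=0$; continuity from above of the finite measure $\mu$ yields $\mu(\bbR\setminus K_n)\to 0$, uniformly in $t$. The continuous embedding $\Hxy\hookrightarrow\Hms$ transfers this to a uniform-in-$t$ smallness of the $\Hms$-tail. For each fixed $n$, the truncated integrand is supported on the compact set $K_n\cap J^c$ disjoint from $\sigma_{\rm exc}$, where $\|\bbM_\lambda\|_{\Hps,\Hms}$ is bounded, so it lies in $L^1(K_n\cap J^c;\Hms)$ and Bochner Riemann--Lebesgue drives the truncated integral to $0$ in $\Hms$ as $t\to+\infty$. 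A standard $\epsilon/2+\epsilon/2$ argument concludes that $\bV_{J^c}(t)\to 0$ in $\Hms$, completing the proof.

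The main obstacle lies in the remote part $\bV_{J^c}(t)$: there is no uniform decay of $\|\bbM_\lambda\|_{\Hps,\Hms}$ near $\sigma_{\rm exc}$ or at infinity, so a direct Bochner Riemann--Lebesgue argument cannot be applied globally on $J^c$. The functional-calculus reformulation $\bV_{J^c}(t)=g_t(\bbA)\bbP_{\rm ac}\bG_\omega$ circumvents this by trading the unavailable $L^1$-type control in $B(\Hps,\Hms)$ for the $L^2$-type control provided by the finite spectral measure of $\bbP_{\rm ac}\bG_\omega$, whose vanishing on $\sigma_{\rm exc}$ supplies the required uniform-in-$t$ tail estimate.
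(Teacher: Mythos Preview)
Your proof is correct and follows essentially the same approach as the paper: the same local/remote decomposition around $J=[\omega-\rho,\omega+\rho]$, the same splitting $\bbM_\lambda=\bbM_\omega+(\bbM_\lambda-\bbM_\omega)$ on $J$ handled via the Dirichlet integral and H\"older continuity plus Bochner Riemann--Lebesgue, and the same functional-calculus reformulation $\bV_{J^c}(t)=g_t(\bbA)\bbP_{\rm ac}\bG_\omega$ on the remote part, with uniform-in-$t$ tail control from the finite spectral measure followed by Riemann--Lebesgue on compacts. Your justification that $\mu(\sigma_{\rm exc})=0$ (eigenvalues killed by $\bbP_{\rm ac}$, non-eigenvalue points carrying no spectral mass) matches the argument used in the paper's Corollary~\ref{cor.specmes}.
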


\begin{proof}
As in the proof of the limiting absorption principle shown in \S\ref{sec-thabslproof}, we can separate the Cauchy principal value at $\omega$ from the limit in $\Hms$ in the definition of $\bV(t)$. We simply have to choose some $\rho > 0$ small enough so that the interval $J := [\omega-\rho,\omega+\rho]$ does not contain any point of $\sigma_{\rm exc}$, which leads us to decompose $\bV(t)$ in the form
\begin{align}
\bV(t) & = \bV^{\rm sin}(t) + \bV^{\rm reg}(t)
\quad\text{where}\quad \nonumber \\[4pt]
\bV^{\rm sin}(t) := \dashint_{J}   \frac{\rme^{-\rmi \, (\lambda-\omega) \,t}}{\lambda-\omega} \, \bbM_{\lambda}\bG_{\omega}\,\rmd \lambda \quad &\text{and}\quad 
\bV^{\rm reg}(t) := \lim_{\Hms} \int_{\mathbb{R} \setminus J}   \frac{\rme^{-\rmi \, (\lambda-\omega) \,t}   }{\lambda-\omega}\, \bbM_{\lambda}\bG_{\omega}  \,\rmd \lambda.
\label{eq.def-V-reg}
\end{align}
We are going to prove successively that
\begin{equation}
\lim_{t\to +\infty} \big\| \bV^{\rm sin}(t) +  \rmi\pi\, \bbM_{\omega}\,\bG_{\omega}\big\|_{\Hms} = 0
\quad\text{and}\quad
\lim_{t\to +\infty} \big\| \bV^{\rm reg}(t) \big\|_{\Hms} = 0,
\label{eq.two-limits}
\end{equation}
which of course implies the statement of the lemma thanks to the triangle inequality.

\textbf{(i)} Let us first consider $\bV^{\rm sin}(t)$ that we rewrite as
\begin{align*}
\bV^{\rm sin}(t) = v^{\rm sin}(t) \ \bbM_{\omega}\bG_{\omega} \ + \  \rme^{\rmi\omega\,t}\ \widetilde{\bV}^{\rm sin}(t) 
\quad & \text{where}\quad
v^{\rm sin}(t) := \dashint_{J}  \frac{\rme^{-\rmi \, (\lambda-\omega) \,t}}{\lambda-\omega}\,\rmd \lambda \quad \text{and}\\[4pt]
\widetilde{\bV}^{\rm sin}(t) := \int_J \rme^{-\rmi\lambda\,t} \,\widetilde{\bV}_\lambda \,\rmd\lambda 
\quad & \text{with}\quad
\widetilde{\bV}_\lambda := \frac{\big(\bbM_{\lambda}-\bbM_{\omega}\big)\bG_{\omega}}{\lambda-\omega}.
\end{align*}
Note that the latter integral is no longer a Cauchy principal value since the function $J \ni \lambda \mapsto \widetilde{\bV}_\lambda \in \Hms$ is Bochner integrable. Indeed by virtue of the H\"older continuity of $\bbM_{\lambda}$ (Theorem \ref{th.Holder-dens-spec}), for any given $\gamma\in\Gamma_J$, there exists a constant $C^\gamma_J>0$ such that
\begin{equation*}
\forall \lambda \in J\setminus\{\omega\},\quad
\big\| \widetilde{\bV}_\lambda \big\|_{\Hms} 
\leq C^\gamma_J \ |\lambda-\omega |^{-1+\gamma}\, \| \bG_\omega \|_{\Hps}.
\end{equation*}
As a consequence, the Riemann-Lebesgue theorem (applied to $\Hms$-valued Bochner integrals) gives us
\begin{equation}
\lim_{t\to +\infty} \big\| \widetilde{\bV}^{\rm sin}(t) \big\|_{\Hms}= 0.
\label{eq.lim-tildeV}
\end{equation}
Besides, using the change of variable $\xi=(\lambda-\omega)t$, we have
\begin{equation*}
v^{\rm sin}(t) := \dashint_{-\rho t}^{+\rho t} \frac{\rme^{-\rmi \, \xi}}{\xi}\,\rmd \xi,
\end{equation*}
where the Cauchy principal value is now at $\xi = 0$. Using standard complex integration on a suitable contour (see for instance section 6.5 of \cite{Staff-02}), one easily shows that
\begin{equation*}
\lim_{t\to +\infty} \dashint_{-\rho t}^{+\rho t} \frac{\rme^{-\rmi \, \xi}}{\xi}\,\rmd \xi = -\rmi \pi.
\end{equation*}
Together with \eqref{eq.lim-tildeV}, this yields the first statement of \eqref{eq.two-limits}.

\textbf{(ii)} Consider now the part $\bV^{\rm reg}(t)$ defined in \eqref{eq.def-V-reg}. In view of formula \eqref{eq.calc-fonct-ac-lim}, we can rewrite it as
\begin{equation*}
\bV^{\rm reg}(t) = f^{\rm reg}_t(\bbA) \bbP_{\rm ac}\bG_{\omega} 
\quad\text{where}\quad
f^{\rm reg}_t(\lambda) := \boldsymbol{1}_{\mathbb{R} \setminus J}(\lambda) \    \frac{\rme^{-\rmi \, (\lambda-\omega) \,t}   }{\lambda-\omega},
\end{equation*}
since $\lambda \mapsto f^{\rm reg}_t(\lambda)$ is a bounded function on $\bbR.$ This shows in particular that $\bV^{\rm reg}(t)$ actually belongs to $\Hxy$ and that the limit in \eqref{eq.def-V-reg} can be taken in $\Hxy$ instead of $\Hms$. This limit is constructed by considering an increasing sequence $(S_n)$ of compact subsets of $S := \mathbb{R} \setminus (J\cup\sigma_{\rm exc})$ whose union covers $S$, so that
\begin{equation}
\bV^{\rm reg}(t) = \lim_{n\to\infty} \bV^{\rm reg}_n(t)
\quad\text{where}\quad
\bV^{\rm reg}_n(t) := \bbE(S_n) \bV^{\rm reg}(t) = \int_{S_n}   \frac{\rme^{-\rmi \, (\lambda-\omega) \,t}   }{\lambda-\omega}\, \bbM_{\lambda}\bG_{\omega}  \,\rmd \lambda.
\label{eq.def-Vnreg}
\end{equation}
From the above definitions of $\bV^{\rm reg}(t)$ and $\bV^{\rm reg}_n(t)$, we have
\begin{equation*}
\bV^{\rm reg}(t) - \bV^{\rm reg}_n(t) = 
f^{\rm reg}_t(\bbA) \bbE(S \setminus S_n)\bbP_{\rm ac}\bG_{\omega},
\end{equation*}
from which we deduce that
\begin{equation*}
\left\| \bV^{\rm reg}(t) - \bV^{\rm reg}_n(t) \right\|_{\Hms} 
\leq \left\| \bV^{\rm reg}(t) - \bV^{\rm reg}_n(t) \right\|_{\Hxy}
\leq \left\|f^{\rm reg}_t\right\|_\infty \left\|\bbE(S \setminus S_n)\bbP_{\rm ac}\bG_{\omega}\right\|_{\Hxy},
\end{equation*}
where $\left\|f^{\rm reg}_t\right\|_\infty = \rho^{-1}.$ Moreover we know from \eqref{eq.norm-Eac} that
\begin{equation*}
\left\| \bbE(S \setminus S_n)\bbP_{\rm ac}\bG_{\omega} \right\|_{\Hxy}^2 
= \int_\bbR \boldsymbol{1}_{S \setminus S_n}(\lambda) \, \langle\bbM_{\lambda} \bG_{\omega}, \bG_{\omega} \rangle_s\,\rmd\lambda,
\end{equation*}
which tends to 0 by definition of the sequence $(S_n)$ since $\lambda\mapsto \langle\bbM_{\lambda} \bG_{\omega}, \bG_{\omega} \rangle_s \in L^1(\bbR)$ by Corollary \ref{cor.specmes}. As the latter quantity is independent of $t$, this means that the convergence of $\bV^{\rm reg}_n(t)$ to $\bV^{\rm reg}(t)$ is uniform with respect to $t$.

For any given $\delta>0$, we know now that we can find an integer $n_\delta$ such that
\begin{equation*}
\forall t \geq 0,\quad 
\left\| \bV^{\rm reg}(t) - \bV^{\rm reg}_{n_\delta}(t) \right\|_{\Hms} 
\leq \delta/2.
\end{equation*}
Furthermore, as $S_{n_\delta}$ is bounded, we can use as in (i) the Riemann-Lebesgue theorem applied now to the Bochner integral \eqref{eq.def-Vnreg} for $n = n_\delta$, which shows that $\bV^{\rm reg}_{n_\delta}(t)$ tends to 0 in $\Hms$ as $t \to +\infty.$ As a consequence, we can find $T_\delta > 0$ such that
\begin{equation*}
\forall t \geq T_\delta,\quad 
\left\| \bV^{\rm reg}_{n_\delta}(t) \right\|_{\Hms} 
\leq \delta/2.
\end{equation*}
By the triangle inequality, we conclude that for any $\delta>0$, we can find $T_\delta > 0$ such that $\| \bV^{\rm reg}(t) \|_{\Hms} \leq \delta$ for all $t \geq T_\delta.$ In other words, we have proved the second statement of \eqref{eq.two-limits}, which completes the proof of the lemma.
\end{proof}

{\bf The critical case}. We assume now that $\Oe=\Om$ and prove the asymptotic behavior \eqref{eq.lim-ampl-reson} for an excitation $\bG_\omega \in \Hps \cap \Hxydiv$. From \eqref{eq.def-Pac}, we see that $\bG_\omega$ can be decomposed as
\begin{equation}\label{eq.decompPac-PP}
\bG_\omega = \bbP_{\rm ac}\bG_\omega + \bbP_{-\Op}\bG_\omega + \bbP_{+\Op}\bG_\omega.
\end{equation}
Hence the solution $\bU(t) = \phi_{\omega,t}(\bbA) \,\bG_{\omega}$ to our Schr\"{o}dinger equation \eqref{eq.schro} can be decomposed accordingly:
\begin{align*}
& \bU(t) = \bU_{\rm ac}(t) + \bU_{-\Op}(t) + \bU_{+\Op}(t)
\quad\text{where} \\[4pt]
& \bU_{\rm ac}(t) := \phi_{\omega,t}(\bbA)\,\bbP_{\rm ac}\bG_\omega 
\quad\text{and}\quad
\bU_{\pm\Op}(t) := \phi_{\omega,t}(\bbA)\,\bbP_{\pm\Op}\bG_\omega.
\end{align*}

On the one hand, the asymptotic behaviour of $\bU_{\rm ac}(t)$ results from the previous lines (the only difference is that we do not assume here that $\bbP_{\rm ac} \, \bG \in \Hms$, but this assumption is actually not needed, since  \eqref{eq.decompUt} holds true by replacing  $\bU(t)$  by $\bU_{\rm ac}(t)$). We obtain
\begin{equation*}
\lim_{t\to+\infty}\Big\| \bU_{\rm ac}(t) + \rmi \, R^{+}_{\rm ac}(\omega) \, \bG_\omega \, \rme^{-\rmi \omega t} \Big\|_{\Hms} = 0.
\end{equation*}

On the other hand, Theorem \ref{th.diagA} tells us that the operator $\phi_{\omega,t}(\bbA)$ is a multiplication by $\phi_{\omega,t}(\pm \Op)$ in the range of the spectral projection $ \bbP_{\pm \Op}$ associated to the eigenvalues $\pm \Op$). Hence
\begin{equation*}
\bU_{\pm\Op}(t) = \phi_{\omega,t}(\pm\Op)\,\bbP_{\pm\Op}\bG_\omega.
\end{equation*}
The conclusion follows.

\begin{remark}
\textbf{(i)} For simplicity, the limiting amplitude principle is formulated in Theorem \ref{th.ampllim} for zero initial conditions. But, one can easily restate this result for nonzero initial conditions $\bU(0)=\bU_0$. Indeed it is readily seen that the transient contribution due to non-vanishing initial conditions in the range of $\bbP_{\rm ac}$  is ``locally evanescent'', in the sense that the local energy decays: $\| \exp(-\rmi\bbA\,t) \bbP_{\rm ac} \bU_0 \|_{\Hms} \to 0$ as $t \to 0$ for all $\bU_0\in\Hps$, contrary to the total energy, which is conserved: $\rmd_t \| \exp(-\rmi\bbA\,t) \bbP_{\rm ac} \bU_0 \|_{\Hxy} = 0$. This property is a consequence of the Riemann-Lebesgue theorem, exactly as we did for some evanescent components of $\bU(t)$ in the above lines (see proof of Lemma \ref{lem-lim-ampl}, point (ii)). 
Thus, for non  a zero initial condition $\bU_0\in \Hps$  in the range of $\bbP_{\rm ac}$ (which coincides with  $ \Hxydiv$ for $\Oe\neq \Om$), the  formula 
\eqref{eq.lim-ampl}	 remains unchanged. Whereas for the critical case $\Oe=\Om$  and  an initial condition $\bU_0\in  \Hps \cap\Hxydiv$, one deduces from the decomposition \eqref{eq.decompPac-PP} applied to $\bG_{\omega}=\bU_0$,   the fact that $\| \exp(-\rmi\bbA\,t) \bbP_{\rm ac} \bU_0 \|_{\Hms} \to 0$ and  the diagonalization Theorem \ref{th.diagA} (to rewrite  the terms  $\exp(-\rmi\bbA\,t) \,\bbP_{\pm\Op}\bG_\omega$) that
	$$
\lim_{t\to+\infty}\Big\| \exp(-\rmi\bbA\,t) \bU_0- \sum_{\pm} \exp(\mp \rmi \Op \,t) \bbP_{\pm\Op}\bU_0 \Big\|_{\Hms} = 0	.$$
Hence, one has to add to the asymptotic expansion \eqref{eq.lim-ampl-reson} the contribution of the  the plasmonic waves: $\sum_{\pm} \exp(\mp  \rmi \Op \,t) \bbP_{\pm\Op}\bU_0$ due to the initial condition $\bU_0$. 

\textbf{(ii)} Following the comments of Remark \ref{rem.LAbP-improved}, we can also obtain an improved version of the limiting amplitude principle. Indeed, formulas \eqref{eq.lim-ampl} and  \eqref{eq.lim-ampl-reson} holds true for the graph norm of the space $\rmD(\bbA)_{-s}$ defined in Remark \ref{rem.LAbP-improved} instead of the norm of $\Hms$. Moreover, the asymptotic behaviour of $\rmd \bU/\rmd t$  in the norm of $\Hms$  can be derived formally  by differentiating the formulas \eqref{eq.lim-ampl} and  \eqref{eq.lim-ampl-reson} with respect to $t$.
We finally mention that these  results can be shown rigorously  in a similar way as in the proof of Lemma \ref{lem-lim-ampl}.
\label{rem.LAmP-improved}
\end{remark}

\section{Limiting absorption and amplitude principles at $\omega = \pm \Op$} \label{sec-limampl-limabs-threshold}
This section deals with a case that has not be treated up to now, namely in the case where the source frequency is $\omega = \pm \Op$ and  $\Oe \neq \Om$, which corresponds to the case where $\zEE$ is not a union of straight lines (see Figure \ref{fig.speczones1}), the latter case being already covered by  Theorems \ref{thm.limabs} and \ref{th.ampllim}. 

It is clear by parity arguments that $+ \Op$ and $- \Op$ can be treated
similarly. For the ease of the reader, in the following lemmas, propositions and
proofs, we shall consider only the vicinity of $\Op$ while the statements of
the main theorems (namely Theorems \ref{thm.limabsamplthr}, \ref{th.dens-spec-thr} and \ref{thm.regholdestimthr}) will consider both 
$+ \Op$ and $- \Op$.

\subsection{New functional framework and main results}

As we saw in this paper (see sections \ref{sec.motiv-main-results} and  \ref{sec.prolimabs}),  the  key assumption to prove limiting absorption and limiting amplitude results  at $ \omega$ is to establish   the local  H\"{o}lder regularity of the spectral density $\lambda \mapsto \bbM_{\lambda}$ on  a vicinity of $\omega$. When $\omega= \Op$, one cannot obtain such property in $B(\Hps, \Hms)$ since the function $\lambda \mapsto \bbM_{\lambda}$ does not have a limit in $B(\Hps, \Hms)$ when $\lambda\to \Op$.
This singular behavior  concerns only the part $\bbM_\lambda^{\EE}$   associated  with the spectral zone $\Lambda_{\EE}$ in the decomposition \eqref{eq.spec-meas-decomp} of $\bbM_{\lambda}$. The other components $\bbM_\lambda^{\scZ}$ for $\scZ\in \calZ\setminus\{\EE\}$ (see \eqref{eq.spec-meas-surf})   admit a limit in $B(\Hps, \Hms)$ when $\lambda\to \Op$ and furthermore the function $\lambda \mapsto \bbM_\lambda^{\scZ}$ is H\"{o}lder continuous   in $B(\Hps, \Hms)$ on a  vicinity of $\Op$.

The singular behaviour of 
\begin{equation}
\label{eq.density-MEE}
\bbM_\lambda^\EE \bU  := \sum_{k \in \zEE(\lambda)}  \JacE(\lambda)\ \langle  \bU,  \bbW_{k,\lambda,0}  \rangle_{s} \,   \bbW_{k,\lambda,0}
\end{equation}
is linked to the fact that the function $k\mapsto\lambda_\scE(k)$ has a finite limit, namely $\Op$, when $|k| \rightarrow + \infty$ (cf. the asymptote of the curves composing the zone $\zEE\cap \{(k,\lambda)\in \bbR^2\mid \lambda>0\}$ in Figure \ref{fig.speczones1}). In particular $\JacE(\lambda)=|\lambda'_\scE(\lambda_\scE^{-1}(|\lambda|)) |^{-1}$ becomes singular at the vicinity of $\Op$. More precise results are provided by  the following lemma (whose proof is given in Appendix \ref{sec.appendixA1}).
\begin{lemma}{(Asymptotic at the  frequency $\Op$)}\label{lem.asymptestimates}
	Let $\Oe\neq \Om$ and $K := \eps_0 \mu_0 \,(\Om^2-\Oe^2)$. One has the following asymptotic formula for $\lambda\to \Op^-$ (if $K<0$) and for $\lambda\to \Op^+$ (if $K>0$):
	\begin{eqnarray} 
	&&\kE(\lambda)= \displaystyle \Big(\frac{\Op \, |K|}{8}\Big)^{\frac{1}{2}}\,| \lambda-\Op |^{-\frac{1}{2}}\, (1+o(1)), \label{eq.estimasymp1}\\[4pt]
	&&\mathcal{J}_\scE (\lambda) =\frac{1}{2}\displaystyle\Big( \frac{\Op \, |K|}{8}\Big)^{\frac{1}{2}} \,   \, | \lambda -\Op |^{-\frac{3}{2}}\, (1+o(1)), \label{eq.estimasymp2}\\[4pt]
	&& \big|\kE''(\lambda)\big|  =\frac{3}{4} \Big(\frac{\Op \, |K|}{8}\Big)^{\frac{1}{2}}  \, | \lambda -\Op |^{-\frac{5}{2}} \, (1+o(1)), \label{eq.estimasympsecondderiv}\\[4pt]
	&& \theta_{\lambda,\kE(\lambda)}^{\pm}= \displaystyle \Big(\frac{\Op \, |K|}{8}\Big)^{\frac{1}{2}}\, | \lambda -\Op |^{-\frac{1}{2}} \, (1+o(1)),  \label{eq.estimasymp3}\\[1pt]
	&& A_{\kE(\lambda),\lambda,0}=\frac{\mu_0^{\frac{1}{2}}\Op}{2 \sqrt{2\pi}}  \Big( \frac{\Op \, |K|}{8}\Big)^{-\frac{1}{4}} \, | \lambda -\Op|^{\frac{1}{4}}\,  (1+o(1)). \label{eq.estimasymp4}
	\end{eqnarray}
\end{lemma}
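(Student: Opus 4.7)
All five asymptotic formulas originate from a single expansion of $\lambda_\scE(k)$ as $|k|\to+\infty$. The overall strategy is: (i) expand $\lambda_\scE(k)^{2}$ from the closed form \eqref{eq.expressionlambdae}, (ii) invert to read off the asymptotics of $\kE(\lambda)$, (iii) differentiate to obtain $\JacE(\lambda)=|\kE'(\lambda)|$ and $|\kE''(\lambda)|$, then (iv) substitute into $\Theta^-_{k,\lambda}=k^{2}-\eps_{0}\mu_{0}\lambda^{2}$ and the dispersion relation \eqref{eq.disp} to get $\theta^{\pm}_{\kE(\lambda),\lambda}$, and finally (v) plug everything into the definition \eqref{def-A-plasm} of $A_{\kE(\lambda),\lambda,0}$.

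\textbf{Steps (i)--(ii).} Factoring $k^{2}/|K|$ out of the square root in \eqref{eq.expressionlambdae} and expanding $\sqrt{1+K^{2}/(4k^{4})}$ by the binomial series yields $\sgn(K)\sqrt{1/4+k^{4}/K^{2}}=k^{2}/K+K/(8k^{2})+O(k^{-6})$. The dominant $k^{2}/K$ terms cancel inside the bracket of \eqref{eq.expressionlambdae}, leaving
$$\lambda_\scE(k)^{2}\;=\;\Op^{2}\,-\,\frac{\Om^{2}K}{8\,k^{2}}\,+\,O(k^{-6}).$$
Dividing by $\lambda_\scE(k)+\Op\to 2\Op$ then gives $|\lambda_\scE(k)-\Op|=\Op|K|/(8k^{2})\,(1+o(1))$; inverting this relation produces \eqref{eq.estimasymp1}.

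\textbf{Steps (iii)--(iv).} Since $\lambda_\scE(k)^{2}$ depends smoothly on $1/k^{2}$ near $k=\infty$, the previous expansion may be differentiated term-by-term to give $\lambda_\scE'(k)\sim \Om^{2}K/(8\Op k^{3})$ and $\lambda_\scE''(k)\sim -3\Om^{2}K/(8\Op k^{4})$. The inverse-function rules $\kE'(\lambda)=1/\lambda_\scE'(\kE(\lambda))$ and $\kE''(\lambda)=-\lambda_\scE''(\kE(\lambda))/\lambda_\scE'(\kE(\lambda))^{3}$, combined with \eqref{eq.estimasymp1}, yield \eqref{eq.estimasymp2} and \eqref{eq.estimasympsecondderiv} after collecting powers of $|\lambda-\Op|$. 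For the $\theta^{\pm}$ estimates, in $\zEE$ one has $\theta^{-}_{k,\lambda}=\sqrt{k^{2}-\eps_{0}\mu_{0}\lambda^{2}}$, so $\theta^{-}_{\kE(\lambda),\lambda}=\kE(\lambda)\,(1+O(|\lambda-\Op|))$; the dispersion relation then gives $\theta^{+}_{\kE(\lambda),\lambda}=-(\mu_\lambda^{+}/\mu_{0})\,\theta^{-}_{\kE(\lambda),\lambda}$, and since $\mu_{\Op}^{+}/\mu_{0}=1-\Om^{2}/\Op^{2}=-1$, we obtain $\theta^{+}\sim\theta^{-}\sim\kE(\lambda)$, which is \eqref{eq.estimasymp3}.

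\textbf{Step (v) and main obstacle.} For \eqref{eq.estimasymp4} I substitute into \eqref{def-A-plasm} the limits $\lambda^{2}\to\Op^{2}$, $|\mu_\lambda^{+}|\to\mu_{0}$, $|\theta^{+}_{\kE(\lambda),\lambda}|^{1/2}\sim(\Op|K|/8)^{1/4}|\lambda-\Op|^{-1/4}$, and use the fact that the constant $(\eps_{0}\mu_{0})^{2}(\Oe^{2}-\Om^{2})^{2}=K^{2}$ is negligible in front of $4\kE(\lambda)^{4}\sim 4(\Op|K|/8)^{2}|\lambda-\Op|^{-2}$, so that $(4\kE(\lambda)^{4}+K^{2})^{1/4}\sim\sqrt{2}\,(\Op|K|/8)^{1/2}|\lambda-\Op|^{-1/2}$; collecting powers and using $\Op^{2}/\Om=\Om/2$ yields the coefficient $\mu_{0}^{1/2}\Op/(2\sqrt{2\pi})\,(\Op|K|/8)^{-1/4}$. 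The only slightly delicate point is the termwise differentiation in step (iii), but this is legitimate because $\lambda_\scE(k)^{2}$ is an analytic function of the variable $1/k^{2}$ for $|k|$ large enough (the inner square root in \eqref{eq.expressionlambdae} is so), and the derivatives of its remainder can be bounded directly from the closed-form expression. All the remaining work is bookkeeping of explicit powers of $|\lambda-\Op|$.
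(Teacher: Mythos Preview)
Your proof is correct and follows essentially the same route as the paper's: expand $\lambda_\scE(k)$ at $k\to\infty$ from \eqref{eq.expressionlambdae}, invert to get $\kE(\lambda)$, read off the derivatives via the inverse-function rules, and then substitute into $\Theta^{-}_{k,\lambda}$, the dispersion relation \eqref{eq.disp}, and \eqref{def-A-plasm}. The only cosmetic difference is that you expand $\lambda_\scE(k)^{2}$ and justify the derivative asymptotics by analyticity in $1/k^{2}$, whereas the paper expands $\lambda_\scE(k)$ directly and computes $\lambda_\scE'$, $\lambda_\scE''$ from the explicit closed form; both lead to the same intermediate asymptotics $\lambda_\scE'(k)\sim \Om^{2}K/(8\Op k^{3})$ and $\lambda_\scE''(k)\sim -3\Om^{2}K/(8\Op k^{4})$.
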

\noindent A consequence of lemma \ref{lem.asymptestimates} is that the function $\lambda \mapsto \bbM_{\lambda}^{\EE} \in B(\Hps, \Hms)$ for $\lambda \neq \Op$ does not have a limit in $B(\Hps, \Hms)$ when $\lambda \rightarrow \Op$. For ensuring the existence of such a limit, it is necessary, as we shall see later, to change the functional framework by considering for $s\geq 0$ a smaller Hilbert space $\Xps$ for $s\geq 0$. To this aim, it  is first useful to reinterpret, via Fubini's theorem  the weighted space $\Hps$ as a weighted space of functions of $x$ with values in a weighted space of functions of $y$: 
$$
\Hps= L_{s}^2\big(\bbR_x, L_{s}^2(\bbR_y)\big)   \times \big(  L_{s}^2\big(\bbR_x, L_{s}^2(\bbR_y)\big) \big)^2     \times  L_{s}^2\big(\bbR_{+,x}, L_{s}^2(\bbR_y)\big) \times L_{s}^2\big(\bbR_{+,x}, L_{s}^2(\bbR_y) \big)^2 
$$
and to define (with obvious notation) 
\begin{equation*} \label{defHs12}
\Xps:= L_{s}^2\big(\bbR_x, H_{s}^1(\bbR_y)\big)   \times  L_{s}^2\big(\bbR_x, H_{s}^2(\bbR_y) \big)^2     \times  L_{s}^2\big(\bbR_{+,x}, H_{s}^1(\bbR_y)\big)  \times    L_{s}^2\big(\bbR_{+,x},H_{s}^2(\bbR_y) \big)^2 
\end{equation*}
where $H^N_s(\bbR_y) := \big\{ v\mid  \partial_y^j v \in  L^2_s(\bbR_y), 0\leq \ j \leq N \}$. The space $\Xps$ is
 endowed with the norm
\begin{equation*}
\begin{array}{lll}
	\|\bU\|^2_{\Xps} & := & \eps_0 \, \|E\|_{L_{s}^2(\bbR_x, H_{s}^1(\bbR_y))}^2+ \mu_0 \, \|  \bH\|_{L_{s}^2(\bbR_x, H_{s}^2(\bbR_y))^2}^2\\[12pt] 
		&+  &\eps_0^{-1} \Oe^{-2} \,  \| J\|^2_{L_{s}^2(\bbR_x, H_{s}^1(\bbR_y))}+  \mu_0^{-1} \Om^{-2} \, \| \bK\|^2_{L_{s}^2(\bbR_x, H_{s}^2(\bbR_y))^2},
\end{array}
\end{equation*}
where $ \bU=(E,\bH,J,\bK) \in \Xps$  and where  for a positive integer $N$ and the open set $\calO=\bbR_x,\bbR_{+,x}$: 
\begin{eqnarray*}
	\|u\|_{L^2_{s}(\mathcal{O}, H^{N}_{s}(\bbR_y))}^2&:=&\int_{\mathcal{O}} \, \sum_{n=0}^N \big\|\eta_{s}(x,\cdot) \, \partial^{n}_y u(x,\cdot)\big\|_{L^2(\bbR_y)} ^2 \, \rmd x .
\end{eqnarray*}

The dual of $ \Xps$ is denoted  $\Xpsstar.$ $\Xps$  has been chosen dense in $\Hps$ so that $\Hms$ can be identified to a subspace  of $\Xpsstar$, with continuous embedding. For this reason, and for simplicity of notation, the duality product between $\Xpsstar$ and $\Xps$ will still be denoted $\langle \cdot, \cdot \rangle_s$. 
One denotes by $\| \cdot \|_{\Xpsstar}$ the associated dual norm defined for all $\bU\in \Xpsstar$ by:
\begin{equation}\label{def.dualnorm}
\| \bU\|_{\Xpsstar}:= \sup_{\| \bV\|_{\Xps}\leq 1} |\langle \bV, \bU \rangle_{s}| .
\end{equation}

 We then introduce the bounded operators  $B(\Xps,\Xpsstar)$ from $\Xps$ into $\Xpsstar$ endowed with the operator norm  $\| \cdot\|_{\Xps,\Xpsstar}$.  Then, one has the obvious continuous embedding 
\begin{equation*}\label{eq.continousembedop}
 B(\Hps, \Hms)  \subset  B(\Xps, \Xpsstar),
 \end{equation*}
with the convention that one identifies an operator of  $ B(\Hps, \Hms)$ defined on $\Hps$ with  its restriction on the dense subset $\Xps$.  We shall then be able to prove that the limit of the resolvent exists as an element of $B(\Xps, \Xpsstar)$. Moreover, the function $\lambda \mapsto \bbM_{\lambda}^{\EE} \in B(\Xps, \Xpsstar)$ will be shown to be H\"{o}lder continuous at the neighbourhood of $ \Op$.

\begin{remark} \label{rem-seuils}
 In the literature concerning the limiting absorption principle at a point where the dispersion curves admits a local extremum is referred  to as a threshold. Here 
  $\Op$  is a  specific form of  threshold associated to 
  a dispersion curve that has an horizontal asymptote. 
  For such points,  the local behavior of the spectral density depends not only on the regularity of the generalized eigenfunctions but also of the convergence speed of the  dispersion curve  to its horizontal asymptote. This type of thresholds arises also  in the context  of magnetic Schr\"{o}dinger operator \cite{Soc-16}.
    In \cite{Soc-16}, the convergence of the corresponding dispersion curves to their horizontal asymptote is super-exponential. Thus, to obtain the H\"{o}lder regularity  of the spectral density, the authors consider the smaller Hilbert space of functions whose   ``Fourier  components''    associated to the considered dispersion curve  decrease supra-exponentially when $\lambda$ tends to its threshold.  Here,  we meet a new situation since this convergence speed (given by the Lemma \ref{lem.asymptestimates}) is polynomial in $k$. We could  consider also a space of functions $\bU \in \mathcal{H}$  whose generalized Fourier transform $\bbF\bU(k,\lambda,0) =\langle  \bU, \bbW_{k,\lambda,0}  \rangle_{s}$  decays as $1/\kE(\lambda)^{\alpha}$ for some $\alpha>0$ when $\lambda\to \Op$. As in  \cite{Soc-16}, this approach would lead to a non explicit space in the physical variables $(x,y)$ but  the space will be  ``more optimal''.
  Here, we prefer to adopt another strategy which yields the construction  of a non-optimal space but 
  which has the great advantage to be  explicit in the $(x,y)$ variable.
More precisely, we give with our functional framework sufficient  conditions on  the regularity  in the $y$ direction to insure  the  decay of  the part of the generalized Fourier transform associated to $\zEE$ at the vicinity of $\Op$ and thus obtain the local  H\"{o}lder  regularity of $\bbml$ at $ \Op$.
 \end{remark}
\noindent In the rest of this section, our main goal will be to establish the existence and local H\"{o}lder regularity of $\lambda \mapsto \bbml$ on $( \bbR \setminus \sigma_{\rm exc}) \cup \{\Op\}$ for $\Oe\neq \Om$ for the topology of $B(\Xps, \Xpsstar)$. Obviously, we need to do so only in an interval of the form $[ \Op - \eta,  \Op + \eta]$  for $\eta$ small enough. \\ [6pt]
All the difficulties are concentrated on the study of the generalized eigenfunctions $\bbW_{\pm k_\scE(\lambda),\lambda,0}$, called plasmonic generalized eigenfunctions in the rest of this section. \\ [6pt]

The next theorem is a limiting absorption and limiting amplitude result at the threshold frequencies $\pm \Op$ for $\Oe \neq \Om$.  It replaces  Theorems   \ref{thm.limabs} and \ref{th.ampllim} for this new case. Its proof is  the same as the one performed for Theorems   \ref{thm.limabs} and \ref{th.ampllim} in section \ref{sec.prolimabs} and will not be reproduced here. However, its validity relies on two results Theorems  \ref{th.dens-spec-thr} and \ref{thm.regholdestimthr} which are counterpart of Theorems \ref{th.dens-spec} and \ref{th.Holder-dens-spec}. The proof of Theorems \ref{th.dens-spec-thr} and \ref{thm.regholdestimthr} is the object  of the following sections.
\begin{theorem}\label{thm.limabsamplthr}{(Limiting absorption and limiting amplitude principles at $\pm \Op$ for $\Oe \neq \Om$)}\\
	Let $\Oe\neq \Om$, $s>1/2$ and $\omega= \pm \Op$.
On one hand,  the limiting absorption principle holds at $\omega$. More precisely, the absolutely continuous part of the resolvent 
$R_{\rm ac}(\zeta)$ has one-sided limits 
$R^{\pm}_{\rm ac}(\omega) := \lim_{\eta \searrow 0} R_{\rm ac}(\omega \pm \rmi \eta)$ for the operator norm of $B(\Xps,\Xpsstar)$. Moreover, using the notation \eqref{eq.def-Rac-pm},
the function  $\zeta \mapsto R^{\pm}_{\rm ac}(\zeta) \in B(\Xps,\Xpsstar)$ is locally H\"{o}lder continuous in $\overline{\bbC^\pm} \setminus \{0, \pm \Om \}$ of index $\gamma\in (0,s-1/2)\cap (0,1/3]$. Namely,  for any compact $K$ of $\overline{\bbC^\pm} \setminus \{0, \pm \Om \}$, there exists $\gamma\in (0,s-1/2)\cap (0,1/3]$ and $C_{K,\gamma}>0$ such that
\begin{equation*}
\forall (\zeta,\zeta') \in K \times K, \quad 
\Big\| R^{\pm}_{\rm ac}(\zeta') - R^{\pm}_{\rm ac}(\zeta) \Big\|_{\Xps,\Xpsstar} 
\leq C_{K,\gamma} \ |\zeta'-\zeta|^{\gamma}.
\end{equation*}  
On the other hand, for any $\bG_\omega \in \Xps$ which belongs to $\Hxydiv$ (i.e. the range of $\bbP_{\rm ac}$),  the limiting amplitude principle holds true in the sense that the solution $\bU(t)$ to \eqref{eq.schro} with zero initial conditions satisfies:
\begin{equation*}
\lim_{t\to+\infty}\Big\| \bU(t) + \rmi \, \bU_{\omega}^{+} \, \rme^{-\rmi \omega t} \Big\|_{\Xpsstar} = 0,
\label{eq.lim-ampl-thr}
\end{equation*}
where $\bU_{\omega}^{+} := R^{+}_{\rm ac}(\omega) \, \bG_\omega\in \Xpsstar$.
\end{theorem}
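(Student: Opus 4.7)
The plan is to mimic step by step the proofs of Theorems \ref{thm.limabs} and \ref{th.ampllim} given in Section \ref{sec.prolimabs}, systematically replacing the topology of $B(\Hps,\Hms)$ by that of $B(\Xps,\Xpsstar)$. The two crucial ingredients needed at every step --- a spectral-density representation of $f(\bbA)\,\bbP_{\rm ac}$ valued in $B(\Xps,\Xpsstar)$ and a local H\"older estimate on $\lambda\mapsto\bbM_\lambda$ near $\pm\Op$ in the same topology --- will be invoked from Theorems \ref{th.dens-spec-thr} and \ref{thm.regholdestimthr}. These two statements are the real technical content of the section; the passage from them to Theorem \ref{thm.limabsamplthr} should be essentially formal.

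For the limiting absorption part, the starting point is the Cauchy-integral representation
$$
R_{\rm ac}(\zeta) \;=\; \slim_{B(\Xps,\Xpsstar)}\int_{\bbR}\frac{\bbM_\lambda}{\lambda-\zeta}\,\rmd\lambda,
$$
that follows from Theorem \ref{th.dens-spec-thr} exactly as \eqref{eq.diag-res} followed from Theorem \ref{th.dens-spec}. Fixing $\omega=\pm\Op$ and a small interval $J=[\omega-\rho,\omega+\rho]$ disjoint from $\{0,\pm\Om\}$, one splits $(\lambda-\zeta)^{-1}=f_\zeta^{\rm sin}(\lambda)+f_\zeta^{\rm reg}(\lambda)$ as in \eqref{eq.def-f-sin-reg}. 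The regular part is handled without change: it is holomorphic in $\zeta$ near $\omega$ for the operator norm of $B(\Hxy)$, hence \emph{a fortiori} for the weaker norm of $B(\Xps,\Xpsstar)$. The singular part becomes a Bochner integral in $B(\Xps,\Xpsstar)$, and the H\"older continuity of $\lambda\mapsto\bbM_\lambda$ at $\omega$ supplied by Theorem \ref{thm.regholdestimthr} is precisely what the Sokhotski--Plemelj formula requires in order to produce the boundary values
$$
R_{\rm sin}^\pm(\omega) \;=\; \dashint_J\frac{\bbM_\lambda}{\lambda-\omega}\,\rmd\lambda \;\pm\; \rmi\pi\,\bbM_\omega \;\in\; B(\Xps,\Xpsstar),
$$
with the announced local H\"older exponent $\gamma\in(0,s-1/2)\cap(0,1/3]$ inherited directly from Theorem \ref{thm.regholdestimthr}.

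The limiting amplitude statement is then obtained exactly as in \S\ref{sec.limamplproof}. Writing $\bU(t)=\phi_{\omega,t}(\bbA)\,\bbP_{\rm ac}\bG_\omega$ --- one can insert $\bbP_{\rm ac}$ freely since $\bG_\omega\in\Hxydiv$ coincides with its range because $\Oe\neq\Om$, see \eqref{eq.def-Pac} --- the very same algebraic manipulations that produced \eqref{eq.def-V} yield
$$
\bU(t) \;=\; -\rmi\,\rme^{-\rmi\omega t}\bigl(\bU_\omega^{+}-\bV(t)-\rmi\pi\,\bbM_\omega\bG_\omega\bigr),
$$
where $\bV(t)$ is now understood as an $\Xpsstar$-valued quantity. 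The two limits required by the analogue of Lemma \ref{lem-lim-ampl} --- the Cauchy principal value around $\omega$ and the Riemann--Lebesgue estimate for the ``regular'' tail --- are obtained by identical arguments; only the H\"older regularity of $\bbM_\lambda$ at $\omega$ and the integrability provided by Corollary \ref{cor.specmes} enter, and both transfer unchanged to the new functional framework.

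The genuine obstacle, which this plan intentionally outsources to the two theorems cited above, lies in the singular behaviour captured by Lemma \ref{lem.asymptestimates}: because $\JacE(\lambda)\sim c\,|\lambda-\Op|^{-3/2}$ while $\bbW_{\pm\kE(\lambda),\lambda,0}$ only decays like $|\lambda-\Op|^{1/4}$ in $\Hms$, the plasmonic piece \eqref{eq.density-MEE} cannot be bounded in $B(\Hps,\Hms)$ as $\lambda\to\pm\Op$. The choice of $\Xps$, which adds $H^{1}_s$- and $H^{2}_s$-regularity in the interface variable $y$, is designed so that the duality pairing $\langle\bU,\bbW_{\pm\kE(\lambda),\lambda,0}\rangle_s$ gains, via integration by parts in $y$, extra negative powers of $\kE(\lambda)\sim c'\,|\lambda-\Op|^{-1/2}$ that absorb the divergence of $\JacE$ and leave a H\"older-continuous limit at the threshold. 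Quantifying that gain carefully is what restricts the admissible H\"older exponent to $\gamma\le 1/3$ in the final statement.
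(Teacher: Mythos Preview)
Your proposal is correct and mirrors exactly what the paper does: it explicitly states that the proof of Theorem \ref{thm.limabsamplthr} ``is the same as the one performed for Theorems \ref{thm.limabs} and \ref{th.ampllim} in section \ref{sec.prolimabs}'' and relies on Theorems \ref{th.dens-spec-thr} and \ref{thm.regholdestimthr} as the counterparts of Theorems \ref{th.dens-spec} and \ref{th.Holder-dens-spec}. Your sketch is in fact more detailed than the paper's own treatment, and your identification of why the $\Xps$ framework and the restriction $\gamma\le 1/3$ arise is accurate.
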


\begin{remark}
We point out that compared to Theorem  \ref{thm.limabs}, we have not only replaced in Theorem \ref{thm.limabsamplthr} the spaces  $\Hps$ and $\Hms$ by the space $\Xps$ and its dual $\Xpsstar$, but also the available set of admissible H\"{o}lder exponents $\gamma$  which is now $(0,s-1/2)\cap (0,1/3]$.  
\end{remark}

In section \ref{sec-sepc-density-thr}, we give the key properties of the spectral density $\bbml$ in this new setting: namely  the construction of a functional calculus with $\bbml$ and its  H\"{o}lder regularity  that are used to prove Theorem \ref{thm.limabsamplthr}. The remaining sections are devoted to prove the results of section \ref{sec-sepc-density-thr}.
In section \ref{sec-Hmsnewestimate}, we provide new $\Xpsstar$ estimates for $\bbW_{\pm k_\scE(\lambda),\lambda,0}$  and corresponding H\"{o}lder regularity estimates in section \ref{sec-Holdregulgeneralized-eigenfunc-thr}. Finally, all these estimates  are used to prove the H\"{o}lder regularity of the spectral density $\lambda \mapsto \bbml$ in the space $B(\Xps,\Xpsstar)$ in section \ref{sec.Hold-reg-spec-dens-thr}.

\subsection{The spectral density in the new functional framework}\label{sec-sepc-density-thr}
In order to understand the need of a new functional framework, let us come back to \S\ref{sec.motiv-main-results} and see what happens if, instead of assumption \eqref{e.asumpt-S}, we choose a Borel set $S \subset \bbR$ whose closure contains $\Op$  (but not 0 or $\pm\Om$). This amounts to allow in Theorem \ref{th.dens-spec} bounded functions with a compact support that contains $\Op$, or to allow in Proposition \ref{p.estim-fpg} intervals $[a,b]$ that contain $\Op$. 

On the one hand, it is easy to see that all the arguments that concern the surface spectral zones (that is, for $\scZ \in \calZ\setminus\{\EE\}$) remain unchanged, since $\Op$ do not give rise to a singular behavior of the associated generalized eigenfunctions. In particular, the first statement of Proposition \ref{p.estim-fpg} holds true if $[a,b]$ contains $\Op$. On the other hand, the arguments that concern the lineic spectral zones (that is, for $\scZ = \EE$) are no longer valid since $\Lambda_{\EE}([a,b])$ becomes unbounded. In particular, the second statement of Proposition \ref{p.estim-fpg} cannot justify the use of the Lebesgue's dominated convergence theorem and the change of variable in the last integral of \eqref{eq.spect-rep-E}. As for the surface spectral zone (see the proof of Theorem \ref{th.dens-spec}), using the fact that for all $\bU \in \Hps$, we have
\begin{equation*}
\| \bbM_\lambda^\EE \bU \|_{\Hms} \leq 
\sum_{k \in \zEE(\lambda)}  \JacE(\lambda)\ \|\bbW_{k,\lambda,0}\|_{\Hms}^2,
\end{equation*}
this justification would require now to verify that the map 
\begin{equation*}
\lambda \mapsto \sum_{k \in \zEE(\lambda)}  \JacE(\lambda)\ \|\bbW_{k,\lambda,0}\|_{\Hms}^2  
\end{equation*}
belongs to $L^1([a,b],\Hms)$. Unfortunately, using Lemma \ref{lem.asymptestimates} and similar calculations as in the proof of Proposition \ref{prop.decayestimplasmon}, one can see (after small computations) that the above map is $O(|\lambda-\Op|^{-3/2})$ near $\Op$, which is not integrable. Hence Theorem \ref{th.dens-spec} is no longer true for functions whose support contains $\Op$. 

The good news is that such a result becomes true if the spectral density $\bbM_\lambda$ is considered as an element of  $B(\Xps,\Xpsstar)$, instead of $B(\Hps,\Hms)$. Indeed, Proposition \ref{prop.decayestimplasmon} together with \eqref{eq.estimasymp2} show that the above map with $\Hms$ replaced by $\Xps$ is now $O(|\lambda-\Op|^{1/2})$ near $\Op$, which is of course integrable, and actually continuous: it implies that $\bbM_\lambda^\EE$ tends to 0 in $B(\Xps,\Xpsstar)$. As regards the other components of the spectral measure, that is, $\bbM_\lambda^\scZ$ for $\scZ \in \calZ\setminus\{\EE\}$, the results of \S\ref{sec.spectral-density} proved in $B(\Hps,\Hms)$ clearly holds in the weaker topology  of $B(\Xps,\Xpsstar)$. Hence, we have the following theorem, which is the counterpart of Theorem \ref{th.dens-spec}.

\begin{theorem}
	Let $s > 1/2$ and $\Oe \neq \Om$. For every bounded function $f: \bbR \to \bbC$ with a compact support that does not contain any point of $\{0,\pm \Om\}$, the operator $f(\bbA)\,\bbP_{\rm ac}$ is given by
	\begin{equation*}\label{eq.calcfoncthr}
	f(\bbA)\,\bbP_{\rm ac} = \int_{\bbR} f(\lambda) \, \bbM_{\lambda} \,\rmd \lambda,
	\end{equation*}
	where the spectral density $\bbM_\lambda$ is defined for all $\lambda \in \bbR \setminus \{0,\pm \Om\}$ as a bounded operator from $\Xps$ to $\Xpsstar$. The above integral is understood as a Bochner integral in $B(\Xps,\Xpsstar)$.
	\label{th.dens-spec-thr}
\end{theorem}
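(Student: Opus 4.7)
The plan is to follow the strategy of Theorem \ref{th.dens-spec}, splitting the spectral density as $\bbM_\lambda = \sum_{\scZ \in \calZ\setminus\{\EE\}} \bbM_\lambda^\scZ + \bbM_\lambda^\EE$ and verifying local integrability in the weaker topology of $B(\Xps,\Xpsstar)$. The difficulty, compared to Theorem \ref{th.dens-spec}, is that the support of $f$ may now contain $\pm\Op$. For the surface components this is in fact harmless: the bounds on $\|\bbW_{k,\lambda,\pm 1}\|_{\Hms}$ obtained in Proposition \ref{p.estim-fpg} depend only on the spectral cuts $|k|=k_0(\lambda),k_\scD(\lambda),k_\scI(\lambda)$, all of which vary smoothly and stay well separated in a neighbourhood of $\pm\Op$ (since the assumption $\Oe\neq\Om$ ensures $\Op\neq\Oc$). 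Hence $\lambda\mapsto\sum_{\scZ\neq\EE}\bbM_\lambda^\scZ$ is Bochner-integrable on any compact subset of $\bbR\setminus\{0,\pm\Om\}$ with values in $B(\Hps,\Hms)$, and a fortiori in $B(\Xps,\Xpsstar)$ via the continuous embedding $B(\Hps,\Hms) \subset B(\Xps,\Xpsstar)$.

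The genuinely new work concerns $\bbM_\lambda^\EE$ near $\pm\Op$. The key pointwise bound I would establish is
\begin{equation*}
\|\bbW_{\pm\kE(\lambda),\lambda,0}\|_{\Xpsstar} \lesssim |\lambda-\Op| \quad\text{as }\lambda\to\pm\Op,
\end{equation*}
which, combined with $\JacE(\lambda)\sim c\,|\lambda-\Op|^{-3/2}$ from \eqref{eq.estimasymp2}, yields $\|\bbM_\lambda^\EE\|_{\Xps,\Xpsstar}\lesssim|\lambda-\Op|^{1/2}$, which is not only locally integrable but actually vanishes at $\pm\Op$, providing the continuous extension needed to define $\bbM_\lambda^\EE$ on all of $\bbR\setminus\{0,\pm\Om\}$. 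The source of the improvement is the oscillatory factor $\rme^{\rmi k y}$ carried by every component of $\bbW_{\pm\kE(\lambda),\lambda,0}$: for $v\in H^N_s(\bbR_y)$ with $s>1/2$, integration by parts gives $|\int \rme^{\rmi k y}\,\overline{v(y)}\,\rmd y|\lesssim (1+|k|)^{-N}\,\|v\|_{H^N_s}$, so that $\|\rme^{\rmi k y}\|_{H^{-N}_{-s}(\bbR_y)}\lesssim (1+|k|)^{-N}$ for $N\in\{1,2\}$. Combining this with the ingredients of Lemma \ref{lem.asymptestimates} — namely $A_{\kE(\lambda),\lambda,0}\sim|\lambda-\Op|^{1/4}$, $(\theta^{+}_{\kE(\lambda),\lambda})^{-1/2}\sim|\lambda-\Op|^{1/4}$ (which controls $\|\psi_{\kE(\lambda),\lambda,0}\|_{L^2(\bbR_x)}$ via the exponential decay $\rme^{-\theta(x)|x|}$), and $\kE(\lambda)\sim|\lambda-\Op|^{-1/2}$ — a direct inspection of the six components of $\bbW_{\pm\kE(\lambda),\lambda,0}$ written out in \eqref{eq.def-Vkbis} shows that each one contributes $O(|\lambda-\Op|)$ in $\Xpsstar$.

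With both pieces of $\bbM_\lambda$ controlled in $B(\Xps,\Xpsstar)$, the remainder of the proof mirrors that of Theorem \ref{th.dens-spec}. Fubini's theorem justifies the change of variable $k=\pm\kE(\lambda)$ used to pass from the representation \eqref{eq.adj-F-lim} of $\bbF^{*}$ to the formula \eqref{eq.density-MEE} for $\bbM_\lambda^\EE$, and the Lebesgue dominated convergence theorem (for $\Xpsstar$-valued Bochner integrals) removes the limiting process. The identity $f(\bbA)\bbP_{\rm ac}=\int f(\lambda)\bbM_\lambda\,\rmd\lambda$ then follows from the diagonalization Theorem \ref{th.diagA}, first for indicators of bounded Borel subsets of $\bbR\setminus\{0,\pm\Om\}$ and then for general bounded $f$ with compact support by sigma-additivity of the spectral measure. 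The main obstacle is the pointwise estimate in $\Xpsstar$: it requires careful component-by-component bookkeeping in \eqref{eq.def-Vkbis}, balancing the order of $H^{-N}_{-s}(\bbR_y)$-regularity (one derivative for $E$ and $J$, two for $\bH$ and $\bK$) against the extra powers of $\theta^{+}_{\kE(\lambda),\lambda}$ introduced by the $x$-derivatives in the third and sixth components, and verifying that even the worst case still yields the claimed $O(|\lambda-\Op|)$ behaviour. The same estimate will underpin the H\"{o}lder regularity stated in Theorem \ref{thm.regholdestimthr}.
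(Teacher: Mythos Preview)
Your proposal is correct and follows essentially the same approach as the paper: you split $\bbM_\lambda$ into surface and lineic parts, note that the surface estimates of Proposition~\ref{p.estim-fpg} carry over unchanged near $\pm\Op$ (since $\Op\neq\Oc$ when $\Oe\neq\Om$), and for the lineic part you establish the key bound $\|\bbW_{\pm\kE(\lambda),\lambda,0}\|_{\Xpsstar}\lesssim|\lambda-\Op|$ via integration by parts in $y$ together with the asymptotics of Lemma~\ref{lem.asymptestimates} and the $L^2(\bbR_x)$-decay of $\psi_{\kE(\lambda),\lambda,0}$. This is exactly the content of Proposition~\ref{prop.decayestimplasmon} in the paper, and the resulting estimate $\JacE(\lambda)\|\bbW\|_{\Xpsstar}^2\lesssim|\lambda-\Op|^{1/2}$ is precisely what the paper uses to justify the Bochner integral in $B(\Xps,\Xpsstar)$ and then conclude as in Theorem~\ref{th.dens-spec}.
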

We deduce from the previous theorem, in the same way as for formula \eqref{eq.calc-fonct-ac-lim} in section \ref{sec.spectral-density},  that for any bounded function $f$ whose support $S$ is no longer compact and/or contains points of $\{ 0,\pm\Om \}$, one has:
\begin{equation*}
f(\bbA)\,\bbP_{\rm ac} = \slim_{B(\Xps,\Hxy)}\ \int_{\bbR} f(\lambda) \, \bbM_{\lambda} \,\rmd \lambda.
\label{eq.calc-fonct-ac-lim-thr}
\end{equation*}
We point out that  from Theorem \ref{th.dens-spec-thr}, we deduce exactly in the same way as in Section \ref{sec.motiv-main-results}   an equivalent of Corollary \ref{cor.specmes}. We simply have to replace in the formulation of Corollary \ref{cor.specmes} the space $\Hps$ by $\Xps$ and  the duality product $\langle \cdot , \cdot \rangle_s $ has to be understood as the duality product between $\Xps$ and $\Xpsstar$.

The  following theorem is the counterpart of Theorem \ref{th.Holder-dens-spec}. Its proofs is given in section \ref{sec.Hold-reg-spec-dens-thr}.
\begin{theorem}\label{thm.regholdestimthr}
	Let $\Oe\neq \Om$ and $s>1/2$. 
	The spectral density  $\lambda \mapsto \bbml \in B(\Xps,\Xpsstar)$, given by \eqref{eq.density-non-crit} is  locally H\"{o}lder-continuous on $\mathbb{R}\setminus \{ -\Om, 0,\Om\}$ of index $\gamma\in (0,s-1/2)\cap (0,1/3]$. In other words, for any interval $[a,b]\subset \mathbb{R}\setminus \{ -\Om, 0,\Om\}$, it exists $C_{a,b}^{\gamma}>0$ such that
	\begin{equation}\label{eq.specdensholdthreshold}
	\| \bbmlp  -\bbml \|_{\Xpsstar, \Xps}\leq C_{a,b}^{\gamma}  |\lambda'-\lambda|^{\gamma}, \quad\forall \lambda',  \lambda \in [a,b].
	\end{equation}
\end{theorem}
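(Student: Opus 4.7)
The plan is to decompose the spectral density as $\bbm_\lambda = \sum_\scZ \bbm_\lambda^\scZ$ (see \eqref{eq.spec-meas-decomp}) and treat the surface components $\bbm_\lambda^\scZ$ for $\scZ\in\calZ\setminus\{\EE\}$ separately from the plasmonic one $\bbm_\lambda^\EE$. Since the generalized eigenfunctions $\bbW_{k,\lambda,\pm 1}$ are smooth in $\lambda$ near $\pm\Op$ (the only singularities in the surface zones occur at $0,\pm\Om,\pm\Oe,\pm\Oc$, none equal to $\pm\Op$), the arguments of \S\ref{sec.Holderspectraldensity} extend verbatim to intervals $[a,b]$ containing $\pm\Op$, giving local H\"older continuity of each $\bbm_\lambda^\scZ$ in $B(\Hps,\Hms)$; the continuous embedding $B(\Hps,\Hms)\hookrightarrow B(\Xps,\Xpsstar)$ then transfers the estimate to the target space. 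For the plasmonic component, Theorem~\ref{th.Holder-dens-spec} already covers intervals staying away from $\pm\Op$, so the only real work concerns a vicinity of $\pm\Op$.

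In such a vicinity, the aim is to establish the two pointwise bounds
\begin{equation*}
\|\bbm_\lambda^\EE\|_{\Xps,\Xpsstar} \lesssim |\lambda\mp\Op|^{1/2}, \qquad \|\partial_\lambda \bbm_\lambda^\EE\|_{\Xps,\Xpsstar} \lesssim |\lambda\mp\Op|^{-1},
\end{equation*}
from which \eqref{eq.specdensholdthreshold} follows by the standard interpolation device of Observation~3 in \S\ref{sec_GEForient}: combining them produces an inequality of the form $\|\bbm_{\lambda'}^\EE-\bbm_\lambda^\EE\|_{\Xps,\Xpsstar}\lesssim \sup_{\mu\in[\lambda,\lambda']}|\mu\mp\Op|^{(1-\gamma)/2-\gamma}\,|\lambda'-\lambda|^\gamma$, whose $\mu$-supremum is locally bounded precisely when $(1-\gamma)/2-\gamma\ge 0$, i.e.\ $\gamma\le 1/3$. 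This is exactly the origin of the restriction $\gamma\in(0,1/3]$ announced in the theorem.

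The boundedness bound rests on a new $\Xpsstar$-estimate for the plasmonic eigenfunction $\bbW_{\pm\kE(\lambda),\lambda,0}$. Its components are of the form $c_i(x;k,\lambda)\,e^{iky}$ with $k=\kE(\lambda)\to+\infty$ as $\lambda\to\pm\Op$. Exploiting the $H^{m_i}_s(\bbR_y)$-regularity built into $\Xps$ (with $m_i\in\{1,2\}$ depending on $i$), $m_i$ integrations by parts in $y$ give
\begin{equation*}
\Big|\int u_i(x,y)\,\overline{c_i(x)\,e^{iky}}\,\rmd x\,\rmd y\Big| \lesssim |k|^{-m_i}\,\|c_i\|_{L^2_{-s}(\bbR_x)}\,\|u_i\|_{L^2_s(\bbR_x,H^{m_i}_s(\bbR_y))}.
\end{equation*}
Evaluating $\|c_i\|_{L^2_{-s}(\bbR_x)}$ through the concentration $\psi_{k,\lambda,0}(x)=e^{-\theta(x)|x|}$ of width $1/|\theta^\pm|$ and substituting the asymptotics of Lemma~\ref{lem.asymptestimates} (notably $|A|\sim|\lambda\mp\Op|^{1/4}$, $|\theta^\pm|\sim|\lambda\mp\Op|^{-1/2}$, $k\sim|\lambda\mp\Op|^{-1/2}$) yields $\|\bbW_{\pm\kE(\lambda),\lambda,0}\|_{\Xpsstar}\lesssim|\lambda\mp\Op|$; combined with $\JacE(\lambda)\sim|\lambda\mp\Op|^{-3/2}$ and the bilinear expression \eqref{eq.density-MEE}, this yields the first claimed bound.

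The main obstacle is the Lipschitz-type estimate for $\partial_\lambda\bbm_\lambda^\EE$: formally differentiating $e^{iky}$ in $\lambda$ produces $i\kE'(\lambda)\,y\,e^{iky}$ with $\kE'\sim|\lambda\mp\Op|^{-3/2}$ (Lemma~\ref{lem.asymptestimates}) and an unbounded factor $y$ that the weight $\eta_{-s}(y)$ cannot absorb when $s\le 3/2$. To avoid pointwise $\lambda$-differentiation, I plan to estimate the increment $\bbW_{k',\lambda',0}-\bbW_{k,\lambda,0}$ directly, decomposing each component as
\begin{equation*}
\big[c_i(\cdot;k',\lambda')-c_i(\cdot;k,\lambda)\big]\,e^{iky}+c_i(\cdot;k',\lambda')\,e^{iky}\big(e^{i(k'-k)y}-1\big).
\end{equation*}
The first piece involves only the smooth coefficients $c_i$ and is handled exactly by the interpolation trick of Observation~3 applied on the $(k,\lambda)$-plane (no $y$ enters); the second piece is controlled via the elementary bound $|e^{i(k'-k)y}-1|\le 2^{1-\alpha}|k'-k|^\alpha |y|^\alpha$ with $\alpha\in(0,s-1/2)$, which is precisely the range for which $|y|^\alpha\eta_{-s}(y)\in L^2(\bbR_y)$ and absorbs the $y$-factor. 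Bookkeeping of all the exponents through Lemma~\ref{lem.asymptestimates} then produces the claimed blow-up $|\lambda\mp\Op|^{-1}$ for $\partial_\lambda\bbm_\lambda^\EE$ and, after interpolation, the admissible range $\gamma\in(0,s-1/2)\cap(0,1/3]$.
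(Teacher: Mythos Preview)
Your overall strategy---separate the surface zones (handled by \S\ref{sec.Holderspectraldensity}) from $\bbM_\lambda^\EE$, obtain a bound $\|\bbM_\lambda^\EE\|_{\Xps,\Xpsstar}\lesssim|\lambda-\Op|^{1/2}$ via integration by parts in $y$, and combine with a Lipschitz-type estimate to interpolate---is the same as the paper's. The constraint $\gamma\le 1/3$ indeed arises from balancing the exponent $1/2$ in the bound against the blow-up rate of the increment.

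There is however a genuine gap in your interpolation step. The two suprema you produce do \emph{not} merge into a single $\sup_\mu|\mu-\Op|^{(1-\gamma)/2-\gamma}$: since $-\gamma<0$ while $(1-\gamma)/2>0$, the suprema are achieved at opposite endpoints, giving instead the asymmetric product $(\lambda-\Op)^{-\gamma}(\lambda'-\Op)^{(1-\gamma)/2}$ (for $\Op<\lambda\le\lambda'$), which blows up as $\lambda\to\Op^+$ with $\lambda'$ fixed regardless of $\gamma$. The paper resolves this by a cone-splitting: on the ``near-diagonal'' region $\tilde D_\kappa=\{\kappa(\lambda'-\Op)<\lambda-\Op\le\lambda'-\Op\}$ one may replace $\lambda-\Op$ by $\lambda'-\Op$ up to a constant, recovering the combined exponent $1/2-3\gamma/2\ge 0$; on the complement $\tilde D_\kappa^{\rm c}$ one has $\lambda'-\Op\lesssim|\lambda'-\lambda|$, so the crude triangle inequality $\|\bbM_{\lambda'}^\EE-\bbM_\lambda^\EE\|\le\|\bbM_{\lambda'}^\EE\|+\|\bbM_\lambda^\EE\|\lesssim|\lambda'-\lambda|^{1/2}$ already suffices. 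You need to insert this argument.

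A smaller remark: your concern about the $y$-factor from $\partial_\lambda e^{i\kE(\lambda)y}$ is overstated. The paper does differentiate $e^{iky}$ pointwise, obtaining the factor $(1+|x|+|y|)$; but it then interpolates \emph{at the eigenfunction level} (Proposition~\ref{prop.holdestimateplasmonasymp}), so the factor becomes $(1+|x|+|y|)^\gamma$, which is absorbed by the weight since $\gamma<s-1/2$. Your alternative decomposition via $|e^{i(k'-k)y}-1|\le 2^{1-\alpha}|k'-k|^\alpha|y|^\alpha$ achieves the same end but is no simpler.
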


\subsection{$\Xpsstar$-estimates of the plasmonic generalized eigenfunctions}\label{sec-Hmsnewestimate}
The following proposition replaces Proposition \ref{prop.decay-estim-plasmon} in which  the right-hand side of estimate \eqref{eq.ineqboundplasm}  blows up when $|k| \rightarrow + \infty$ (see \eqref{eq.estimasymp3}).
\begin{proposition}\label{prop.decayestimplasmon}
Let $\Oe \neq \Om$, $s> 1/2$ and $[a,b]=[ \Op -\eta,  \Op+\eta]$ with $\eta>0$ sufficiently small such that $0, \Oc\notin [a,b]$.
Then, there  exists a constant $C_{\eta}>0$ (depending only on $\eta$) such that 
\begin{equation}\label{eq.ineqboundplasmasympt}
\| \bbW_{k,\lambda,0}\|_{\Xpsstar}\leq C_{\eta}  \, | \lambda-\Op|, \quad \forall (k, \lambda)\in \Lambda_{\EE}([a,b]).
\end{equation}
\end{proposition}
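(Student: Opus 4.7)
The approach is to exploit the rapid oscillation $\rme^{\rmi k y}$ that is built into $w_{k,\lambda,0}$ by repeated integration by parts in $y$ against test functions of $\Xps$, using the extra $y$-regularity ($H^1_s$ for scalar components and $H^2_s$ for vector ones) that distinguishes $\Xps$ from $\Hps$. The number of integrations by parts is calibrated so as to cancel the powers of $k$ that appear in the explicit formula \eqref{eq.def-Vkbis} for $\bbW_{k,\lambda,0}$; the remaining factor $\rme^{-\theta_{k,\lambda}^\pm |x|}$ is then handled in $x$ by a Cauchy–Schwarz inequality that benefits from the largeness of $\theta_{k,\lambda}^\pm$ when $\lambda$ is close to $\Op$. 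The asymptotics of Lemma \ref{lem.asymptestimates} will then combine to produce the factor $|\lambda-\Op|$.

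Concretely, using the dual-norm definition \eqref{def.dualnorm}, I would fix $\bU=(E',\bH',J',\bK')\in\Xps$ with $\|\bU\|_{\Xps}\leq 1$ and bound the six scalar contributions to $\langle \bU,\bbW_{k,\lambda,0}\rangle_s$ one by one. For the scalar components of $\bbW_{k,\lambda,0}$ (components $1$ and $4$ of \eqref{eq.def-Vkbis}, both proportional to $w_{k,\lambda,0}=A_{k,\lambda,0}\,\rme^{-\theta_{k,\lambda}(x)|x|}\,\rme^{\rmi k y}$), one integration by parts in $y$ converts the factor $\rme^{\rmi k y}$ into a factor $1/(\rmi k)$ at the price of transferring a $\partial_y$ onto the test function. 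For the components carrying an extra factor $k$ (components $2$ and $5$) or an extra factor $\partial_x w \sim \theta\, w$ (components $3$ and $6$), I would integrate by parts twice in $y$, producing the factor $1/k^2$; this is exactly why $\bH'$ and $\bK'$ are required to lie in $H^2_s$ in the $y$-variable.

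Each resulting integral is then estimated by two successive Cauchy–Schwarz inequalities: first in $y$, bringing in the finite constant $C_s:=\bigl(\int_{\bbR}(1+y^2)^{-s}\,\rmd y\bigr)^{1/2}$ (finite since $s>1/2$) and leaving $\|\partial_y^N E'(x,\cdot)\|_{L^2_s(\bbR_y)}$ with $N=1$ or $2$; then in $x$, which isolates the quantity
\begin{equation*}
C_{s,\theta}:=\Big(\int_{\bbR}(1+x^2)^{-s}\,\rme^{-2\theta_{k,\lambda}^{\min}|x|}\,\rmd x\Big)^{1/2}\lesssim \bigl(\theta_{k,\lambda}^{\min}\bigr)^{-1/2}
\end{equation*}
whenever $\theta_{k,\lambda}^{\min}\geq 1$. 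Invoking \eqref{eq.estimasymp1}, \eqref{eq.estimasymp3} and \eqref{eq.estimasymp4}, the scaling $|A_{k,\lambda,0}|\sim|\lambda-\Op|^{1/4}$, $|k|=\kE(\lambda)\sim|\lambda-\Op|^{-1/2}$, $\theta_{k,\lambda}^\pm\sim|\lambda-\Op|^{-1/2}$ yields a contribution of order $|A|/|k|\cdot C_{s,\theta}\sim|\lambda-\Op|^{1/4+1/2+1/4}=|\lambda-\Op|$ for the scalar components, and of order $|A|\,\max(|k|,\theta)/k^2\cdot C_{s,\theta}\sim|\lambda-\Op|^{3/4+1/4}=|\lambda-\Op|$ for the $k$- or $\theta$-weighted components. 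The remaining prefactors $1/\mu_\lambda$, $1/\mu_\lambda^+$ and $1/\lambda$ in \eqref{eq.def-Vkbis} are bounded on $[\Op-\eta,\Op+\eta]$ for $\eta$ small enough since $0$ and $\pm\Om$ are excluded from this interval.

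The main difficulty is not conceptual but is one of uniformity: the scaling analysis above is sharp only in the regime $\lambda\to\Op$, while Proposition \ref{prop.decay-estim-plasmon} does not cover the interval $[\Op-\eta,\Op+\eta]$ which contains $\Op$. To bridge the two regimes I would apply the asymptotic analysis above on an inner neighbourhood $|\lambda-\Op|\leq \eta/2$ to obtain $\|\bbW_{k,\lambda,0}\|_{\Xpsstar}\leq C\,|\lambda-\Op|$, and on the complementary set $[\Op-\eta,\Op-\eta/2]\cup [\Op+\eta/2,\Op+\eta]$ I would invoke Proposition \ref{prop.decay-estim-plasmon} (valid there since $\Op$ is excluded) together with the continuous embedding $\Hms\hookrightarrow\Xpsstar$ to get a uniform bound $\|\bbW_{k,\lambda,0}\|_{\Xpsstar}\lesssim 1\lesssim (2/\eta)\,|\lambda-\Op|$. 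Gluing the two regimes yields \eqref{eq.ineqboundplasmasympt} with a constant $C_\eta$ depending only on $\eta$.
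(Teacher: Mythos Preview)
Your proposal is correct and follows essentially the same route as the paper: bound $\langle\bU,\bbW_{k,\lambda,0}\rangle_s$ component by component via \eqref{eq.def-Vkbis}, integrate by parts in $y$ once for the $w$-components and twice for the $k\,w$- and $\partial_x w$-components, then exploit the $L^2(\bbR_x)$ decay $\|\psi_{\kE(\lambda),\lambda,0}\|_{L^2}\sim(\theta^{\pm})^{-1/2}\sim|\lambda-\Op|^{1/4}$ together with the asymptotics of Lemma~\ref{lem.asymptestimates}. The paper phrases the $x$-step as $\|\psi\,\rme^{\rmi k y}\|_{L^2_{-s}(\bbR^2)}\lesssim\|\psi\|_{L^2(\bbR_x)}$ rather than as your two successive Cauchy--Schwarz inequalities, but the content is identical; your extra inner/outer gluing argument is harmless though unnecessary, since $\Lambda_{\EE}([a,b])$ lies on one side of $\Op$ and the asymptotic equivalences of Lemma~\ref{lem.asymptestimates} upgrade to two-sided bounds by continuity on the remaining compact portion.
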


\begin{proof}
Obtaining $\Xpsstar$ estimates relies on a bound for the duality product $\langle \bU, \bbW_{k,\lambda,0}\rangle_s$, for $\bU \in \Xps$, of the form 
 \begin{equation*} 
\big| \langle \bU, \bbW_{k,\lambda,0}\rangle_s \big| \leq C(\lambda) \; \| \bU\|_{\Xps}
 \end{equation*}  
 that yields, by definition of the dual norm \eqref{def.dualnorm}, the estimate $\|\bbW_{k,\lambda,0} \|_{\Xps} \leq C(\lambda).$
By \eqref{eq.def-Vkbis}, it requires to estimate all the duality  products between the components of  $ \bbW_{k,\lambda,0}$ and a vector $\bU=(E,\bH,J,\bK)\in \Xps$ since
\begin{equation} \label{dualityprod}
\begin{array}{lll} 
|\langle \bU, \bbW_{k,\lambda,0}\rangle_s| & \lesssim & \displaystyle \Big| \int_{\bbR^2} E \, \overline{w_{k,\lambda,0}} \, \mathrm{d}x\mathrm{d}y \Big| + |k| \; \Big|\int_{\bbR^2}  H_x \, \overline{w_{k,\lambda,0}} \,\mathrm{d}x\mathrm{d}y \Big| + \Big| \int_{\bbR^2} H_y \, \overline{\partial_x w_{k,\lambda,0}} \, \mathrm{d}x\mathrm{d}y\Big| \\ [12pt]
& + &  \displaystyle \Big| \int_{\bbR^2_+} J \,\overline{ w_{k,\lambda,0} }\mathrm{d}x\mathrm{d}y\Big| + |k| \; \Big|\int_{\bbR^2_+}  K_x \,\overline{  w_{k,\lambda,0}}\mathrm{d}x\mathrm{d}y \Big| + \Big| \int_{\bbR^2_+} K_y \,\overline{ \partial_x w_{k,\lambda,0}} \, \mathrm{d}x\mathrm{d}y\Big|.
\end{array} 
\end{equation} 
In the following, we are going to estimate each of the terms of the right hand side of \eqref{dualityprod}, regrouping them column by column.
These are obtained by standard manipulations such as Fubini's theorem (since the product of a $L^2_s$ and a $L^2_{-s}$ function for $s\geq 0$ is $L^1$) and integration by parts    that are justified  by the fact that ${\cal D}(\bbR)$  is dense in $H^k_s(\bbR)$, $k=1,2$ and $L^2_{s}(\bbR)$ is continuously embedded in $L^1(\bbR)$ for $s>1/2$.
\\ [12pt]
	$(i)$ By definition of  $w_{k,\lambda,0}$ (cf. \eqref{eq.def-w},  \eqref{def-A-plasm} and \eqref{def-psi-plasm}), using that $A_{k,\lambda,0}$ and $\psi_{k,\lambda,0}$ are even in $k = \pm \kE(\lambda)$ along $\zEE$, we have
	$$
\Big| \int_{\bbR^2} E \, \overline{ w_{k,\lambda,0}} \ \mathrm{d}x\mathrm{d}y \Big| = A_{\kE(\lambda),\lambda,0} \,  \Big| \int_{\bbR^2}  \psi_{\kE(\lambda),\lambda,0}( x)  E(x,y)   \, \rme^{ \mp \rmi k_\scE(\lambda)  y}  \mathrm{d} x \,  \mathrm{d} y  \Big|,
	$$
 A naive estimate, using $\| \psi_{\kE(\lambda),\lambda,0} \|_{L^2_{-s}(\bbR^2)}  \lesssim 1$ (because $|\psi_{\kE(\lambda),\lambda,0}| \leq 1$ and \eqref{Linf-L-s}) would lead to 
$$
\Big| \int_{\bbR^2} E \, \overline{ w_{k,\lambda,0}}  \, \mathrm{d}x\mathrm{d}y\Big| = | A_{\kE(\lambda),\lambda,0} |  \;  \|E\|_{L^2_s(\bbR^2)} \lesssim |\lambda - \Op|^{\frac{1}{4}} \;  \|E\|_{L^2_s(\bbR^2)}
$$
which would not be sufficient to compensate the blow up of $\JacE(\lambda)$ in the expression \eqref{eq.density-MEE}. To get a sharper estimate (when $\lambda \rightarrow \Op$), we will use two properties:
\begin{itemize} 

\item the fact that $k_\scE(\lambda) \rightarrow + \infty$ when $\lambda \rightarrow \Op$, which can be exploited through an integration by parts in the $y$-variable (this is where we use the fact that $\bU \in \Xpsstar$). More precisely, one has
$$
\Big| \int_{\bbR^2} E \, \overline{w_{k,\lambda,0} }\,\mathrm{d}x\mathrm{d}y \Big| = \frac{A_{\kE(\lambda),\lambda,0}}{\kE(\lambda)}  \,  \Big| \int_{\bbR^2}  \psi_{\kE(\lambda),\lambda,0}(x)\,  \partial_y E (x,y)  \, \rme^{ \mp \rmi k_\scE(\lambda)  y}   \, \mathrm{d}x\mathrm{d}y\Big|,
$$
thus, by  the duality between  $L^2_s$ and $L^2_{-s}$,
\begin{equation*} \label{intpart}
\Big| \int_{\bbR^2} E \, \overline{w_{k,\lambda,0} }\, \mathrm{d}x\mathrm{d}y \Big| \leq  \frac{A_{\kE(\lambda),\lambda,0}}{\kE(\lambda)}  \; \|\psi_{\kE(\lambda),\lambda,0}\|_{L^2_{-s}(\bbR^2)} \; \|\partial_y E\|_{L^2_{s}(\bbR^2)}. 
\end{equation*}

	\item  $\psi_{\kE(\lambda),\lambda,0} \in L^2(\bbR_x)$ and, by an explicit computation and \eqref{eq.estimasymp3}, 
	\begin{equation*} \label{estipsi} \|\psi_{\kE(\lambda),\lambda,0}\|_{L^2(\bbR_x)} =(1/\sqrt{2})\,\big((\theta_{\kE(\lambda),\lambda}^{+})^{-1}+(\theta_{\kE(\lambda),\lambda}^{-})^{-1}\big)^{\frac{1}{2}} \lesssim |\lambda - \Op|^{\frac{1}{4}} .\end{equation*}
Therefore, we have a sharper $L^2_{-s}$-estimate of $ \psi_{\kE(\lambda),\lambda,0}$ (better than $\| \psi_{\kE(\lambda),\lambda,0} \|_{L^2_{-s}(\bbR^2)}  \lesssim 1$ )
	\begin{equation} \label{sharperHms}
	\|\psi_{\kE(\lambda),\lambda,0}\|_{L^2_{-s}(\bbR^2)} \lesssim \|\psi_{\kE(\lambda),\lambda,0}\|_{L^2_{-s}(\bbR_x)} \lesssim \|\psi_{\kE(\lambda),\lambda,0}\|_{L^2(\bbR_x)}  \lesssim |\lambda - \Op|^{\frac{1}{4}} .
	\end{equation}
\end{itemize}
Finally, using \eqref{sharperHms}, together with  the asymptotic behaviours  \eqref{eq.estimasymp1} and \eqref{eq.estimasymp4} for $A_{\kE(\lambda),\lambda,0}$ and $\kE(\lambda)$ respectively, we obtain
\begin{equation} \label{est1}
\Big| \int_{\bbR^2} E \, \overline{ w_{k,\lambda,0}}  \, \mathrm{d}x\mathrm{d}y \Big| \lesssim | \lambda- \Op | \; \|\partial_y E\|_{L^2_{s}(\bbR^2)} .
\end{equation}
Proceeding exactly in the same way, we obtain the companion estimate 
\begin{equation} \label{est1bis}
\Big| \int_{\bbR^2_+} J \, \overline{ w_{k,\lambda,0}}  \, \mathrm{d}x\mathrm{d}y \Big| \lesssim | \lambda - \Op | \; \|\partial_y J\|_{L^2_{s}(\bbR^2_+)} .
\end{equation}
	$(ii)$ For the terms in the second column of \eqref{dualityprod}, since an additional factor $|k|$ has appeared, we need an additional integration by parts to compensate it. It leads to
	\begin{equation} \label{est2}
	|k| \Big| \int_{\bbR^2} H_x \, \overline{ w_{k,\lambda,0}} \,\mathrm{d}x\mathrm{d}y \Big| \lesssim | \lambda- \Op | \; \|\partial^2_y H_x\|_{L^2_{s}(\bbR^2)},
	\end{equation}
	\begin{equation} \label{est2bis}
	|k| \Big| \int_{\bbR^2_+} K_x \,  \overline{ w_{k,\lambda,0}}  \, \mathrm{d}x\mathrm{d}y\Big| \lesssim | \lambda - \Op | \; \|\partial^2_y K_x\|_{L^2_{s}(\bbR^2_+)} .
	\end{equation}
	$(iii)$ For the terms of third column of \eqref{dualityprod}, the problem comes from the apparition of the $x$-derivative of $w$ that makes appear an additional 
	factor $\theta_{k, \lambda}$. More precisely,   one has for $x\neq 0$:
	$$
	\partial_x w_{k, \lambda, 0} = -\operatorname{sgn}(x) \; \thetakl(x)  \; w_{k, \lambda, 0}.
	$$
	Since  $\theta_{k, \lambda}^{\pm}$ behaves proportionally to $|k|$ far large $k$ (see \eqref{eq.estimasymp1} and \eqref{eq.estimasymp3}), 
	the same procedure as in point $(ii)$ can be applied. One then obtains 
	\begin{equation} \label{est3}
	\Big| \int_{\bbR^2} H_y \,\overline{\partial_x w_{k,\lambda,0}} \, \mathrm{d}x\mathrm{d}y\Big| \lesssim | \lambda - \Op | \; \|\partial^2_y H_y\|_{L^2_{s}(\bbR^2)}, 
	\end{equation}
	\begin{equation} \label{est3bis}
	|k| \Big| \int_{\bbR^2_+} K_y \, \overline{ \partial_x w_{k,\lambda,0}} \, \mathrm{d}x\mathrm{d}y | \lesssim | \lambda - \Op | \; \|\partial^2_y K_y\|_{L^2_{s}(\bbR^2_+)}. 
	\end{equation}
	Finally, the desired estimate \eqref{eq.ineqboundplasmasympt} follows from substituting (\ref{est1}, \ref{est1bis}, \ref{est2}, \ref{est2bis}, \ref{est3}, \ref{est3bis}) into \eqref{dualityprod}.
\end{proof}

\subsection{H\"older estimates of ``plasmonic generalized eigenfunctions'' at $ \Op$}\label{sec-Holdregulgeneralized-eigenfunc-thr}
For establishing such H\"older estimates, we essentially use the same strategy as the one exposed in section \ref{sec_GEForient}, which rests upon
preliminary estimates of $\lambda$-derivatives of various functions. Such estimates are the object of section  \ref{specder} whose results are exploited in section 
\ref{Holderplasmonic} to establish H\"older estimates for the function $\lambda \mapsto \bbW_{\pm\kE(\lambda),\lambda,0} \in \Xpsstar$.

We assumes that
$[a,b]=[ \Op-\eta,  \Op+\eta]$ with $\eta>0$ sufficiently small such that  $0 , \Oc, \notin [a,b]$.  
The difference with the estimates of section \ref{section.spec derivplasm} relies on the fact that $ \Op\in[a,b]$. Hence the set $\Lambda_{\EE}([a,b])$ is not bounded  and contains points arbitrarily closed to the  horizontal asymptote $\lambda=  \Op$. 
One deals with the  case $(\kE(\lambda),\lambda)\in \Lambda_{\EE}([a,b])$, but by parity arguments in $k$, one checks easily that these estimates hold  also for $(-\kE(\lambda),\lambda)\in \Lambda_{\EE}([a,b])$ (that is by replacing $\kE(\lambda)$ by $-\kE(\lambda)$ in the left-hand side in the  inequalities (\ref{eq.derivthetaplasmasympt}, \ref{eq.coeffAasympt}, \ref{eq.boundphiasympt}, \ref{eq.derivfirstcomponentasympt}, \ref{eq.derivothercomponentasympt})).
\subsubsection{$\lambda$-derivatives of the ``plasmonic generalized eigenfunctions''} \label{specder}
\noindent $\bullet$ {\bf Derivative of powers of } $\theta^{\pm}_{\kE(\lambda), \lambda}$.
We use (see Remark \ref{rem.partialderiv}) formula \eqref{eq.derivativetheta-compos}  namely
\begin{equation}\label{eq.partialderivative2}
\partial_\lambda \big[ (\theta^{\pm}_{k_{\scE}(\lambda), \lambda})^\alpha \big] = 
\frac{\alpha}{2} \Big[\partial_{k}  \Theta^{\pm}_{k_{\scE}(\lambda),\lambda } \, k_{\scE}'(\lambda) +\partial_{\lambda}  \Theta^{\pm}_{k_{\scE}(\lambda),\lambda }  \Big]  (\theta_{k_{\scE}(\lambda),\lambda }^{\pm})^{\alpha-2}.
\end{equation}
Using \eqref{eq.estimasymp1}, \eqref{eq.estimasymp2} (since $\mathcal{J}_\scE (\lambda)=|\kE'(\lambda)|$), \eqref{eq.estimasymp3}, $|\partial_{k}  \Theta^{\pm}_{k_{\scE}(\lambda),\lambda }|=2k_{\scE}(\lambda)\lesssim  | \lambda-\Op |^{-1/2}$ and $|\partial_{\lambda}  \Theta^{\pm}_{k_{\scE}(\lambda),\lambda }| \lesssim 1$ leads to
\begin{equation}\label{eq.derivthetaplasmasympt}
\big|\partial_\lambda \big[ (\theta^{\pm}_{k_{\scE}(\lambda), \lambda})^\alpha \big]\big|  \lesssim | \lambda-\Op |^{-1-\frac{\alpha}{2}}.
\end{equation}

\noindent $\bullet$ {\bf Derivative of the coefficient} $A_{\kE(\lambda),\lambda,0}$. Its expression  is given by 
$$
\partial_\lambda \big(A_{\kE(\lambda),\lambda,0}\big)= 
\partial_\lambda (\theta_{\kE(\lambda),\lambda }^{+})^{\frac{1}{2}} \, B_{\scE}(\lambda) +  (\theta_{\kE(\lambda),\lambda }^{+})^{\frac{1}{2}} \;\partial_\lambda B_{\scE}(\lambda)
$$
where $B_{\scE}(\lambda)$ is defined from $A_{k_{\scE}(\lambda), \lambda,0} = (\theta_{\kE(\lambda),\lambda }^{+})^{\frac{1}{2}} \, B_{\scE}(\lambda)$  with $A_{k_{\scE}(\lambda), \lambda,0}$ given by 
\eqref{def-A-plasm}. 
By virtue of  \eqref{eq.estimasymp1} and \eqref{eq.estimasymp2}, one gets easily (details are omitted) that 
$$ B_{\scE}(\lambda)=O(|\lambda-\Op|^{\frac{1}{2}})  \mbox{ and } \partial_{\lambda} \, B_{\scE}(\lambda)= O(|\lambda-\Op|^{-\frac{1}{2}}) \mbox{ when } \lambda\to \Op.$$
Combining these estimates with the asymptotic formula  \eqref{eq.estimasymp3} and \eqref{eq.derivthetaplasmasympt} for $\alpha=1/2$ finally leads to:
\begin{equation}\label{eq.coeffAasympt}
\big|\partial_\lambda \big(A_{\kE(\lambda),\lambda,0}\big)\big| \lesssim    | \lambda-\Op |^{-\frac{3}{4}}.
\end{equation}
\noindent $\bullet$ {\bf Derivative of } $ \psi_{\kE(\lambda),\lambda,0}(x) \, \rme^{\rmi k y}$.
From the expression  \eqref{eq.expression-deriv-lambda-psi}, one gets  using  \eqref{eq.estimasymp1} and  \eqref{eq.derivthetaplasmasympt}  for $\alpha=1$:
\begin{equation}\label{eq.boundphiasympt}
|\partial_{\lambda}( \psi_{\kE(\lambda),\lambda,0}(x) \, \rme^{\rmi k y}) |\lesssim (|x|+|y|) \,  \psi_{\kE(\lambda),\lambda,0}(x) \,  | \lambda-\Op |^{-\frac{3}{2}} ,   \ \forall (x,y)\in \bbR^2.
\end{equation}
\noindent $\bullet$ {\bf Derivative of} $ w_{\kE(\lambda),\lambda,0}(x,y)$. 
Applying  \eqref{eq.coeffAasympt}, \eqref{eq.boundphiasympt} and \eqref{eq.estimasymp4} on the expression  \eqref{eq.express-derivpartialambda-wlo} of $\partial_{\lambda}w_{k,\lambda,0}$ gives:
\begin{equation}\label{eq.derivfirstcomponentasympt}
|\partial_{\lambda}w_{\kE(\lambda),\lambda,0}(x,y)| \lesssim  (1+|x|+|y|) \   \psi_{\kE(\lambda),\lambda,0}(x) \,  | \lambda-\Op |^{-\frac{5}{4}},\   \forall (x,y) \in \bbR^2.
\end{equation}
\noindent $\bullet$ {\bf Derivative  of} $ \partial_x w_{\kE(\lambda),\lambda,0}(x,y)$. One has $\partial_x w_{\kE(\lambda),\lambda,0}(x,y)= \operatorname{sgn}(x) \, \thetakl(x) w_{\kE(\lambda),\lambda,0}(x,y)$ for $x\neq 0$.
Furthermore, with the expression of $w_{\kE(\lambda),\lambda,0}$ (see (\ref{eq.def-w}, \ref{def-psi-plasm})), one sees that  
$$ |w_{\kE(\lambda),\lambda,0}(x,y)| \lesssim A_{\kE(\lambda),\lambda,0}  \,  \psi_{\kE(\lambda),\lambda,0}(x). $$
It is then easy to show that (again the details are left to the reader), using  \eqref{eq.derivfirstcomponentasympt}, \eqref{eq.estimasymp3},   \eqref{eq.estimasymp4} and  \eqref{eq.derivthetaplasmasympt}  for $\alpha=1$,  
\begin{equation}\label{eq.derivothercomponentasympt}
|\partial_{\lambda} \partial_x w_{\kE(\lambda),\lambda,0}(x,y)| \lesssim  (1+|x|+|y|) \   \psi_{\kE(\lambda),\lambda,0}(x) \,  \big| \lambda-\Op \big|^{-\frac{7}{4}}, \  \forall (x,y)\in \bbR^*\times \bbR. 
\end{equation}

\subsubsection{H\"older estimates of the plasmonic generalized eigenfunctions} \label{Holderplasmonic}
The following proposition replaces Proposition \ref{prop.holdestimateplasmon} which does not hold if $\pm \Op\in [a,b]$.
\begin{proposition}\label{prop.holdestimateplasmonasymp}
	Let  $\Oe \neq \Om$, $s> 1/2$, $\gamma \in (0,1]\cap (0,s-1/2)$ and $[a,b]=[ \Op -\eta, \Op+\eta]$ with $\eta>0$ sufficiently small such that $0, \Oc\notin [a,b]$. Then, there exists a constant $ C_{\eta}^{\gamma}>0$ (depending only on $\eta$ and $\gamma$) such that for all $ (\pm \kE(\lambda), \lambda), (\pm \kE(\lambda'),\lambda')\in\Lambda_{\EE}([a,b]) \mbox{ and } \lambda\leq \lambda'$:
	\begin{equation}\label{eq.ineqboundholdplasmassympt}
	\| \bbW_{\pm \kE(\lambda'),\lambda',0}  -\bbW_{\pm \kE(\lambda),\lambda,0}\|_{\Xpsstar}\leq C_{\eta}^{\gamma} \,    \sup_{\tilde{\lambda}\in [\lambda,\lambda']} |\tilde{\lambda}-\Op |^{1-\frac{3}{4} \gamma} \sup_{\tilde{\lambda}\in [\lambda,\lambda']} |\tilde{\lambda}-\Op|^{-\frac{3}{4} \gamma} \,  |\lambda'-\lambda|^{\gamma}.
	\end{equation}
\end{proposition}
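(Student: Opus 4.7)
The plan is to follow the interpolation strategy outlined in observation 3 of section \ref{sec_GEForient}, but now in the weaker topology of $\Xpsstar$, combining a pointwise bound coming from the improved $\Xpsstar$-estimate and a Lipschitz-type bound obtained by a careful splitting of the difference.

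First, Proposition \ref{prop.decayestimplasmon} together with the triangle inequality provides the ``pointwise'' bound
\[
\|\bbW_{\pm\kE(\lambda'),\lambda',0}-\bbW_{\pm\kE(\lambda),\lambda,0}\|_{\Xpsstar} \lesssim \sup_{\tilde\lambda\in[\lambda,\lambda']}|\tilde\lambda-\Op|.
\]
Second, to establish a Lipschitz-type bound, I would set $k=\kE(\lambda)$, $k'=\kE(\lambda')$, $A=A_{k,\lambda,0}$, $\psi=\psi_{k,\lambda,0}$ (with analogous notation for primed quantities) and split at the level of the scalar generator
\[
w_{k',\lambda',0}-w_{k,\lambda,0} = (A'\psi'-A\psi)\,e^{ik'y} \;+\; A\psi\,(e^{ik'y}-e^{iky}),
\]
with corresponding decompositions for the other five components of $\bbW_{k,\lambda,0}$ via \eqref{eq.def-Vkbis}. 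For the first piece, which carries the single oscillatory factor $e^{ik'y}$, I would pair against $\bU\in\Xps$ and integrate by parts in $y$ exactly as in the proof of Proposition \ref{prop.decayestimplasmon}: each IBP extracts a factor $1/k'\sim|\lambda'-\Op|^{1/2}$ from Lemma \ref{lem.asymptestimates}, while the mean value theorem applied to the $\lambda$-derivative estimates of section \ref{specder} controls the weighted $L^2(\bbR_x)$-norms of $A'\psi'-A\psi$ and $\partial_x(A'\psi'-A\psi)$, yielding a contribution proportional to $|\lambda'-\lambda|$. For the second piece, I would use the elementary interpolation $|e^{ik'y}-e^{iky}|\leq 2^{1-\gamma}|k'-k|^\gamma|y|^\gamma$ combined with $|k'-k|\lesssim\sup_\tau|\tau-\Op|^{-3/2}|\lambda'-\lambda|$ coming from \eqref{eq.estimasymp2}; the surplus factor $|y|^\gamma$ is absorbed into the $L^2_{-s}(\bbR_y)$ weight provided $\gamma<s-1/2$.

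Third, the stated estimate \eqref{eq.ineqboundholdplasmassympt} follows by multiplicatively interpolating the Lipschitz-type contribution of the first piece with the pointwise bound of the first step, adding the direct H\"older contribution of the second piece, and using that $|\tilde\lambda-\Op|\leq\eta$ is small so that slight discrepancies in the resulting exponents of $|\tilde\lambda-\Op|$ can be absorbed into a weaker but sufficient bound of the form displayed in the proposition.

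The \emph{main obstacle} lies in the second piece $A\psi(e^{ik'y}-e^{iky})$. Its natural genealogy is the $\tau$-derivative of the oscillating factor $e^{i\kE(\tau)y}$, which equals $i\kE'(\tau)y\,e^{i\kE(\tau)y}$ and therefore carries both the singular coefficient $\kE'(\tau)\sim|\tau-\Op|^{-3/2}$ and a linear growth $|y|$. This linear growth cannot be removed by further integration by parts in $y$: doing so would demand $H^2_s$-regularity on the first and third components of $\bU\in\Xps$, whereas $\Xps$ only provides $H^1_s$-regularity on those components. This forces the direct pointwise H\"older interpolation on the exponential difference described above, and the delicate matching of singular powers of $|\tilde\lambda-\Op|$ across the two pieces of the decomposition relies crucially on the asymptotic expansions of Lemma \ref{lem.asymptestimates}.
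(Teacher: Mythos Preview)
Your splitting $w_{k',\lambda',0}-w_{k,\lambda,0}=(A'\psi'-A\psi)e^{ik'y}+A\psi(e^{ik'y}-e^{iky})$ and the treatment of the first piece are reasonable, but the second piece as you handle it does \emph{not} deliver the stated bound. Concretely, without an integration by parts your second piece for the first component gives
\[
\big|\langle E,\,A\psi(e^{ik'y}-e^{iky})\rangle\big|
\;\lesssim\;\|E\|_{L^2_s}\,A\,\|\psi\|_{L^2_{-s}(\bbR_x)}\,|k'-k|^\gamma
\;\lesssim\;\|\bU\|_{\Xps}\,|\lambda-\Op|^{1/2}\,(\lambda-\Op)^{-3\gamma/2}\,|\lambda'-\lambda|^\gamma,
\]
using $A\sim|\lambda-\Op|^{1/4}$, $\|\psi\|_{L^2}\sim|\lambda-\Op|^{1/4}$, and $|k'-k|\lesssim(\lambda-\Op)^{-3/2}|\lambda'-\lambda|$ from Lemma~\ref{lem.asymptestimates}. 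Compare this with the target $(\lambda'-\Op)^{1-3\gamma/4}(\lambda-\Op)^{-3\gamma/4}$: already on the diagonal $\lambda=\lambda'$ the ratio is $(\lambda-\Op)^{-1/2}$, which blows up as $\lambda\to\Op$. So the ``slight discrepancy'' you hope to absorb goes in the wrong direction, and the proposition does not follow; moreover, the resulting bound is useless in the proof of Theorem~\ref{thm.regholdestimthr}, where after multiplication by $\JacE(\lambda')\sim(\lambda'-\Op)^{-3/2}$ and $\|\bbW\|_{\Xpsstar}\sim(\lambda'-\Op)$ one would obtain an unbounded factor for every $\gamma>0$.

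The missing idea is to integrate by parts \emph{before} forming the difference, not after splitting. Writing $\langle E,w_{k,\lambda,0}\rangle=\langle\partial_yE,\,w_{k,\lambda,0}/\kE(\lambda)\rangle$ (up to sign) for each $\lambda$ separately, one is led to interpolate the single function $\tau\mapsto w_{\kE(\tau),\tau,0}/\kE(\tau)$. The extra factor $1/\kE(\tau)\sim|\tau-\Op|^{1/2}$ tames the derivative of the $y$-exponential: one gets $|\partial_\tau(w/\kE)|\lesssim(1+|x|+|y|)\psi\,|\tau-\Op|^{-3/4}$ instead of the $|\tau-\Op|^{-5/4}$ that your second piece effectively carries. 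Interpolating this with $|w/\kE|\lesssim\psi\,|\tau-\Op|^{3/4}$ and using $\|(1+|x|+|y|)^\gamma\sup_\tau\psi_\tau\|_{L^2_{-s}}\lesssim\sup_\tau|\tau-\Op|^{1/4}$ gives exactly \eqref{eq.ineqboundholdplasmassympt}. Your diagnosis of the obstacle is also slightly off: the linear growth in $y$ is harmless (after interpolation it becomes $(1+|y|)^\gamma\in L^2_{-s}$); the genuine enemy is the $|\tau-\Op|^{-3/2}$ singularity of $\kE'(\tau)$, and the cure is the $1/\kE(\tau)$ gained from a single preliminary integration by parts, not a second one.
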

\begin{proof}
Let $(k=\kE(\lambda), \lambda), (k'=\kE(\lambda'),\lambda')\in\Lambda_{\EE}([a,b]) \mbox{ and } \lambda\leq \lambda'$ and $\bU=(E,\bH,J,\bK)\in \Xps$, then one has
	$$
	\hspace{-0.075cm}	|\langle \bU, \bbW_{\pm k',\lambda,0} - \bbW_{\pm k,\lambda',0}\rangle_s|  \leq \sum_{\ell = 1}^3 \big| \int_{\bbR^2} \hspace{-0.15cm} \bU^\ell \: (\overline{\bbW_{\pm k',\lambda',0}^\ell} - \overline{\bbW_{\pm k,\lambda,0}^\ell} ) \mathrm{d}x  \mathrm{d}y  \big| + \sum_{\ell = 4}^6 \big| \int_{\bbR^2_+} \hspace{-0.15cm}\bU^\ell \: (\overline{\bbW_{\pm k',\lambda',0}^\ell} - \overline{\bbW_{\pm k,\lambda,0}^\ell} )  \mathrm{d}x \mathrm{d}y \big| .
$$
Thus, to establish the H\"{o}lder estimate on the vector-valued function of $\lambda$: $\bbW_{k,\lambda,0}$ , we derive such an estimate on all the components of $\bbW_{k,\lambda,0}$.\\[6pt]
	$(i)$ Estimate of the first term. Since $\bU^1 = E$ and $\bbW_{k,\lambda,0}= w_{k,\lambda,0}$, one has 
	$$
	\Big| \int_{\bbR^2} \bU^1 \, (\overline{\bbW_{\pm k',\lambda',0}^1} - \overline{\bbW_{\pm k,\lambda,0}^1} )  \mathrm{d}x \mathrm{d}y \Big| = \Big| \int_{\bbR^2} E \, (\overline{w_{\pm k',\lambda',0}} - \overline{w_{\pm k,\lambda,0}} )  \mathrm{d}x  \mathrm{d}y \Big|.
	$$
By integrating by parts in $y$ (as in the proof of Proposition \ref{prop.decayestimplasmon}), one obtains:
	\begin{equation}\label{eq.asymptholdpremcomp1}
	\Big| \int_{\bbR^2} \bU^1 \, (\overline{\bbW_{\pm k',\lambda',0}^1} - \overline{\bbW_{\pm k,\lambda,0}^1} )  \mathrm{d}x \mathrm{d}y \Big|= \left|\int_{\bbR^2} \partial_y E   \,\Big( \frac{\overline{ w_{\pm k',\lambda',0}}}{\kE(\lambda')}-\frac{ \overline{w_{\pm k,\lambda,0}}}{\kE(\lambda)} \Big)  \mathrm{d}x  \mathrm{d}y \right|.
	\end{equation}
	We follow again the method described  in section \ref{sec_GEForient} which consist to obtain a H\"{o}lder estimate via an interpolation between a $L^\infty $-estimate and a Lipschitz estimate given by the mean value theorem.
	First, one obtains from the definition of  \eqref{eq.def-w} of $w_{\pm k,\lambda,0}$ and the asymptotic behavior  \eqref{eq.estimasymp4} of $A_{k,\lambda,0}$ that:
	\begin{equation}\label{eq.asymwlkasympt}
	| w_{\pm k,\lambda,0}(x,y)|\lesssim     \psi_{k,\lambda,0}(x) \,  |\lambda-\Op |^{\frac{1}{4}} ,\   \forall (x,y) \in \bbR^2,
	\end{equation}
	and thus with  the estimate \eqref{eq.estimasymp1}, it follows immediately that:
	\begin{equation}\label{eq.thirdcompaasympt1-Linf}
		\Big|\frac{ w_{\pm k,\lambda,0}}{\kE(\lambda)}(x,y)\big|\lesssim    \psi_{k,\lambda,0}(x)  \, |\lambda-\Op |^{\frac{3}{4}} ,\   \forall (x,y) \in \bbR^2.
		\end{equation}
It yields the $L^\infty $-estimate: 
\begin{equation}\label{eq.Linf-asympt}
	\Big|\frac{ w_{\pm k,\lambda',0}}{\kE(\lambda')}(x,y)-\frac{ w_{\pm k,\lambda,0}}{\kE(\lambda)}(x,y) \big|\lesssim   \sup_{\tilde{\lambda} \in [\lambda, \lambda']}  \psi_{\kE(\tilde{\lambda}),\tilde{\lambda}}(x) \sup_{\tilde{\lambda} \in [\lambda, \lambda']} |\tilde{\lambda}-\Op |^{\frac{3}{4}}.
\end{equation}
Combining the  estimates  \eqref{eq.estimasymp1}, \eqref{eq.estimasymp2} (as $\mathcal{J}_\scE(\lambda)=|\kE'(\lambda)|$), \eqref{eq.asymwlkasympt} and \eqref{eq.derivfirstcomponentasympt} gives after simple computations:
\begin{equation}\label{eq.estimatederic--firsterm-asympt}
\Big|\partial_{\lambda}\Big(\frac{ w_{\pm k,\lambda,0}}{\kE(\lambda)}\Big)(x,y) \Big| \lesssim  (1+|x|+|y|) \   \psi_{k,\lambda,0}(x) \, |\lambda-\Op  |^{-\frac{3}{4}},\   \forall (x,y) \in \bbR^2.
\end{equation}
By applying the  mean value inequality with the estimate \eqref{eq.estimatederic--firsterm-asympt}, one gets:
	\begin{equation}\label{eq.Lipschiz-asympt}
	\Big|\frac{ w_{\pm k,\lambda',0}}{\kE(\lambda')}(x,y)-\frac{ w_{\pm k,\lambda,0}}{\kE(\lambda)}(x,y) \big| \lesssim  (1+|x|+|y|) \  \sup_{\tilde{\lambda} \in [\lambda, \lambda']} 
	\psi_{\kE(\tilde{\lambda}),\tilde{\lambda}}(x) \,  \sup_{\tilde{\lambda} \in [\lambda, \lambda']} |\tilde{\lambda}-\Op  |^{-\frac{3}{4}}\, |\lambda'-\lambda|. 
	\end{equation}
	We introduce the function $h_{\lambda,\lambda'}:(x,y)\mapsto  (1+|x|+|y|)^{\gamma} \, \sup_{\tilde{\lambda} \in [\lambda, \lambda']}  \psi_{\kE(\tilde{\lambda}),\tilde{\lambda}}(x)$. Interpolating between the inequalities \eqref{eq.Linf-asympt} and \eqref{eq.Lipschiz-asympt} yield:
	\begin{equation*}\label{eq.interpolasympt}
	\Big|\frac{ w_{\pm k',\lambda',0}}{\kE(\lambda')}(x,y)-\frac{ w_{\pm k,\lambda,0}}{\kE(\lambda)}(x,y) \Big| \lesssim h_{\lambda,\lambda'}(x,y)  \sup_{\tilde{\lambda}\in [\lambda,\lambda']} |\tilde{\lambda}-\Op|^{\frac{3}{4} (1-\gamma)} \sup_{\tilde{\lambda}\in [\lambda,\lambda']} |\tilde{\lambda}-\Op|^{-\frac{3}{4} \gamma}   |\lambda'-\lambda|^{\gamma},
	\end{equation*}
	for all  $(x,y)\in \bbR^2$.
It follows from \eqref{eq.asymptholdpremcomp1}  that
\begin{equation}\label{L-s-asympt--firstterm-Hold}
\Big\| \frac{ w_{\pm k',\lambda',0}}{\kE(\lambda')}-\frac{ w_{\pm k,\lambda,0}}{\kE(\lambda)}  \Big\|_{L^2_{-s}(\bbR^2)} \lesssim  \| h_{\lambda,\lambda'} \|_{L^2_{-s}(\mathbb{R}^2)}
\sup_{\tilde{\lambda}\in [\lambda,\lambda']} |\tilde{\lambda}-\Op|^{\frac{3}{4} (1-\gamma)}  \sup_{\tilde{\lambda}\in [\lambda,\lambda']} |\tilde{\lambda}-\Op |^{-\frac{3}{4} \gamma} \  |\lambda'-\lambda|^{\gamma}. 
\end{equation}
We point out that  $\sup_{\tilde{\lambda} \in [\lambda, \lambda']}  \psi_{\kE(\tilde{\lambda}),\tilde{\lambda}}$ is uniformly bounded by $1$, thus  $h_{\lambda,\lambda'} \in  L^2_{-s}(\mathbb{R}^2)$ for $\gamma \in (0,s-1/2)$.  However, we do not dominate this supremum by $1$ since one wants to benefit from the decay of $ \| h_{\lambda,\lambda'} \|_{L^2_{-s}(\mathbb{R}^2)}$ when $\lambda$ is close to $\Op$. \\[6pt]
\noindent We now bound $ \| h_{\lambda,\lambda'} \|_{L^2_{-s}(\mathbb{R}^2)}$. Clearly, it 
exists $\lambda_{\pm} \in  [\lambda, \lambda']$  such that 
	$\displaystyle \min_{\tilde{\lambda}\in [\lambda,\lambda']} \theta^{\pm}_{\kE(\tilde{\lambda}), \tilde{\lambda} }=\theta^{\pm}_{\kE(\lambda_{\pm}), \lambda_{\pm}}$. Thus, one has
$$
\sup_{\tilde{\lambda} \in [\lambda, \lambda']}  \psi_{\kE(\tilde{\lambda}),\tilde{\lambda}}(x)= \rme^{-\theta^{\pm}_{\kE(\lambda_{\pm}), \lambda_{\pm}} |x|}, \, \mbox{ for } \pm x\geq 0.$$
Hence,  using the inequality $(1+|x|+|y|) \leq (1+|x|) (1+|y|)$ for $x, y\in \bbR$ and the facts that $(1+|x|)^{2\gamma}/(1+|x|^2)^{s}\lesssim 1$ and $y\mapsto (1+|y|)^{\gamma}\in L^2_s(\bbR_y)$ for $\gamma<s-1/2$ leads to
	\begin{equation*}\label{eq.bornsupphi}
	\| h_{\lambda,\lambda'}\|_{L^2_{-s}(\mathbb{R}^2)} \lesssim \Big\| \sup_{\tilde{\lambda} \in [\lambda, \lambda']}  \psi_{\kE(\tilde{\lambda}),\tilde{\lambda}} \Big\|_{L^2_{(\mathbb{R}_x)}} =\frac{1}{\sqrt{2}} \,\big((\theta^{-}_{\kE(\lambda_-),\lambda_-})^{-1}+(\theta^{+}_{\kE(\lambda_+),\lambda_+})^{-1}\big)^{\frac{1}{2}}
	\end{equation*}
and thus it follows with \eqref{eq.estimasymp3} that:
\begin{equation}\label{eq.bornsupphi}
\| h_{\lambda,\lambda'}\|_{L^2_{-s}(\mathbb{R}^2)}\lesssim \sup_{\tilde{\lambda}\in [\lambda,\lambda']} |\tilde{\lambda}-\Op |^{\frac{1}{4} }.
\end{equation}
One concludes from \eqref{eq.asymptholdpremcomp1},  \eqref{L-s-asympt--firstterm-Hold} and  \eqref{eq.bornsupphi} that for $\gamma \in (0,1]\cap (0,s-1/2)$ and $(k, \lambda), (k',\lambda')\in\Lambda_{\EE}([a,b]) \mbox{ and } \lambda\leq \lambda'$:
	\begin{equation} \label{eq.firstcomponenthold}
 \hspace{-0.2cm}	\Big| \int_{\bbR^2} \bU^1 \, (\overline{\bbW_{\pm k,\lambda',0}^1} - \overline{\bbW_{\pm k,\lambda,0}^1} )  \mathrm{d}x  \mathrm{d}y \Big|\lesssim  \sup_{\tilde{\lambda}\in [\lambda,\lambda']} |\tilde{\lambda}-\Op|^{1-\frac{3}{4} \gamma} \hspace{-0.2cm}\sup_{\tilde{\lambda}\in [\lambda,\lambda']} |\tilde{\lambda}-\Op |^{-\frac{3}{4} \gamma} \,  |\lambda'-\lambda|^{\gamma} \,\|\partial_y E\|_{L^2_{s}(\bbR^2)} .
	\end{equation}
$(ii)$ Estimate on the second term. Integrating by part twice in $y$  to compensate the  additional factor $k$ in $\bbW_{\pm k,\lambda,0}^2 $ (see \eqref{eq.def-Vkbis})  gives
	$$
	\Big| \int_{\bbR^2} \bU^2 \, (\overline{\bbW_{\pm k',\lambda',0}^2} - \overline{\bbW_{\pm k,\lambda,0}^2} ) \, \mathrm{d}x  \mathrm{d}y \Big|=   \Big| \int_{\bbR^2} \partial_y^2 H_x  \Big( \frac{\overline{w_{\pm k',\lambda',0}}}{\mu_{\lambda'} \lambda'\, \kE(\lambda')} -  \frac{\overline{w_{\pm k,\lambda,0)}}}{\mu_{\lambda} \lambda\, \kE(\lambda)} \Big)   \, \mathrm{d}x  \mathrm{d}y\Big|.
	$$
	As the functions $\lambda \to 1/ (\mu_{\lambda}^{\pm}\lambda)$ are $C^{\infty}$ on $[a,b]$, the estimates \eqref{eq.Linf-asympt} and \eqref{eq.Lipschiz-asympt} still holds by replacing $w_{\pm k,\lambda,0}/ \kE(\lambda)$  by $w_{\pm k,\lambda,0}/(\mu_{\lambda}\, \lambda \kE(\lambda))$. Hence, one gets  with the same reasoning:
	\begin{equation}\label{eq.secondcomponenthold}
	\Big| \int_{\bbR^2} \bU^2 \, (\overline{\bbW_{\pm k',\lambda',0}^2} - \overline{\bbW_{\pm k,\lambda,0}^2} ) \, \mathrm{d}x  \mathrm{d}y\lesssim  \sup_{\tilde{\lambda}\in [\lambda,\lambda']} |\tilde{\lambda}-\Op |^{1-\frac{3}{4} \gamma} \sup_{\tilde{\lambda}\in [\lambda,\lambda']} |\tilde{\lambda}-\Op |^{-\frac{3}{4} \gamma} \,  |\lambda'-\lambda|^{\gamma}\, \|\partial_y^2 H_x \|_{L^2_{s}(\bbR^2)}.
	\end{equation}
$(iii)$ Estimate on the third term. As in the proof of Proposition \ref{prop.decayestimplasmon}, we need  to  integrate by part twice in $y$ to compensate the additional 
	factor $\theta_{k, \lambda}$ that appears in the expression of $\partial_x w_{\pm k,\lambda',0}$. It yields:
	$$
	\Big| \int_{\bbR^2} \bU^3 \, (\overline{\bbW_{k,\lambda,0}^3} - \overline{\bbW_{k',\lambda',2}^3} )  \mathrm{d}x  \mathrm{d}y \Big|= \Big|  \int_{\bbR^2}  \partial_y^2 H_y   \,  \Big( \frac{ \overline{\partial_x w_{\pm k',\lambda',0}}}{\mu_{\lambda'}\, \lambda' \kE(\lambda')^2} -  \frac{\overline{\partial_x w_{\pm k,\lambda,0)}}}{\mu_{\lambda}\, \lambda   \, \kE(\lambda)^2} \Big)  \, \mathrm{d}x \mathrm{d}y \Big|,
	$$
with $\partial_x w_{\pm k,\lambda,0}(x,y)=\operatorname{sgn}(x) \, \thetakl(x) w_{\kE(\lambda),\lambda,0}(x,y)$ for $x\neq 0$. \\[6pt]
On one hand, from the above formula, \eqref{eq.asymwlkasympt},  \eqref{eq.estimasymp1} and  \eqref{eq.estimasymp3}, one gets 
	\begin{equation}\label{eq.thirdcompaasympt3-Linf}
	\Big| \frac{\partial_x w_{\pm k,\lambda,0}(x,y)}{\mu_{\lambda}\, \lambda   \, \kE(\lambda)^2}\Big|\lesssim     \psi_{k,\lambda,0}(x)  \, |\lambda-\Op  |^{\frac{3}{4}} ,\   \forall (x,y) \in \bbR^*\times \bbR.
	\end{equation}
	On the other hand,  from \eqref{eq.thirdcompaasympt3-Linf}, \eqref{eq.asymwlkasympt},  \eqref{eq.estimasymp1}, \eqref{eq.estimasymp2}  and \eqref{eq.derivothercomponentasympt}, one obtains that
	\begin{eqnarray}\label{eq.thirdcompaasympt2-Lipsch}
		\Big| \partial_{\lambda} \Big(\frac{\partial_x w_{\pm k,\lambda,0}(x,y)}{\mu_{\lambda}\, \lambda   \, \kE(\lambda)^2} \Big)\Big|&\leq &\Big| \frac{\partial_{\lambda} \partial_x w_{\pm k,\lambda,0}(x,y)}{\mu_{\lambda}\, \lambda   \, \kE(\lambda)^2} \Big|  + \Big| \partial_x w_{\pm k,\lambda,0}(x,y) \partial_{\lambda}\Big(\frac{1}{\mu_{\lambda}\, \lambda   \, \kE(\lambda)^2}\Big)  \Big|  \nonumber \\& \lesssim &     (1+|x|+|y|)\, \psi_{k,\lambda,0}(x)  \,  |\lambda-\Op |^{-\frac{3}{4}} ,\   \forall (x,y) \in \bbR^2 .
	\end{eqnarray}
\eqref{eq.thirdcompaasympt3-Linf} and \eqref{eq.thirdcompaasympt2-Lipsch} are the equivalent of the estimates \eqref{eq.thirdcompaasympt1-Linf} and \eqref{eq.estimatederic--firsterm-asympt}  for the first component.	Thus following the same reasoning as for the first component  gives
	\begin{equation}\label{eq.thirdcomponenthold}
 \hspace{-0.2cm}\Big| \int_{\bbR^2} \bU^3 \, (\overline{\bbW_{k,\lambda,0}^3} - \overline{\bbW_{k',\lambda',2}^3} )  \mathrm{d}x  \mathrm{d}y \Big|	\lesssim  \sup_{\tilde{\lambda}\in [\lambda,\lambda']} |\tilde{\lambda}-\Op |^{1-\frac{3}{4} \gamma} \hspace{-0.2cm}\sup_{\tilde{\lambda}\in [\lambda,\lambda']} |\tilde{\lambda}-\Op|^{-\frac{3}{4} \gamma} \,  |\lambda'-\lambda|^{\gamma} \, \|\partial_y^2 H_y \|_{L^2_{s}(\bbR^2)}.
	\end{equation}
$(iv)$ As the three last components of $ \bbW_{k,\lambda,0}$ are only given (up to a multiplication by $C^{\infty}$ function  in $\lambda$ on $[a,b]$) by a restriction to $\bbR^+$ of the three first components, one easily deduced a similar estimates as the ones in \eqref{eq.firstcomponenthold},  \eqref{eq.secondcomponenthold} and \eqref{eq.thirdcomponenthold} for these components. Thus combining all these estimates and the definition of  the dual norm \eqref{def.dualnorm} leads to  \eqref{eq.ineqboundholdplasmassympt}.
\end{proof}

\subsection{Proof of the Theorem \ref{thm.regholdestimthr}  
}\label{sec.Hold-reg-spec-dens-thr}

\begin{figure}[h!]\label{fig.Cas-Asymptote}
	\begin{center}
		\includegraphics[width=0.75\textwidth]{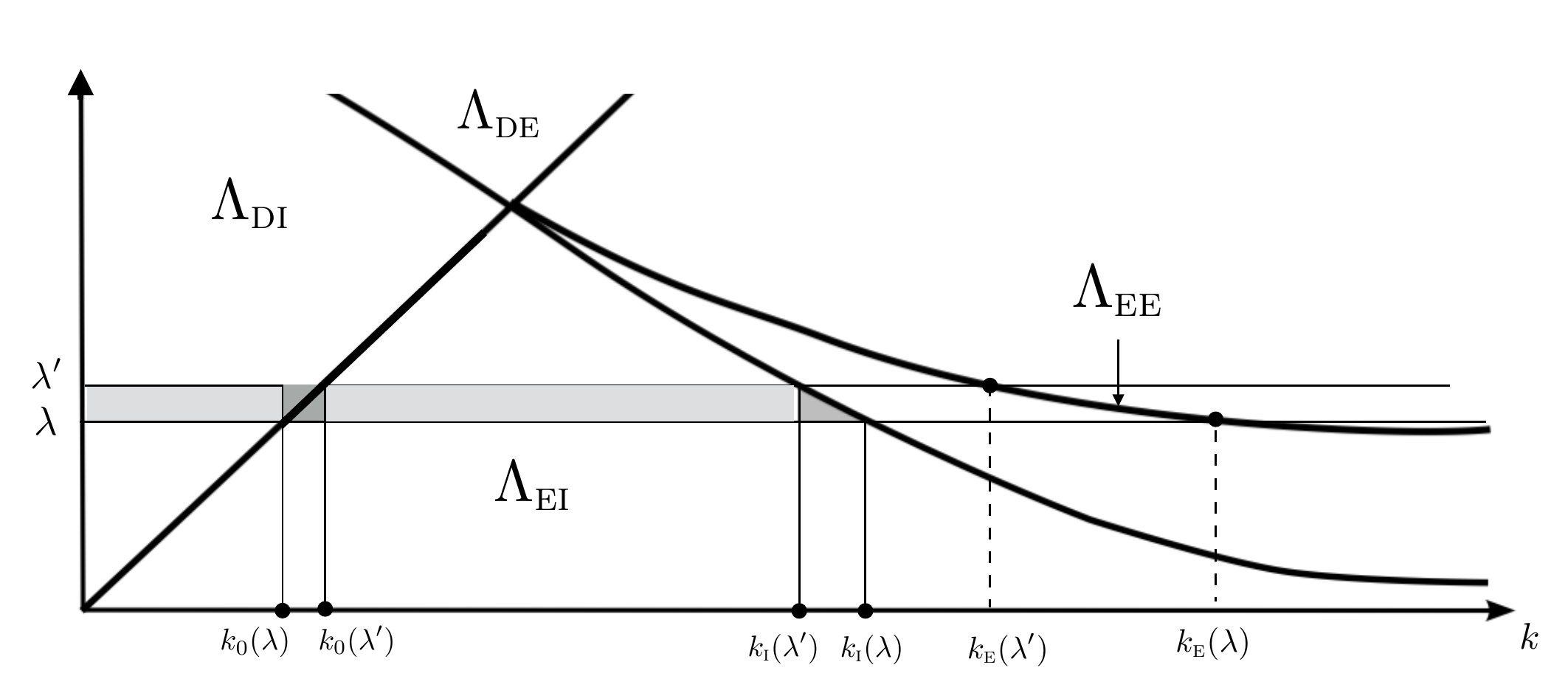}
	\end{center}
	\caption{Zoom for the case $[a,b]=[\Op-\eta,\Op]$ and $\Om< \Oe$.}
	\label{fig.asymptote}
\end{figure}

\begin{proof}
 Using the Theorem \ref{th.Holder-dens-spec},  it only remains to prove a local estimate at $ \Op$, that is on an interval  $K=[\Op-\eta, \Op+\eta]$ with $\eta$ sufficiently small such that $0, \Om, \Oe,\Oc \notin K$ and $\Lambda_{\DD}(K)=\emptyset$. 

As the change of topology is only due to the spectral zone $\Lambda_{\EE}$ at the vicinity of $ \Op$ for $\Oe\neq \Om$,  it requires only to obtain new estimate on the part of the spectral density  $\bbml^{\EE}$  associated with $\Lambda_{\EE}$ for $\Oe\neq \Om$.
For the other spectral zones $\calZ\setminus\{\EE\}$, we can show that the estimate \eqref{eq.holderestim-specdensity-Z} is satisfied   for  $[a,b]=[ \Op -\eta,  \Op+\eta]$ and   H\"{o}lder indexes  $\gamma\in (0,s-1/2) \cap (0,1)$ if $\Op\neq \Oe$ or $\gamma\in (0,s-1/2) \cap (0,1/2)$ if $\Oe=\Op$ (this latter case can occur only if $\Oe<\Om$).
The proof is exactly the same proof as the one of section \ref{sec.Holderspectraldensity}  performed for intervals $[a,b]\subset  \bbR\setminus \sigma_{\rm exc}$ that does not contain $\pm \Oe$  and $\pm \Oc$  (Case A) or does not contain $\pm \Oc$ for the particular case $\Oe=\Op $(Case B). It does not depend on the fact that  $\pm \Op$ is contained in the considered interval.

We consider   the case $\Oe>\Om$ (see Figure \ref{fig.asymptote}). Thus, one can choose $\eta$ sufficiently small such that $\Lambda_{\DE}(\lambda)=\Lambda_{\DE}(\lambda')=\varnothing$ and $\Oe\notin [a,b]$.
The  proof for $\Om>\Oe$ is obtained  in the same way.

For $\lambda, \lambda'\in K$, one has for all $\bU \in \Xps$:
	\begin{eqnarray*}\label{eq.asymptMlmabda}
	\| \bbmlp \bU  -\bbml \bU \|_{\Xpsstar} &  \leq & \sum_{\scZ \in \{\DI, \EI\}}  \| \bbM^{\scZ}_{\lambda'} \bU- \bbM^{\scZ}_{\lambda}\bU \|_{\Xpsstar} + \| \bbM^{\EE}_{\lambda'} \bU- \bbM^{\EE}_{\lambda} \bU\|_{\Xpsstar},
	\end{eqnarray*}
	Indeed,  in the proof of  Theorem \ref{th.Holder-dens-spec} (case A), the   H\"{o}lder  estimate in $B(\Hps, \Hms)$ of the terms related to the spectral zones $\Lambda_{\DI}$ and $\Lambda_{\EI}$  holds on $K$  (which implies the same estimate in  the norm $B(\Xps,\Xpsstar)$). Thus, we  obtain that for $0<\gamma<s-1/2$ and $\gamma <1$: 
	\begin{equation}\label{eq.densitespecasymptote}
	\| \bbmlp \bU  -\bbml \bU \|_{\Xpsstar}   \lesssim  | \lambda'- \lambda |^{\gamma} \, \| \bU\|_{\Xps}+\| \bbM^{\EE}_{\lambda'} \bU- \bbM^{\EE}_{\lambda} \bU\|_{\Xpsstar}.
	\end{equation}
	Hence, as explained at the beginning of the proof, one only needs to derive a H\"{o}lder   estimate for $\lambda\mapsto \bbml^{\EE}$ on $K$.
As  $\Lambda_{\EE}(\lambda)=\Lambda_{\EE}(\lambda')=\varnothing$ and thus $\bbM^{\lambda}=\bbM^{\lambda'}=0 $ for $\lambda,\lambda'\in [\Op-\eta,\Op]$, it is sufficient to prove  this estimate  for $\lambda,\lambda'\in (\Op,\Op+\eta]$ with $\lambda\leq \lambda'$. We point out that we already  show that  the limit $\bbM^{\EE}_{\lambda}$ when $\lambda \to\Op^+$ is zero in $B(\Xps,\Xpsstar)$ to ensure the continuity of $\bbM^{\EE}_{\lambda}$ at $\Op$, see section \ref{sec-sepc-density-thr}.

(i) a) Now, we show that $\bbM^{\EE}_{\lambda}$ is H\"{o}lder continuous with an index $0<\gamma<s-1/2$ and $\gamma\leq 1/3$ on the set $(\Op,\Op+\eta]$. First, one has from the expression \eqref{eq.density-MEE}  of $\bbM^{\EE}_{\lambda}$

\begin{equation}\label{eq.inequality-gen-Hol-ZEE-thr}
	\big\| \bbM_{\lambda'}^\EE \bU - \bbM_\lambda^\EE \bU \big\|_{\Xpsstar}
	\leq  \sum_\pm q_{\lambda,\lambda'}^\pm \ \|\bU\|_{\Xps},
\end{equation}
where we have denoted
\begin{multline}
	q_{\lambda,\lambda'}^\pm  := 
	\big|\JacE(\lambda') - \JacE(\lambda)\big| \ \big\| \bbW_{\pm \kE(\lambda),\lambda} \big\|_{\Xpsstar}^2 \\ + \big|\JacE(\lambda')\big| \
	\Big\{ \big\| \bbW_{\pm \kE(\lambda'),\lambda'} \big\|_{\Xpsstar} + \big\| \bbW_{\pm \kE(\lambda),\lambda} \big\|_{\Xpsstar} \Big\} \ \big\| \bbW_{\pm \kE(\lambda'),\lambda'} - \bbW_{\pm \kE(\lambda),\lambda} \big\|_{\Xpsstar}.
	\label{eq.def-d-lineic-thr}
\end{multline}
We deal with the first term of  \eqref{eq.def-d-lineic-thr}. Thanks to the asymptotic formula  \eqref{eq.estimasymp2} and \eqref{eq.estimasympsecondderiv}, that for $\eta$ sufficiently small and since $ \Op<\lambda\leq \lambda'$:
	$$
	|\mathcal{J}_{\EE}(\lambda')-\mathcal{J}_{\EE}(\lambda)| \lesssim ( \lambda-\Op)^{-\frac{3}{2}} \ \mbox { and } \  |\mathcal{J}_{\EE}(\lambda')-\mathcal{J}_{\EE}(\lambda)| \lesssim ( \lambda-\Op)^{-\frac{5}{2}} |\lambda'-\lambda|.
	$$
	Thus, by interpolation, it follows immediately that for $0\leq \gamma\leq 1$:
	$$
	|\mathcal{J}_{\EE}(\lambda')-\mathcal{J}_{\EE}(\lambda)|  \leq (\lambda-\Op)^{-\frac{3}{2}-\gamma}  |\lambda'-\lambda|^{\gamma}.
	$$
	It gives with  \eqref{eq.ineqboundplasmasympt} that for $\eta$  sufficiently small, $0\leq \gamma\leq 1/2$ and  $\Op<\lambda\leq \lambda'$
	\begin{equation}\label{eq.firsttremholdproofdensity}
	|\mathcal{J}_{\EE}(\lambda')-\mathcal{J}_{\EE}(\lambda)|   \, \| \bbW_{\pm \kE(\lambda),\lambda,0}  \|_{\Xpsstar}^2  \lesssim  (\lambda-\Op)^{1/2-\gamma}   |\lambda'-\lambda|^{\gamma} \lesssim  |\lambda'-\lambda|^{\gamma}  .
	\end{equation}
For the second term of the right hand side of  \eqref{eq.def-d-lineic-thr}, as  $\Op<\lambda\leq  \lambda'$, one has 
	$$
	\lambda-\Op  \leq \lambda'-\Op   \ \mbox{ and }    \ \sup_{\tilde{\lambda}\in [\lambda,\lambda']} \big| \tilde{\lambda}-\Op \big|^{1-\frac{3}{4}\gamma} \sup_{\tilde{\lambda}\in [\lambda,\lambda']} \big|\tilde{\lambda}-\Op \big|^{-\frac{3}{4} \gamma}=(\lambda'-\Op)^{1-\frac{3}{4}\gamma} \,  (\lambda-\Op)^{-\frac{3}{4} \gamma}
	$$
	and thus by virtue of \eqref{eq.ineqboundplasmasympt}, \eqref{eq.ineqboundholdplasmassympt} and \eqref{eq.estimasymp2}, it yields
	\begin{eqnarray}\label{eq.boundplasmon1asympt}
	& & \mathcal{J}_{\EE}(\lambda') \big( \| \bbW_{\pm k',\lambda',0}  \|_{\Xpsstar}  +  \| \bbW_{\pm k,\lambda,0}  \|_{\Xpsstar} ) \|  \bbW_{\pm k',\lambda',0}- \bbW_{\pm k,\lambda,0}\|_{\Xpsstar}\nonumber \\  
	&  &\lesssim  (\lambda'- \Op)^{-\frac{3}{2}} \,  (\lambda'-\Op) \,  (\lambda'-\Op )^{1-\frac{3}{4}\gamma} \,  (\lambda-\Op)^{-\frac{3}{4} \gamma} |\lambda'-\lambda|^{\gamma} \nonumber\\
	& &\lesssim  r_{\lambda, \lambda'}   |\lambda'-\lambda|^{\gamma} ,
	\end{eqnarray} 
with 
$$
 r_{\lambda, \lambda'} :=(\lambda'-\Op)^{\frac{1}{2}-\frac{3}{4}\gamma}  (\lambda-\Op)^{-\frac{3}{4} \gamma}.
$$	
Here, it remains to show that  $r_{\lambda, \lambda'} $ is bounded. The problem (which is similar to the one of section \ref{sss.Holder-M-lineic} for the case (C)) is that $r_{\lambda, \lambda'} $ is not a continuous function on the line $\lambda= \Op$. However, we see that for any fixed $\kappa \in (0,1)$, it is continuous, thus bounded, in any domain of the form 
\begin{equation*}
	\tilde{D}_\kappa := \big\{ (\lambda, \lambda') \in (\Op, \Op+\eta]^2\; \big| \; \kappa \; ( \lambda'-\Op) <  \lambda- \Op  \leq  \lambda'-\Op \big\} 
\end{equation*}
since $(1/2-3\gamma/4)-3\gamma/4 = 1/2 - 3/2\gamma \geq 0$ for $\gamma\in (0,1/3]$. Combining  \eqref{eq.firsttremholdproofdensity}, \eqref{eq.boundplasmon1asympt}, \eqref{eq.def-d-lineic-thr}  and \eqref{eq.inequality-gen-Hol-ZEE-thr} yields
that for any $\gamma\in (0,s-1/2)\cap (0,1/3]$ and $\lambda, \lambda'\in  \tilde{D}_\kappa $, one has 
\begin{equation}\label{eq.specdensity-Hold-EE-thr}
\big\| \bbM_{\lambda'}^\EE \bU - \bbM_\lambda^\EE \bU \big\|_{\Xps,\Xpsstar} \lesssim | \lambda'-\lambda|^{\gamma}.
\end{equation}

(i) b) To conclude, we have to prove that \eqref{eq.specdensity-Hold-EE-thr} holds true in the complement of $\tilde{D}_\kappa$, that is, 
\begin{equation*}
	\tilde{D}_\kappa^{\rm c} := \big\{ (\lambda, \lambda') \in (\Op, \Op+\eta]^2\; \big| \; \kappa \; (\lambda' - \Op) \geq \lambda-\Op\big\}.
\end{equation*}
The idea is to use directly \eqref{eq.densitespecasymptote} (and not \eqref{eq.boundplasmon1asympt}) for $\eta$ sufficiently small  with the following inequality
$$
\|\bbM^{\EE}_{\lambda'} - \bbM^{\EE}_{\lambda} \|_{\Xps,\Xpsstar} \lesssim  \|\bbM^{\EE}_{\lambda'} \|_{\Xps,\Xpsstar} + \|\bbM^{\EE}_{\lambda} \|_{\Xps,\Xpsstar} \lesssim (\lambda'-\Op)^{\frac{1}{2}}  +(\lambda-\Op)^{\frac{1}{2}}  
$$
and to notice that the inequality $\kappa \, (\lambda'- \Op) \geq  \lambda-\Op$ which characterizes points of $\tilde{D}_\kappa^{\rm c}$ can be written equivalently as
\begin{equation*}
	\lambda'-\Op \leq (1-\kappa)^{-1}\ (\lambda'-\lambda)
	\quad\text{or}\quad
	 \lambda-\Op \leq (\kappa^{-1}-1)^{-1}\ (\lambda'-\lambda).
\end{equation*}
Thus, we finally obtain for any $\gamma\in (0,s-1/2)\cap (0,1/3]$:
\begin{equation*}
	\|\bbM^{\EE}_{\lambda'} - \bbM^{\EE}_{\lambda} \|_{\Xps,\Xpsstar} \lesssim |\lambda'-\lambda|^{1/2}\lesssim |\lambda'-\lambda|^{\gamma}, \,   \forall (\lambda,\lambda') \in D_\kappa^{\rm c}.
\end{equation*}
 Combining this inequality, \eqref{eq.specdensity-Hold-EE-thr} and  \eqref{eq.densitespecasymptote} completes the proof of \eqref{eq.specdensholdthreshold}.
\end{proof}

\appendix
\section{Appendix}\label{sec.appendix}

\subsection{Asymptotic formulas at the vicinity of $\pm \Op$}
\label{sec.appendixA1}

We prove here  the Lemma \ref{lem.asymptestimates} that gives some asymptotic expansions which are useful to establish a H\"{o}lder estimate of the spectral density at $\pm \Op$.

\begin{proof}
To prove the asymptotic expansion \eqref{eq.estimasymp1}, one first establishes the asymptotic expansion of $\lambda_\scE(k)$ when $k\to+ \infty$.
From the expression \eqref{eq.expressionlambdae} of $\lambda_\scE$, one obtains after some simple computations:
\begin{equation}\label{eq.asymplambdaE}
\lambda_\scE(k)=\Op-\frac{K \, \Op}{8} \ k^{-2}+ O(k^{-4}), \ \mbox{ as } k \to \infty.
\end{equation}
We deduce  from \eqref{eq.asymplambdaE} an asymptotic expansion of $k=\kE(\lambda)$, where $\kE=\lambda_{\scE}^{-1}$ is defined by \eqref{defkE}. Using  the equivalence between the limits $k=\kE(\lambda)\to +\infty$ and $\lambda\to \Op^{\pm}$ for $\pm K>0$,  one gets from \eqref{eq.asymplambdaE}: 
$$
\big|\lambda-\Op \big|=8^{-1} |K|  \Op \ \kE(\lambda)^{-2}+ O(\kE(\lambda)^{-4}),  \ \mbox{ as } \lambda \to \Op^{\pm} \ \mbox{ for } \ \pm K>0,
$$
and it yields
$$
\kE(\lambda)=\Big(\frac{ |K| \, \Op}{8} \Big)^{\frac{1}{2}}\, \big |\lambda-\Op \big|^{-\frac{1}{2}} \left(1+O(\kE(\lambda)^{-2}) \right) .
$$
One concludes finally to \eqref{eq.estimasymp1} by using the fact that $\kE(\lambda) \to \infty $ for $\lambda \to \Op^{\pm}$ for $\pm K>0$. 

$\bullet$  To compute the asymptotic  expansion  \eqref{eq.estimasymp2} of $\JacE(\lambda)=| \lambda'_{\scE} (\kE(\lambda))|^{-1}$, one first computes $\lambda_{\scE}'$ thanks to \eqref{eq.expressionlambdae}. After some simplifications, one gets that:
\begin{equation}\label{eq.firstderiv}
 \lambda_\scE'(k)=\frac{g(k)}{ \lambda_{\scE}(k)}   \mbox{ with } \ g(k):=\Om^2  \Big( \frac{k}{K}-  \frac{k}{K} \Big(1+\frac{K^2}{4k^4}\Big)^{-\frac{1}{2}} \Big).
\end{equation}
As  
\begin{equation}\label{eq.dlg}
g(k)= \frac{\Om^2}{8 k^3} \left(K+O(k^{-4})\right)  \ \mbox{ as } k\to +\infty,
\end{equation}
it follows that:
\begin{equation}\label{eq.dlfirstderiv}
 \lambda_\scE'(k)=\frac{\Om^2}{8 \lambda_{\scE}(k) k^3} \left(K+O(k^{-4})\right),  \ \mbox{ as } k\to +\infty.
\end{equation}
Using  again that for $k=k_{\scE}(\lambda)$, the limit $k\to +\infty$ is equivalent to the limit $\lambda=\lambda_{\scE}(k) \to \Op^{\pm}$ for $\pm K>0$, one deduces an asymptotic in $\lambda$ for $k=k_{\scE}(\lambda)$. Thanks to \eqref{eq.estimasymp1} and the definition \eqref{eq.def-Op-Oc} of $\Op$, one obtains:
$$
 \lambda_\scE'(\kE(\lambda))=\frac{\Op}{4\, \kE(\lambda)^3}  (K+o(1))=2 \Big(\frac{\Op |K|}{8}\Big)^{-\frac{1}{2}} \big |\lambda-\Op \big|^{\frac{3}{2}} \big(\operatorname{sgn}(K)+o(1)\big).
$$
One arrives finally to  the asymptotic expansion \eqref{eq.estimasymp2} by using that  $\JacE(\lambda)=| \lambda'_{\scE} (\kE(\lambda))|^{-1}$.

$\bullet$  To show \eqref{eq.estimasympsecondderiv}, one uses the relation between the derivatives of $\kE$ and its inverse $\lambda_{\scE}$ which gives
\begin{equation}\label{eq.derivsecondKE}
|\kE''(\lambda)|=  \big|\lambda_{\scE}''(\kE(\lambda)) \big| \, \mathcal{J}_{\scE}(\lambda)^3.
\end{equation}
Thus, as one knows the asymptotic behaviour of $\mathcal{J}_{\scE}$, it only remains to compute the asymptotic of  the second derivative of $\lambda_{\scE}$.
To this aim, differentiating with respect to $k$ the relation \eqref{eq.firstderiv}, one gets that:
\begin{equation}\label{eq.formulderivsecondlambdaE}
\lambda_{\scE}''(k)=\big(g'(k) \lambda_{E}(k)-\lambda_{\scE}'(k) g(k)\big) \,\lambda_{\scE}(k)^{-2}.
\end{equation}
By differentiating $g$ and computing its asymptotic at $+ \infty$,  one shows after simplifications that:
\begin{equation}\label{eq.dlerivg}
g'(k)=\Om^2 \, \Big(-\frac{3\, K }{8k^4}+o\Big(\frac{1}{k^4} \Big) \Big), \mbox{ as } k \to  +\infty .
\end{equation}
Using the asymptotic expansions \eqref{eq.asymplambdaE},  \eqref{eq.dlg},  \eqref{eq.dlfirstderiv} and  \eqref{eq.dlerivg} in \eqref{eq.formulderivsecondlambdaE} gives:
\begin{equation*}
\lambda_{\scE}''(k)=-\frac{3K\Op}{4\, k^4}\Big(1+o(1)\Big) \mbox{ as } k \to  +\infty .
\end{equation*}
Applying the asymptotic expansion \eqref{eq.estimasymp1} in the latter expression for $k=\kE(\lambda)$ yields
\begin{equation}\label{eq.seconddlerivg}
\big|\lambda_{\scE}''\big(\kE(\lambda)\big)\big|= 6 \Big(\frac{\Op \, |K|}{8}\Big)^{-1}  \big| \lambda-\Op\big|^{2} (1+o(1)).
\end{equation}
Thus, using the asymptotic formula  \eqref{eq.estimasymp2} and \eqref{eq.seconddlerivg} in  \eqref{eq.derivsecondKE} leads after simplifications to \eqref{eq.estimasympsecondderiv}.

$\bullet$ For \eqref{eq.estimasymp3}, one deals first with the expansion of  $\theta_{\lambda,\kE(\lambda)}^{-}$. By \eqref{eq.def-thetam} and \eqref{eq.defThetalkfunction}, one has 
$$
\theta_{\lambda,\kE(\lambda)}^{-}=\sqrt{\kE(\lambda)^2-\varepsilon_0 \mu_0 \lambda^2}=|\kE(\lambda)| \, \sqrt{1-\varepsilon_0 \mu_0 \big(\lambda\,\kE(\lambda)^{-1}\big)^2}.
$$
Thus, it yields immediately with  \eqref{eq.estimasymp1} the asymptotic formula \eqref{eq.estimasymp3}. One deduces the expansion of $\theta_{\lambda,\kE(\lambda)}^{+}$ from the one of $\theta_{\lambda,\kE(\lambda)}^{-}$ by using the dispersion relation \eqref{eq.disp} which gives:
$$
\theta_{\lambda,\kE(\lambda)}^{+}=-\frac{ \mu^+(\lambda)}{ \mu_0} \theta_{\lambda,\kE(\lambda)}^{-}= (1+o(1)) \, \theta_{\lambda,\kE(\lambda)}^{-} \, \mbox{ as } \lambda \to \Op^{\pm} \ \mbox{ for } \ \pm K>0.
$$

$\bullet$ Concerning the asymptotic formula \eqref{eq.estimasymp4}, one deduces from  the definition \eqref{def-A-plasm} of $A_{\kE(\lambda),\lambda,0}$ that :
$$
A_{\kE(\lambda),\lambda,0}=\frac{\lambda^2\, |\mu_{\lambda}^{+} \, \theta_{\lambda,\kE(\lambda)}^{+}|^{1/2}}{\sqrt{2\pi}\,\Om (4\kE(\lambda)^4 +(\eps_0\mu_0)^2(\Oe^2-\Om^2)^2)^{1/4}} =\frac{\Op \mu_{0}^{\frac{1}{2}}}{2 \sqrt{2 \pi}}\,\frac{ | \theta_{\lambda,\kE(\lambda)}^{+}|^{1/2}}{ \kE(\lambda)}  (1+o(1)),
$$
as $ \lambda \to \Op^{\pm} \ \mbox{ for } \ \pm K>0$.
Combining the latter expression, \eqref{eq.estimasymp1} and  \eqref{eq.estimasymp3} gives immediately \eqref{eq.estimasymp4}.
\end{proof}

\subsection{The limiting absorption principle near the frequency $\pm \Oe$}
\label{app.cas-un-demi}
In this paragraph, we indicate what changes have to be made in order to adapt the proof of Theorem \ref{thm.limabs} for the particular value $\gamma = 1/2$ of the H\"older exponent in a vicinity of $+\Oe$ or $-\Oe$ (see Remark \ref{rem.cas-un-demi}). Indeed the approach we propose in \S\ref{sec.spectral-density} to prove the H\"older regularity of the spectral density (Theorem \ref{th.Holder-dens-spec}) is not valid in this particular case. This is actually due to our choice of introducing the function $\thetakl^{\min}$ (see \eqref{eq.defthetamin}) which yields more concise but less precise estimates. To deal with this particular case, we have to distinguish between $\thetakl^{+}$ and $\thetakl^{-}$ in the H\"older estimates.

We assume again for simplicity that $\Oe > \Om$ and we consider an interval $[a,b]$ of the form $[\Oe,b]$ or $[a,\Oe]$ contained in $\bbR \setminus (\sigma_{\rm exc}\cup\{\Oc\})$. We see in Figure \ref{fig.Cas-Omega_e} that the two spectral zones which are concerned are $\Lambda_\DD$ and $\Lambda_\DE$. Only the latter is problematic. Indeed, the core of the problem is that for $\gamma = 1/2$, Lemma \ref{l.int-theta1} is wrong for $\scZ = \DE$ (whereas it holds true for $\scZ = \DD$). To remedy, we have to come back to the very first estimates where we have used $\thetakl^{\min}$. This started with the $\lambda$-derivative of $A_{k, \lambda,\pm 1}$
given in \eqref{eq.deriv-coefA}. Here, $J_\DE = \{+1\}$ so that instead of \eqref{eq.derivA-1.improved}, we obtain the more precise estimate
\begin{equation*}
\forall (k, \lambda)\in\Lambda_{\DE}([a,b]), \quad
|\partial_{\lambda} A_{k, \lambda,1}| \lesssim |\theta_{k\lambda }^{-}|^{-3/2}+ |\thetakl^{+}|^{-1},
\end{equation*}
which can be then used in the proof of Lemma \ref{lem_estimderpsiB}. Instead of \eqref{eq.estimate-deriv-lambda-deriv-psiA} and 
\eqref{eq.deriv-w-surfac-inter-crosspointsA-improved}, we infer that
\begin{align*}
\forall (x,y)\in \bbR^2, \quad 	\big|\partial_{\lambda} w_{k,\lambda,1}(x,y)\big| & \lesssim \big(|\thetakl^{-}|^{-3/2}+|\thetakl^{+}|^{-1}\big)\, (1+|x|) , \\
\forall (x,y) \in \bbR^* \times \bbR, \quad \big| \partial_{\lambda} \partial_x w_{k,\lambda,1}(x,y)\big| &   \lesssim \big(|\thetakl^{-}|^{-3/2}+|\thetakl^{+}|^{-1}\big)\, (1+|x|)  .
\end{align*}
Proceeding as in the proof of Proposition \ref{prop.holdreggeneralizedeigenfunctions}, we deduce that for all $(k, \lambda), (k,\lambda')\in\Lambda_{\DE}([a,b])$ such that $\lambda\leq \lambda'$,
\begin{equation*}
\big\| \bbW_{k,\lambda',1}- \bbW_{k,\lambda,1}\big\|_{\Hms}\lesssim  \left(\sup_{\tilde{\lambda}\in [\lambda,\lambda']}  |\thetaklt^{-}|^{-1/2- \gamma}+\sup_{\tilde{\lambda}\in [\lambda,\lambda']}  |\thetaklt^{+}|^{-\gamma} \sup_{\tilde{\lambda}\in [\lambda,\lambda']}  |\thetaklt^{-}|^{-1/2+\gamma/2}\right) \ \,  |\lambda'-\lambda|^{\gamma},
\end{equation*}
which is more precise than \eqref{eq.ineqboundhold1nocrosspoint}. As we are only interested in the case where $\gamma = 1/2,$ from now on we choose this particular value. We move to \S\ref{sss.Holder-M-surface}, where the definition \eqref{eq.def-d}  of $d_{\lambda,\lambda',j}$  and \eqref{eq.ineqbound1} (for $\gamma=1/2$) yield now
\begin{equation*}
d_{\lambda,\lambda',1} \lesssim  \left(\sup_{\tilde{\lambda}\in [\lambda,\lambda']}  |\thetaklt^{-}|^{-1}+\sup_{\tilde{\lambda}\in [\lambda,\lambda']}  |\thetaklt^{+}|^{-1/2} \sup_{\tilde{\lambda}\in [\lambda,\lambda']}  |\thetaklt^{-}|^{-3/4}\right) \ \,  |\lambda'-\lambda|^{1/2}.
\end{equation*}
Hence the relation \eqref{eq.holderestim-common-part} for $\scZ = \DE$ will be proved once we have verified that the integral on $\zDE(\lambda) \cap \zDE(\lambda')$ of the quantity inside the parentheses is bounded. As $\thetaklt^{+}$ and $\thetaklt^{-}$ cannot vanish simultaneously, this amounts to proving separately
\begin{equation*}
\int_{\zDE(\lambda) \cap \zDE(\lambda')} \ \sup_{\tilde{\lambda}\in [\lambda,\lambda']}  \big|\thetaklt^{+}\big|^{-1/2} \ \rmd k \lesssim 1
\quad\text{and}\quad
\int_{\zZ(\lambda) \cap \zDE(\lambda')} \ \sup_{\tilde{\lambda}\in [\lambda,\lambda']}  \big|\thetaklt^{-}\big|^{-1} \ \rmd k \lesssim 1,
\end{equation*}
instead of Lemma \ref{l.int-theta1}. In the present case, the statement of this lemma is equivalent to \eqref{eq.two-estim-d}  with exponent $\beta = -1/2$ for $\big|\thetaklt^{+}\big|$ and $\beta=-1$ for $\big|\thetaklt^{-}\big|$, that is, replacing in the first integral the exponent $-1$ (for which the inequality is false) by $-1/2$ (for which the inequality is true). Finally it is easy to see that on one hand we can reuse exactly the arguments of the proof of Lemma \ref{l.int-theta1} for case (B) to prove the above inequalities and  on the other hand that the proof of Lemma \ref{l.int-theta2} for case (B) holds for $\gamma=1/2$.


\end{document}